\newtheorem{proposition}{Proposition}[section]
\newtheorem{theorem}[proposition]{Theorem}
\newtheorem{lemma}[proposition]{Lemma}
\newtheorem{prop}[proposition]{Proposition}
\newtheorem{cor}[proposition]{Corollary}
\theoremstyle{definition}
\newtheorem{example}[proposition]{Example}
\theoremstyle{remark}
\newtheorem{remark}[proposition]{Remark}
\numberwithin{equation}{section}
\newcommand{\newword}[1]{\textbf{\emph{#1}}}
\newcommand{\integers}{\mathbb Z}
\newcommand{\reals}{\mathbb R}
\newcommand{\ep}{\varepsilon}
\newcommand{\Seed}{\operatorname{Seed}}
\newcommand{\Camb}{\operatorname{Camb}}
\newcommand{\sgn}{\operatorname{sgn}}
\renewcommand{\int}{\operatorname{int}}
\newcommand{\cov}{\mathrm{cov}}
\newcommand{\covered}{{\,\,<\!\!\!\!\cdot\,\,\,}}
\newcommand{\set}[1]{{\lbrace #1 \rbrace}}
\newcommand{\pidown}{\pi_\downarrow}
\newcommand{\br}[1]{{\langle #1 \rangle}}
\newcommand{\g}{{\mathbf g}}
\renewcommand{\c}{{\mathbf c}}
\renewcommand{\b}{{\mathbf b}}
\newcommand{\A}{{\mathcal A}}
\newcommand{\tB}{{\widetilde{B}}}
\newcommand{\F}{{\mathcal F}}
\newcommand{\join}{\vee}
\renewcommand{\Join}{\bigvee}
\newcommand{\Meet}{\bigwedge}
\newcommand{\ck}{^{\vee\!}}
\newcommand{\dashname}[1]{\stackrel{#1}{\begin{picture}(22,3)\put(0,2.5){\line(1,0){22}}\end{picture}}}
\renewcommand{\th}{^\mathrm{th}}
\newcommand{\1}{{\hat{1}}}
\newcommand{\FF}{\mathbb{F}}
\newcommand{\ZZ}{\mathbb{Z}}
\newcommand{\RR}{\mathbb{R}}
\newcommand{\Tits}{\mathrm{Tits}}
\newcommand{\Cone}{\mathrm{Cone}}
\newcommand{\Proj}{\mathrm{Proj}}
\newcommand{\relint}{\mathrm{relint}}
\newcommand{\Ex}{\mathrm{Ex}}
\DeclareMathOperator{\Span}{Span}
\DeclareMathOperator{\inv}{inv}
\DeclareMathOperator{\Cart}{Cart}
\newcommand{\twomatrix}[2]{\begin{bsmallmatrix} #1 \\ #2 \end{bsmallmatrix} }
\newcommand{\ab}{\uparrow}
\newcommand{\bel}{\downarrow}
\newcommand{\DF}{{\mathcal {DF}}}
\newcommand{\aff}{\mathrm{aff}}
\newcommand{\DCamb}{\operatorname{DCamb}}
\newcommand{\DC}{\mathrm{DC}}
\begin{document}

\title[Affine Cambrian frameworks]{Cambrian frameworks for cluster algebras of affine type}
\author{Nathan Reading and David E Speyer}
\subjclass[2010]{13F60, 20F55}

\thanks{Nathan Reading was partially supported by NSA grant H98230-09-1-0056, by Simons Foundation grant \#209288, and by NSF grant DMS-1101568.   David E Speyer was supported in part by a Clay Research Fellowship}

\begin{abstract}
We give a combinatorial model for the exchange graph and $\g$-vector fan associated to any acyclic exchange matrix $B$ of affine type. 
More specifically, we construct a reflection framework for $B$ in the sense of [N. Reading and D. E. Speyer, ``Combinatorial frameworks for cluster algebras''] and establish good properties of this framework.
The framework (and in particular the $\g$-vector fan) is constructed by combining a copy of the Cambrian fan for $B$ with an antipodal copy of the Cambrian fan for $-B$.
\end{abstract}

\maketitle

\setcounter{tocdepth}{2}
\tableofcontents

\section{Introduction}\label{intro sec} 
In a series of papers \cite{cambrian,sortable,camb_fan}, the authors studied cluster algebras of finite type in terms of the combinatorics and geometry of finite Coxeter groups, and in particular sortable elements and Cambrian lattices.
In \cite{typefree,cyclic,framework}, 
these constructions were extended to infinite Coxeter groups.
Sortable elements and Cambrian semilattices were shown in \cite{framework} to produce combinatorial models of cluster algebras of infinite type, with an important limitation:  
Sortable elements can only model the part of the exchange graph that corresponds to clusters whose $\g$-vector cones intersect the interior of the Tits cone.

In this paper, we show how to extend the sortable/Cambrian setup to obtain a complete combinatorial model when
$B$ is acyclic and its Cartan companion $\Cart(B)$
 is of affine type. The basic idea can be illustrated by a very simple example.   

\begin{figure}[ht]
\includegraphics{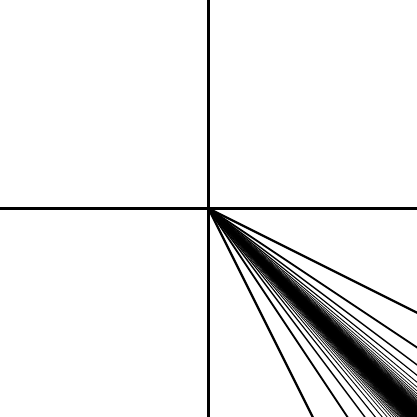}
\caption{The $\g$-vector fan for $B$ with $\Cart(B)$ of type $\widetilde A_1$}
\label{A1tilde g}
\end{figure}
\begin{figure}[ht]
\includegraphics{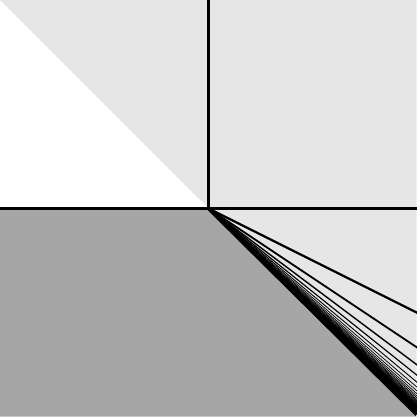}\qquad\qquad\includegraphics{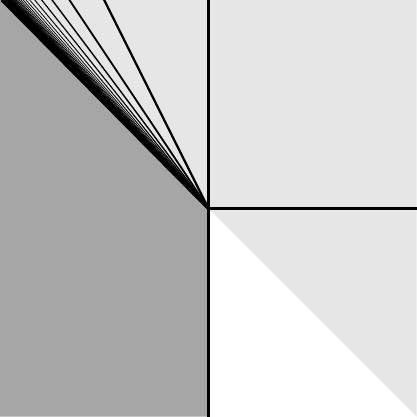}
\caption{The Cambrian fans for $B$ and for $-B$}
\label{A1tilde Camb}
\end{figure}
Suppose $B=\begin{bsmallmatrix*}[r]0& 2 \\ -2 & 0 \end{bsmallmatrix*}$.
Its Cartan companion $\Cart(B)=\begin{bsmallmatrix*}[r] 2 & -2 \\ -2 & 2 \end{bsmallmatrix*}$ is of type $\tilde{A}_1$.
Figure~\ref{A1tilde g} shows the $\g$-vector fan associated to $B$.
The left picture of Figure~\ref{A1tilde Camb} shows the Cambrian fan associated to $B$, while the right picture shows the Cambrian fan associated to $-B$.
In each Cambrian fan picture, the Tits cone is identified by light gray shading and the area outside the Cambrian fan is identified by dark gray shading.
Two surprising things happen.
First, the Cambrian fan for $B$ is ``compatible'' with the image under the antipodal map of the Cambrian fan for $-B$, in the sense that the union of these two fans is again a fan (the \newword{doubled Cambrian fan}).
Second, the doubled Cambrian fan coincides with the $\g$-vector fan, so that the dual graph to the doubled Cambrian fan is isomorphic to the exchange graph.

The main results of the paper are that the first of these surprising things happens for all acyclic $B$ and that the second happens whenever $B$ is acyclic and $\Cart(B)$ 
 is of affine type.  
For richer examples than the rank-$2$ example described above, see Examples~\ref{affineG2 ex}, \ref{affineG2 frame ex}, \ref{344 ex} and~\ref{affineG2 slice ex}.

We now state our main results more formally.
For each acyclic exchange matrix $B$, we construct a doubled Cambrian fan $\DF_c$ as the union of the Cambrian fan for $B$ with the antipodal Cambrian fan for $-B$.
The \newword{doubled Cambrian framework} is the pair $(\DCamb_c, \DC_c)$, 
where $\DCamb_c$ is the dual graph to $\DF_c$ and $\DC_c$ is a certain labeling of $\DCamb_c$ by roots.  
In the language of \cite{framework}, the key result is the following.

\begin{theorem}\label{AffineDoubleFramework}
Suppose that $B$ is acyclic and $\Cart(B)$ 
is of affine type. Then $(\DCamb_c, \DC_c)$ is a complete, exact, well-connected, polyhedral, simply connected reflection framework.
\end{theorem}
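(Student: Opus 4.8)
The plan is to verify the five adjectives (complete, exact, well-connected, polyhedral, simply connected) together with the underlying reflection-framework axioms one at a time, exploiting the fact that each half of $\DF_c$ is already understood. Concretely, the Cambrian fan for $B$ is a reflection framework supported on the Tits cone $\Tits$, and the antipodal Cambrian fan for $-B$ is a reflection framework supported on $-\Tits$; both are known from \cite{framework} (and the earlier Cambrian papers) to be exact, well-connected, polyhedral, and simply connected on their respective domains, although in infinite type neither is complete --- this is exactly the limitation recalled in the introduction, namely that sortable elements only see the clusters whose $\g$-vector cones meet the interior of $\Tits$. Since $\Cart(B)=\Cart(-B)$, the two copies share the same Coxeter group and the same imaginary direction $\delta$, so the domains $\Tits$ and $-\Tits$ are the two half-spaces meeting along the hyperplane $\delta^\perp$. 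The heart of the argument is therefore to glue these two frameworks along $\delta^\perp$ and to show that the glued object inherits every good property globally.

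The polyhedral condition is essentially immediate: once $\DF_c$ is known to be a fan (the first main result of the paper, valid for every acyclic $B$), the cones it assigns to the vertices of $\DCamb_c$ form a polyhedral fan by construction, and the labels $\DC_c$ are read off from the walls in the usual way. Completeness is the one property that genuinely requires affine type. I would argue that $\Tits\cup(-\Tits)$ exhausts the ambient space precisely because the Tits cone of an affine Coxeter group is an open half-space together with the origin, with bounding hyperplane $\delta^\perp$. It is here that finite type (where $\Tits$ is everything, making the doubling degenerate) and indefinite type (where $\Tits$ is strictly smaller than a half-space, leaving imaginary gaps) are excluded. It then remains to check that the closed cones of the two Cambrian fans fit together to cover the seam $\delta^\perp$ with no gap and no overlap in its interior, so that every wall of $\DF_c$ borders exactly two maximal cones; in framework language this is the statement that every vertex of $\DCamb_c$ has its full complement of $n$ neighbors, which is completeness.

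With polyhedrality and completeness in hand, exactness, well-connectedness, and the reflection-framework axioms reduce to local statements that already hold in the interior of each half by \cite{framework}, so the only new verification is at the seam. Here I would analyze the chambers of the two Cambrian fans that accumulate against $\delta^\perp$, using the structure of affine root systems: the real roots labeling walls near the boundary limit onto the imaginary direction $\delta$, and I must check that the labeling $\DC_c$ transitions across $\delta^\perp$ in a manner compatible with the framework transition rules. For simple connectivity I would run a van Kampen--style argument: the two halves are each simply connected, their overlap along the seam is connected, and the gluing creates no new independent cycles; equivalently, every cycle in $\DCamb_c$ is generated by the rank-two cycles, which can be checked within each half and across the boundary separately.

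The main obstacle is exactly this seam analysis --- establishing that the two fans are mutually compatible along $\delta^\perp$ and that the root labels extend consistently across it. This is where the affine hypothesis does the real work: I expect to need precise control of the rays and walls of the Cambrian fan as they limit onto $\delta^\perp$, phrased in terms of the asymptotic behavior of sortable elements and of the limit roots lying on the boundary hyperplane. Once the compatibility of the labels across the seam is established, the remaining framework axioms, together with exactness and well-connectedness, should follow by combining the corresponding interior statements from \cite{framework} with this boundary input.
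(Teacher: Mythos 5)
Your overall plan (verify each adjective separately, leaning on the known properties of each Cambrian half, with affine type entering through the Tits cone being a half-space bounded by $\delta^\perp$) matches the paper's architecture, and your observation that completeness is where the affine hypothesis does real work is correct. However, the geometric picture underlying your ``seam analysis'' is wrong in two ways that would derail the argument. First, the two fans are not glued along $\delta^\perp$ by matching boundary walls: no codimension-$1$ cone of $\DF_c$ lies in $\partial\Tits(A)$ at all (its normal would have to be a scaling of $\delta$, which is not a real root). Instead, $\F_c$ and $-\F_{c^{-1}}$ overlap in \emph{full-dimensional} cones straddling $\delta^\perp$; the graph $\DCamb_c$ is formed by identifying vertices whose cones literally coincide, and the compatibility of labels is then automatic, since $C_c(v)=-C_{c^{-1}}(v')$ whenever $\Cone_c(v)=-\Cone_{c^{-1}}(v')$ (both label sets consist of the roots normal to the same facets). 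Connectivity of $\DCamb_c$ depends precisely on exhibiting such a shared maximal cone, which the paper does via the longest element of a suitable finite parabolic. Second, and more seriously for your completeness argument, the union of the two fans does \emph{not} cover the seam: the paper proves that $V^*\setminus|\DF_c|$ is a \emph{nonempty} open cone inside $\partial\Tits(A)$, namely the union of relative interiors of the $W_0$-Coxeter cones containing the point $x_c$ defined by $\gamma\mapsto-\omega_c(\gamma,\delta)$. Your proposed verification that the cones ``cover the seam $\delta^\perp$ with no gap'' would therefore fail. Completeness is instead proved facet by facet: a facet with negative label is full by the Full edge condition on $\Camb_c$; a facet of a cone whose interior meets $-\Tits(A)$ is handled by switching to the coinciding cone $-\Cone_{c^{-1}}(v')$; and a facet with positive label on a cone interior to $\Tits(A)$ is crossed inside $\F_c$ using the chambers $wD$.

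The hole in the seam also matters for well-connectedness and simple connectivity, which you treat as routine. The paper must show that every codimension-$2$ face of $\DF_c$ in the interior of the support corresponds to a \emph{finite} rank-two cycle, whereas those contained in $\partial\Tits(A)$ give infinite rank-two paths and lie on the boundary of the support; and the loop-contraction argument chooses its basepoint in the antipode of the hole precisely so that the contracting triangles avoid the hole and meet only finitely many cones. A bare van Kampen argument on the two halves does not work as stated, because a rank-two cycle of $\DCamb_c$ need not be a rank-two cycle of either half (a rank-two path in $\Camb_c$ can close up only after passing through $-\Camb_{c^{-1}}$), so the $2$-cells of the glued complex do not decompose over the pieces. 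None of this is fatal to your outline, but the missing ingredient --- the explicit description of $|\DF_c|\cap\partial\Tits(A)$ and its nonempty complement --- is the real content of the affine analysis and cannot be bypassed.
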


Let $\A_\bullet(B)$ be the principal-coefficients cluster algebra associated to $B$.
Using results of \cite{framework} (as we explain in Section~\ref{frame and clus sec}), the following corollaries are easily obtained.  

\begin{cor}\label{Affine exch}
If $B$ is acyclic and $\Cart(B)$ 
is of affine type, then the exchange graph of the cluster algebra $\A_\bullet(B)$ is isomorphic to $\DCamb_c$. 
The isomorphism sends a vertex $v$ of $\DCamb_c$ to a seed whose $\c$-vectors are (simple-root coordinates of) $\DC_c(v)$.
\end{cor}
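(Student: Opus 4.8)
The plan is to deduce the corollary directly from Theorem~\ref{AffineDoubleFramework} together with the general correspondence in \cite{framework} between reflection frameworks and principal-coefficients cluster algebras. That paper proves that the axioms of a reflection framework encode, vertex by vertex and edge by edge, precisely the local combinatorics of $\c$-vector mutation in $\A_\bullet(B)$. The first step is therefore to recall the exact theorem from \cite{framework} that turns a framework with good properties into a statement about the exchange graph: for a complete, exact, well-connected, simply connected reflection framework $(E,C)$ for $B$, it asserts that $E$ is isomorphic to the exchange graph of $\A_\bullet(B)$, under an isomorphism matching the root-labeling $C$ to the $\c$-vectors of the associated seeds.

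Granting that theorem, the argument reduces to checking its hypotheses. This is exactly what Theorem~\ref{AffineDoubleFramework} provides: when $B$ is acyclic and $\Cart(B)$ is of affine type, $(\DCamb_c,\DC_c)$ is a complete, exact, well-connected, polyhedral, simply connected reflection framework. Applying the cited result then yields the isomorphism between $\DCamb_c$ and the exchange graph of $\A_\bullet(B)$, together with the identification of the $\c$-vectors of the seed at $v$ with the simple-root coordinates of $\DC_c(v)$.

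The conceptual heart of this style of argument --- and the step I would highlight when writing out the details in Section~\ref{frame and clus sec} --- is the passage from the purely local matching guaranteed by the framework axioms to a genuine global graph isomorphism. Two of the framework properties carry out this passage: completeness guarantees that every seed of $\A_\bullet(B)$ is reached, giving surjectivity, while simple connectivity rules out monodromy, so that iterating the $\c$-vector mutation rule around a loop returns the same data and the correspondence is both well defined and injective. The only remaining care is in reconciling sign and indexing conventions, confirming that the simple-root coordinates of $\DC_c(v)$ are the $\c$-vectors themselves rather than their negatives or a permutation; this is routine once the conventions of \cite{framework} are aligned with those used here.
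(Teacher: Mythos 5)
Your proposal is correct and follows essentially the same route as the paper, which deduces the corollary immediately by combining Theorem~\ref{AffineDoubleFramework} with Theorem~\ref{framework exchange} (the result from \cite{framework} asserting that a complete, exact reflection framework is isomorphic to $\Ex_\bullet(B)$ with $\c$-vectors given by the labels $C(v,e)$). The only slight imprecision is that the cited theorem requires just completeness and exactness (well-connectedness and polyhedrality are not needed for this particular conclusion, and simple connectivity enters only via the implication that it yields ampleness, hence exactness); this does not affect the validity of the argument.
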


\begin{cor}\label{Affine g}
If $B$ is acyclic and $\Cart(B)$
is of affine type, then  $\DF_c$ coincides with the fan of $\g$-vector cones for the cluster algebra $\A_\bullet(B)$.
\end{cor}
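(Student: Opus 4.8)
The plan is to derive Corollary~\ref{Affine g} from Theorem~\ref{AffineDoubleFramework} by feeding it into the general framework-to-cluster-algebra dictionary of \cite{framework}, precisely as announced in the sentence introducing the corollaries and elaborated in Section~\ref{frame and clus sec}. This is the geometric counterpart of Corollary~\ref{Affine exch}: where that statement extracts the $\c$-vectors from the root labels $\DC_c$, the present one extracts the $\g$-vector cones from the fan $\DF_c$. Granting the main theorem, no genuinely new argument should be needed here.

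First I would recall the relevant theorem of \cite{framework}: for any reflection framework enjoying the five properties listed in Theorem~\ref{AffineDoubleFramework}, the cones attached to the vertices by the framework are exactly the $\g$-vector cones of $\A_\bullet(B)$, and they fit together into the fan of such cones. In the framework this cone is reconstructed from the root labels at a vertex via the standard tropical duality between $\c$-vectors and $\g$-vectors, so the very matching that identifies $\DC_c(v)$ with the $\c$-vectors of a seed (Corollary~\ref{Affine exch}) identifies the corresponding maximal cone of the framework with that seed's $\g$-vector cone. Next I would observe that, by construction, the fan attached to the framework $(\DCamb_c,\DC_c)$ is literally $\DF_c$: the vertices of $\DCamb_c$ are the maximal cones of $\DF_c$, and the polyhedrality clause of Theorem~\ref{AffineDoubleFramework} is exactly the assertion that these cones form the fan $\DF_c$. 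Combining the two observations yields the corollary.

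The main obstacle lies entirely in Theorem~\ref{AffineDoubleFramework}, not here; once that theorem is available the corollary is bookkeeping. The one point deserving care is the role of completeness. In the framework sense ``complete'' means that the framework accounts for \emph{every} seed of $\A_\bullet(B)$, so that no $\g$-vector cone is left out of $\DF_c$ and, conversely, every maximal cone of $\DF_c$ is genuinely a cluster cone. This is exactly the delicate feature of the affine case, where the exchange graph is infinite and the two Cambrian fans accumulate toward the boundary of the Tits cone; it is the completeness half of Theorem~\ref{AffineDoubleFramework} that upgrades a possible inclusion of one fan into the other into an honest equality of fans.
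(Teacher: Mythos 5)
Your proposal is correct and follows essentially the same route as the paper: the authors likewise deduce Corollary~\ref{Affine g} by combining Theorem~\ref{AffineDoubleFramework} with the last assertion of Theorem~\ref{all conj} (that the fan defined by a complete, exact, well-connected polyhedral framework equals the $\g$-vector fan), together with the observation that $\DF_c$ is by construction the fan defined by $(\DCamb_c,\DC_c)$. Your gloss of ``complete'' as accounting for every seed is a consequence (via Theorem~\ref{framework exchange}) rather than the definition (no half-edges), but that does not affect the argument.
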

Corollary~\ref{Affine g} refers to the $\g$-vector cones defined by interpreting $\g$-vectors as fundamental-weight coordinates of vectors in the weight lattice.  
See \cite[Remark~5.17]{framework} and Section~\ref{ca background sec} of the present paper.  
When $B$ is of infinite non-affine type, the doubled Cambrian fan may be a proper subfan of the $\g$-vector fan.
(See Example~\ref{344 ex}.)
However, the doubled Cambrian fan coincides with the $\g$-vector fan for all $2\times 2$ exchange matrices, even those of non-affine infinite type.  
(See Remark~\ref{rank 2}.)  

One of the initial motivations for this work was to prove the affine-type case of many of the standard structural conjectures on cluster algebras.
Indeed, theorems about frameworks from \cite{framework} combined with the results stated above accomplish that task.
(See Corollary~\ref{affine type conj}.)
Many of the standard conjectures have been turned into theorems in full generality (not merely the affine case) using the machinery of scattering diagrams \cite{GHKK}.
See also the table at the end of \cite[Section~3.3]{framework} for more on the previous status of these conjectures.

The approach to cluster algebras via scattering diagrams makes it \emph{more} important, not less important, to make explicit constructions in key special cases, like the affine case.
Scattering diagrams consist of a rational fan, decorated with certain rational functions, and containing the $\g$-vector fan as a subfan.
Other constructions, notably semi-invariants (see~\cite{IPT} for a detailed treatment of the affine case,~\cite{IOTW1} for the acyclic case and~\cite{IOTW2} for the beginnings of an investigation of the general case)
and the mutation fan~\cite{universal}  also yield fans that contain the $\g$-vector fan.

However, none of these methods give a direct combinatorial description of the fan, and it is difficult to use them to study the regions of the fan.
Scattering diagrams natively construct the walls of the fan and it is difficult to use them to discuss the chambers cut out by those walls. 
(Not impossible though! See~\cite{Muller} for some progress along these lines.) 
Semi-invariants also naturally construct the walls, although connections have been found to cluster tilting and $\tau$-tilting modules which correspond to the regions~\cite{IOTW1, IT}. 
Neither method naturally connects the fan directly to lattice theory, although torsion classes form a lattice which has been related to semi-invariants~\cite{IT}.

In contrast, our methods provide direct descriptions of the $\g$-vector fan in the affine types in terms of the combinatorics of affine Coxeter groups and root systems.
We label the regions of the $\g$-vector fan by sortable elements in those groups, which can be described either in terms of  reduced words or in terms of pattern avoidance~(\cite{typefree}).
And our methods are built on lattice theoretic and geometric properties which are well suited for proving global properties of the fans we study.

The remainder of the paper is devoted to constructing the doubled Cambrian framework and fan and proving Theorem~\ref{AffineDoubleFramework}.
We begin with background in Section~\ref{background sec}.
We define the doubled Cambrian framework and fan for any acyclic exchange matrix in Section~\ref{double sec}, where we also prove the part of Theorem~\ref{AffineDoubleFramework} that does not need the hypothesis the $\Cart(B)$ is of affine type.
The proof of Theorem~\ref{AffineDoubleFramework} is completed in Section~\ref{affine sec}.

Most of the affine Dynkin diagrams are trees, and thus admit no non-acyclic orientations. 
The exception is $\tilde A_{n-1}$ ($n \geq 3$), which is an $n$-cycle. 
Thus, in almost every case when $\Cart(B)$ is of affine type, $B$ is acyclic and the doubled Cambrian framework construction provides a complete framework for $B$.
However, the cyclic orientation in type $\tilde A_{n-1}$ ($n \geq 3$) does not fit into the doubled Cambrian framework.
In this case, the associated cluster algebra is of finite type $D_n$.
In~\cite{cyclica}, we construct a complete framework for the cyclically oriented $n$-cycle using the $\tilde{A}_{n-1}$ root system and a variation on the doubled Cambrian framework idea.

\section{Background}\label{background sec}  

\subsection{Reflection frameworks}\label{frame sec}
In this section, we review the background material on frameworks from~\cite{framework}. 
The doubled Cambrian frameworks belong to a special class of frameworks called \newword{reflection frameworks}.
For that reason, in this paper, we review only the definition of reflections frameworks, and not the general definition of frameworks that is required to model cluster algebras in general.

The starting point is an \newword{exchange matrix} $B=[b_{ij}]$, with rows and columns indexed by a set~$I$, with $|I|=n$.
This is a skew-symmetrizable integer matrix, meaning that there exists a positive real-valued function~$d$ on $I$ with $d(i) b_{ij}=-d(j) b_{ji}$ for all $i,j\in I$.
The \newword{Cartan companion} $\Cart(B)$ of $B$ is the square matrix $A=[a_{ij}]$ with diagonal entries $2$ and off-diagonal entries $a_{ij}=-|b_{ij}|$.  
This is a symmetrizable generalized Cartan matrix, and specifically $d(i) a_{ij}=d(j) a_{ji}$ for all $i,j\in I$.
(See \cite{Kac} for more on Cartan matrices.  Or, for an exposition tailored to the present purposes, see \cite[Section~2.2]{typefree}.)

We take $V$ to be a real vector space of dimension $n$ with a basis $\Pi=\set{\alpha_i:i\in I}$.
We write $V^*$ for the dual vector space to $V$ and write $\br{x,y}$ for the canonical pairing between $x\in V^*$ and $y\in V$.
The vectors $\alpha_i$ are the \newword{simple roots}. 
The vectors $\alpha_i\ck= d(i)^{-1} \alpha_i$ are the \newword{simple co-roots}.
The set of simple co-roots is written~$\Pi\ck$.
We define a bilinear form $\omega$ 
whose matrix is $B$, written in terms of the simple co-root basis on one side and the simple root basis on the other.
Specifically, we set $\omega(\alpha_{i}\ck,\alpha_{j})=b_{ij}$.  
It is easily checked that $\omega$ is skew-symmetric.

The Cartan matrix $A$ defines a Coxeter group $W$ and a specific reflection representation of $W$ on $V$.
The group is generated by the set $S=\set{s_i:i\in I}$ of \newword{simple reflections}, where each $s_i$ acts on a simple root $\alpha_j$ by $s_i(\alpha_j)=\alpha_j-a_{ij} \alpha_i$.
It follows that the action on a simple co-root $\alpha\ck_j$ is $s_i(\alpha\ck_j)=\alpha\ck_j-a_{ji} \alpha\ck_i$.
Furthermore,~$A$ encodes a symmetric bilinear form $K$ on $V$ with $K(\alpha\ck_i, \alpha_j)=a_{ij}$.
The action of $W$ can be rewritten as $s_i(\alpha_j)=\alpha_j-K(\alpha\ck_i, \alpha_j) \alpha_i$, so that $s_i$ acts as a reflection with respect to the form $K$.
In particular, the action of $W$ preserves the form $K$.

When it is convenient, we will replace the indexing set $I$ with the set $S=\set{s_i:i\in I}$ of simple reflections in $W$.
For example, the simple root $\alpha_i$ may appear as $\alpha_s$ for $s=s_i$.

The \newword{real roots} are the vectors in the $W$-orbit of $\Pi$, and the \newword{real co-roots} are the vectors in the $W$-orbit of $\Pi\ck$.
(Imaginary roots make an appearance in Section~\ref{affine root sec}.)  
The set of all real roots constitutes the \newword{(real) root system} $\Phi$ associated to $A$.
The root system is a subset of the \newword{root lattice}, the lattice in $V$ generated by $\Pi$.
The root system is the disjoint union of \newword{positive roots} (roots in the nonnegative linear span of $\Pi$) and \newword{negative roots} (roots in the nonpositive linear span of $\Pi$).
Each root $\beta$ has a corresponding co-root $\beta\ck$, related by the scaling $\beta\ck=2\frac{\beta}{K(\beta,\beta)}$.
The set of all co-roots is a root system in its own right, associated to the Cartan matrix $A^T$.
The integer span of the simple co-roots is a lattice called the \newword{co-root lattice}.
A \newword{reflection} in $W$ is an element conjugate to an element of $S$.
For each root $\beta$, there is a reflection $t$ in $W$ such that $tx=x-K(\beta\ck, x) \beta$ for all $x\in V$.
This defines a bijection between positive roots and reflections in $W$.
We write $\beta_t$ for the positive root associated to a reflection $t$.

A \newword{quasi-graph} is a graph with ordinary or \newword{full} edges and also \newword{half-edges}.
Half-edges should be though of as edges that dangle from a vertex without connecting that vertex to any other.
More formally, a quasi-graph is a hypergraph with edges of size $1$ or $2$.
We will assume our quasi-graphs to be \newword{simple}: no two edges connect the same pair of vertices, and every full edge connects two distinct vertices.
We will also consider only quasi-graphs that are \newword{regular of degree $n$}, meaning that each vertex is incident to exactly $n$-edges (i.e.\ $k$ half-edges and $n-k$ full edges for some $k$ from $0$ to $n$).
Finally, we will consider \newword{connected} quasi-graphs, meaning that, deleting half-edges, the remaining graph is connected in the usual sense.
A pair $(v,e)$ consisting of a vertex $v$ and an edge $e$ containing $v$, is called an \newword{incident pair}.
Let $I(v)$ denote the set of edges $e$ containing the vertex~$v$.  

We will define a reflection framework to be a connected $n$-regular quasi-graph $G$ together with a labeling $C$ which labels each incident pair in $G$ by a vector $C(v,e)$ in $V$.
The set $\set{C(v,e):e\in I(v)}$ of labels on a vertex $v$ will be written $C(v)$.
The labeling must satisfy certain conditions that we now explain.\\

\noindent
\textbf{Base condition:}  
There exists a vertex $v_b$ of $G$ such that $C(v_b)$ is the set of simple roots of $\Phi$.\\

In light of the Base condition, we identify the indexing set $I$ with $I(v_b)$.
Specifically, we identify $e\in I(v_b)$ with the index $i\in I$ such that $C(v_b,e)=\alpha_i$.\\

\noindent
\textbf{Root condition:}  
Each label $C(v,e)$ is a real root in $\Phi$.\\

In particular, each label $C(v,e)$ has a corresponding \newword{co-label} $C\ck(v,e)$, which is the co-root corresponding to the root $C(v,e)$.
The set of co-labels on a vertex $v$ is denoted by $C\ck(v)$.

For the benefit of the reader who is reading this work together with~\cite{framework}, we recall that, in a general framework, a label $C(v,e)$ is not necessarily a root, so there is no reasonable notion of a ``co-root corresponding to $C(v,e)$.''
Instead, the co-labels are defined separately to obey a Co-transition condition that is dual to the Transition condition.
In a reflection framework, we can define co-labels more simply as described above.
See~\cite[Proposition 2.13]{framework}.

The \newword{Euler form} $E$ associated to $B$ is defined by:  
\[E(\alpha\ck_{i},\alpha_{j})=\left\lbrace\begin{array}{ll}
\min(b_{ij},0)   &\mbox{if } i\neq j,\mbox{ or}\\
1&\mbox{if }i=j.
\end{array}\right.\]
The symmetric bilinear form $K$ obeys $K(\alpha, \beta) = E(\alpha, \beta)+E(\beta, \alpha)$ for any $\alpha$, $\beta \in V$. 
The skew-symmetric bilinear form $\omega$ satisfies $\omega(\alpha, \beta) = E(\alpha, \beta) - E(\beta, \alpha)$. 

Supposing the Root condition holds for $(G,C)$, for each vertex $v$ of $G$ we define $C_+(v)$ to be the set of positive roots in $C(v)$ and $C_-(v)$ to be the set of negative roots in $C(v)$.
We also define a directed graph $\Gamma(v)$ whose vertex set is $C(v)$ and which has
an edge $\beta \to \beta'$ whenever $\beta\neq\beta'$ and
$E(\beta, \beta') \neq 0$. 
\\

\noindent
\textbf{Euler conditions:}
Let $v$ be a vertex of $G$ and let $e$ and $f$ be distinct edges incident to $v$. 
Write $\beta=C(v,e)$ and $\gamma=C(v,f)$.
Then  
\begin{enumerate}
\item[(E1) ] If $\beta\in C_+(v)$ and $\gamma\in C_-(v)$ then $E(\beta,\gamma)=0$. 
\item[(E2) ] If $\sgn(\beta)=\sgn(\gamma)$ then $E(\beta,\gamma)\le0$. 
\item[(E3) ] The graph $\Gamma(v)$ is acyclic.\\
\end{enumerate}
Here $\sgn(\beta)=1$ if $\beta$ is a positive root and $-1$ if $\beta$ is a negative root.

\noindent
\textbf{Reflection condition:}
Suppose $v$ and $v'$ are distinct vertices incident to the same edge $e$.
If $\beta=C(v,e)=\pm\beta_t$ for some reflection $t$ and $\gamma\in C(v)$, then $C(v')$ contains the root 
\[\gamma'=\left\lbrace\begin{array}{ll}
t\gamma&\mbox{if }\omega(\beta_t\ck,\gamma)\ge 0,\mbox{ or}\\
\gamma&\mbox{if }\omega(\beta_t\ck,\gamma)<0.\\
\end{array}\right.\]
\smallskip

A \newword{reflection framework} for $B$ is a pair $(G,C)$ consisting of a connected, $n$-regular quasi-graph $G$, with a labeling $C$ satisfying the Base condition, the Root condition, the Reflection condition, and the Euler conditions (E1), (E2), and (E3).
If a reflection framework exists for $B$, then in particular the graph $\Gamma(v_b)$ is acyclic.
This condition on $\Gamma(v_b)$ is equivalent to saying that we can reorder the rows and columns of $B$ so that $b_{ij}>0$ when $i<j$.
When such a reordering is possible, we say that $B$ is \newword{acyclic}.  
The more general notion of a \newword{framework} applies to arbitrary exchange matrices $B$ with no requirement of acyclicity.
See~\cite[Section~2]{framework} for details.

The following is \cite[Proposition~2.4]{framework}, specialized to reflection frameworks.  
\begin{prop}\label{basis}
Suppose $(G,C)$ is a reflection framework for $B$ and let $v$ be a vertex of $G$.
Then the label set $C(v)$ is a basis for the root lattice and the co-label set $C\ck(v)$ is a basis for the co-root lattice.
\end{prop}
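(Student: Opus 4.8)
The plan is to induct on the distance from the base vertex $v_b$ in the graph obtained from $G$ by deleting all half-edges, which is connected by hypothesis; each step of the induction crosses a single full edge. The base case is immediate: by the Base condition $C(v_b)=\Pi$ and $C\ck(v_b)=\Pi\ck$, which are by definition $\ZZ$-bases of the root lattice and the co-root lattice respectively. So suppose $e$ is a full edge joining $v$ to a vertex $v'$ one step farther from $v_b$, and assume inductively that $C(v)$ is a $\ZZ$-basis of the root lattice; in particular the $n$ labels at $v$ are distinct. I want to deduce the same statement at $v'$, and simultaneously handle the co-labels.

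Write $\beta=C(v,e)=\pm\beta_t$ for the associated reflection $t$ (possible since, by the Root condition, every label is a real root). The Reflection condition attaches to each $\gamma\in C(v)$ a root $\gamma'\in C(v')$, equal to $t\gamma$ when $\omega(\beta_t\ck,\gamma)\ge 0$ and to $\gamma$ otherwise. I would first show that $\gamma\mapsto\gamma'$ carries the basis $C(v)$ to another $\ZZ$-basis of the root lattice. Expand each $\gamma'$ in the basis $C(v)$, listing $\beta$ first. Since $\beta_t\ck$ is parallel to $\beta_t$ and $\omega$ is skew-symmetric, $\omega(\beta_t\ck,\beta)=0$, so the rule gives $\beta'=t\beta=-\beta$, contributing $-1$ on the diagonal. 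Every other $\gamma'$ equals either $\gamma$ or $t\gamma=\gamma-K(\beta_t\ck,\gamma)\beta_t$, that is, $\gamma$ plus an integer multiple of $\beta$; thus its coordinate vector has a $1$ on the diagonal and a possible nonzero entry only in the $\beta$-row. The transition matrix $M$ is therefore upper triangular with $\det M=-1$, and all its entries are integers because each $\gamma'$ is a root and hence lies in the root lattice. Thus $M\in\GL_n(\ZZ)$ and $\set{\gamma':\gamma\in C(v)}$ is a $\ZZ$-basis of the root lattice. These $n$ roots are distinct, and all lie in $C(v')$; since $G$ is $n$-regular, $|C(v')|\le n$, forcing $C(v')=\set{\gamma':\gamma\in C(v)}$, which is the desired basis.

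The co-label statement follows from the same computation. Because $t\in W$ preserves the form $K$, it commutes with the root-to-co-root correspondence $\beta\mapsto\beta\ck$, so $(\gamma')\ck$ equals $t\gamma\ck$ or $\gamma\ck$ according to the same sign condition on $\omega(\beta_t\ck,\gamma)$. As $\beta_t\ck$ is parallel to $\beta_t$, the correction in $t\gamma\ck=\gamma\ck-K(\beta_t\ck,\gamma\ck)\beta_t$ is again a scalar multiple of $\beta_t\ck=\pm\beta\ck$, so each $(\gamma')\ck$ is $\gamma\ck$ plus a multiple of the single basis co-root $\beta\ck$, while $(\beta)\ck\mapsto-\beta\ck$. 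The identical triangular determinant argument places the transition matrix in $\GL_n(\ZZ)$, so $C\ck(v')$ is a $\ZZ$-basis of the co-root lattice, completing the induction.

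The crux of the argument is the determinant computation in the inductive step. The point that makes it work is that the single label which gets negated, namely $\beta$, is exactly the one whose reflection $t$ is being applied, so all off-diagonal corrections are multiples of the one vector $\beta$ (respectively $\beta\ck$), rendering the transition matrix triangular with determinant $-1$. The remaining points are routine: that the pairing $K(\beta_t\ck,\gamma)$ of a co-root with a root is an integer (standard, e.g.\ via $W$-invariance of $K$ together with $K(\alpha_i\ck,\alpha_j)=a_{ij}\in\ZZ$), and that the $n$ produced roots genuinely exhaust $C(v')$, which is where $n$-regularity enters.
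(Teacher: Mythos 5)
Your argument is correct. Note, however, that the paper does not prove this statement at all: it is quoted verbatim from \cite[Proposition~2.4]{framework} (where it is proved for general frameworks via the Transition and Co-transition conditions), so there is no in-paper proof to compare against. Your proof is a clean self-contained version for the reflection-framework case and almost certainly mirrors the cited one: induct along full edges from $v_b$, observe that the Reflection condition sends the basis $C(v)$ to a set of roots each of the form $\gamma+c\beta$ with $c\in\ZZ$ (and sends $\beta$ itself to $-\beta$, since $\omega(\beta_t\ck,\beta)=0$ forces the case $\gamma'=t\gamma$), so the transition matrix is integral, triangular, and of determinant $-1$; then $n$-regularity pins down $C(v')$ as exactly this image. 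The co-label half goes through verbatim because $W$ preserves $K$ and hence commutes with $\gamma\mapsto\gamma\ck$, and $t\gamma\ck=\gamma\ck-K(\beta_t,\gamma\ck)\,\beta_t\ck$ with $K(\beta_t,\gamma\ck)\in\ZZ$. I see no gaps.
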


In particular, no label occurs twice in $C(v)$.
Accordingly, the Reflection condition lets us relate the sets $I(v)$ and $I(v')$ of edges incident to adjacent vertices $v$ and $v'$.
Specifically, if an edge $e$ connects $v$ to $v'$, we define a function $\mu_e$ from $I(v)$ to $I(v')$.
We set $\mu_e(e)$ to be $e$, and for each $f\in I(v)\setminus\set{e}$, taking $\gamma=C(v,f)$, we define $\mu_e(f)$ to be the edge $f'\in I(v')$ such that $C(v',f')=\gamma'$ in the Reflection condition.
The following version of the Reflection condition is useful, although it amounts only to a restatement of the definition of $\mu_e$:
\\

\noindent
\textbf{Reflection condition, restated:}
Suppose $v$ and $v'$ are distinct vertices incident to the same edge $e$.
Let $C(v,e)=\pm\beta_t$ for some reflection $t$.
Let $f\in I(v)$ and write~$\gamma$ for $C(v,f)$.
Then 
\[C(v',\mu_e(f))=\left\lbrace\begin{array}{ll}
t\gamma&\mbox{if }\omega(\beta_t\ck,\gamma)\ge 0,\mbox{ or}\\
\gamma&\mbox{if }\omega(\beta_t\ck,\gamma)<0.
\end{array}\right.\]

The \newword{fundamental weights} associated to the root system $\Phi$ are the vectors in the basis of $V^*$ that is dual to $\Pi\ck$.
The notation $\rho_i$ stands for the fundamental weight that is dual to $\alpha\ck_i$ in this dual basis.
The lattice in $V^*$ generated by the fundamental weights is called the \newword{weight lattice}.
Proposition~\ref{basis} implies that the dual basis to $C\ck(v)$ is a basis for the weight lattice.
Write $R(v)$ for this dual basis and write $R(v,e)$ for the vector dual to $C\ck(v,e)$ in $R(v)$.
The following is \cite[Proposition~2.8]{framework}, specialized to reflection frameworks. 
\begin{prop}\label{dual adjacent}
Let $(G,C)$ be a reflection framework for $B$ and let $v$ and $v'$ be adjacent vertices of $G$.
Then $R(v)\cap R(v')$ contains exactly $n-1$ vectors.
Specifically, if $e$ is the edge connecting $v$ to $v'$ and $f\in I(v)\setminus\set{e}$, then $R(v,f)=R(v',\mu_e(f))$.
Also, $R(v,e)$ and $R(v', e)$ lie on opposite sides of the hyperplane spanned by $R(v) \cap R(v')$.
\end{prop}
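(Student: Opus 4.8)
The plan is to reduce everything to the way the co-labels $C\ck(v)$ and $C\ck(v')$ are related across the shared edge $e$, and then to read off the dual bases $R(v)$ and $R(v')$ by pure linear algebra. Throughout, write $t$ for the reflection with $C(v,e)=\pm\beta_t$, so that $C\ck(v,e)$ is a nonzero scalar multiple of $\beta_t$. The first step is to establish the key fact that, for every $f\in I(v)$, the co-label $C\ck(v',\mu_e(f))$ differs from $C\ck(v,f)$ by a multiple of $C\ck(v,e)$; equivalently $C\ck(v',\mu_e(f))-C\ck(v,f)\in\reals\, C\ck(v,e)$.

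To prove this key fact I would use the Reflection condition (restated) together with the observation that a reflection moves a vector only along its own root. Since $tx=x-K(\beta_t\ck,x)\beta_t$ for all $x\in V$, and since $W$ preserves $K$ so that $(t\gamma)\ck=t(\gamma\ck)$, the reflecting branch gives $C\ck(v',\mu_e(f))=t\,C\ck(v,f)=C\ck(v,f)-K(\beta_t\ck,C\ck(v,f))\beta_t$, while the identity branch gives $C\ck(v',\mu_e(f))=C\ck(v,f)$; in both cases the difference lies in $\reals\beta_t=\reals\, C\ck(v,e)$. The case $f=e$ needs separate but easy attention: because $\omega$ is skew-symmetric and $\beta_t\ck$ is parallel to $\beta_t$, we have $\omega(\beta_t\ck,\beta_t)=0$, so the Reflection condition puts $e$ in the reflecting branch and yields $C(v',e)=-C(v,e)$, hence $C\ck(v',e)=-C\ck(v,e)$. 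This last identity is the one that really drives part (3), and I expect the sign bookkeeping here to be the only genuinely delicate point.

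With the key fact in hand I would finish by a dual-basis verification. Recall that $R(v)$ is defined by $\br{R(v,g),C\ck(v,f)}=\delta_{gf}$, and likewise for $R(v')$. For $f\ne e$ I claim $R(v,f)=R(v',\mu_e(f))$: using $C\ck(v',\mu_e(h))=C\ck(v,h)+\lambda_h\, C\ck(v,e)$ for $h\ne e$ together with $C\ck(v',e)=-C\ck(v,e)$, a one-line computation shows $\br{R(v,f),C\ck(v',\mu_e(h))}=\delta_{fh}$ for all $h$, so $R(v,f)$ satisfies the defining equations of $R(v',\mu_e(f))$; uniqueness of the dual basis (legitimate since $C\ck(v)$ and $C\ck(v')$ are bases by Proposition~\ref{basis}) gives the equality. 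This produces the $n-1$ common vectors $\set{R(v,f):f\ne e}$ and proves the middle assertion.

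Finally, for part (3), these $n-1$ common vectors all pair to $0$ with $C\ck(v,e)$, and being $n-1$ independent vectors inside the $(n-1)$-dimensional space $\set{\xi\in V^*:\br{\xi,C\ck(v,e)}=0}$ they span exactly that hyperplane $H$. I would then evaluate the two remaining basis vectors against $C\ck(v,e)$: directly $\br{R(v,e),C\ck(v,e)}=1$, while $\br{R(v',e),C\ck(v',e)}=1$ combined with $C\ck(v',e)=-C\ck(v,e)$ forces $\br{R(v',e),C\ck(v,e)}=-1$. The opposite signs place $R(v,e)$ and $R(v',e)$ on opposite sides of $H$; in particular $R(v,e)\ne R(v',e)$, and since $R(v)$ and $R(v')$ are bases this also shows that $R(v)\cap R(v')$ consists of exactly the $n-1$ vectors found above, completing the count.
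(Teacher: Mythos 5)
Your proof is correct: the key fact that $C\ck(v',\mu_e(f))-C\ck(v,f)\in\reals\,C\ck(v,e)$ (with $C\ck(v',e)=-C\ck(v,e)$) follows as you say from the Reflection condition together with $(t\gamma)\ck=t(\gamma\ck)$ and $\omega(\beta_t\ck,\beta_t)=0$, and the rest is the straightforward dual-basis computation. The present paper does not prove this statement but quotes it from \cite[Proposition~2.8]{framework}; your argument is essentially the one used there, where the behavior of the co-labels across an edge is taken as an axiom (the Co-transition condition) rather than derived from the Reflection condition as you correctly do in the reflection-framework case (cf.\ \cite[Proposition~2.13]{framework}).
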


\subsection{Polyhedral geometry in $V^*$}\label{poly sec}  
A \newword{closed polyhedral cone} (or simply a \newword{cone}) in $V^*$ is a set of the form ${\bigcap \{ x \in V^* : \br{x, \beta_i} \geq 0 \}}$ for vectors $\beta_1$, \ldots, $\beta_k$ in~$V$.  
A subset of $V^{\ast}$ is a closed polyhedral cone if and only if it is the nonnegative linear span of a finite set of vectors in $V^*$.
A \newword{simplicial cone} is a cone such that $\beta_1$, \ldots, $\beta_k$ form a basis for $V$.
Equivalently, a simplicial cone is the nonnegative linear span of a basis for $V^*$.

A \newword{face} of a cone $F$ is a subset $G$ of $F$ such that there is a linear functional on $V^*$ that is $0$ on $G$ and nonnegative on $F$.
Taking the functional to be zero, we see that $F$ is a face of itself.  
A face $G$ of $F$ with $\dim(G)=\dim(F)-1$ is called a \newword{facet} of $F$.
The \newword{relative interior} of a cone is the cone minus (set-theoretically) its proper faces.

Two cones \newword{meet nicely} if their intersection is a face of each of them.
A \newword{fan} is a set $\F$ of cones such that (1) if $F$ is in $\F$ then all faces of $F$ are in $\F$, and (2) any two cones $F_1$ and $F_2$ in $\mathcal{F}$ meet nicely.
The fan is \newword{simplicial} if its maximal cones are simplicial.
We will use some well-known easy facts:
Each face of a cone is itself a cone.
The relation ``is a face of'' is transitive.
Two faces of the same cone meet nicely.
A set $\F$ of cones satisfying (1) is a fan if and only if any two of its \emph{maximal} cones meet nicely.
References for these facts can be found in \cite[Section~3.2]{framework}.

\subsection{Cluster algebras of geometric type}\label{ca background sec}
As before, take $I$ to be an indexing set with $|I|=n$.
Let $J$ be an indexing set, disjoint from $I$, with $|J|=m$.
An \newword{extended exchange matrix} 
 is an $(n+m)\times n$ integer matrix with columns
indexed by $I$ and rows indexed by the disjoint union $I \sqcup J$, such that the $n\times n$ submatrix whose rows are indexed by $I$ is skew-symmetrizable.
This skew-symmetrizable submatrix is the \newword{underlying exchange matrix}.
Let $x_i$ be indeterminates indexed by $i$ in $I \sqcup J$, 
and let $\FF$ be the field of rational functions in these indeterminates, with rational coefficients. 
A \newword{cluster} is an $n$-tuple of algebraically independent elements of $\FF$.
The individual elements of the cluster are called \newword{cluster variables}.
A \newword{seed} is a pair consisting of an extended exchange matrix and a cluster.

Let $T$ be the $n$-regular tree.
For each edge $e$ of $T$, connecting vertices $v$ and $v'$, fix a pair of inverse bijections between the set $I(v)$ of edges incident to $v$ and the set $I(v')$.
We will call both maps $\mu_e$ and let the context distinguish the two.
Now fix an \newword{initial seed} $(\tB,X)$ where $\tB$ is an extended exchange matrix with underlying exchange matrix $B$ and $X$ is a cluster, which we may as well take to be $x_i:i\in I$.
We will  
associate to each vertex $v$ of $T$ a seed $(\tB^v,X^v)$.
In each seed $(\tB^v,X^v)$, the rows of $\tB^v$ are indexed by $I(v)\sqcup J$,
while the columns of $\tB$, as well as the cluster variables, are indexed by $I(v)$.
Choose some vertex $v_b$ of $T$, identify $I$ with $I(v_b)$, and take $(\tB^{v_b},X^{v_b})$ to be the initial seed $(\tB,X)$.
The remaining seeds are defined recursively by \newword{seed mutations}.
Let $e$ be an edge connecting 
$v$ to $v'$.   
We extend the map $\mu_e$ to a bijection from $I(v)\sqcup J$ to $I(v')\sqcup J$ by letting the map fix $J$
 pointwise.  \\

\noindent
\textbf{Matrix mutation.} 
The matrices $\tB^v$ and $\tB^{v'}$ are related by
\begin{equation} 
b^{v'}_{\mu_e(p)\mu_e(q)} = 
\begin{cases} 
- b^v_{pq} & \mbox{if $p=e$ or $q=e$} \\
b^v_{pq} + \sgn(b^v_{pe})[b^v_{pe}b^v_{eq}]_{+}& \mbox{otherwise} 
\end{cases}
\label{BMatrixRecurrence}
\end{equation}
for $p\in I(v)\sqcup J$ and $q\in I(v)$. 
The notation $[a]_+$ means $\max(a,0)$.\\

\noindent
\textbf{Cluster mutation.} 
The clusters $X^v$ and $X^{v'}$ are related by 
\begin{equation}  
x^{v'}_{\mu_e(q)} = 
\begin{cases}
\frac{1}{x^v_q} \left(\prod_p (x^v_p)^{[b^v_{pe}]_+} + \prod_p (x^v_p)^{[-b^v_{pe}]_+} \right) &\mbox{if }q=e\\
x^v_q & \mbox{if }q\neq e
\label{ClusterRecurrenceGeom}
\end{cases}
\end{equation}
for $q\in I(v)$. 
The index $p$ runs over the set $I(v)\sqcup J$.
\\

Let $\A(\tB,X)$ be the subalgebra of $\FF$ generated by all of the cluster variables $x_e^v$, where $v$ runs over all vertices of $T$ and $e$ runs over $I(v)$, and by the $x_i$ for $i\in J$.  
This is called the \newword{cluster algebra (of geometric type)} associated to the initial seed $(\tB,X)$.
To see how the cluster algebras of geometric type are a special case of a more general construction, see \cite[Section~2]{ca4}.
For a treatment more notationally compatible with the treatment here, see \cite[Section~3.1]{framework}.

Two seeds $(\tB^v,X^v)$ and $(\tB^{v'},X^{v'})$ are \newword{equivalent} if there exists  
a bijection ${\lambda:I(v)\to I(v')}$ such that $b^{v'}_{\lambda(e)\lambda(f)}=b^v_{ef}$ for $e,f\in I(v)$, such that $b^{v'}_{j,\lambda(f)}=b^v_{jf}$
for $j\in J$ and $f\in I(v)$, and such that $x^{v'}_{\lambda(e)}=x^v_e$ for $e\in I(v)$.
Such a bijection induces, by seed mutation, a bijection from the neighbors of $v$ to the neighbors of $v'$ with each neighbor of~$v$ defining a seed equivalent to the seed at the corresponding neighbor of $v'$.
The \newword{exchange graph} $\Ex(\tB)$ is the quotient of $T$ obtained by identifying vertices $v$ and $v'$ if they define equivalent seeds, and identifying edges of $v$ with edges of $v'$ by the bijection $\lambda$.
In all of the notation defined above (e.g.\ the maps $\mu_e$), we can correctly use $\Ex(\tB)$ in place of $T$.

Specializing, suppose that $J$ is a disjoint copy of $I$ and construct an extended exchange matrix $\tB$ whose rows indexed by $I$ are $B$ and whose rows indexed by $J$ constitute an identity matrix.
A cluster algebra associated to this initial extended exchange matrix is said to have \newword{principal coefficients} at the initial seed.
Up to isomorphism, it depends only on $B$, and thus is denoted $\A_\bullet(B)$.
The associated exchange graph is $\Ex_\bullet(B)$.
In the principal coefficients case, we write each extended exchange matrix $\tB^v$ as $\twomatrix{B^v}{H^v}$ such that $B^v$ is the exchange matrix associated to $v$ and $H^v$ is a matrix with rows indexed by $I$ and columns indexed by $I(v)$.
The \newword{$\c$-vectors} at the seed $v$ are the vectors whose simple-root coordinates are given by the columns of $H^v$.
Specifically, $\c_e^v$ is the vector given by the column of $H^v$ indexed by $e\in I(v)$.

To each cluster variable $x^v_e$ in a cluster algebra with principal coefficients, there is an associated \newword{$\g$-vector} $\g^v_e$.
Here, we construct the $\g$-vectors as vectors in the weight lattice:
A cluster variable $x^{v_b}_i=x_i$ in the initial cluster has as its $\g$-vector the fundamental weight $\rho_i$.
The other $\g$-vectors are defined recursively as follows.\\

\noindent
\textbf{$\g$-vector mutation.}
Let $e$ be an edge $v$ to $v'$. 
The $\g$-vectors of the clusters $X^v$ and $X^{v'}$ are related by \begin{equation}
\g^{v'}_{\mu_e(q)} = 
\begin{cases}
-\g^v_q + \sum_{p\in I(v)} [-b^v_{pq}]_+\,\g^v_p - \sum_{i\in I}  [-b^v_{iq}]_+\,\b_i &\mbox{if }q=e\\
\g^v_q & \mbox{if }q\neq e.
\label{gRecurrence}
\end{cases}
\end{equation}
The notation $\b_i$ stands for the vector in $V^*$ with fundamental-weight coordinates given by the $i\th$ column of $B$.\\

It is not immediately obvious that this recursion yields a well-defined $\g$-vector, but $\g^v_e$ is indeed well-defined because it is the degree of $x^v_e$ with respect to a certain $\integers^n$-grading on the cluster algebra with principal coefficients.
See \cite[Sections~6--7]{ca4}.

\subsection{Frameworks and cluster algebras}\label{frame and clus sec}
We now introduce some additional conditions on frameworks and quote some key results on the connection between frameworks and combinatorial models for cluster algebras.
For a more detailed development of these and related results, see~\cite{framework}.
The results \emph{quoted} in this section (Theorems~\ref{framework exchange} and~\ref{all conj} and Corollary~\ref{cor:BdyFacet}) are stated for reflection frameworks, but are proved in \cite{framework} for the more general frameworks defined there.
In addition, two results are \emph{proved} in this section for reflection frameworks:  Proposition~\ref{rk2cyc poly}, which holds for general frameworks by essentially the same proof, and Proposition~\ref{rk2cyc reflection}, which is special to reflection frameworks.  (See Remark~\ref{rank two defn}.)

A reflection framework $(G,C)$ is \newword{complete} if $G$ has no half-edges. 
It is \newword{injective} if $v\mapsto C(v)$ is an injective map from vertices of $G$ to subsets of $\Phi$.

Define $\Cone(v)$ to be the cone $\bigcap_{e\in I(v)}\set{x\in V^*:\br{x,C\ck(v,e)}\ge 0}$ in $V^*$.
We will see soon that this notation is compatible with the earlier definition of $\Cone(v)$ as the cone spanned by the $\g$-vectors of cluster variables associated to $v\in\Ex_\bullet(B)$.
A reflection framework $(G,C)$ is \newword{polyhedral} if (1) the cones $\Cone(v)$ are the maximal cones of a fan in $V^*$ and (2) distinct vertices $v$ of $G$ define distinct cones $\Cone(v)$.
In particular, every polyhedral reflection framework is injective.
A \newword{well-connected} polyhedral reflection framework has the following additional property:
Suppose $\Cone(v)$ and $\Cone(v')$ intersect in a face $F$.
Then there is a path ${v=v_0,v_1,\ldots,v_k=v'}$ in $G$ such that $F$ is a face of $\Cone(v_i)$  for all $i$ from $0$ to $k$.

We now prepare to define a notion of simple connectivity for frameworks, which is described more fully (for general frameworks) in \cite[Section~4.2]{framework}. 
Let $(G,C)$ be a reflection framework.
Let $v_0$ be a vertex of $G$ and suppose $e_0$ and $e_1$ are two full edges incident to $v_0$. 
Let $v_{-1}$ be the vertex connected to $v_0$ by $e_0$, let $e_{-1}=\mu_{e_0}(e_1)$, let $v_{-2}$ be the vertex connected to $v_{-1}$ by $e_{-1}$, 
let $e_{-2}=\mu_{e_{-1}}(e_0)$, etc.
Similarly, let $v_1$ be the vertex connected to $v_0$ by $e_1$, let $e_2=\mu_{e_1}(e_0)$, let $e_2$ connect $v_1$ to a vertex $v_2$, etc.
We can continue in this manner as long as the edges involved are full edges.
If we never encounter half-edges (in particular, if $G$ is complete), then we have defined a path $\cdots \dashname{e_{-1}} v_{-1} \dashname{e_{0}} v_{0} \dashname{e_{1}} v_1 \dashname{e_{2}} \cdots$ that is either bi-infinite or closes up into a cycle.
We refer to $\cdots \dashname{e_{-1}} v_{-1} \dashname{e_{0}} v_{0} \dashname{e_{1}} v_1 \dashname{e_{2}} \cdots$  as a \newword{rank-two path} or \newword{rank-two cycle} accordingly.

A reflection framework $(G, C)$ is \newword{simply connected} if the fundamental group $\pi_1(G, v_0)$ is generated by paths of the form $\sigma \tau \sigma^{-1}$ where $\tau$ travels around a rank-two cycle and $\sigma$ is some path from the basepoint $v_0$ to that rank-two cycle. 
In other words, take the graph $G$ and build a regular $CW$-complex $\Sigma$ by filling in two-dimensional cells whose boundaries are the rank-two cycles. Then $(G,C)$ is simply connected if the topological space $\Sigma$ is simply connected. See the end of \cite[Section 4.2]{framework} for discussion of the relationship between $\Sigma$ and the geometry of the polyhedral fan $\F$ that occurs when $(G,C)$ is polyhedral.

Another condition called \newword{ampleness} is considered in \cite{framework}, and a framework is called \newword{exact} if it is injective and ample.
Here, we wish to avoid defining ampleness, and we can safely do so because of \cite[Proposition~4.18]{framework}, which states that if a framework is simply connected then it is ample.
Instead, we describe the role that ampleness will play below in the assertion in Theorem~\ref{framework exchange} that a complete, exact reflection framework is isomorphic to the exchange graph:
Ampleness insists that the framework not be properly covered by the exchange graph.
(Similarly, injectivity forces the exchange graph not to be properly covered by the framework.)

For the present purposes, the following two theorems (stated in the special case of reflection frameworks) are the most important results about frameworks and cluster algebras.
For statements that are closer to minimizing the hypotheses, see \cite[Sections~3--4]{framework}.
The first of the two theorems is obtained by combining \mbox{\cite[Theorem~3.25]{framework}}, \cite[Theorem~3.26]{framework}, and \cite[Theorem~4.2]{framework}.

\begin{theorem}\label{framework exchange} 
Suppose $(G,C)$ is a complete, exact reflection framework for $B$.
Then there exists a graph isomorphism $v\mapsto\Seed(v)=(\tB^v,X^v)$ from $G$ to the  principal coefficients exchange graph $\Ex_\bullet(B)$, such that the base vertex $v_b$ maps to the initial seed.
Furthermore, if $v$ is any vertex of $G$,
\begin{enumerate}
\item\label{exch mat}
The exchange matrix $B^v=[b^v_{ef}]_{e,f\in I(v)}$ associated to $\Seed(v)$ has $b^v_{ef}=\omega(C\ck(v,e),C(v,f))$.
\item \label{extended mat}
For each $e\in I(v$), the $\c$-vector $\c_e^v$ is $C(v,e)$.
\item \label{coef sign-coherent}
Each $\c$-vector $\c_e^v$ has a definite sign: it is either in the nonnegative span of the simple roots or in the nonpositive span of the simple roots.
\item \label{g vec}
If $X^v=(x^v_e:e\in I(v))$ is the cluster in $\Seed(v)$, then for each $e\in I(v$), the $\g$-vector $\g^v_e=\g(x^v_e)$ is $R(v,e)$.
\end{enumerate}
\end{theorem}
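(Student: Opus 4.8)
The plan is to transport the cluster-algebra recursions onto the framework and then recognize the resulting correspondence as a graph isomorphism by a covering-space argument. I would first lift everything to the $n$-regular tree $T$. Since $(G,C)$ is complete, $G$ is an honest connected $n$-regular simple graph, so the natural map $\pi\colon T\to G$ that sends a chosen basepoint to $v_b$ and respects the edge-bijections $\mu_e$ is a covering; meanwhile $T$ also carries the principal-coefficients seeds $(\tB^{\tilde v},X^{\tilde v})$ whose quotient is $\Ex_\bullet(B)$. The four numbered assertions are exactly the statement that, pulled back to $T$, the framework data agrees with the seed data, so the heart of the proof is to verify
\[b^{\tilde v}_{ef}=\omega(C\ck(\pi\tilde v,e),C(\pi\tilde v,f)),\qquad \c^{\tilde v}_e=C(\pi\tilde v,e),\qquad \g^{\tilde v}_e=R(\pi\tilde v,e)\]
at every vertex $\tilde v$ of $T$.

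I would establish these identities by induction on distance from the basepoint in $T$. The base case is immediate from the Base condition: $C(v_b,e)=\alpha_i$ and $C\ck(v_b,e)=\alpha_i\ck$ give $\omega(\alpha_i\ck,\alpha_j)=b_{ij}$, reproducing the initial exchange matrix and $\c$-vectors, while $R(v_b,e)$, being dual to $\alpha_i\ck$, equals $\rho_i$, reproducing the initial $\g$-vectors. The inductive step crosses a single edge $e$ of $T$ and compares two propagation rules across it: on the cluster side, matrix, $\c$-vector, and $\g$-vector mutation; on the framework side, the Reflection condition (restated) together with Proposition~\ref{dual adjacent}. For the $\c$-vectors the Reflection condition is already a mutation rule of the required shape, sending $\gamma=C(v,f)$ either to $t\gamma=\gamma-K(\beta_t\ck,\gamma)\,\beta_t$ or to $\gamma$ according to the sign of $\omega(\beta_t\ck,\gamma)$, which matches $\c^{v'}_{\mu_e(f)}=\c^v_f+[\,\cdot\,]_+\,\c^v_e$. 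Sign-coherence, assertion (3), then needs no separate work: by the Root condition every label is a root and so lies in the nonnegative or nonpositive span of the simple roots. For the $\g$-vectors I would feed the transition in Proposition~\ref{dual adjacent}—that $R$ keeps $n-1$ vectors and flips the last across the shared hyperplane—into the $\g$-vector recursion.

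Once these identities hold on $T$, the isomorphism follows from a descent argument using the remaining hypotheses. Injectivity of $(G,C)$ gives that equivalent seeds, which share a $\c$-vector set and hence (by assertion (2)) a label set $C$, have the same image under $\pi$; this factors the covering $\pi\colon T\to G$ through $\Ex_\bullet(B)$ and produces a map $\Ex_\bullet(B)\to G$. Ampleness, the second half of exactness, supplies the converse—that $\pi$-equivalent vertices of $T$ carry equivalent seeds—giving an inverse $G\to\Ex_\bullet(B)$; I would invoke this as the cited property rather than reprove it. Completeness guarantees no mutation direction is lost, so both quotient maps out of $T$ are coverings of $n$-regular graphs and the two maps are mutual inverses carrying $v_b$ to the initial seed. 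Evaluating the Step-two identities at a representative of each vertex of $G$ then yields assertions (1)--(4).

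The main obstacle is the exchange-matrix compatibility in assertion (1): verifying that $\omega(C\ck,C)$ transforms under the Reflection condition precisely by matrix mutation. This is where the Euler conditions do their work—the sign hypotheses (E1), (E2) and the acyclicity (E3), together with the definite sign of $\omega(\beta_t\ck,\gamma)$, are exactly what force the $[\,\cdot\,]_+$ terms to appear with the correct signs according to whether or not $t$ is applied. Unwinding $\omega$ and $K$ through the stated relations $K(\alpha,\beta)=E(\alpha,\beta)+E(\beta,\alpha)$ and $\omega(\alpha,\beta)=E(\alpha,\beta)-E(\beta,\alpha)$ and tracking these cases is the delicate part; by contrast, the $\c$-vector and $\g$-vector statements follow comparatively directly from the Reflection condition and Proposition~\ref{dual adjacent} once (1) and sign-coherence are secured.
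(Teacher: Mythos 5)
First, a point of orientation: this paper does not prove Theorem~\ref{framework exchange} at all. It is quoted from the companion paper, obtained by combining Theorems~3.25, 3.26 and~4.2 of \cite{framework}, so there is no in-paper proof to compare against; your outline has to stand or fall on its own. The architecture you propose --- lift to the $n$-regular tree, verify inductively that the framework labels satisfy the seed recursions, then descend to an isomorphism with $\Ex_\bullet(B)$ using injectivity in one direction and ampleness in the other --- is the right architecture and is essentially how the cited source proceeds. Your observation that assertion (3) is free once (2) is known (a real root is either positive or negative) is correct, and invoking ampleness as a black box is reasonable given that the paper deliberately declines to define it.

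The gap is that the entire mathematical content of the theorem is concentrated in the steps you defer. Showing that $b^{v'}_{\mu_e(p)\mu_e(q)}=\omega(C\ck(v',\mu_e(p)),C(v',\mu_e(q)))$ reproduces matrix mutation requires a case split according to whether each of $C(v,p)$, $C(v,q)$ is transported by the reflection $t$ or left fixed, and in each case one must convert between $\omega$, $K$ and $E$ via $K=E(\cdot,\cdot)+E(\cdot,\cdot)^{\mathrm{op}}$ and $\omega=E(\cdot,\cdot)-E(\cdot,\cdot)^{\mathrm{op}}$ and invoke (E1)--(E3) to control the signs; since $\omega$ is not $W$-invariant, none of this is automatic, and saying that ``the Euler conditions do their work'' names the difficulty rather than resolving it. The same deferral occurs for the $\c$-vectors: matching $t\gamma=\gamma-K(\beta_t\ck,\gamma)\beta_t$ against the mutation rule needs the inductive identity for $b^v_{eq}$ together with sign-coherence of $\c^v_e$, so it is not independent of (1). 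Finally, Proposition~\ref{dual adjacent} only says that $R(v,e)$ and $R(v',e)$ lie on opposite sides of the common hyperplane, whereas the recursion \eqref{gRecurrence} demands the exact linear combination $-\g^v_e+\sum_p[-b^v_{pe}]_+\g^v_p-\sum_i[-b^v_{ie}]_+\b_i$; extracting that from the dual-basis description of $R(v',e)$ is another computation you do not perform. As it stands the proposal is a plausible plan with its pivotal verifications missing.
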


The second of the two theorems is obtained by combining \cite[Theorem~4.1]{framework}, \mbox{\cite[Corollary 4.4]{framework}}, and \cite[Corollary 4.6]{framework}.
It states that the existence of a framework with good properties implies many of the standard structural conjectures.

\begin{theorem}\label{all conj}
If a complete, exact, well-connected polyhedral reflection framework exists for $B$, then 
Conjectures~3.9--3.13, 3.15--3.18, and~3.20 of \cite{framework} all hold for principal coefficients at~$B$.  
If, in addition, a complete reflection
framework exists for $-B$, then Conjecture~3.19 of \cite{framework} also holds for principal coefficients at~$B$. 
Furthermore, the fan defined by the framework is identical to the fan defined by $\g$-vectors of clusters in $\A_\bullet(B)$. 
\end{theorem}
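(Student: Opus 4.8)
The plan is to derive this theorem as a direct assembly of results already proved in \cite{framework}, exactly as the sentence preceding the statement indicates. The first thing to record is that a reflection framework is a special case of the general frameworks of \cite{framework}: by \cite[Proposition~2.13]{framework} the co-labels $C\ck(v,e)$ defined here agree with the separately axiomatized co-labels of a general framework, so every result of \cite{framework} stated for frameworks applies verbatim once completeness, exactness, well-connectedness, and the polyhedral property are in hand. These are precisely the hypotheses assumed in the statement.

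For the first sentence, I would invoke \cite[Theorem~4.1]{framework}: its hypotheses are a complete, exact, well-connected polyhedral framework, which is exactly what we assume, and its conclusion is that Conjectures~3.9--3.13, 3.15--3.18, and~3.20 hold for principal coefficients at $B$. For Conjecture~3.19 I would then apply \cite[Corollary~4.4]{framework}; this is the one place the extra hypothesis is used, since Conjecture~3.19 relates the data at $B$ to that at $-B$, and the corollary supplies it exactly when a complete framework also exists for $-B$. Both reductions are immediate once one checks that specializing each conjecture from the general framework setting to the reflection setting adds no hypothesis, which holds because the two notions of co-label coincide.

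For the final sentence I would combine \cite[Corollary~4.6]{framework} with part~\eqref{g vec} of Theorem~\ref{framework exchange}. The framework fan has maximal cones $\Cone(v)=\bigcap_{e\in I(v)}\set{x\in V^*:\br{x,C\ck(v,e)}\ge 0}$, and since the framework is polyhedral these are genuinely the maximal cones of a fan, each equal to the nonnegative span of the dual basis $R(v)$. By Theorem~\ref{framework exchange}\eqref{g vec} we have $R(v,e)=\g^v_e$, so $\Cone(v)$ is exactly the cone spanned by the $\g$-vectors of the cluster at $\Seed(v)$; ranging over all $v$ identifies the framework fan with the $\g$-vector fan of $\A_\bullet(B)$. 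The main obstacle is not any single citation but the bookkeeping hidden in this last step: one must confirm that the weight-lattice interpretation of $\g$-vectors fixed in Section~\ref{ca background sec} (fundamental-weight coordinates) is the very interpretation under which $R(v,e)=\g^v_e$, and that the polyhedral hypothesis really forces the $\Cone(v)$ to fit together as a fan rather than merely to coincide setwise with the $\g$-vector cones. Everything else is a faithful transcription of the cited statements.
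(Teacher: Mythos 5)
Your proposal matches the paper's treatment exactly: the paper states that this theorem is obtained by combining \cite[Theorem~4.1]{framework} (for the list of conjectures), \cite[Corollary~4.4]{framework} (for Conjecture~3.19 under the extra hypothesis on $-B$), and \cite[Corollary~4.6]{framework} (for the identification of the framework fan with the $\g$-vector fan), with the reduction from general frameworks to reflection frameworks handled by \cite[Proposition~2.13]{framework} just as you describe. Your additional remarks about the weight-lattice normalization of $\g$-vectors and the role of the polyhedral hypothesis are sound bookkeeping but do not constitute a different route.
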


The results quoted here show how Corollaries~\ref{Affine exch} and~\ref{Affine g} follow from Theorem~\ref{AffineDoubleFramework}.
Specifically, Theorems~\ref{AffineDoubleFramework} and~\ref{framework exchange} immediately imply Corollary~\ref{Affine exch}.
We will construct the doubled Cambrian framework $(\DCamb_c, \DC_c)$ and the doubled Cambrian fan $\DF_c$ in such a way that $\DF_c$ is the fan defined by the framework $(\DCamb_c, \DC_c)$.
Thus Theorem~\ref{AffineDoubleFramework} and the last assertion of Theorem~\ref{all conj} immediately imply Corollary~\ref{Affine g}.  
We also have the following corollary.

\begin{cor}\label{affine type conj}
If $B$ is acyclic and $\Cart(B)$ is of affine type, then 3.9--3.20 of \cite{framework} all hold for principal coefficients at~$B$.  
\end{cor}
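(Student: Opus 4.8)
The plan is to read off the corollary from Theorem~\ref{AffineDoubleFramework} together with Theorem~\ref{all conj}, so that all the work lies in checking that the hypotheses of Theorem~\ref{all conj} are met, once for $B$ and once for $-B$. First I would apply Theorem~\ref{AffineDoubleFramework} to $B$ itself: since $B$ is acyclic and $\Cart(B)$ is of affine type, the pair $(\DCamb_c,\DC_c)$ is a complete, exact, well-connected, polyhedral reflection framework for $B$ (the simple connectivity asserted in Theorem~\ref{AffineDoubleFramework} is more than is needed here). The first sentence of Theorem~\ref{all conj} then yields Conjectures~3.9--3.13, 3.15--3.18, and~3.20 of \cite{framework} for principal coefficients at $B$.

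To capture the remaining conjecture, 3.19, I would produce a complete reflection framework for $-B$ and invoke the second sentence of Theorem~\ref{all conj}. The key point is that the hypotheses of Theorem~\ref{AffineDoubleFramework} are invariant under $B\mapsto -B$. Indeed, the Cartan companion depends only on the absolute values $|b_{ij}|$, so $\Cart(-B)=\Cart(B)$ is again of affine type; and reversing a linear order on $I$ that witnesses the acyclicity of $B$ witnesses the acyclicity of $-B$. Hence Theorem~\ref{AffineDoubleFramework} applies verbatim to $-B$ and, in particular, furnishes a complete reflection framework for $-B$. The second sentence of Theorem~\ref{all conj} then delivers Conjecture~3.19 of \cite{framework} for principal coefficients at $B$.

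Combining the two applications covers every conjecture that Theorem~\ref{all conj} addresses, namely 3.9--3.13 and 3.15--3.20. The one piece of bookkeeping I would still verify is that no genuine conjecture in the range 3.9--3.20 has been skipped: item 3.14 does not appear in the list of Theorem~\ref{all conj}, and I would confirm that this is because 3.14 is not a conjecture in the numbering of \cite{framework} (or else dispatch it directly), so that the phrase ``3.9--3.20 all hold'' is literally correct. I do not expect any genuine obstacle here: the statement is a formal consequence of the two quoted theorems, and the only substantive observation is the symmetry of the affine/acyclic hypotheses under negation of $B$, which is exactly what promotes the first batch of conjectures to include 3.19.
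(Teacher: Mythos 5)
Your derivation of Conjectures~3.9--3.13, 3.15--3.18, and~3.20 from Theorem~\ref{AffineDoubleFramework} and the first sentence of Theorem~\ref{all conj}, and of Conjecture~3.19 from the second sentence via the observation that the acyclicity and affine-type hypotheses are invariant under $B\mapsto -B$, is exactly the paper's argument for those conjectures. The gap is Conjecture~3.14. You correctly noticed it is absent from the list in Theorem~\ref{all conj}, but your resolution is a disjunction --- either it is not really a conjecture in the numbering of \cite{framework}, or you would ``dispatch it directly'' --- and the first branch is false while the second is left empty. Conjecture~3.14 of \cite{framework} is a genuine conjecture (the sign-coherence of $\g$-vectors), it is deliberately excluded from Theorem~\ref{all conj}, and it is the one assertion of the corollary that cannot be obtained as a formal consequence of the two quoted theorems: it requires information about the actual construction of the doubled Cambrian framework.

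The paper closes this gap with a short argument in Section~\ref{debt paid sec}: by Theorem~\ref{framework exchange}\eqref{g vec} the $\g$-vectors at a seed are the vectors $R(v,e)$ for the corresponding framework vertex, and Proposition~\ref{camb g sign-coherent} gives sign-coherence of these for every vertex of $(\Camb_c,C_c)$ and of $(\Camb_{c^{-1}},C_{c^{-1}})$; since every vertex of $(\DCamb_c,\DC_c)$ comes from one of these two Cambrian frameworks, possibly composed with the antipodal map, and the antipodal map preserves sign-coherence, Conjecture~3.14 holds at every seed. You should supply this (or an equivalent) argument to make the claim ``3.9--3.20 all hold'' literally correct.
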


Almost all of the assertions of Corollary~\ref{affine type conj} follow from Theorem~\ref{all conj} and from the existence of a framework satisfying the properties established in Theorem~\ref{AffineDoubleFramework}.
The exception is the assertion that Conjecture~3.14 of \cite{framework} holds.
This is very easily argued using Theorem~\ref{framework exchange}, but requires information about the actual construction of the doubled Cambrian framework.  
We provide this argument in Section~\ref{debt paid sec}. 

We close this section with some results related to the polyhedral property and to rank-two cycles and paths.
First, a weakening of the polyhedral property holds in all frameworks, as stated in the following corollary. This result is a corollary of Proposition~\ref{dual adjacent} and appears as \cite[Corollary~2.9]{framework}. 
\begin{cor} \label{cor:BdyFacet}
Let $(G,C)$ be a reflection framework for $B$ and let $v$ and $v'$ be adjacent vertices of $G$.
Then $\Cone(v)$ and $\Cone(v')$ intersect in a common facet.
\end{cor}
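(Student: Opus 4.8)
The plan is to exploit the fact that each $\Cone(v)$ is a \emph{simplicial} cone with an explicit ray-and-facet description, identify the candidate common facet directly from Proposition~\ref{dual adjacent}, and then confirm that the intersection $\Cone(v)\cap\Cone(v')$ is exactly that facet and nothing more.

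First I would record the geometry of a single cone. By Proposition~\ref{basis}, $C\ck(v)$ is a basis of $V$, so writing $x\in V^*$ in the dual basis $R(v)$ as $x=\sum_{g\in I(v)}\lambda_g R(v,g)$ yields $\br{x,C\ck(v,g)}=\lambda_g$. Hence $\Cone(v)=\set{x:\br{x,C\ck(v,g)}\ge 0 \text{ for all } g}$ is precisely the nonnegative span of the vectors $\set{R(v,g):g\in I(v)}$, a simplicial cone whose extreme rays are spanned by the $R(v,g)$. Its facet cut out by $\br{x,C\ck(v,e)}=0$ is therefore the nonnegative span of $\set{R(v,f):f\in I(v)\setminus\set{e}}$, which I will call $F_v$; symmetrically $\Cone(v')$ has a facet $F_{v'}$ spanned by $\set{R(v',f'):f'\in I(v')\setminus\set{e}}$.

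Next I would invoke Proposition~\ref{dual adjacent}: it gives $R(v,f)=R(v',\mu_e(f))$ for every $f\in I(v)\setminus\set{e}$, and since $\mu_e$ restricts to a bijection from $I(v)\setminus\set{e}$ onto $I(v')\setminus\set{e}$, the two spanning sets coincide. Thus $F_v=F_{v'}=:F$, a single cone that is a facet of each of $\Cone(v)$ and $\Cone(v')$. Writing $H=\Span(R(v)\cap R(v'))$ for the hyperplane it spans, the computation above identifies $H=\set{x:\br{x,C\ck(v,e)}=0}$, and $F=\Cone(v)\cap H=\Cone(v')\cap H$. To finish, I would bound the intersection from above: by definition $\Cone(v)\subseteq\set{x:\br{x,C\ck(v,e)}\ge 0}$, one closed half-space bounded by $H$, while the last sentence of Proposition~\ref{dual adjacent} places $R(v',e)$ on the opposite side of $H$ from $R(v,e)$, which forces $\Cone(v')$ into the opposite closed half-space. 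Hence $\Cone(v)\cap\Cone(v')\subseteq H$, and therefore $\Cone(v)\cap\Cone(v')=(\Cone(v)\cap H)\cap(\Cone(v')\cap H)=F$, the desired common facet.

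The one genuinely nontrivial ingredient is the opposite-sides assertion guaranteeing that the intersection collapses into $H$; everything else is bookkeeping with the simplicial structure. This is exactly the final clause of Proposition~\ref{dual adjacent}, so no real obstacle remains. Should one prefer a self-contained argument, one could instead note from the Reflection condition that $C(v',e)=-C(v,e)$ — the reflection $t$ with $C(v,e)=\pm\beta_t$ negates $\beta_t$, and $\omega(\beta_t\ck,\beta_t)=0$ selects the ``$t\gamma$'' branch — whence $C\ck(v',e)=-C\ck(v,e)$ and the two defining half-spaces are literally opposite.
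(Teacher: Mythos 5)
Your proof is correct, and it follows exactly the route the paper indicates: the paper gives no written proof here but states that the result is a corollary of Proposition~\ref{dual adjacent}, which is precisely the deduction you carry out (simplicial cone generated by $R(v)$, shared spanning set for the candidate facet, and the opposite-sides clause forcing the intersection into the hyperplane). Your closing remark that $C(v',e)=-C(v,e)$ via the Reflection condition is also a valid self-contained substitute for that last clause.
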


Next, we prove two propositions about rank-two cycles in complete, well-con\-nected  
polyhedral reflection frameworks.
The following is a slight strengthening of \cite[Proposition 4.17]{framework}, 
stated in the special case of reflection frameworks. 

\begin{prop}\label{rk2cyc poly}  
Suppose $(G,C)$ is a complete, well-connected polyhedral reflection framework with corresponding fan $\F$.
If $\tau$ is a rank-two cycle or path, then there exists a unique codimension-$2$ face $F$ of $\F$  such that $\tau$ is the set of vertices $v$ of $G$ with $F\subset\Cone(v)$.  
If $F$ is a codimension-$2$ face of $\F$, then the set of vertices $v$ of $G$ with $F\subset\Cone(v)$ forms a rank-two cycle or path. 
\end{prop}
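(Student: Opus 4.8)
The plan is to prove the two directions separately, using the geometry of the fan $\F$ together with the combinatorial description of rank-two paths and cycles via the maps $\mu_e$. The key bridge between the two pictures is Proposition~\ref{dual adjacent} and Corollary~\ref{cor:BdyFacet}: adjacent vertices of $G$ correspond to maximal cones sharing a facet, and the shared facet is spanned by the common vectors $R(v)\cap R(v')$.

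For the first direction, I would start with a rank-two cycle or path $\tau = \cdots \dashname{e_0} v_0 \dashname{e_1} v_1 \dashname{e_2} \cdots$. The defining feature of a rank-two path is that all the edges $e_i$ are obtained from a single pair $\{e_0,e_1\}$ of edges at $v_0$ by repeatedly applying the transition maps $\mu$, while the remaining $n-2$ edges at each vertex are carried along untouched. By Proposition~\ref{dual adjacent}, passing from $v_{i-1}$ to $v_i$ across the edge $e_i$ changes exactly one vector of the dual basis $R(\cdot)$, namely $R(v_{i-1},e_i)$, while the other $n-1$ vectors are preserved. Tracking which vectors survive \emph{every} step, I expect the $n-2$ vectors of $R(v_0)$ indexed by $I(v_0)\setminus\{e_0,e_1\}$ to be common to all $R(v_i)$ along $\tau$. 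These $n-2$ vectors span a codimension-$2$ subspace, and I would define $F$ to be the cone they span (a face of each $\Cone(v_i)$ since it is the intersection of facets). The main content here is to verify that $F$ is exactly a codimension-$2$ face of $\F$ and that the vertices $v$ with $F\subset\Cone(v)$ are \emph{precisely} those on $\tau$, with no others. Uniqueness of $F$ follows because $F$ is determined by $\tau$ as the common intersection of the cones.

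For the converse, I would start with a codimension-$2$ face $F$ of $\F$ and consider the set $U$ of vertices $v$ with $F\subset\Cone(v)$. Since $F$ has codimension $2$, for each such $v$ the face $F$ is the intersection of exactly two facets of $\Cone(v)$, which by Corollary~\ref{cor:BdyFacet} correspond to two edges of $G$ at $v$; the other $n-2$ edges are ``transverse'' to $F$. Using well-connectedness, any two cones in $U$ can be joined by a path of cones all containing $F$; restricting the transition moves to those that fix the two facets meeting in $F$ shows this path is exactly of rank-two type. The link of $F$ in $\F$ is a one-dimensional fan in the quotient $V^*/\Span(F)$, i.e.\ either a full circle (finitely many rays, giving a cycle) or a union of rays not closing up (giving a bi-infinite path); this dichotomy is what produces the cycle-versus-path alternative, and it matches the two cases in the definition of rank-two cycle and path.

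The hard part will be the careful bookkeeping in the first direction: showing that the $n-2$ preserved vectors really are common to \emph{all} vertices of $\tau$ and that $\tau$ captures \emph{all} vertices whose cone contains $F$ (rather than a proper subset). This requires knowing that the rank-two move is ``faithful'' — that distinct vertices of $\tau$ give distinct cones, which uses the polyhedral property (condition (2), that distinct vertices define distinct cones) — and that no vertex off $\tau$ can contain $F$, which is where well-connectedness does the essential work by forcing any such vertex to be reachable through cones containing $F$, hence to lie on $\tau$.
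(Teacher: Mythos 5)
Your proposal follows essentially the same route as the paper's proof: identify the common codimension-$2$ face from the $n-2$ vectors (equivalently, the $2$-plane of labels) preserved along $\tau$ via the Reflection condition and Proposition~\ref{dual adjacent}, use well-connectedness to show every vertex whose cone contains $F$ lies on $\tau$, and for the converse use completeness and Proposition~\ref{dual adjacent} to grow the rank-two cycle/path out of any maximal cone containing $F$. The only cosmetic difference is that you track the dual basis $R(v)$ where the paper tracks the span of the two active labels and its orthogonal complement, which amounts to the same thing.
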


\begin{proof}
Suppose $\tau=(\ldots,v_{-1},v_0,v_1,\ldots)$  
is a rank-two cycle/path.
The Reflection condition in restated form implies that  
\[ \mathrm{Span}_{\RR}(C(v_i, e_i), C(v_i, e_{i+1})) = \mathrm{Span}_{\RR}(C(v_{i+1}, e_{i+1}), C(v_{i+1}, e_{i+2})) . \]
So the $2$-plane $\mathrm{Span}_{\RR}(C(v_i, e_i), C(v_i, e_{i+1}))$ is the same for all $i$ and, dually, the $(n-2)$-dimensional subspace $C(v_i, e_i)^{\perp} \cap  C(v_i, e_{i+1})^{\perp}$ is the same for all $i$. 
 Call this $(n-2)$-dimensional subspace $R$. 
 Then $\Cone(v_i) \cap R$ is an $(n-2)$-dimensional
 face of $\Cone(v_i)$. 
 Moreover, this face of $\Cone(v_i)$ is a boundary face of the common facet 
 of $\Cone(v_i)$ and $\Cone(v_{i+1})$, so that $\Cone(v_i) \cap R$ is also a face of $\Cone(v_{i+1})$, and $\Cone(v_i) \cap R = \Cone(v_{i+1}) \cap R$. 
Setting $F = \Cone(v_0) \cap R$, we see that $F$ is a face of every $\Cone(v_i)$.  

We now want to show that, if $u$ is any vertex of $G$, with $\Cone(u) \supseteq F$, then $u$ is in $\tau$. 
By well-connectedness, there is a path $u=u_0,\ldots,u_N = v_0$ such that $F$ is contained in all of the $\Cone(u_i)$. 
If $u$ is not of the form $v_i$, then there must be an index $j$ for which $u_j \not \in \{ v_i \}$ but $u_{j+1} = v_i$ for some~$i$. 
But the only neighbors of $v_i$ whose cones contain $F$ are $v_{i \pm 1}$.
This contradiction shows that $\tau$ 
is the set of vertices $v$ of $G$ with $F\subset\Cone(v)$.  

Finally, let $F$ be any codimension-$2$ face of $\F$. 
We want to show that the set ${\set{v \in G:F \subset \Cone(v)}}$ forms a rank-two path or cycle.
Let $\Cone(v_0)$ be a maximal cone of $\F$ containing $F$. 
Since $(G,C)$  
is complete, by Proposition~\ref{dual adjacent}, there are two distinct vertices $v'$ with $\Cone(v')$ adjacent to $\Cone(v_0)$ and $F\subseteq\Cone(v')$.
Write $v_{-1}$ and $v_1$ for these vertices and write $\tau=(\ldots,v_{-1},v_0,v_1,\ldots)$ 
for the rank-two cycle/path through $v_{-1}$, $v_0$, and $v_1$.   
(Since $(G,C)$  
 is complete, there are no half edges and we can continue this cycle/path indefinitely.)
Then, by the preceding paragraphs, $\tau$ is precisely the set of vertices $v$ of $G$ for which $F \subset \Cone(v)$ . \end{proof}

\begin{prop} \label{rk2cyc reflection}
Suppose $(G,C)$ is a complete, well-connected polyhedral reflection framework with corresponding fan $\F$.
Suppose $F$ is a codimension-$2$ face of $\F$ and let $\tau$ be the corresponding rank-two path or cycle.
Let $\Phi'$ be the rank-two root system $\{ \beta \in \Phi : \langle \beta, F \rangle = 0 \}$.
Then $\tau$ is a cycle if and only if $\Phi'$ is of finite type.  
\end{prop}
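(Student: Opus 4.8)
The plan is to analyze the rank-two path/cycle $\tau$ through the lens of the rank-two root system $\Phi'$ and the two-dimensional quotient geometry, reducing the whole question to the familiar dichotomy for rank-two Coxeter groups. By Proposition~\ref{rk2cyc poly}, $\tau$ is exactly the set of vertices $v$ with $F\subset\Cone(v)$, and from its proof we have a fixed $(n-2)$-dimensional subspace $R\subset V^*$ (the common span of $\Cone(v_i)\cap R$) and a fixed $2$-plane $P=\Span_\RR(C(v_i,e_i),C(v_i,e_{i+1}))\subset V$ that does not depend on $i$. The first step is to observe that $\Phi'=\{\beta\in\Phi:\br{\beta,F}=0\}$ lies in $P$: indeed $F$ is codimension-$2$, spanning an $(n-2)$-dimensional space, so its annihilator in $V$ is exactly the $2$-plane $P$, and every root orthogonal (under the canonical pairing) to $F$ lies in $P$. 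Thus $\Phi'$ is a (possibly infinite) rank-two root system sitting inside the $2$-plane $P$.

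Next I would identify the combinatorics of $\tau$ with the reflection geometry in $P$. The Reflection condition, in restated form, tells us that passing from $v_i$ to $v_{i+1}$ across the edge $e_{i+1}$ applies the reflection $t_{i+1}$ associated to the root $C(v_i,e_{i+1})$ to the labels (or leaves them fixed, according to the sign of $\omega$). The two labels $C(v_i,e_i)$ and $C(v_i,e_{i+1})$ are both roots lying in $P\cap\Phi=\Phi'$ (they are orthogonal to $F$ because $\Cone(v_i)\cap R$ is a face of $\Cone(v_i)$ cut out by these two co-labels), and successive steps cycle through the roots of $\Phi'$ in their natural circular/linear order around the origin in $P$. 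The key point is that the rank-two subframework induced on $\tau$ is governed entirely by the dihedral reflection group $W'$ generated by the reflections in the roots of $\Phi'$, acting on $P$.

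The heart of the argument is then the standard dichotomy for rank-two root systems. If $\Phi'$ is of finite type, then $W'$ is a finite dihedral group, $\Phi'$ is a finite set of roots, and the reflections one applies while traversing $\tau$ march once around the circle of chambers in $P$ and return to the starting configuration after finitely many steps; hence $\tau$ closes up into a cycle. Conversely, if $\Phi'$ is of infinite type, $W'$ is the infinite dihedral group, $\Phi'$ is an infinite set of roots accumulating toward the boundary rays of the Tits cone in $P$, and the successive reflections never return the pair of labels to its initial values, so $\tau$ is a bi-infinite path. To make this rigorous I would track the pair $\bigl(C(v_i,e_i),C(v_i,e_{i+1})\bigr)$ of roots in $P$ and show, via the Reflection condition together with the sign rule on $\omega$, that consecutive pairs are related by a reflection that advances them through the cyclic sequence of roots; finiteness of that sequence is then equivalent to finiteness of $\Phi'$.

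I expect the main obstacle to be the careful bookkeeping in the rank-two geometry: one must verify that the labels $C(v_i,e_i)$ genuinely run through \emph{all} the roots of $\Phi'$ (so that the length of a cycle equals $|\Phi'|$, or equivalently twice the order of the dihedral group, up to the sign conventions), and that the sign condition $\omega(\beta_t\ck,\gamma)\ge 0$ versus $<0$ never stalls the path prematurely. In particular one should confirm that at each vertex exactly one of the two incident rank-two edges produces a genuine reflection advancing $\tau$, matching the facet-adjacency guaranteed by Corollary~\ref{cor:BdyFacet} and Proposition~\ref{dual adjacent}. Once this correspondence between steps of $\tau$ and reflections in $W'$ is pinned down, the finite/infinite dichotomy for $\tau$ follows immediately from the corresponding dichotomy for the rank-two group $W'$.
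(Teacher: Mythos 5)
Your setup is right: $\Phi'$ spans the two-plane $P=\Span_{\RR}(C(v_i,e_i),C(v_i,e_{i+1}))$, both labels cutting out $F$ lie in $\Phi'$, and the problem reduces to the link of $F$. The direction ``$\Phi'$ finite $\Rightarrow$ $\tau$ is a cycle'' can be rescued from your sketch: finitely many roots give finitely many possible walls in the link, the cones $\Cone(v_i)$ have pairwise disjoint interiors there by polyhedrality, so $\tau$ is finite, and completeness forces a finite rank-two path to close up. (The paper proves the equivalent contrapositive ``$\tau$ a path $\Rightarrow$ $\Phi'$ infinite'' even more directly: infinitely many distinct cones force infinitely many distinct labels, all of which vanish on $F$ and hence lie in $\Phi'$.)

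The genuine gap is in the other direction, ``$\Phi'$ infinite $\Rightarrow$ $\tau$ is a path'' (equivalently ``$\tau$ a cycle $\Rightarrow$ $\Phi'$ finite''). You assert that when $W'$ is infinite dihedral ``the successive reflections never return the pair of labels to its initial values,'' but traversing $\tau$ is not the orbit of a pair of roots under $W'$: by the Reflection condition each step either reflects a label or leaves it fixed according to the sign of $\omega(\beta_t\ck,\gamma)$, so the label dynamics is the rank-two $\c$-vector mutation recurrence, and its non-periodicity when the root system is infinite is precisely the nontrivial fact that has to be proved. Your supporting picture is also quantitatively wrong even in finite type: the labels do not run once through all of $\Phi'$, and the length of a cycle is not $|\Phi'|$ (for $G_2$ the cycle has length $8$ while $|\Phi'|=12$), which signals that the ``march through the circle of chambers'' identification does not hold. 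The paper does not re-derive this step; it quotes Lemma~\ref{Periodic} (i.e.\ \cite[Corollary~4.13]{framework}), which says that periodicity of the recurrence forces $\begin{bsmallmatrix} 2 & -|b'| \\ -|b| & 2 \end{bsmallmatrix}$ to be of finite type, and then identifies that matrix with the Cartan matrix of $\Phi'$ using Euler condition (E3) and Proposition~\ref{basis}. Without that lemma, or an equivalent growth analysis of the recurrence, or a geometric argument that the cones in the link of $F$ cannot cross the isotropic rays of an infinite rank-two root system, your proof of this direction is incomplete.
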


\begin{remark}\label{what a root system} 
The set $\Phi'$ is the set of roots of $\Phi$ contained in a $2$-dimensional linear subspace of~$V$.
The results of \cite{Deodhar,DyerReflection} show in particular that the intersection of $\Phi$ with any linear subspace is a root system in a broader sense than that of this paper (Section~\ref{frame sec}).
However, when the subspace is $2$-dimensional, the intersection is a root system in our sense.
(See \cite[Remark~2.13]{typefree} for an example of what can go wrong when the subspace has dimension greater than $2$.)
\end{remark}

\begin{remark} \label{rank two defn}  
Proposition~\ref{rk2cyc reflection} is special to reflection frameworks, and need not hold for a general framework.  
(See \cite[Remark~4.11]{framework}.) 
We emphasize that, in any framework, a rank-two path or cycle 
$\ldots,v_{-1},v_0,v_1,\ldots$ is a rank-two cycle, by definition, if it forms a finite cycle in the graph $G$, whether or not $\Phi'$ is of finite type.
\end{remark}

\begin{remark}\label{link rem}
In Proposition~\ref{rk2cyc reflection} when $\tau$ is a cycle and $\Phi'$ is of finite type, one can describe the link of $F$ in $\F$ using \cite[Lemmas~4.9--4.10]{framework}.
The link of $F$ in $\F$ is isomorphic to the $\g$-vector fan associated to an exchange matrix $B'$ such that $\Cart(B')$ is the Cartan matrix for $\Phi'$. 
\end{remark}

To prove Proposition~\ref{rk2cyc reflection}, we quote the following technical result, which is part of \cite[Corollary 4.13]{framework} restricted to reflection frameworks.

\begin{lemma} \label{Periodic}
Let $(G, C)$ be a reflection framework and let $\tau=(\ldots,v_{-1},v_0,v_1,\ldots)$ be a rank-two cycle.
Write $e$ and $f$ for the edges in $\tau$ incident to $v_0$.
Define $\beta=C(v_0,e)$ and $\gamma=C(v_0,f)$ and $b=\omega(\beta\ck,\gamma)$ and $b'=\omega(\gamma\ck,\beta)$.
Then  $\begin{bsmallmatrix} 2 & -|b'| \\ -|b| & 2 \end{bsmallmatrix}$ is a Cartan matrix of finite type. 
\end{lemma}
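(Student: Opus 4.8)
The plan is to reduce the statement to the purely numerical fact that a $2 \times 2$ generalized Cartan matrix $\begin{bsmallmatrix} 2 & -q \\ -p & 2 \end{bsmallmatrix}$ with $p,q$ nonnegative integers is of finite type if and only if $pq \le 3$ (the values $pq = 0,1,2,3$ giving types $A_1 \times A_1$, $A_2$, $B_2$, $G_2$, while $pq = 4$ is the affine type $\tilde A_1$). Thus it suffices to show that $|b|$ and $|b'|$ are nonnegative integers with $|b|\,|b'| \le 3$.

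First I would identify $|b|$ and $|b'|$ with ordinary Cartan integers. The two rank-two labels $\beta = C(v_0,e)$ and $\gamma = C(v_0,f)$ are real roots (Root condition) and are linearly independent (Proposition~\ref{basis}); let $P$ be the $2$-plane they span. As in the proof of Proposition~\ref{rk2cyc poly}, the Reflection condition forces every pair of consecutive rank-two labels around $\tau$ to span this same plane $P$, so all labels appearing in $\tau$ lie in the rank-two crystallographic root system $\Phi' = \Phi \cap P$ (Remark~\ref{what a root system}). Now I use the Euler conditions: since $\Gamma(v_0)$ is acyclic by (E3), it contains no $2$-cycle, so at least one of $E(\beta,\gamma)$, $E(\gamma,\beta)$ vanishes. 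Combined with the fact that $K$ and $\omega$ are the symmetric and skew-symmetric parts of $E$, so that $K(\beta,\gamma) = E(\beta,\gamma) + E(\gamma,\beta)$ and $\omega(\beta,\gamma) = E(\beta,\gamma) - E(\gamma,\beta)$, this yields $|\omega(\beta,\gamma)| = |K(\beta,\gamma)|$. Because $\beta\ck = 2\beta/K(\beta,\beta)$ with $K(\beta,\beta) > 0$, bilinearity then gives $|b| = |\omega(\beta\ck,\gamma)| = |K(\beta\ck,\gamma)|$ and likewise $|b'| = |K(\gamma\ck,\beta)|$; these are the ordinary Cartan integers of the pair $\{\beta,\gamma\}$ in $\Phi'$, hence nonnegative integers, and their product equals $4\cos^2\theta$, where $\theta$ is the angle between $\beta$ and $\gamma$ with respect to $K$.

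It remains to show $|b|\,|b'| \le 3$, and here I would use the hypothesis that $\tau$ is a \emph{cycle} rather than a bi-infinite path. The idea is that walking around $\tau$ passes through consecutive chambers of the reflection arrangement of $\Phi'$, the successive rank-two labels $\rho_i$ obeying a linear recurrence $\rho_{i+1} = |c_i|\,\rho_i - \rho_{i-1}$ whose coefficients $|c_i|$ alternate between $|b|$ and $|b'|$ (the magnitudes being preserved around the cycle because rank-two mutation merely flips the signs of $b$ and $b'$, and (E3) applied at each $v_i$ again identifies the coefficient with a Cartan integer). If $|b|\,|b'| \ge 4$, then the $K$-norms $K(\rho_i,\rho_i)$ grow without bound, so the $\rho_i$ are pairwise distinct and $\tau$ cannot close up; equivalently, $\Phi'$ would be of affine or indefinite type, its reflection group infinite, and $\tau$ bi-infinite. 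Since $\tau$ is a finite cycle, we conclude $|b|\,|b'| \le 3$, so the matrix is a finite-type Cartan matrix.

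The main obstacle is this last step: extracting the precise recurrence for the $\rho_i$ from the Reflection condition, including the sign bookkeeping, and establishing the growth estimate $K(\rho_{i+1},\rho_{i+1}) > K(\rho_i,\rho_i)$ in the regime $|b|\,|b'| \ge 4$. This is the genuine content of the lemma; the reduction in the first two paragraphs is essentially formal. I would take care to phrase this step so as to avoid any circular appeal to Proposition~\ref{rk2cyc reflection}, which is downstream of this lemma.
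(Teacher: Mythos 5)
The first thing to note is that the paper never proves Lemma~\ref{Periodic} at all: it is explicitly \emph{quoted} as part of \cite[Corollary~4.13]{framework} (which is proved there via \cite[Lemmas~4.9--4.10]{framework}, results that determine the entire label sequence around a rank-two cycle or path). So there is no in-paper argument to match, and your proposal must be judged as a self-contained proof. Your first two paragraphs are sound, and in fact reproduce the reduction the paper performs inside its proof of Proposition~\ref{rk2cyc reflection}: condition (E3) forbids a directed $2$-cycle in $\Gamma(v_0)$, so at least one of $E(\beta,\gamma)$, $E(\gamma,\beta)$ vanishes, whence $|b|=|K(\beta\ck,\gamma)|$ and $|b'|=|K(\gamma\ck,\beta)|$ are the Cartan integers of the rank-two system $\Phi'=\Phi\cap\Span_{\RR}(\beta,\gamma)$, and everything reduces to $|b|\,|b'|\le 3$. (A quibble: writing $|b|\,|b'|=4\cos^2\theta$ presupposes that $K$ is positive definite on this plane, which is exactly what cannot be assumed at this stage; in the affine and hyperbolic rank-two cases that quantity exceeds $4\cos^2\theta$ for every angle $\theta$.)

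The genuine gap is in your third step, and it is not merely a deferral: the mechanism you propose is false. Every label $\rho_i$ is a real root (Root condition), real roots are $W$-images of simple roots, and $K$ is $W$-invariant, so $K(\rho_i,\rho_i)$ takes only finitely many values and cannot ``grow without bound.'' Indeed your own recurrence is norm-preserving: with $c_i=K(\rho_i\ck,\rho_{i-1})$, the vector $c_i\rho_i-\rho_{i-1}$ is exactly $-t(\rho_{i-1})$, where $t$ is the reflection in $\rho_i$, i.e.\ a $K$-isometry composed with negation. Concretely, in type $\tilde A_1$ (so $|b|=|b'|=2$) the labels run through the roots $\pm(\alpha_1+k\delta)$ and $\pm(\alpha_2+k\delta)$, all of $K$-norm $2$, and yet the rank-two path is bi-infinite; so distinctness of the $\rho_i$ is invisible to $K$-norms. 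What a correct argument must track is something like the coordinates of $\rho_i$ in the basis $\set{\beta,\gamma}$ (equivalently, the position of $\rho_i$ in the natural order on the infinitely many positive roots of $\Phi'$), or, group-theoretically, that the product of the reflections in two consecutive labels is (modulo the sign issues below) a fixed element of infinite order in the infinite dihedral group, so the sequence of reflecting hyperplanes never recurs. Compounding this, the recurrence with coefficients $|c_i|$ alternating between $|b|$ and $|b'|$ is not what the Reflection condition actually produces: on an edge where $\omega(\beta_t\ck,\gamma)<0$ the label is carried across \emph{unreflected} (coefficient $0$ together with a sign change), and determining which branch occurs at each step --- the ``sign bookkeeping'' you explicitly set aside --- is precisely the content of \cite[Lemmas~4.9--4.10]{framework}. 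Since the one step you yourself identify as the genuine content of the lemma is both left undone and, as sketched, rests on an incorrect growth claim, the proposal does not constitute a proof.
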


\begin{proof}[Proof of Proposition~\ref{rk2cyc reflection}]
If $\tau=(\ldots,v_{-1},v_0,v_1,\ldots)$ is a rank-two cycle, define $e$,~$f$, $\beta$, $\gamma$, $b$, and $b'$ as in Lemma~\ref{Periodic}.
That lemma says that the Cartan matrix ${\tilde{A} := \begin{bsmallmatrix} 2 & - |b| \\ - |b'| & 2 \end{bsmallmatrix}}$ is of finite type.  The Euler condition (E3) implies that ${K(\beta\ck,\gamma) = \pm b}$ and $K(\gamma\ck,\beta) = \pm b'$, with the same sign. 
Proposition~\ref{basis} implies that $\beta$ and $\gamma$ are an integer basis for the lattice spanned by $\Phi'$. 
Therefore $\tilde{A}$ is the Cartan matrix of $\Phi'$. 
We see that if $\tau$ is a cycle then $\Phi'$ is of finite type.

Now suppose that $\tau=(\ldots,v_{-1},v_0,v_1,\ldots)$ is an infinite path. Let $X$ be the set of roots $\beta$ such that $\beta$ lies in some $C(v_i)$ and $F \subset \beta^{\perp}$.
Since $(G,C)$ is polyhedral, the set $X$ is infinite.
However, the Reflection condition implies that $X \subset\Phi'$, and thus $\Phi'$ is infinite.
\end{proof}

\section{Doubled Cambrian frameworks}\label{double sec}
In this section, we review the definition of Cambrian frameworks, and use them to construct the doubled Cambrian framework $(\DCamb_c, \DC_c)$ for any acyclic exchange matrix $B$.
We then prove the part of Theorem~\ref{AffineDoubleFramework} that doesn't depend on the hypothesis that $\Cart(B)$ is of affine type.

\subsection{Cambrian frameworks}\label{camb sec}  
Let $B$ be an acyclic exchange matrix with Cartan companion ${A=\Cart(B)}$ and let $W$ be the associated Coxeter group.
We now review, from~\cite{framework}, the construction of a reflection framework for $B$ called the \newword{Cambrian framework}.
Cambrian frameworks are defined using a Coxeter element which encodes the orientation of $B$, as we will now explain.

A \newword{Coxeter element} of $W$ is an element $c$ that can be expressed as a product of the simple reflections $S$, in some 
 order, with each simple reflection occurring exactly once.
The information contained in $B$ is equivalent to the information $(A,c)$, where 
 $c$ is the Coxeter element obtained by multiplying $S$ in an order such that $s_i$ precedes $s_j$ whenever $b_{ij}>0$.
The acyclicity of $B$ implies that $S$ can be ordered by that rule.
Two different orders satisfying that rule multiply to the same Coxeter element because $b_{ij}=0$ implies $A_{ij}=0$, which implies that $s_i$ and $s_j$ commute.

We write $\omega_c$ and $E_c$ for the bilinear forms $\omega$ and $E$ to emphasize their dependence on $c$. 
Table~\ref{conv} emphasizes our convention on how $B$ defines a Coxeter element together with our other related conventions.
\renewcommand{\arraystretch}{1.25}
\begin{table}[ht]
\begin{center}
\begin{tabular}{| l l |}
\hline
If $b_{ij}> 0$, then &
$c=\cdots s_i \cdots s_j \cdots$.\\ 
& $E_c(\alpha_i\ck,\alpha_j)=0$.\\
& $\omega_c(\alpha_i\ck,\alpha_j)>0$.\\ \hline
\end{tabular}
\begin{tabular}{| l l |}
\hline
If $b_{ij}< 0$, then &
$c=\cdots s_j \cdots s_i \cdots$.\\ 
& $E_c(\alpha_i\ck,\alpha_j)<0$.\\
& $\omega_c(\alpha_i\ck,\alpha_j)<0$.\\ \hline
\end{tabular}
\end{center}
\smallskip
\caption{Sign conventions for Coxeter elements and bilinear forms} \label{conv}
\end{table}

An \newword{initial} letter in a Coxeter element $c$ is an element $s$ of $S$ such that $c$ has an expression as a product of the elements of $S$, beginning with $s$.
Similarly, a \newword{final} letter in $c$ is a simple reflection $s$ that can occur last in an expression for $c$ as a product of the elements of $S$.
If $s$ is an initial or final letter of $c$, then $scs$ is another Coxeter element.

A \newword{reduced word} for an element $w\in W$ is a word in the generators $S$ that is of minimal length among words for $w$.
The \newword{length} of $w$ is defined to be the length of a reduced word for
 $w$, and is written $\ell(w)$. 
The \newword{(right) weak order} is a partial order on $W$ defined as the transitive closure of all relations of the following form:  $w<ws$ for $w\in W$ and $s\in S$ with $\ell(w)<\ell(ws)$.  
The weak order is a meet semilattice in general and a lattice if $W$ is finite.
Furthermore, every nonempty set $U$ has a meet $\Meet U$, and if $U$ has an upper bound, then it has a join $\Join U$.

For each subset $J\subseteq S$, the subgroup $W_J$ of $W$ generated by $J$ is called a \newword{standard parabolic subgroup}.
The subgroup $W_J$ is a Coxeter group in its own right, with simple reflections $J$, and is an order ideal in the weak order on $W$.
The root system $\Phi_J$ associated to $W_J$ is the intersection $\Phi\cap V_J$, where $V_J$ is the span of $\set{\alpha_s:s\in J}$.
Given $w\in W$ and $J\subseteq S$, there exists a unique maximal element $w_J$ among elements of $W_J$ below $w$ in the weak order.
We call $w_J$ the \newword{projection} of $w$ to $W_J$.

A Coxeter element $c$ of $W$ induces a Coxeter element of $W_J$ called the \newword{restriction} of $c$ to $W_J$. (This is usually not the projection $c_J$.)
The restriction of $c$ to $W_J$ is obtained by deleting the letters in $S\setminus J$ from a reduced word for $c$.  
We define the notation $\br{s}$ to stand for $S\setminus\set{s}$.

We give here a recursive definition of \newword{$c$-sortable elements}, by induction on the rank of $W$
(the cardinality $n$ of $S$) and on the length of elements of $W$.  
First, the identity element is $c$-sortable for any $c$.
If $s$ is initial in $c$, then we can decide recursively whether $w$ is $c$-sortable, by considering two cases:
If $w\not\ge s$, then~$w$ is $c$-sortable if and only if it is in~$W_{\br{s}}$ and is $sc$-sortable.
If $w\ge s$, then~$w$ is $c$-sortable if and only if $sw$ is $scs$-sortable.
The condition $w\ge s$ is equivalent to $\ell(sw)=\ell(w)-1$.
The recursion involves either deciding sortability in a Coxeter group $W_{\br{s}}$ of rank $n-1$ or deciding sortability for an element $sw$ of length $\ell(w)-1$, and thus terminates.
The consistency of these recursive requirements follows from a non-recursive definition of $c$-sortable elements, found for example in~\cite[Section~5.1]{framework}. 

The vertices of the $c$-Cambrian framework are the $c$-sortable elements.
Before defining the quasi-graph structure on $c$-sortable elements, we recursively define a label set for each $c$-sortable element.
Let $v$ be $c$-sortable and let $s$ be initial in $c$.
Define  
\begin{equation}\label{C recur}
C_c(v)=\left\lbrace\begin{array}{ll}
C_{sc}(v)\cup\set{\alpha_s}&\mbox{if } v\not\ge s\\
s C_{scs}(sv)&\mbox{if } v\ge s
\end{array}\right.
\end{equation}
The set $C_{sc}(v)$ is a set of roots in $\Phi_\br{s}$, defined by induction on the rank of $W$.
The set $C_{scs}(sv)$ is defined by induction on the length of $v$.  
The base of the inductive definition is that the unique element of the trivial Coxeter group has empty label set or, for those who dislike such stark minimalism, that the identity is labeled by the set of simple roots.

The non-recursive definition of $c$-sortable elements constructs a special reduced word for each $c$-sortable element, called its \newword{$c$-sorting word}.  
The set $C_c(v)$ can also be defined non-recursively in terms of the combinatorics of the $c$-sorting word for $v$.
For details, see \cite[Section~5]{typefree} or \cite[Section~5.1]{framework}.

The quasi-graph structure on $c$-sortable elements is obtained from the restriction of the weak order to $c$-sortable elements.
This restriction is called the \newword{$c$-Cambrian semilattice} and denoted $\Camb_c$.
The $c$-Cambrian semilattice is a sub-meet-semilattice \cite[Theorem~7.1]{typefree} of the weak order on $W$.
We will also use the symbol $\Camb_c$ for the undirected Hasse diagram of the $c$-Cambrian semilattice.

The following lemma, which is part of \cite[Lemma~5.11]{framework}, is the key to constructing and labeling an $n$-regular quasi-graph from the $c$-Cambrian semilattice.  
The symbol $\covered$ denotes a cover relation.
\begin{lemma}\label{cov beta part 1}
If $v'\covered v$ in the $c$-Cambrian semilattice, then there exists a unique root $\beta$ such that $\beta\in C_c(v')$ and $-\beta\in C_c(v)$.
The root $\beta$ is positive.
\end{lemma}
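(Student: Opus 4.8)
The plan is to prove existence and uniqueness of $\beta$ by a double induction that follows the recursive definition \eqref{C recur} of the label sets: an outer induction on the rank $n=|S|$ and an inner induction on the length $\ell(v)$ of the larger element. Fix $s$ initial in $c$. Since $v'<v$ in the weak order and $s\le v'$ would force $s\le v$ by transitivity, the case $v'\ge s,\ v\not\ge s$ cannot occur, so $(v',v)$ falls into exactly one of three cases: (A) $v'\not\ge s$ and $v\not\ge s$; (B) $v'\ge s$ and $v\ge s$; (C) $v'\not\ge s$ and $v\ge s$.

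In case (A) both $v'$ and $v$ lie in $W_{\br{s}}$, are $sc$-sortable, and $C_c(v')=C_{sc}(v')\cup\set{\alpha_s}$, $C_c(v)=C_{sc}(v)\cup\set{\alpha_s}$ with $C_{sc}(v'),C_{sc}(v)\subseteq\Phi_{\br{s}}$. Because $v'\covered v$ admits no $c$-sortable element strictly between its endpoints, and any element weakly below $v$ is again $\not\ge s$ (hence in $W_{\br{s}}$), the relation $v'\covered v$ is also a cover in the $sc$-Cambrian semilattice of $W_{\br{s}}$. The outer induction hypothesis (rank $n-1$) produces a unique positive $\beta\in\Phi_{\br{s}}$ with $\beta\in C_{sc}(v')$ and $-\beta\in C_{sc}(v)$; the common label $\alpha_s$ is positive and lies in $\Phi\setminus\Phi_{\br{s}}$, so it can neither serve as nor interfere with the flipped root, and the statement follows verbatim.

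In case (B) both $sv'$ and $sv$ are $scs$-sortable, $C_c(v')=sC_{scs}(sv')$ and $C_c(v)=sC_{scs}(sv)$, and $v\mapsto sv$ carries $v'\covered v$ to a cover $sv'\covered sv$ of shorter length in $\Camb_{scs}$ (by the standard correspondence between $\Camb_c$ and $\Camb_{scs}$; see \cite[Section~5.1]{framework}). The inner induction hypothesis gives a unique positive $\beta_0$ with $\beta_0\in C_{scs}(sv')$ and $-\beta_0\in C_{scs}(sv)$, and $\beta:=s\beta_0$ then satisfies $\beta\in C_c(v')$ and $-\beta\in C_c(v)$; this $\beta$ is positive provided $\beta_0\neq\alpha_s$. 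In case (C), by contrast, I claim the flipped root is $\alpha_s$ itself: $\alpha_s\in C_c(v')$ is immediate, while $-\alpha_s\in C_c(v)=sC_{scs}(sv)$ is equivalent to $\alpha_s\in C_{scs}(sv)$.

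Both of these points rest on a single auxiliary fact, which I expect to be the main obstacle: for a Coxeter element $c'$ in which $s$ is \emph{final} and any $c'$-sortable $w$, one has $\alpha_s\in C_{c'}(w)$ if and only if $w\not\ge s$ (equivalently $-\alpha_s\in C_{c'}(w)$ if and only if $w\ge s$). Granting this, everything closes: since $s$ is initial in $c$ it is final in $scs$, and $v\ge s$ forces $sv\not\ge s$; hence in case (C) indeed $\alpha_s\in C_{scs}(sv)$, while in case (B) we get $-\alpha_s\notin C_{scs}(sv)$, which rules out $\beta_0=\alpha_s$ and keeps $\beta=s\beta_0$ positive. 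I would prove the auxiliary fact from the non-recursive description of $C_c$ in terms of the $c$-sorting word (see \cite[Section~5]{typefree} or \cite[Section~5.1]{framework}), or by a symmetric ``final-letter'' recursion dual to \eqref{C recur}; isolating the exact behavior of $\pm\alpha_s$ there is where the real work lies. Uniqueness is inherited directly from the induction in cases (A) and (B); in case (C) it remains to rule out a positive root of $C_{sc}(v')\subseteq\Phi_{\br{s}}$ being negated in $C_c(v)$, which I would settle by identifying $v'$ as the projection of $v$ to $W_{\br{s}}$ and comparing $C_{sc}(v')$ with $C_{scs}(sv)$, leaving $\alpha_s$ as the only flipped root. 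Throughout, Proposition~\ref{basis} guarantees that each root occurs at most once in a label set, so the candidate flipped roots stay well separated.
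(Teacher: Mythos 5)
First, note that this paper contains no proof of Lemma~\ref{cov beta part 1}: it is imported as part of \cite[Lemma~5.11]{framework}, so there is no internal argument to compare yours against, and I am judging your proposal on its own. Your three-case skeleton and double induction are reasonable, and cases (A) and (B) go through essentially as you describe; in (B) you only need the implication $-\alpha_s\in C_{c'}(w)\Rightarrow s\in\cov(w)\Rightarrow w\ge s$, which is Proposition~\ref{lower walls}. The genuine gap is the auxiliary fact on which case (C) rests. The two statements you present as equivalent are not equivalent, and the one you actually need is false. Take $W$ of type $A_2$, $c'=s_1s_2$, so $s=s_2$ is final, and $w=s_1$. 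Then $w\not\ge s_2$, but unwinding \eqref{C recur} gives $C_{c'}(s_1)=s_1C_{s_2s_1}(e)=\set{-\alpha_1,\,\alpha_1+\alpha_2}$, which contains neither $\alpha_2$ nor $-\alpha_2$. So ``$\alpha_s\in C_{c'}(w)$ iff $w\not\ge s$'' fails in exactly the direction case (C) requires, whereas the true statement is the other one, ``$-\alpha_s\in C_{c'}(w)$ iff $w\ge s$'' (forward by Proposition~\ref{Cc final}, backward by Proposition~\ref{lower walls}); the two are inequivalent because $C_{c'}(w)$ need not contain either of $\pm\alpha_s$.

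The failure is essential, not cosmetic: in case (C) you must show $-\alpha_s\in C_c(v)$, i.e.\ $s\in\cov(v)$, and this cannot follow from $sv\not\ge s$ alone. Indeed, for $c=s_2s_1$ and $v=s_2s_1$ one has $v\ge s_2$ but $-\alpha_2\notin C_c(v)=\set{\alpha_1,\,-(\alpha_1+\alpha_2)}$; this $v$ escapes contradiction only because its unique lower cover in $\Camb_c$ is $s_2\ge s_2$, so case (C) never occurs for it. Thus any correct argument must actually use the hypothesis that $v$ has a lower cover $v'\not\ge s$. One route: by Lemma~\ref{Cambrian covers}, $v'=\pidown^c(tv)$ for some $t\in\cov(v)$; if $t\ne s$ then $\alpha_s\in\inv(tv)$, so $tv\ge s$, and since $s$ is $c$-sortable and $\pidown^c(tv)$ is the maximum $c$-sortable element below $tv$, this would force $v'\ge s$, a contradiction; hence $t=s\in\cov(v)$ and Proposition~\ref{lower walls} gives $-\alpha_s\in C_c(v)$. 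Separately, the uniqueness half of case (C) (that no $\gamma\in C_{sc}(v')\subseteq\Phi_{\br{s}}$ has $-\gamma\in C_c(v)$) is only sketched; it too needs real work, for which Proposition~\ref{Cc initial} (describing $C_+(v)$ via $C_{sc}(v_{\br{s}})$ once $s\in\cov(v)$ is known) is the natural tool. As written, the proof does not close.
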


Suppose $v$ is a $c$-sortable element.
In light of Lemma~\ref{cov beta part 1}, we can label each incident pair $(v,e)$, where $e$ is an edge $v'\covered v$ in $\Camb_c$, 
by the unique negative root $\beta\in C_c(v)$ such that $-\beta\in C_c(v')$.
Similarly, we can label each incident pair $(v,e)$, where $e$ is an edge $v\covered v'$ in $\Camb_c$ by the unique positive root $\beta\in C_c(v)$ such that $-\beta\in C_c(v')$.
As explained in the paragraphs before \cite[Theorem~5.12]{framework}, the vertices in the graph $\Camb_c$ all have degree at most $n$.  
To each vertex, we add the appropriate number of half-edges, to make an $n$-regular quasi-graph.
The new half-edges on $v$ get the roots in $C_c(v)$ that were not assigned to full edges. 
We will need one more condition on a framework.\\

\noindent
\textbf{Full edge condition:}  \label{Full edge condition}
If $(v,e)$ is an incident pair and the sign of $C(v,e)$ is negative, then $e$ is a full edge.\\

Most of the following theorem is stated as \cite[Corollary~5.14]{framework}, but the assertion about the Full edge condition follows from \cite[Theorem~5.12]{framework}.
(That theorem states that $(\Camb_c,C_c)$ is a ``descending'' framework, and the Full edge condition is part of the definition of a descending framework.)

\begin{theorem}\label{camb good}
The pair $(\Camb_c,C_c)$ is an exact, well-connected, polyhedral, simply connected reflection framework for $B$ and satisfies the Full edge condition.
\end{theorem}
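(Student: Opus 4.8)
The plan is to assemble the stated properties from the results of \cite{framework} that have been recalled above, since $(\Camb_c,C_c)$ is precisely the object constructed there. First I would confirm that $(\Camb_c,C_c)$ is a reflection framework at all. The Base condition holds because the identity element is $c$-sortable and the base of the recursion \eqref{C recur} labels it by the set of simple roots. The Root condition is immediate, since \eqref{C recur} only ever applies simple reflections to roots, keeping every label in $\Phi$. Lemma~\ref{cov beta part 1} is exactly what makes the quasi-graph well defined: each cover relation $v'\covered v$ in $\Camb_c$ carries a positive root $\beta$ with $\beta\in C_c(v')$ and $-\beta\in C_c(v)$, so the full edges are correctly labeled, and adjoining half-edges for the leftover roots produces an $n$-regular quasi-graph. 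The Reflection and Euler conditions are the genuinely substantive axioms; these are verified in \cite{framework} through the combinatorics of $c$-sorting words, and I would cite that verification rather than reproduce it.

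Next, for the four adjectives (exact, well-connected, polyhedral, simply connected) I would invoke \cite[Corollary~5.14]{framework} directly, as flagged in the remark preceding the theorem. It is worth recording the logical dependencies: polyhedrality means the cones $\Cone(v)$ form a fan in $V^*$ (the Cambrian fan); well-connectedness is the path-connectivity strengthening of polyhedrality; simple connectivity is the topological condition that the $CW$-complex $\Sigma$ built from rank-two cycles is simply connected; and exactness is injectivity together with ampleness. Because \cite[Proposition~4.18]{framework} shows simple connectivity already forces ampleness, once injectivity (a formal consequence of polyhedrality) and simple connectivity are in hand, exactness follows with no further work.

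For the Full edge condition I would translate terminology: \cite[Theorem~5.12]{framework} asserts that $(\Camb_c,C_c)$ is a \emph{descending} framework, and the Full edge condition is one of the defining clauses of that notion. Concretely, a negative label on a vertex $v$ corresponds, via Lemma~\ref{cov beta part 1}, to a downward cover $v'\covered v$ in the $c$-Cambrian semilattice, and such a cover is a genuine (full) edge of the Hasse diagram; only the remaining positive roots, which index no cover, receive half-edges. Hence every negatively-labeled incident pair lies on a full edge, which is exactly the Full edge condition.

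The main obstacle, were one to prove this from scratch rather than import it from \cite{framework}, is polyhedrality: establishing that the cones $\Cone(v)$ genuinely assemble into a fan, and do not merely meet pairwise in facets (which Corollary~\ref{cor:BdyFacet} already supplies). That argument rests on the lattice-theoretic structure of the $c$-Cambrian semilattice and the sortable-element combinatorics developed across \cite{typefree} and \cite{framework}, and is the deepest input. Simple connectivity is the next most delicate point, and it is what ultimately yields exactness through the implication simple connectivity $\Rightarrow$ ampleness of \cite[Proposition~4.18]{framework}.
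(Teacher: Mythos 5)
Your proposal is correct and takes essentially the same route as the paper: the paper gives no independent proof of this theorem, but simply cites \cite[Corollary~5.14]{framework} for the framework properties and \cite[Theorem~5.12]{framework} (the ``descending framework'' statement) for the Full edge condition, exactly as you do. Your additional remarks on the Base/Root conditions and on the implication chain simple connectivity $\Rightarrow$ ampleness $\Rightarrow$ exactness are accurate elaborations of what those citations supply.
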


For each $c$-sortable element $v$, define a cone  
\[\Cone_c(v)=\bigcap_{\beta \in C_c(v)}\set{x\in V^*:\br{x,\beta} \geq 0}.\]
The cone $\Cone_c(v)$ coincides with the cone $\Cone(v)$ defined in Section~\ref{frame and clus sec}.
The \newword{$c$-Cambrian fan} $\F_c$ is the collection consisting of the cones $\Cone_c(v)$ and all of their faces, with $v$ running over all $c$-sortable elements. 
The following result is part of \cite[Corollary~5.15]{framework}.

\begin{theorem}\label{camb fan}
The $c$-Cambrian fan $\F_c$ is a simplicial fan, and distinct vertices of $\Camb_c$ label distinct maximal cones of $\F_c$.
\end{theorem}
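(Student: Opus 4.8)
The plan is to deduce the statement almost entirely from Theorem~\ref{camb good} and Proposition~\ref{basis}, observing that the polyhedral property already delivers most of the assertion and that only simpliciality is genuinely new. First I would recall that $(\Camb_c,C_c)$ is a polyhedral reflection framework by Theorem~\ref{camb good}, and that $\Cone_c(v)$ coincides with the cone $\Cone(v)$ of Section~\ref{frame and clus sec}. By the definition of \emph{polyhedral}, the cones $\Cone(v)$ are exactly the maximal cones of a fan in $V^*$, and distinct vertices $v$ yield distinct cones. Since $\F_c$ is by definition the collection of the cones $\Cone_c(v)$ together with all of their faces, and since a fan is determined by the set of all faces of its maximal cones (one of the easy facts recalled in Section~\ref{poly sec}), $\F_c$ is precisely that fan. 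This already establishes that $\F_c$ is a fan and that distinct vertices of $\Camb_c$ label distinct maximal cones.

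The only remaining point is that the maximal cones are simplicial. For a fixed $c$-sortable element $v$, I would invoke Proposition~\ref{basis}, which says that the label set $C_c(v)$ is a basis for the root lattice. Because the root lattice is the full-rank lattice in $V$ generated by the simple roots $\Pi$, which themselves form a basis of $V$, any $\integers$-basis of the root lattice is automatically an $\reals$-basis of $V$. Hence the $n$ defining functionals $\set{\br{\,\cdot\,,\beta}:\beta\in C_c(v)}$ of $\Cone_c(v)$ are indexed by a basis of $V$, and by the definition of a simplicial cone in Section~\ref{poly sec}, $\Cone_c(v)$ is simplicial. Thus every maximal cone of $\F_c$ is simplicial, so $\F_c$ is a simplicial fan.

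Along the way I would record one small compatibility check: the definition of $\Cone_c(v)$ uses the half-spaces $\set{x:\br{x,\beta}\ge 0}$ for the roots $\beta\in C_c(v)$, whereas $\Cone(v)$ is defined via the co-labels $C\ck(v,e)$. These cut out the same half-spaces because each co-root $\beta\ck=2\beta/K(\beta,\beta)$ is a positive scalar multiple of $\beta$, so $\br{x,\beta}\ge 0$ if and only if $\br{x,\beta\ck}\ge 0$. This is exactly the coincidence already asserted in the text, so in the write-up it can be cited rather than reproved.

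I do not anticipate a real obstacle: the statement is essentially a corollary of the polyhedral property (Theorem~\ref{camb good}) and the basis property (Proposition~\ref{basis}). If any step deserves care, it is only the elementary observation that a lattice basis of a full-rank lattice spanning $V$ is automatically a vector-space basis of $V$; this is what upgrades ``basis for the root lattice'' to ``basis for $V$,'' and hence converts the defining inequalities of $\Cone_c(v)$ into those of a simplicial cone.
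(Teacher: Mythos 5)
Your derivation is correct, but it is worth pointing out that the paper itself gives no proof of Theorem~\ref{camb fan}: it is quoted directly from \cite[Corollary~5.15]{framework}, just as Theorem~\ref{camb good} is quoted from \cite[Corollary~5.14]{framework}. What you have done is show that, within the toolkit the paper imports, Theorem~\ref{camb fan} is essentially a repackaging of two other quoted facts. The polyhedral property in Theorem~\ref{camb good} literally asserts that the cones $\Cone(v)$ are the maximal cones of a fan and that distinct vertices give distinct cones, and your identification of $\Cone(v)$ (cut out by the co-labels $C\ck(v,e)$) with $\Cone_c(v)$ (cut out by the labels $C_c(v)$) is the right compatibility check, since $\beta\ck=2\beta/K(\beta,\beta)$ is a positive multiple of $\beta$ and so defines the same half-space. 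Simpliciality then follows from Proposition~\ref{basis} exactly as you say: a $\integers$-basis of the full-rank root lattice is an $\reals$-basis of $V$, so the $n$ defining functionals of $\Cone_c(v)$ come from a basis of $V$ and the cone is simplicial by the definition in Section~\ref{poly sec}. The only caveat is that this is not an independent proof from first principles --- the substantive work is hidden in the proof that $(\Camb_c,C_c)$ is polyhedral, which lives in \cite{framework} --- but as a derivation of the stated theorem from the results this paper makes available, your argument is complete and has no gaps.
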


We now fill in some additional results on sortable elements, Cambrian semilattices, and the Cambrian framework.
The following propositions are \cite[Proposition~2.30]{typefree} and \cite[Proposition~3.13]{typefree}.
\begin{prop}\label{sort para easy}
Let $J \subseteq S$ 
and let $c'$ be the restriction of $c$ to $W_J$. 
Then an element $v \in W_J$ is $c$-sortable if and only if it is $c'$-sortable. 
\end{prop}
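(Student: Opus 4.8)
The plan is to prove the statement by a double induction on the rank $n=|S|$ and on the length $\ell(v)$, mirroring the recursive definition of $c$-sortability recalled above. Throughout I would use the standard facts that, for $v\in W_J$, the length $\ell(v)$ and the left descents of $v$ agree whether computed in $W$ or in $W_J$, that all left descents of such a $v$ lie in $J$, and that the right weak order on $W_J$ is the restriction of that on $W$. I would also record the elementary bookkeeping about restriction of Coxeter elements: if $s$ is initial in $c$, write $c=s\cdot(sc)$ with $sc$ a Coxeter element of $W_\br{s}$, so that a reduced word for $c$ beginning with $s$ lets one compute $c'$ by deletion. The substance of the argument is to check, case by case, that $c'$ transforms under the recursion in lockstep with $c$.

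Fix $s$ initial in $c$; the base case (the identity, or rank zero) is immediate, since the identity is sortable for every Coxeter element. First I would dispose of the case $s\notin J$. Here $v\in W_J\subseteq W_\br{s}$, so $v\not\ge s$, and the recursion says that $v$ is $c$-sortable if and only if $v$ is $sc$-sortable. Deleting the letter $s$ (which lies in $S\setminus J$) shows that the restriction of $sc$ to $W_J$ is again $c'$. Since $W_\br{s}$ has rank $n-1$ and contains $W_J$, induction on rank gives that $v$ is $sc$-sortable if and only if it is $c'$-sortable, finishing this case.

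Next suppose $s\in J$, so that $s$ is also initial in $c'$. The key identities to establish by deletion are that the restriction of $sc$ to $W_{J\setminus\set{s}}$ equals $sc'$, and that the restriction of $scs$ to $W_J$ equals $sc's$; both follow from writing $c=s\cdot(sc)$ and $scs=(sc)\cdot s$ and tracking the surviving letters. If $v\not\ge s$, then $v$ is $c$-sortable if and only if $v\in W_\br{s}$ and $v$ is $sc$-sortable, while $v$ is $c'$-sortable if and only if $v\in W_{J\setminus\set{s}}$ and $v$ is $sc'$-sortable. For $v\in W_J$ the two membership conditions coincide (both say $v\in W_{J\setminus\set{s}}$), and induction on rank applied to $W_\br{s}\supseteq W_{J\setminus\set{s}}$ matches the two sortability conditions. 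If instead $v\ge s$, then $v$ is $c$-sortable if and only if $sv$ is $scs$-sortable, and $v$ is $c'$-sortable if and only if $sv$ is $sc's$-sortable; since $\ell(sv)=\ell(v)-1$ and $sv\in W_J$, induction on length together with the identity relating $scs$ to $sc's$ closes the case.

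The main obstacle is not any single deep step but the careful bookkeeping of the restriction operation. I must check that restriction to a parabolic is well defined, i.e.\ independent of the chosen reduced word for $c$; this holds because any two reduced words for a Coxeter element differ only by commutations of distinct commuting letters (no genuine braid move is possible, since each generator occurs once), and deletion commutes with such swaps. I must also verify that restriction intertwines correctly with each of the operations $c\mapsto sc$ and $c\mapsto scs$ and with the passage between $\br{s}=S\setminus\set{s}$ and $J\setminus\set{s}$. Once these commutation lemmas are in hand, the desired equivalence propagates mechanically through the recursion.
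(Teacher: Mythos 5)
Your argument is correct, but note that the paper contains no proof of this proposition to compare against: it is quoted directly from \cite[Proposition~2.30]{typefree} (see the sentence immediately preceding the statement). So what you have produced is a self-contained derivation from the recursive definition of sortability given in Section~\ref{camb sec}, which is a genuinely different route from the one in the cited source. There, sortability is characterized non-recursively via the $c$-sorting word, and the proposition follows almost immediately: for $v\in W_J$ the $c$-sorting word uses only letters of $J$ and coincides with the $c'$-sorting word, so the defining condition on nested supports is literally the same on both sides. Your double induction on rank and on $\ell(v)$ trades that machinery for case analysis, and the bookkeeping you identify is exactly what must be checked: well-definedness and transitivity of restriction (which do follow since any two reduced words for a Coxeter element differ only by commutations), the intertwining of restriction with $c\mapsto sc$ and $c\mapsto scs$, the identity $W_J\cap W_{\br{s}}=W_{J\setminus\set{s}}$, and the fact that length, left descents, and weak order on $W_J$ agree with their restrictions from $W$ (which justifies that $v\ge s$ means the same thing in both groups). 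Each of these is standard, your case split on $s\in J$ versus $s\notin J$ is complete, and the induction is well-founded in lexicographic $(\text{rank},\ell(v))$ order since the subcase $v\ge s$ decreases length while the other subcases decrease rank. The sorting-word proof is shorter once that formalism is available; yours has the advantage of relying only on the recursive definition actually stated in this paper.
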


\begin{prop}\label{sort para}
If $v$ is $c$-sortable and $J\subseteq S$, then $v_J$ is $c'$-sortable, where $c'$ is the restriction of $c$ to $W_J$.
\end{prop}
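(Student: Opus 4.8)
The plan is to argue by a double induction, on the rank $n=|S|$ and on the length $\ell(v)$, organized around the initial letter of $c$. A useful preliminary reduction is to replace the target ``$v_J$ is $c'$-sortable'' by the cleaner statement ``$v_J$ is $c$-sortable'': since $v_J\in W_J$, Proposition~\ref{sort para easy} says these are equivalent, and now both $v$ and $v_J$ are tested against the \emph{same} Coxeter element $c$. I will use two standard properties of parabolic projection in the weak order: projections compose, so $v_J=(v_K)_J$ whenever $J\subseteq K$; and if $v\in W_{\br t}$ then $v_J=v_{J\cap\br t}$.

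Let $s$ be initial in $c$, so the recursive definition of $c$-sortability applies. The base case $v=e$ is trivial, as $e_J=e$. If $v\not\ge s$, then $v\in W_{\br s}$ and $v$ is $sc$-sortable for the rank-$(n-1)$ Coxeter element $sc$ of $W_{\br s}$. Here $v_J=v_{J\cap\br s}$ lies in $W_{\br s}$, and the rank induction (with Proposition~\ref{sort para easy} applied inside $W_{\br s}$) shows $v_{J\cap\br s}$ is $sc$-sortable; being an element of $W_{\br s}$ it is $\not\ge s$, so the recursion for $c$ immediately certifies it as $c$-sortable. If instead $v\ge s$, then $sv$ is $scs$-sortable with $\ell(sv)=\ell(v)-1$, so the length induction applies to $sv$ and gives that $(sv)_K$ is $scs$-sortable for every $K$. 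When $s\in J$, the containment $s\le v$ with $s\in W_J$ forces $s\le v_J$, so $v_J\ge s$, and a short inversion-set computation gives $s\,v_J=(sv)_J$. Thus $s\,v_J$ is $scs$-sortable by the length induction, and the recursion for $c$ (with $v_J\ge s$) upgrades this to $c$-sortability of $v_J$.

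The remaining branch, $v\ge s$ with $s\notin J$, is the crux. Now $J\subseteq\br s$, so $v_J=(v_{\br s})_J$, and the rank induction reduces the whole proposition to the single \emph{projection lemma}: if $v$ is $c$-sortable with $v\ge s$, then $v_{\br s}$ is $sc$-sortable. (Since $v_{\br s}\in W_{\br s}$ is automatically $\not\ge s$, this is the same as $c$-sortability of $v_{\br s}$.) The naive hope $v_{\br s}=(sv)_{\br s}$ is false, because pushing $s$ through the projection twists the roots; so the lemma genuinely has to be proved.

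To attack the projection lemma I would work with inversion sets. Writing $N(w)=\set{\beta\in\Phi:\beta>0,\ w^{-1}\beta<0}$, one has the projection formula $N(w_J)=N(w)\cap\Phi_J$ (intersected with the positive roots) and, when $v\ge s$, the factorization $N(v)=\set{\alpha_s}\sqcup s\,N(sv)$. Intersecting with the positive roots of $\Phi_{\br s}$ and using $\alpha_s\notin\Phi_{\br s}$ expresses $N(v_{\br s})$ purely in terms of $N(sv)$ twisted by $s$, and the task is to recognize the resulting element of $W_{\br s}$ as $sc$-sortable, leaning on the length induction applied to $sv$ (which makes $(sv)_{\br s}$ $sc$-sortable, since $scs$ restricted to $\br s$ equals $sc$). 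I expect this root-theoretic bookkeeping---tracking exactly which positive roots of $\Phi_{\br s}$ survive the projection once $s$ has acted, and reconciling the twisted inversion set with the support-nesting that characterizes $sc$-sortability---to be the main technical obstacle, and the place where the initial-letter structure of the acyclic $c$ is essential.
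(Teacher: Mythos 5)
First, a point of reference: the paper does not prove this proposition at all --- it is quoted verbatim from \cite[Proposition~3.13]{typefree} --- so there is no internal proof to compare against, and your attempt has to stand on its own. The inductive skeleton is sound in the cases you actually complete: the reduction via Proposition~\ref{sort para easy}, the base case, the case $v\not\ge s$ (rank induction inside $W_{\br{s}}$), and the case $v\ge s$ with $s\in J$ are all correct. In particular the identity $s\,v_J=(sv)_J$ does follow from $\inv(v_J)=\inv(v)\cap W_J$ together with $\inv(sv)=s\bigl(\inv(v)\setminus\set{s}\bigr)s$, because conjugation by $s$ preserves the set of reflections of $W_J$ precisely when $s\in J$.

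The proof is not finished, however: the case $v\ge s$, $s\notin J$ --- which you correctly isolate as the ``projection lemma'' that $v_{\br{s}}$ is $sc$-sortable --- is only a plan, and the plan as stated runs into a concrete obstruction rather than mere bookkeeping. Writing inversion sets as positive roots, $\inv(v)=\set{\alpha_s}\sqcup s\,\inv(sv)$ and $\inv(v_{\br{s}})=\inv(v)\cap\Phi_{\br{s}}$ give $\inv(v_{\br{s}})=s\bigl(\inv(sv)\cap s\Phi_{\br{s}}\bigr)$. The set $s\Phi_{\br{s}}$ is the root system of the \emph{non-standard} parabolic $sW_{\br{s}}s$, not of $W_{\br{s}}$, so the length induction applied to $sv$ --- which controls only standard parabolic projections such as $(sv)_{\br{s}}$ --- does not see the relevant roots. (For instance, in type $A_2$ with $c=s_1s_2$ and $v=s_1s_2$, one has $v_{\br{s_1}}=e$ while $(s_1v)_{\br{s_1}}=s_2$.) So this branch needs a genuinely new input, not just care. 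One way to close the gap using only results already quoted in this paper is Theorem~\ref{sortable is aligned}: a generalized rank-two parabolic $\Phi'$ of $\Phi_J$ is also one of $\Phi$ with the same canonical roots (since $\Phi_J=\Phi\cap V_J$), one has $\inv(v_J)\cap\Phi'=\inv(v)\cap\Phi'$, and $\omega_{c'}$ agrees with $\omega_c$ on $V_J\times V_J$; hence $c$-alignment of $v$ transfers verbatim to $c'$-alignment of $v_J$, proving the whole proposition in a few lines. (Within \cite{typefree} itself one would have to check this is not circular, since the alignment theorem appears there after Proposition~3.13.)
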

Here $v_J$ is the projection of $v$ to the standard parabolic subgroup $W_J$.
See Section~\ref{camb sec}.  

The following theorems are \cite[Theorem~7.1]{typefree} and \cite[Theorem~7.4]{typefree}.
\begin{theorem} \label{meets and joins}
Let~$U$ be a collection of $c$-sortable elements of~$W$. 
If~$U$ is nonempty then $\Meet U$ is $c$-sortable. 
If~$U$ has an upper bound then $\Join U$ is $c$-sortable.
\end{theorem}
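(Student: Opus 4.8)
The plan is to prove both halves of the theorem by a single well-founded induction, with the rank $n=|S|$ as the primary parameter and, for a fixed group, the length of $\bigwedge U$ (respectively $\bigvee U$) as the secondary parameter, running the induction uniformly over all Coxeter elements $c$. Throughout I fix an initial letter $s$ of $c$, so that $sc$ is a Coxeter element of $W_{\langle s\rangle}$ and $scs$ is a Coxeter element of $W$, and I exploit the recursive definition of $c$-sortability together with Propositions~\ref{sort para easy} and~\ref{sort para}. The one general fact about the weak order I would isolate first is that left multiplication by $s$ is an isomorphism of posets from $\{w:w\ge s\}$ onto $\{w:w\not\ge s\}$; this follows from the inversion-set identity $\mathrm{Inv}(sw)=s(\mathrm{Inv}(w)\setminus\{\alpha_s\})$ together with the fact that $s$ permutes $\Phi^+\setminus\{\alpha_s\}$.

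For the meet, the key preliminary observation is that $\bigwedge U\ge s$ if and only if every $u\in U$ satisfies $u\ge s$ (one direction is transitivity, the other is that $s$ is then a lower bound for $U$). This yields two cases. If $m:=\bigwedge U\not\ge s$, then some $u_0\in U$ has $u_0\not\ge s$, so $u_0\in W_{\langle s\rangle}$ by the recursion; since $W_{\langle s\rangle}$ is an order ideal, $m\le u_0$ forces $m\in W_{\langle s\rangle}$. I would then check that $m=\bigwedge_{u\in U}u_{\langle s\rangle}$ (the projections), invoke Proposition~\ref{sort para} to see that each $u_{\langle s\rangle}$ is $sc$-sortable, apply the inductive hypothesis in the rank-$(n-1)$ group $W_{\langle s\rangle}$, and conclude with Proposition~\ref{sort para easy}. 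If instead $m\ge s$, then every $u\ge s$, so every $su$ is $scs$-sortable; using the poset isomorphism above, and the fact that each of the two sides is closed under the meet in question, I would show $sm=\bigwedge_{u}su$ and then apply the inductive hypothesis for $scs$ at the smaller length $\ell(m)-1$.

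For the join I would run the parallel induction, but the case analysis must be organized around $j:=\bigvee U$ rather than around the individual $u$, because joining can create a descent that none of the $u$ possesses. If $j\not\ge s$ then every $u\not\ge s$, hence every $u\in W_{\langle s\rangle}$; I would check that the join is already computed inside $W_{\langle s\rangle}$ (using that the projection of any upper bound of $U$ is again an upper bound lying in $W_{\langle s\rangle}$) and finish by induction on rank and Proposition~\ref{sort para easy}. If $j\ge s$, I would instead use the identity $j=\bigvee_{u\in U}(s\vee u)$, in which every term is $\ge s$; granting that each $s\vee u$ is $c$-sortable, the poset isomorphism gives $sj=\bigvee_{u}s(s\vee u)$ with each $s(s\vee u)$ being $scs$-sortable, and induction on length completes the argument.

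The hard part is the sub-lemma just invoked: if $v$ is $c$-sortable, $s$ is initial in $c$, and $s\vee v$ exists, then $s\vee v$ is $c$-sortable. The naive guess $s\vee v=sv$ already fails when $v\in W_{\langle s\rangle}$, since $s$ does not fix $\mathrm{Inv}(v)$, so this needs a genuine argument; I expect to prove it by its own induction on $\ell(v)$, analyzing $s(s\vee v)$ and the cover reflections of $v$, and taking care that it does not circularly invoke the full join theorem. In finite type this entire difficulty evaporates, since the anti-automorphism $w\mapsto w_0w$ of the weak order exchanges meets with joins and carries $c$-sortable elements to $w_0cw_0$-sortable elements, so the join statement would follow formally from the meet statement. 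It is precisely the absence of $w_0$ in the infinite setting relevant here that forces the direct argument and concentrates the difficulty in controlling the descents created by joining.
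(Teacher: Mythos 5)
First, a point of comparison: the paper does not prove this statement at all --- it is quoted verbatim from \cite[Theorem~7.1]{typefree} --- so your proposal has to be measured against the toolkit the paper imports rather than against an in-text argument. Your treatment of the \emph{meet} is sound and is essentially the standard induction: the dichotomy $\Meet U\ge s$ versus $\Meet U\not\ge s$, the identity $\Meet U=\Meet_{u}u_{\br{s}}$ in the second case via Propositions~\ref{sort para} and~\ref{sort para easy}, and the left-multiplication isomorphism $\set{w:w\ge s}\to\set{w:w\not\ge s}$ in the first case (which does preserve the relevant meets, since the former set is an order filter and the latter an order ideal) all check out, and the double induction on rank and $\ell(\Meet U)$ is well founded.

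The \emph{join} half, however, has a genuine gap exactly where you flag it. The sub-lemma ``if $v$ is $c$-sortable, $s$ is initial in $c$, and $s\join v$ exists, then $s\join v$ is $c$-sortable'' is not a technical step you can expect to dispatch by a side induction: it \emph{is} the join statement for the two-element set $\set{s,v}$, so your reduction of the general join to it is very close to circular, and no argument for it is supplied. It also does not follow formally from Theorem~\ref{c to scs}, which (for $v\not\ge s$) gives only that $s\join v$ is $scs$-sortable, whereas $c$-sortability of $s\join v$ requires $s(s\join v)$ to be $scs$-sortable --- a different assertion. The way out, and the route the source takes, is to use the projection $\pidown^c$ that the paper already quotes (the element maximal --- in fact maximum --- among $c$-sortable elements below $w$, \cite[Corollary~6.2]{typefree}, introduced just before Lemma~\ref{Cambrian covers}): if $U$ has an upper bound and $j=\Join U$, then each $u\in U$ is a $c$-sortable element below $j$, hence $u\le\pidown^c(j)$; thus $\pidown^c(j)$ is an upper bound of $U$ with $\pidown^c(j)\le j$, forcing $\pidown^c(j)=j$, so $j$ is $c$-sortable. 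This three-line argument replaces your entire case analysis for the join and eliminates the sub-lemma; the price is that it leans on the (substantial, but already-quoted) existence of $\pidown^c$, which in \cite{typefree} is established before the meet/join theorem and so involves no circularity.
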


\begin{theorem}\label{c to scs}
If~$s$ is initial in~$c$ then the map
\[v\mapsto\left\lbrace\begin{array}{cl}
sv&\mbox{if }v\ge s\\
s\join v&\mbox{if }v\not\ge s
\end{array}\right.\]
is a bijection from the set $\set{\mbox{$c$-sortable elements $v$ such that $s\join v$ exists}}$ to the set of all $scs$-sortable elements.
The inverse map~is
\[x\mapsto\left\lbrace\begin{array}{cl}
sx&\mbox{if }x\not\ge s\\
x_{\br{s}}&\mbox{if }x\ge s.
\end{array}\right.\]
\end{theorem}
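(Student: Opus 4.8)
The plan is to verify directly that the two displayed maps are well defined and mutually inverse, organizing everything by whether an element lies above $s$. I would first record two elementary facts about the weak order: left multiplication by $s$ interchanges $\{w:w\ge s\}$ and $\{w:w\not\ge s\}$ (since $\ell(sw)=\ell(w)-1$ exactly when $w\ge s$), and when $v\ge s$ the join $s\join v=v$ exists automatically. Consequently the domain decomposes as the $c$-sortable $v$ with $v\ge s$ together with the $c$-sortable $v$ with $v\not\ge s$ for which $s\join v$ exists, while the target decomposes as the $scs$-sortable $x$ with $x\not\ge s$ together with those with $x\ge s$. I would show the map restricts to a bijection on each piece.

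On the first piece the claim is essentially a restatement of the recursive definition of sortability. That definition says, for $s$ initial in $c$ and $v\ge s$, that $v$ is $c$-sortable if and only if $sv$ is $scs$-sortable; since $v\mapsto sv$ sends $\{w\ge s\}$ bijectively to $\{w\not\ge s\}$, this gives a bijection from the $c$-sortable $v$ with $v\ge s$ to the $scs$-sortable $x$ with $x\not\ge s$, whose inverse is $x\mapsto sx$. These are exactly the relevant clauses of the two displayed maps, and every such $v$ already lies in the domain because $s\join v=v$.

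The substance is the second piece, the correspondence $v\mapsto s\join v$ and $x\mapsto x_{\br s}$. I would begin with the bookkeeping that makes the two ends comparable. By the recursive definition together with Proposition~\ref{sort para easy}, a $c$-sortable $v$ with $v\not\ge s$ is precisely an element $v\in W_{\br s}$ that is $u$-sortable, where $u$ is the common restriction to $W_{\br s}$ of $c$ and of $scs$ (writing $c=s\cdot u$ as words, one has $sc=u$ and $scs=u\cdot s$, and deleting $s$ from either word leaves $u$); dually, Proposition~\ref{sort para} shows that for $scs$-sortable $x$ the projection $x_{\br s}$ is $u$-sortable, hence $c$-sortable and lying in $W_{\br s}$. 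The order-theoretic engine is the adjunction $s\join(-)\dashv(-)_{\br s}$: for $v\in W_{\br s}$ with $s\join v$ existing and any $x\ge s$ one has $s\join v\le x\iff v\le x_{\br s}$. This adjunction already yields the two inequalities $v\le(s\join v)_{\br s}$ and $s\join(x_{\br s})\le x$. The equality $(s\join v)_{\br s}=v$ I would obtain from the standard inversion-set description of weak-order joins and parabolic projection: adjoining $\alpha_s$ to the (biconvex) inversion set of $v\in W_{\br s}$ introduces no new roots of $\Phi_{\br s}$ into its closure, so the projection of $s\join v$ back to $W_{\br s}$ recovers $v$.

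What remains, and where I expect the real work to lie, are the two assertions that genuinely use sortability rather than order theory: (a) if $v$ is $c$-sortable with $v\not\ge s$ and $s\join v$ exists, then $s\join v$ is $scs$-sortable; and (b) if $x$ is $scs$-sortable with $x\ge s$, then the inequality $s\join(x_{\br s})\le x$ is an equality. For (a), Theorem~\ref{meets and joins} applied to $\{s,v\}$ (both $c$-sortable, with upper bound $s\join v$) already gives that $s\join v$ is $c$-sortable, so the task is to upgrade $c$-sortability to $scs$-sortability for an element above $s$. For (b), the equality can fail for non-sortable $x\ge s$ (already in type $A_3$, where $s_2s_1\ge s_2$ but $s_2\join(s_2s_1)_{\br{s_2}}=s_2\ne s_2s_1$), so sortability must enter essentially. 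I would prove (a) and (b) together by induction on $\ell$, with an inner induction on the rank $n$, running the recursive definition of $scs$-sortability from an initial letter $t$ of $scs$ (necessarily $t\in\br s$): when the relevant element is $\not\ge t$ one descends into $W_{\br t}$ and uses the rank-$(n-1)$ case, and when it is $\ge t$ one passes to $t(\cdot)$ and uses the shorter-length case. The main obstacle is precisely the compatibility needed to run this induction: tracking how $s\join(-)$ and $(-)_{\br s}$ behave under the reduction that peels off $t$, in a setting where joins need not exist, so that the inductive hypotheses apply to the correct smaller elements. Once (a) and (b) are established, the round trips $(s\join v)_{\br s}=v$ and $s\join(x_{\br s})=x$ show that $v\mapsto s\join v$ and $x\mapsto x_{\br s}$ are mutually inverse on the second piece, completing the bijection.
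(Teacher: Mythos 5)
First, a caveat about the comparison: the paper does not prove this statement at all --- it quotes it verbatim as \cite[Theorem~7.4]{typefree} --- so the only ``proof'' in the paper is a citation. Judged on its own terms, your skeleton is right: the split by whether the element is above $s$, the disposal of the $v\ge s$ piece via the recursive definition of sortability, the adjunction $s\join v\le x\iff v\le x_{\br s}$, and the identification of (a) and (b) as the substance. But the proposal stops short of a proof precisely at that substance: you explicitly defer (a) and (b) to a double induction whose ``main obstacle'' (compatibility of $s\join(-)$ and $(-)_{\br s}$ with peeling off an initial letter $t$ of $scs$, in a setting where joins need not exist) you name but do not resolve. As written this is a plan, not a proof, and the difficulty you flag is real.

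Both (a) and (b) are in fact immediate from results the paper already quotes, and using them would close most of the gap. For (b): $s$ is \emph{final} in $scs$, so Proposition~\ref{Cc final} applied to the Coxeter element $scs$ says exactly that an $scs$-sortable $x\ge s$ satisfies $x=s\join x_{\br s}$. For (a): rather than applying Theorem~\ref{meets and joins} to $\set{s,v}$ as \emph{$c$-sortable} elements (which yields only $c$-sortability of $s\join v$ and saddles you with an unnecessary ``upgrade'' problem), note that $v$ is already \emph{$scs$-sortable}: it lies in $W_{\br s}$ and is sortable for the common restriction of $c$ and $scs$ to $W_{\br s}$, so Proposition~\ref{sort para easy} applies; since $s$ is also $scs$-sortable and $\set{s,v}$ is bounded above, Theorem~\ref{meets and joins} for the Coxeter element $scs$ gives directly that $s\join v$ is $scs$-sortable. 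What then remains is the order-theoretic identity $(s\join v)_{\br s}=v$, which does \emph{not} follow formally from (a) and (b) (those only show that $x\mapsto x_{\br s}\mapsto s\join x_{\br s}$ returns $x$, i.e.\ one of the two composites is the identity). Your sketch for this identity, via a biclosure description of $\inv(s\join v)$, is plausible but delicate in infinite type: to bound $\inv(s\join v)\cap\Phi_{\br s}$ from above you need an actual upper bound of $\set{s,v}$ whose inversion set meets $\Phi_{\br s}$ in exactly $\inv(v)$, not merely an abstract biclosed set containing $\set{\alpha_s}\cup\inv(v)$, since it is not automatic that $\inv(s\join v)$ is contained in every such biclosed set. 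This step needs a genuine argument or a citation.
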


An \newword{inversion} of $w\in W$ is a reflection $t$ such that $\ell(tw)<\ell(w)$. 
Inversions interact nicely with the map $w\mapsto w_J$ for any $J\subseteq S$.
Specifically, $\inv(w_J)=\inv(w)\cap W_J$.

A \newword{cover reflection} of $w\in W$ is an inversion $t$ of $w$ such that $tw=ws$ for some $s\in S$.
The name refers to the fact that $tw\covered w$ in the weak order if and only if $t$ is a cover reflection of $w$.
We write $\inv(w)$ for the set of inversions of $w$ and $\cov(w)$ for the set of cover reflections of~$w$.
We have $\inv(tw)=\inv(w)\setminus\set{t}$ when $t$ is a cover reflection of~$w$.

The following propositions are first, \cite[Proposition~5.2]{typefree}, second, the combination of parts of \cite[Proposition~5.1]{typefree} and \cite[Proposition~5.4]{typefree}, and third, the combination of parts of \cite[Proposition~5.1]{typefree} and \cite[Proposition~5.3]{typefree}.

\begin{prop}\label{lower walls}
Let $v$ be a $c$-sortable element.
The set of negative roots in $C_c(v)$ is $\set{-\beta_t:t\in\cov(v)}$.
\end{prop}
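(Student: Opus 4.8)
The plan is to induct on the rank $n$ of $W$ and on $\ell(v)$, following the recursion~\eqref{C recur} for $C_c(v)$ and the parallel recursion that defines $c$-sortable elements. Fix $s$ initial in $c$. The base case is $v=e$: then $C_c(e)$ is the set of simple roots, which are all positive, while $\cov(e)=\emptyset$, so both sides are empty. For the inductive step I treat the two branches of~\eqref{C recur} separately, using two standard facts about inversions. First, if $v\in W_{\br{s}}$ then $\inv(v)\subseteq W_{\br{s}}$ (this is $\inv(v_{\br{s}})=\inv(v)\cap W_{\br{s}}$ with $v_{\br{s}}=v$) and $\ell(vs)=\ell(v)+1$. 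Second, if $v\ge s$ then $\inv(v)=\set{s}\sqcup\set{srs:r\in\inv(sv)}$, and the positive root of $srs$ equals $s\beta_r$ whenever $\beta_r\neq\alpha_s$.

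In the branch $v\not\ge s$, we have $v\in W_{\br{s}}$ and $v$ is $sc$-sortable, with $C_c(v)=C_{sc}(v)\cup\set{\alpha_s}$. The added root $\alpha_s$ is positive, so the negative roots of $C_c(v)$ are exactly those of $C_{sc}(v)$, which by induction on rank (inside $W_{\br{s}}$, with Coxeter element $sc$) are $\set{-\beta_t:t\in\cov_{W_{\br{s}}}(v)}$. It then remains to identify $\cov_{W_{\br{s}}}(v)$ with $\cov(v)$: all inversions of $v$ lie in $W_{\br{s}}$, and since $\ell(vs)=\ell(v)+1$ the reflection $s$ is not a right descent of $v$, so every cover reflection $t$ of $v$ satisfies $tv=vs''$ with $s''\in\br{s}$, i.e.\ is a cover reflection of $v$ in $W_{\br{s}}$; as $\beta_t$ is the same computed in $\Phi_{\br{s}}$ or in $\Phi$, this branch closes.

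In the branch $v\ge s$ we have $C_c(v)=sC_{scs}(sv)$ with $sv$ an $scs$-sortable element of length $\ell(v)-1$. Combining the inversion decomposition above with the identity $v^{-1}(srs)v=(sv)^{-1}r(sv)$ gives
\[ \cov(v)=\set{srs:r\in\cov(sv)}\,\sqcup\,\big(\set{s}\text{ if }v^{-1}sv\in S\big), \]
the union being disjoint since $s\notin\inv(sv)$. On the other side, applying $s$ to a root $\gamma\in C_{scs}(sv)$ produces a negative root of $C_c(v)$ precisely when $\gamma=\alpha_s$ or when $\gamma$ is negative and $\gamma\neq-\alpha_s$. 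Feeding in the inductive hypothesis for $sv$ (its negative roots are $\set{-\beta_r:r\in\cov(sv)}$, none equal to $-\alpha_s$ because $s\notin\cov(sv)$), the negative roots of $C_c(v)$ come out as
\[ \set{-\beta_{srs}:r\in\cov(sv)}\,\cup\,\big(\set{-\alpha_s}\text{ if }\alpha_s\in C_{scs}(sv)\big). \]
Comparing the two displays, and noting $-\alpha_s\in C_c(v)\iff\alpha_s\in C_{scs}(sv)$, the proposition for $v$ reduces to the single equivalence $-\alpha_s\in C_c(v)\iff s\in\cov(v)$.

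This last equivalence is where I expect the real difficulty to lie, and it does \emph{not} fold into the present induction naively: under the length-reduction branch the test root $\alpha_s$ is replaced by $r\alpha_s$, where $r$ is the initial letter of $scs$, and $r\alpha_s$ is non-simple as soon as $r$ and $s$ fail to commute, so a hypothesis phrased only about simple roots does not close. My plan to resolve this is to strengthen the inductive statement to control all positive roots of $C_c(v)$ at once (proving the companion ``upper wall'' description alongside this ``lower wall'' one), so that the recursion stays closed under $\gamma\mapsto s\gamma$; the only fact ultimately needed is that membership $\alpha_s\in C_{scs}(sv)$ is detected by $v^{-1}sv\in S$. Alternatively, and likely more cleanly, one can bypass the recursion for this single point and read the equivalence directly from the $c$-sorting word description of $C_c(v)$ in~\cite[Section~5]{typefree}, in which the negative roots of $C_c(v)$ are manifestly indexed by the cover reflections of $v$.
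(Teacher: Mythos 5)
A preliminary remark: the paper does not prove Proposition~\ref{lower walls}; it is imported verbatim as \cite[Proposition~5.2]{typefree}, where it is deduced from the non-recursive description of $C_c(v)$ in terms of the $c$-sorting word (the negative elements of $C_c(v)$ are the roots attached to the \emph{unforced skips} of the sorting word, and those skips are matched with the cover reflections of $v$). So there is no in-paper proof to compare against, and your proposal has to be judged as an independent argument.

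Judged that way, it has a genuine gap, and you have located it correctly yourself. The reductions you carry out are sound: the parabolic branch, the identity expressing $\cov(v)$ as $\set{srs:r\in\cov(sv)}$ together with $s$ itself exactly when $v^{-1}sv\in S$, and the sign bookkeeping under $\gamma\mapsto s\gamma$ all check out, and they reduce the proposition to the single equivalence $-\alpha_s\in C_c(v)\iff s\in\cov(v)$ for $s$ initial in $c$. But that equivalence is not a residual technicality: it is the content of the proposition at the first letter, it is of exactly the same nature as the statements this paper must import separately as Propositions~\ref{Cc initial} and~\ref{Cc final}, and, as you observe, it does not close under \eqref{C recur}, because the length-reduction branch replaces the test root $\alpha_s$ by a conjugate that is generally non-simple. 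Of your two repair plans, the first (strengthen the induction to control all of $C_+(v)$) is not yet an argument --- you have not stated the strengthened hypothesis, and the natural candidate descriptions of the positive part of $C_c(v)$ are themselves theorems that \cite{typefree} derives from the sorting-word description rather than from \eqref{C recur}. The second plan (read the equivalence off the $c$-sorting word) would work, but at that point you would be reproducing the argument of \cite{typefree}, which produces the negative roots of $C_c(v)$ already indexed by $\cov(v)$ in one step, making the recursive scaffolding above unnecessary. As it stands, the proposal is a correct reduction plus an IOU for the one step that carries the mathematical weight.
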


\begin{prop}\label{Cc initial}
Let~$s$ be initial in~$c$ and let $v$ be a $c$-sortable element of~$W$ such that~$s$ is a cover reflection of $v$.
Then $\cov(v)=\set{s}\cup\cov(v_{\br{s}})$, and the set of positive roots in $C_c(v)$ is obtained by applying the reflection $s$ to each positive root in $C_{sc}(v_{\br{s}})$.
\end{prop}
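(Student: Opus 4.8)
The plan is to run the recursive definition \eqref{C recur} of the label sets against Proposition~\ref{lower walls}, which identifies the negative roots of a label set with the cover reflections. Since $s$ is a cover reflection of $v$ it is an inversion, so $v\ge s$ and \eqref{C recur} gives $C_c(v)=sC_{scs}(sv)$. Write $u=sv$. Because $s\in\cov(v)$, Proposition~\ref{lower walls} places $-\alpha_s$ in $C_c(v)=sC_{scs}(u)$, whence $\alpha_s\in C_{scs}(u)$ (and, by Proposition~\ref{basis}, $-\alpha_s\notin C_{scs}(u)$). I would record at the outset that $u_{\br s}=v_{\br s}$: since $s$ is a cover reflection, $\inv(sv)=\inv(v)\setminus\set{s}$, and intersecting with $W_{\br s}$ (using $\inv(w_J)=\inv(w)\cap W_J$ and $s\notin W_{\br s}$) gives $\inv(u_{\br s})=\inv(v_{\br s})$. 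By Proposition~\ref{sort para}, $v_{\br s}$ is $(sc)$-sortable in $W_{\br s}$, where $sc$ is the restriction of $c$ to $W_{\br s}$.

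Next I would reduce both assertions to statements purely about $u$. The reflection $s$ preserves the sign of every root except $\alpha_s$, which it negates. Applying this to $C_c(v)=sC_{scs}(u)$ and separating signs, the negative roots of $C_c(v)$ are $\set{-\alpha_s}$ together with $s$ applied to the negative roots of $C_{scs}(u)$, while the positive roots of $C_c(v)$ are $s$ applied to those positive roots of $C_{scs}(u)$ other than $\alpha_s$. Feeding the negative-root statement through Proposition~\ref{lower walls} on both $v$ and $u$, and using $s\beta_{t'}=\beta_{st's}$, yields $\cov(v)=\set{s}\cup s\,\cov(u)\,s$; since neither $s\,\cov(u)\,s$ nor $\cov(u_{\br s})$ contains $s$, part (a) is equivalent to $s\,\cov(u)\,s=\cov(u_{\br s})$. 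The positive-root statement reduces part (b) to the claim that the positive roots of $C_{scs}(u)$ other than $\alpha_s$ coincide with the positive roots of $C_{sc}(u_{\br s})$.

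I would then prove these two reduced claims — call them the Lemma — by induction on $\ell(u)$, peeling off an initial letter $p$ of the restricted Coxeter element $sc$; note $p\in W_{\br s}$ and $p\ne s$. Because $p\in W_{\br s}$, we have $u\ge p\iff u_{\br s}\ge p$, so the two cases of \eqref{C recur} line up on both sides. When $u\not\ge p$, both $u$ and $u_{\br s}$ lie in $W_{\br p}$, each recursion simply adjoins $\set{\alpha_p}$, and the problem descends to rank $n-1$ inside $W_{\br p}$ (in which $s$ is still final in the relevant Coxeter element), closing by induction. When $u\ge p$, conjugation by $p$ shortens both $u$ and $u_{\br s}$; here one checks $(pu)_{\br s}=p\,u_{\br s}$ from the inversion formula $\inv(pu)=p(\inv(u)\setminus\set{p})p$ together with the fact that conjugation by $p$ preserves $W_{\br s}$, and then invokes the length induction. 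Throughout, cover reflections of the parabolic elements $u_{\br s}$ and $v_{\br s}$ may be computed in $W_{\br s}$ or in $W$ interchangeably.

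The main obstacle is the bookkeeping in the case $u\ge p$ when $p$ and $s$ do not commute. Conjugating by $p$ then moves $s$ out of final position in the Coxeter element and sends the distinguished root $\alpha_s$ to $\alpha_s-a_{ps}\alpha_p$, which is no longer simple, so one cannot keep $\alpha_s$ (or the reflection $s$) fixed across the induction. The remedy is to phrase the inductive statement covariantly, tracking a distinguished positive root (initially $\alpha_s$) and its reflection, which transform respectively by $p$ and by conjugation by $p$, so that both sides of the Lemma transform compatibly. Once this invariant is in place, the remaining work is the sign analysis of the second paragraph and routine applications of \eqref{C recur}; the commuting subcase and the case $u\not\ge p$ are immediate.
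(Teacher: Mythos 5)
The paper does not actually prove this proposition; it imports it verbatim as a combination of parts of Propositions~5.1 and~5.4 of \cite{typefree}, where the argument goes through the explicit description of $C_c(v)$ via the $c$-sorting word rather than the recursion \eqref{C recur}. So I am judging your argument on its own terms. Your reduction (the first two paragraphs) is correct and clean: $v\ge s$, $C_c(v)=sC_{scs}(u)$ with $u=sv$, $\alpha_s\in C_{scs}(u)$, $u_{\br s}=v_{\br s}$, and the sign analysis correctly converts the proposition into the two claims $s\cov(u)s=\cov(u_{\br s})$ and $\mathrm{pos}(C_{scs}(u))\setminus\set{\alpha_s}=\mathrm{pos}(C_{sc}(u_{\br s}))$.

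The gap is in the third paragraph: the inductive ``Lemma'' is never stated in a form that closes under the recursion, and the proposed remedy (track the distinguished root $\alpha_s$ and its reflection covariantly) does not address the real source of non-covariance. The statement you must propagate applies $s$ to the \emph{positive} labels and not to the negative ones, and the positive/negative split is exactly what left-multiplication by $p$ destroys: $p$ flips the sign of $\alpha_p$ and nothing else. Concretely, in the case $u\ge p$, writing $c_1=p(scs)p$, the claim for $u$ unwinds to the covariant claim for $pu$ \emph{plus} the extra assertion that $-\alpha_p\in C_{c_1}(pu)$ if and only if $-\alpha_p\in C_{p(sc)p}((pu)_{\br s})$ --- i.e.\ that $p\in\cov(pu)$ iff $p\in\cov((pu)_{\br s})$. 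This does not follow from your inductive hypothesis: the cover-reflection half of the Lemma transforms by conjugation by $psp$ (not by $p$), since $\cov$ is built from \emph{right} descents and does not conjugate under left multiplication by $p$; one only gets $\cov(w_J)\supseteq\cov(w)\cap W_J$ for free, and the reverse inclusion is the hard direction. Moreover the cover-reflection claim $s\cov(u)s=\cov(u_{\br s})$ is simply false without the hypothesis $\alpha_s\in C_{scs}(u)$ (in type $A_2$ with $s=s_1$ and $u=s_2$ one has $s_1\cov(s_2)s_1=\set{s_1s_2s_1}\neq\set{s_2}=\cov\bigl((s_2)_{\br{s_1}}\bigr)$), yet your sketch never indicates where that hypothesis is consumed in the inductive step. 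Until the covariant inductive statement is written down explicitly --- including the rule for which labels receive the (conjugated) reflection, which after $k$ peeling steps depends on membership in $w\Phi^-$ for the accumulated product $w$ of peeled letters, not merely on the sign of the label --- and the $\pm\alpha_p$ bookkeeping is shown to close, this is a missing proof of the main point rather than routine bookkeeping.
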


\begin{prop}\label{Cc final}
Let~$s$ be final in~$c$ and let $v$ be $c$-sortable with $v \geq s$.
Then $v=s\join v_{\br{s}}$ and $C_c(v)=C_{cs}(v_{\br{s}})\cup\set{-\alpha_s}$.
\end{prop}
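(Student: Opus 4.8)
The plan is to establish the label identity $C_c(v)=C_{cs}(v_{\br s})\cup\set{-\alpha_s}$ first and then to read off $v=s\join v_{\br s}$ from it. Two preliminary observations set the stage: since $s$ is final in $c$, a reduced word for $c$ ends in $s$, so $cs$ is the restriction of $c$ to $W_{\br s}$ and is a Coxeter element of $W_{\br s}$; by Proposition~\ref{sort para}, $v_{\br s}$ is $cs$-sortable, so $C_{cs}(v_{\br s})$ is defined. Granting the label identity, the join identity is quick: $s$ and $v_{\br s}$ are $c$-sortable (the latter by Proposition~\ref{sort para easy}) and both lie below $v$, so $w:=s\join v_{\br s}$ exists, is $c$-sortable, and satisfies $w\le v$ by Theorem~\ref{meets and joins}; projecting $v_{\br s}\le w\le v$ gives $w_{\br s}=v_{\br s}$, so the label identity, applied to $v$ and (by induction, if $w<v$) to $w$, forces $C_c(w)=C_c(v)$, whence $w=v$ by injectivity of the Cambrian framework (Theorem~\ref{camb good}).

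I would prove the label identity by induction on $\ell(v)$ and the rank $n$, peeling off an initial letter $r$ of $c$ via the recursion~\eqref{C recur}. If $r=s$, then $s$ is simultaneously a source and a sink, hence isolated in the diagram, so $W=\br{s}\times W_{\br s}$ and the identity drops out of~\eqref{C recur} directly. Assume $r\neq s$. If $v\not\ge r$, then $v\in W_{\br r}$ is $(rc)$-sortable and $s$ is still final in the restriction $rc$; applying the inductive hypothesis inside $W_{\br r}$ and reassembling through $C_c(v)=C_{rc}(v)\cup\set{\alpha_r}$, together with one further application of~\eqref{C recur} inside $W_{\br s}$ with initial letter $r$ (legitimate because $v\in W_{\br r}$ gives $v_{\br s}=v_{S\setminus\set{r,s}}$), yields the claim.

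The case $v\ge r$ is the heart of the matter, since here $C_c(v)=r\,C_{rcr}(rv)$ with $rv$ shorter and $rcr$-sortable. When $r$ and $s$ commute, $s$ is still final in $rcr$ and $rv\ge s$, so the inductive hypothesis applies to $rv$; conjugating back by $r$ (using $r\alpha_s=\alpha_s$) and reconciling with~\eqref{C recur} in $W_{\br s}$—where $r(cs)r=(rcr)s$ and $r\,v_{\br s}=(rv)_{\br s}$—closes this subcase. The genuine obstacle is when $r$ and $s$ are adjacent: then $s$ is no longer final in $rcr$, so the recursion does not reduce to the same statement, and I expect this to be where the real work lies. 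This configuration is, however, tightly constrained: $v\ge r$ and $v\ge s$ with $r,s$ non-commuting forces $W_{\set{r,s}}$ to be finite and $v_{\set{r,s}}$ to be its longest element. I would dispatch it by a direct rank-two analysis—examining the rank-two cycle attached to the codimension-two face cut out by $W_{\set{r,s}}$ (Propositions~\ref{rk2cyc poly} and~\ref{rk2cyc reflection}), or equivalently by invoking the non-recursive $c$-sorting-word description of $C_c(v)$, which treats the final letter $s$ symmetrically. In every case the upshot is that $-\alpha_s$ appears as the lower-facet label at $v$, consistently with $s\in\cov(v)$ via Proposition~\ref{lower walls}.
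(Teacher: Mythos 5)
The paper does not actually prove Proposition~\ref{Cc final}: it is imported from \cite{typefree} as ``the combination of parts of Proposition~5.1 and Proposition~5.3'' there, so there is no in-paper argument to measure yours against; what can be judged is whether your blind attempt stands on its own, and it does not. The architecture is reasonable: deducing $v=s\join v_{\br{s}}$ from the label identity via injectivity of $(\Camb_c,C_c)$ (Theorem~\ref{camb good}) is sound, and the three easy branches of your induction --- $r=s$ (so $s$ is isolated), $v\not\ge r$, and $v\ge r$ with $r$ and $s$ commuting --- go through as sketched, modulo routine verifications such as $(rv)_{\br{s}}=r(v_{\br{s}})$ and $r\alpha_s=\alpha_s$.

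The genuine gap is exactly the case you flag and then do not close: $r$ initial, $s$ final, $rs\neq sr$, $v\ge r$. Since $s$ is no longer final in $rcr$, the induction truly breaks there, and neither proposed repair is a proof. Propositions~\ref{rk2cyc poly} and~\ref{rk2cyc reflection} are stated only for \emph{complete} well-connected polyhedral frameworks, and $(\Camb_c,C_c)$ is not complete when $W$ is infinite (it has half-edges); even where applicable, they describe the incidence structure of cones around a codimension-$2$ face, not the values of the labels, so at best they constrain the rank-two cycle through $v$ without showing that $C_c(v)\setminus\set{-\alpha_s}$ equals $C_{cs}(v_{\br{s}})$. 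Likewise, Proposition~\ref{lower walls} yields $-\alpha_s\in C_c(v)$ only after you know $s\in\cov(v)$, which does not follow from $s\in\inv(v)$ and is itself part of what must be proved; and even granting it, the substance of the statement is the matching of the remaining $n-1$ labels with $C_{cs}(v_{\br{s}})$, which your rank-two analysis does not address. Finally, invoking the non-recursive $c$-sorting-word description of $C_c(v)$ is in effect invoking \cite{typefree}, where that machinery lives and where this proposition is actually proved; it is not developed in the present paper. So the hard case remains open in your write-up.
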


We will also need a certain downward projection map from $W$ to $c$-sortable elements in $W$. 
Given $w\in W$, by \mbox{\cite[Corollary~6.2]{typefree}}, there is an element $\pidown^c(w)$ that is maximal among the $c$-sortable elements that are $\leq w$.
The map $\pidown^c$ lets us describe the cover relations in $\Camb_c$.
The description is proved as \cite[Lemma~5.9]{framework}. 
\begin{lemma}\label{Cambrian covers}
Let $v$ be a $c$-sortable element.
Then $v'\covered v$ in the $c$-Cambrian semilattice if and only if $v'=\pidown^c(tv)$ for some $t\in\cov(v)$.
Furthermore if $t_1$ and $t_2$ are distinct cover reflections of $v$, then $\pidown^c(t_1v)\neq\pidown^c(t_2v)$.
\end{lemma}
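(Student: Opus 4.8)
The plan is to prove the two implications separately and then the injectivity clause, throughout using only the defining property of the downward projection from \cite[Corollary~6.2]{typefree}, as recalled in the excerpt: $\pidown^c(w)$ is the largest $c$-sortable element weakly below $w$, the map $\pidown^c$ is order-preserving, and it fixes $c$-sortable elements. I will also freely use the standard characterization $u\le w$ in the right weak order if and only if $\inv(u)\subseteq\inv(w)$, together with $\inv(tw)=\inv(w)\setminus\set{t}$ for a cover reflection $t$.

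First I would handle the (easy) forward direction. Suppose $v'\covered v$ in $\Camb_c$. Since right weak order intervals are graded by length, choose a saturated chain $v'=u_0\lessdot u_1\lessdot\cdots\lessdot u_k=v$ in the full weak order. The top step is a right weak cover $u_{k-1}=vs\lessdot v$ for some $s\in S$, so the reflection $t=vsv^{-1}$ satisfies $tv=vs=u_{k-1}$ and lies in $\cov(v)$, and $v'\le u_{k-1}=tv$. As $v'$ is $c$-sortable and weakly below $tv$, maximality gives $v'\le\pidown^c(tv)$; but $\pidown^c(tv)$ is itself a $c$-sortable element with $v'\le\pidown^c(tv)\le tv<v$. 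Since $v'\covered v$ leaves no $c$-sortable element strictly between $v'$ and $v$, we conclude $\pidown^c(tv)=v'$.

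For the converse, fix $t\in\cov(v)$ and set $v'=\pidown^c(tv)$, so that $v'\le tv<v$ with $v'$ $c$-sortable and $v'<v$. I must exclude any $c$-sortable $u$ with $v'<u<v$. Comparing inversion sets and using $\inv(tv)=\inv(v)\setminus\set{t}$, either $\inv(u)\subseteq\inv(tv)$, forcing $u\le tv$ and hence $u\le\pidown^c(tv)=v'$, a contradiction; or else $t\in\inv(u)$. All the difficulty is in this second case, where $u$ ``contains'' the cover reflection $t$ yet still lies strictly below $v$; I expect this to be the main obstacle, as nothing about the weak order alone rules it out. To dispose of it I would induct on the rank $n$ of $W$ and on $\ell(v)$, following the initial-letter recursion that defines $c$-sortable elements. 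Let $s$ be initial in $c$. If $v\not\ge s$, then $v$ and all elements below it lie in $W_{\br s}$, where $c$-sortability coincides with $sc$-sortability by Proposition~\ref{sort para easy}, where $\cov(v)$ and $\pidown^c$ restrict to their $W_{\br s}$-analogues, and where the claim holds by the rank induction. If $v\ge s$, then $sv$ is $scs$-sortable of strictly smaller length, and I would transport the statement across the bijection of Theorem~\ref{c to scs}, using Proposition~\ref{Cc initial} to record how $\cov(v)$ and the positive labels of $C_c(v)$ behave (in particular whether $s$ is itself a cover reflection of $v$) and checking that $\pidown^c$ intertwines with $\pidown^{scs}$. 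Reconciling the change of Coxeter element $c\to scs$ with multiplication by $s$, which does not preserve the right weak order, is the technical heart of the argument.

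Finally, for injectivity I would run the same induction in parallel: the rank-reduction case is immediate from the inductive hypothesis in $W_{\br s}$, and in the length-reduction case distinct cover reflections $t_1\ne t_2$ of $v$ transport to distinct data for $sv$, keeping $\pidown^c(t_1v)\ne\pidown^c(t_2v)$. Alternatively, once the equivalence is established, injectivity follows from the labelling: by Lemma~\ref{cov beta part 1} each down-cover of $v$ carries a negative root of $C_c(v)$, and by Proposition~\ref{lower walls} these negative roots are exactly $\set{-\beta_t:t\in\cov(v)}$, so that $t\mapsto\pidown^c(tv)$ cannot identify two distinct cover reflections — provided one checks that the root attached by Lemma~\ref{cov beta part 1} to the down-cover $\pidown^c(tv)$ is indeed $\beta_t$.
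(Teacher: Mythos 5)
First, a caveat: this paper does not actually prove Lemma~\ref{Cambrian covers}; it quotes it from \cite[Lemma~5.9]{framework}, so there is no in-paper argument to compare yours against. Judged on its own terms, your proposal is not yet a proof. The forward implication is correct and complete: a saturated chain in the weak order from $v'$ up to $v$ ends in a cover $tv\covered v$ with $t\in\cov(v)$, and then $v'\le\pidown^c(tv)<v$ forces $v'=\pidown^c(tv)$ because $v'\covered v$ in $\Camb_c$. The gap is in the converse, which is the substantive half of the lemma. You correctly reduce it to excluding a $c$-sortable $u$ with $\pidown^c(tv)<u<v$ and $t\in\inv(u)$, and you correctly observe that the weak order alone cannot rule this out --- sortability must enter. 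But at that point you only announce an induction on rank and length following the initial-letter recursion, and you yourself label its central step (showing that $\pidown^c$ intertwines with $\pidown^{scs}$ across the bijection of Theorem~\ref{c to scs}, and reconciling $c\mapsto scs$ with left multiplication by $s$, which does not preserve the weak order) as the unresolved ``technical heart.'' That step is exactly where the content of the lemma lives: one needs precise commutation statements (for $s$ initial and $w\ge s$, how $\pidown^{scs}(sw)$ relates to $s\,\pidown^c(w)$, together with a case analysis of whether $t=s$ or $t$ conjugates into $W_{\br{s}}$), none of which is stated or proved. As written, the hard case is identified but not disposed of.

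The distinctness claim has the same status. Your first route defers it to the same uncarried-out induction. Your second route, via Lemma~\ref{cov beta part 1} and Proposition~\ref{lower walls}, hinges on the check you flag at the end --- that the negative root attached to the down-cover $\pidown^c(tv)$ is $-\beta_t$. That check is precisely Lemma~\ref{cov beta part 2}, which in \cite{framework} is part of the same Lemma~5.11 whose proof is built on the cover description you are in the middle of proving; invoking it here risks circularity unless you establish it independently. In short: your reductions and the overall inductive strategy are the right ones, but the ``if'' direction and the injectivity statement are asserted rather than proved.
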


Recall that Lemma~\ref{cov beta part 1} is part of \cite[Lemma~5.11]{framework}.  
The rest of \mbox{\cite[Lemma~5.11]{framework}} gives some more detail:  
\begin{lemma}\label{cov beta part 2}
If $v'\covered v$ in the $c$-Cambrian semilattice, then the unique root $\beta$ such that $\beta\in C_c(v')$ and $-\beta\in C_c(v)$ is the positive root $\beta_t$ associated to the cover reflection $t$ of $v$ such that $v'=\pidown^c(tv)$.
\end{lemma}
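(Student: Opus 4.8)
The plan is to combine Lemma~\ref{cov beta part 1} (which provides the existence and uniqueness of the positive root $\beta$ with $\beta \in C_c(v')$ and $-\beta \in C_c(v)$) with the explicit description of cover relations from Lemma~\ref{Cambrian covers} and the characterization of the negative labels from Proposition~\ref{lower walls}. Since Lemma~\ref{Cambrian covers} tells us that each cover $v' \covered v$ arises as $v' = \pidown^c(tv)$ for a unique cover reflection $t \in \cov(v)$, and since that correspondence $t \mapsto \pidown^c(tv)$ is a bijection onto the lower covers of $v$, the task reduces to identifying \emph{which} positive root $\beta_t$ the given edge produces. So the real content is to show that the distinguished root $\beta$ attached to the edge $v' \covered v$ is precisely $\beta_t$, where $t$ is the cover reflection indexing that edge.

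First I would invoke Proposition~\ref{lower walls}, which says the set of negative roots in $C_c(v)$ is exactly $\set{-\beta_t : t \in \cov(v)}$. This already shows that the root $-\beta$ in $C_c(v)$ from Lemma~\ref{cov beta part 1} equals $-\beta_{t'}$ for some cover reflection $t' \in \cov(v)$; equivalently $\beta = \beta_{t'}$. It remains to verify that $t' = t$, i.e.\ that the cover reflection determining the negative label $-\beta$ on the edge is the same cover reflection that Lemma~\ref{Cambrian covers} associates to the cover $v' = \pidown^c(tv)$. Because Lemma~\ref{Cambrian covers} guarantees the map $t \mapsto \pidown^c(tv)$ is injective on $\cov(v)$, there is exactly one cover reflection producing the vertex $v'$, so it suffices to match the two indexings by a single compatible computation rather than to rule out coincidences separately.

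The cleanest route to the matching, I expect, is induction on the rank $n$ and the length of $v$, mirroring the recursive definition \eqref{C recur} of $C_c(v)$ and the recursive structure of $c$-sortability. I would fix $s$ initial in $c$ and split into the two cases of \eqref{C recur}. In the case $v \not\ge s$, the element $v$ lies in $W_{\br{s}}$ and is $sc$-sortable (by the sortability recursion), the label set is $C_{sc}(v) \cup \set{\alpha_s}$, and the cover reflections and the downward projection behave compatibly with passing to the parabolic $W_{\br{s}}$ (using $\inv(w_J) = \inv(w) \cap W_J$ and Propositions~\ref{sort para easy}--\ref{sort para}); here the inductive hypothesis on lower rank applies. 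In the case $v \ge s$, the label set is $s\, C_{scs}(sv)$ and $sv$ is $scs$-sortable of strictly smaller length, so I would transport the statement for $sv$ through left-multiplication by $s$, tracking how cover reflections, the root $\beta_t$, and $\pidown^c$ transform under the bijection of Theorem~\ref{c to scs}. The main obstacle will be the bookkeeping in this second case: I must check that applying $s$ carries the cover reflection $t$ of $v$ to the correct cover reflection of $sv$ (conjugation $t \mapsto sts$ when $t \ne s$) and that this is exactly compatible with $\pidown^c$ versus $\pidown^{scs}$ under the same bijection, so that the indexing of edges by cover reflections is preserved. Once that compatibility is pinned down, the positive-root identity $\beta = \beta_t$ follows, and the base case (the trivial or rank-one group, where the claim is immediate) closes the induction.
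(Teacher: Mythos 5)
The paper does not actually prove this lemma: it is imported verbatim as ``the rest of'' Lemma~5.11 of \cite{framework}, so there is no internal proof to compare against. Judged on its own, your strategy is viable: reducing via Proposition~\ref{lower walls} and Lemma~\ref{Cambrian covers} to matching the two bijections out of $\cov(v)$, and then inducting on rank and length along the recursion \eqref{C recur}, does work. The crux you flag but do not resolve is real, though: in the case $v\ge s$ you need the identity $\pidown^c(w)=s\,\pidown^{scs}(sw)$ for $w\ge s$ (applied to $w=tv$), so that $v'=\pidown^c(tv)$ corresponds under Theorem~\ref{c to scs} to the lower cover $\pidown^{scs}(sts\cdot sv)$ of $sv$ in $\Camb_{scs}$. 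This is provable from Theorem~\ref{order preserving}, Theorem~\ref{c to scs}, and the fact that $s\in\inv(tv)$ when $t\ne s$, but it is a genuine lemma that must be established, not mere bookkeeping. You also need to treat $t=s$ separately (there $sv\not\ge s$, hence $v'=\pidown^c(sv)\not\ge s$, so $\alpha_s\in C_c(v')$ by \eqref{C recur} and uniqueness in Lemma~\ref{cov beta part 1} finishes it); your parenthetical ``conjugation when $t\ne s$'' acknowledges this case exists but does not handle it.

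It is worth noting that a shorter, non-inductive argument is available entirely from results already quoted in this paper. By Proposition~\ref{lower walls}, $-\beta_t\in C_c(v)$. By Theorem~\ref{pidown fibers}, $vD\subseteq\Cone_c(v)$ and $tvD\subseteq\Cone_c(v')$; since $vD$ and $tvD$ share an $(n-1)$-dimensional face lying in $\beta_t^\perp$, Theorem~\ref{camb fan} forces $\Cone_c(v)\cap\Cone_c(v')$ to be a common facet contained in $\beta_t^\perp$, so $\pm\beta_t\in C_c(v')$. Finally $t\notin\inv(v')$ because $v'\le tv$ and $t\notin\inv(tv)$, so $t\notin\cov(v')$ and Proposition~\ref{lower walls} rules out $-\beta_t\in C_c(v')$; hence $\beta_t\in C_c(v')$ and the uniqueness in Lemma~\ref{cov beta part 1} gives $\beta=\beta_t$. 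Your inductive route buys nothing over this except independence from the geometric results, at the cost of the unproved $\pidown$-compatibility.
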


The following theorem is \cite[Theorem~6.1]{typefree}.
\begin{theorem} \label{order preserving}
$\pidown^c$ is order preserving.
\end{theorem}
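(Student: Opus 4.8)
The plan is to reduce the statement to the monotonicity of joins in the weak order, using Theorem~\ref{meets and joins} to supply the crucial input. The key observation is that $\pidown^c(w)$ admits a description as a join. For $w\in W$, set $U(w)=\set{v:v\text{ is }c\text{-sortable and }v\le w}$. This set is nonempty, since it contains the identity, and it is bounded above by $w$. Since the weak order is a meet-semilattice in which every set with an upper bound has a join, $\Join U(w)$ exists in $W$. By Theorem~\ref{meets and joins}, $\Join U(w)$ is again $c$-sortable, and because $w$ is an upper bound for $U(w)$ we have $\Join U(w)\le w$. Thus $\Join U(w)$ lies in $U(w)$ and is its greatest element; by \cite[Corollary~6.2]{typefree} this greatest element is exactly $\pidown^c(w)$. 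I would first establish this join characterization, as it converts the problem into pure order theory.

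Granting the characterization, order preservation falls out immediately. Suppose $w_1\le w_2$ in the weak order. Every $c$-sortable element below $w_1$ is then also below $w_2$, so $U(w_1)\subseteq U(w_2)$. The element $\pidown^c(w_2)=\Join U(w_2)$ is an upper bound for the larger set $U(w_2)$, hence also for the smaller set $U(w_1)$. Since $\pidown^c(w_1)=\Join U(w_1)$ is by definition the least upper bound of $U(w_1)$, we conclude $\pidown^c(w_1)\le\pidown^c(w_2)$, which is the desired conclusion.

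The only point that genuinely requires justification is the existence of the joins $\Join U(w)$, and this is handled by the boundedness of $U(w)$ by $w$ together with the semilattice facts recalled about the weak order; its $c$-sortability is then exactly the join assertion of Theorem~\ref{meets and joins}. I do not expect a serious obstacle here, since the combinatorial substance of the result is already packaged into Theorem~\ref{meets and joins} and the lattice-theoretic structure of the weak order, and the remaining ingredient is the elementary fact that the join of a subset is dominated by the join of any bounded superset. Consequently the main effort lies entirely in correctly identifying $\pidown^c(w)$ with $\Join U(w)$, after which the monotonicity argument is routine.
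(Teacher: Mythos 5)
Your order-theoretic deduction is internally correct: granting that $\pidown^c(w)=\Join\set{v:v\mbox{ is $c$-sortable},\ v\le w}$, monotonicity of $\pidown^c$ follows exactly as you say, and the identification itself is immediate from Theorem~\ref{meets and joins} together with the paper's definition of $\pidown^c(w)$ as the maximal $c$-sortable element below $w$. The problem is that the argument is circular with respect to the actual logical development. The paper offers no proof of Theorem~\ref{order preserving} at all---it is quoted verbatim as \cite[Theorem~6.1]{typefree}---and Theorem~\ref{meets and joins}, your sole nontrivial input, is \cite[Theorem~7.1]{typefree}, whose proof in that reference \emph{uses} Theorem~6.1. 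Concretely, the join assertion there is established by observing that for each $v\in U$ one has $v=\pidown^c(v)\le\pidown^c\bigl(\Join U\bigr)\le\Join U$ by order preservation, so that $\pidown^c\bigl(\Join U\bigr)$ is an upper bound for $U$ below the least upper bound and hence equals $\Join U$, proving $\Join U$ is $c$-sortable. So you have derived Theorem~\ref{order preserving} from a statement that is itself derived from Theorem~\ref{order preserving}; as a self-contained proof it establishes only that the two quoted theorems are (easily) equivalent, not that either holds.

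An independent proof cannot avoid engaging with the recursive structure of sortability. In \cite{typefree}, $\pidown^c$ is first defined by the recursion $\pidown^c(w)=s\,\pidown^{scs}(sw)$ if $w\ge s$ and $\pidown^c(w)=\pidown^{sc}(w_{\br{s}})$ if $w\not\ge s$ (for $s$ initial in $c$), order preservation is proved by a double induction on length and rank through a case analysis on how a cover $w\covered ws'$ interacts with the initial letter $s$, and only afterwards is $\pidown^c(w)$ identified with the maximum $c$-sortable element below $w$ and the semilattice properties deduced. If you want a genuine proof rather than a citation, that induction is the content you would need to supply.
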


Let $D$ be the cone $\bigcap_{s \in S} \set{x\in V^*: \br{x,\alpha_s}\ge 0}$.
The cones $wD$, for $w\in W$, are distinct, and form a fan whose support is called the \newword{Tits cone}.  
(Here the action of $W$ on $V^*$ is dual to its action on $V$.)

The following theorem is \cite[Theorem~6.3]{typefree}.
\begin{theorem} \label{pidown fibers}
Let $v$ be $c$-sortable. Then $\pidown^c(w)=v$ if and only if $w D \subseteq \Cone_c(v)$.
\end{theorem}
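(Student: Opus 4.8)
The plan is to prove both implications at once by induction on the rank $n$ of $W$ and on $\ell(w)$, mirroring the recursive structure of $c$-sortable elements, of their labels $C_c(v)$, and of the cones $\Cone_c(v)$. The geometric engine is the hyperplane $H_s=\alpha_s^\perp$ for a letter $s$ that is initial in $c$. Three facts drive the argument. First, $w\ge s$ is equivalent to $\ell(sw)=\ell(w)-1$, which dualizes to the statement that the chamber $wD$ lies in the closed halfspace $\set{x:\br{x,\alpha_s}\le0}$, while $D$, and hence every $wD$ with $w\not\ge s$, lies in $\set{x:\br{x,\alpha_s}\ge0}$. Second, the label recursion~\eqref{C recur} dualizes to $\Cone_c(v)=\Cone_{sc}(v)\cap\set{x:\br{x,\alpha_s}\ge0}$ when $v\not\ge s$, and to $\Cone_c(v)=s\cdot\Cone_{scs}(sv)$ when $v\ge s$; in particular the former confines $\Cone_c(v)$ to the nonnegative halfspace of $H_s$ and the latter to the negative one. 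Third, once the forward implication $\pidown^c(w)=v\Rightarrow wD\subseteq\Cone_c(v)$ is proved for every $w$, totality of $\pidown^c$ shows the Cambrian cones cover every chamber, so by Theorem~\ref{camb fan} each $wD$ lies in a \emph{unique} maximal Cambrian cone and the converse implication is automatic; thus there is no circularity in speaking of ``the $v$ with $wD\subseteq\Cone_c(v)$.''

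First I would match the cases on the two sides. Since $\pidown^c(w)$ is the $c$-sortable element $\Join\set{u\le w:u\text{ is }c\text{-sortable}}$ (using Theorem~\ref{meets and joins}) and $s$ is itself $c$-sortable, $w\ge s$ forces $\pidown^c(w)\ge s$, while $\pidown^c(w)\le w$ gives the reverse, so $w\ge s\iff\pidown^c(w)\ge s$. The separation facts above give the same dichotomy geometrically: a full-dimensional chamber cannot sit inside a cone confined to the opposite closed halfspace, so $wD\subseteq\Cone_c(v)$ can hold only when $v\ge s$ and $w\ge s$ have the same truth value. Hence it suffices to treat the two cases separately.

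In the case $w\not\ge s$ (so $v\not\ge s$), I would use the identity $\pidown^c(w)=\pidown^{sc}(w_{\br{s}})$: every $c$-sortable $u\le w$ satisfies $u\not\ge s$, hence lies in $W_{\br{s}}$, is $sc$-sortable, and lies below $w_{\br{s}}$, so taking joins identifies the two projections. By the inductive hypothesis in the rank-$(n-1)$ group $W_{\br{s}}$, this equality is the assertion that $w_{\br{s}}$'s chamber lies in the $sc$-Cambrian cone of $v$ computed inside $W_{\br{s}}$; the halfspace description $\Cone_c(v)=\Cone_{sc}(v)\cap\set{x:\br{x,\alpha_s}\ge0}$ together with the standard parabolic decomposition of the Tits cone lifts this to $wD\subseteq\Cone_c(v)$. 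In the case $w\ge s$ (so $v\ge s$), applying the reflection $s$ turns $\Cone_c(v)=s\cdot\Cone_{scs}(sv)$ into the equivalence $wD\subseteq\Cone_c(v)\iff(sw)D\subseteq\Cone_{scs}(sv)$; since $\ell(sw)<\ell(w)$, the inductive hypothesis for $scs$ rewrites the right side as $\pidown^{scs}(sw)=sv$. It then remains to transport this back across the bijection $v\mapsto sv$ of Theorem~\ref{c to scs}, verifying via order-preservation (Theorem~\ref{order preserving}) that $\pidown^{scs}(sw)=s\,\pidown^c(w)$ whenever $w\ge s$.

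I expect the main obstacle to be the two separation statements---that $v\not\ge s$ confines $\Cone_c(v)$ to the nonnegative halfspace of $H_s$ and $v\ge s$ to the negative one---and the bookkeeping for chambers and cones that actually meet $H_s$. The recursion~\eqref{C recur} gives the confinement painlessly when $s$ is a cover reflection of $v$, since then $\pm\alpha_s\in C_c(v)$ by Proposition~\ref{lower walls}; but in general $\alpha_s$ need not be a label, and the confinement must be extracted from the full recursive description, with Proposition~\ref{Cc final} supplying the companion statement for the final-letter side that appears after passing from $c$ to $scs$. A secondary difficulty is the parabolic lifting in the $w\not\ge s$ case: translating ``$w_{\br{s}}$'s chamber lies in the parabolic cone'' into a statement about the full $n$-dimensional chamber $wD$ requires the standard but slightly delicate description of how the Coxeter fan restricts to the facet of $D$ in $H_s$ and fibers over $W_{\br{s}}$.
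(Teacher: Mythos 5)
The first thing to say is that this paper does not prove the statement at all: Theorem~\ref{pidown fibers} is quoted verbatim as Theorem~6.3 of \cite{typefree}, so there is no in-paper proof to compare yours against. Judged on its own terms, your inductive skeleton (induct on rank and on $\ell(w)$, split on $w\ge s$ versus $w\not\ge s$ for $s$ initial in $c$, use $\pidown^c(w)=\pidown^{sc}(w_{\br{s}})$ in one case and $\pidown^{scs}(sw)=s\,\pidown^c(w)$ in the other) is sound and close in spirit to the argument in \cite{typefree}, and the \emph{forward} implication does go through as you describe: $\inv(w_{\br{s}})=\inv(w)\cap\Phi_{\br{s}}$ gives $\Proj_{\br{s}}(wD)\subseteq w_{\br{s}}D_{\br{s}}$, which handles the parabolic lifting when $w\not\ge s$, and the case $w\ge s$ is a clean application of $\Cone_c(v)=s\,\Cone_{scs}(sv)$.

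The gaps are concentrated exactly where you predicted, and you have not closed them. First, the confinement statement for $v\ge s$ (that $\Cone_c(v)$ lies weakly on the non-$D$ side of $\alpha_s^\perp$) does not fall out of \eqref{C recur}: after one application of the recursion you are looking at $\Cone_{scs}(sv)$ with $s$ now \emph{final} in $scs$ and $sv\not\ge s$, a configuration about which neither \eqref{C recur} nor Proposition~\ref{Cc final} says anything concerning $\alpha_s$, and a further naive induction breaks because $u\not\ge s$ does not imply $s'u\not\ge s$ for $s'$ initial. This is precisely why the present paper proves Lemma~\ref{AboveBelow} by an entirely different route ($\g$-vector sign-coherence, via Propositions~\ref{extreme} and~\ref{camb g sign-coherent}) --- and note that the proof of Lemma~\ref{AboveBelow} given here itself invokes Theorem~\ref{pidown fibers}, so you cannot borrow it. Second, your fallback for the converse implication, Theorem~\ref{camb fan}, carries the same circularity: the fan property of the Cambrian cones is established in \cite{typefree} and \cite{framework} \emph{using} Theorem~6.3 of \cite{typefree}, i.e.\ using the statement you are trying to prove. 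Strip out both of these inputs and what remains is a correct proof of only the forward implication; finishing requires an independent argument for either the separation needed to match cases or the uniqueness of the maximal cone containing $wD$, and that is the real content of the theorem as proved in \cite{typefree}. (A smaller point: $\pidown^{scs}(sw)=s\,\pidown^c(w)$ for $w\ge s$ does not follow from order-preservation alone; it needs the bijection of Theorem~\ref{c to scs} together with the fact that left multiplication by $s$ carries $\set{u: s\le u\le w}$ order-isomorphically onto $\set{u': u'\le sw}$.)
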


We record the following simple observation.
\begin{prop}\label{pidown commutes}
If $w\in W$ and $J\subseteq S$, then $\pidown^c(w_J)=(\pidown^c(w))_J$.
\end{prop}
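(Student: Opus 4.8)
The plan is to prove that $\pidown^c(w_J)=(\pidown^c(w))_J$ by reducing the equality of two $c'$-sortable elements of $W_J$ to a comparison of their defining extremal properties, where $c'$ is the restriction of $c$ to $W_J$. First I would record that both sides do lie in $W_J$ and are $c'$-sortable: the left side is a $\pidown^c$-image, hence $c$-sortable, and it lies in $W_J$ because $w_J\in W_J$; by Proposition~\ref{sort para easy} it is therefore $c'$-sortable. The right side is $c$-sortable (as a $\pidown^c$-image), so by Proposition~\ref{sort para} its projection $(\pidown^c(w))_J$ is $c'$-sortable in $W_J$. Thus both sides are honest $c'$-sortable elements of $W_J$, and I may hope to identify them via the characterizing maximality property of $\pidown$.

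Next I would invoke the defining property of $\pidown^{c'}$ on $W_J$: for $u\in W_J$, the element $\pidown^{c'}(u)$ is the largest $c'$-sortable element of $W_J$ below $u$ in the weak order. Since $w_J$ is the largest element of $W_J$ below $w$, and since weak-order intervals in $W_J$ inherit from $W$, I expect to show $\pidown^c(w_J)=\pidown^{c'}(w_J)$: indeed any $c'$-sortable element of $W_J$ below $w_J$ is $c$-sortable and below $w$, so is below $\pidown^c(w)$ and hence below its projection. This reduces the statement to proving $\pidown^{c'}(w_J)=(\pidown^c(w))_J$ entirely inside $W_J$, or, even more cleanly, to the identity $(\pidown^c(w))_J=\pidown^{c'}(w_J)$. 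The key combinatorial fact I would exploit is the compatibility of projection-to-$W_J$ with inversion sets, namely $\inv(u_J)=\inv(u)\cap W_J$, recorded just before Proposition~\ref{lower walls}, which lets me translate the weak-order maximality statements into statements about inversion sets. Writing $x=\pidown^c(w)$, the element $x_J$ has inversion set $\inv(x)\cap W_J$, and I would argue this equals the inversion set of the maximal $c'$-sortable element of $W_J$ below $w_J$.

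The cleanest route to the final identity is to use the fiber description in Theorem~\ref{pidown fibers}, which says $\pidown^c(w)=v$ exactly when $wD\subseteq\Cone_c(v)$, together with the parabolic behavior of sortable elements. I would show that $(\pidown^c(w))_J$ is the unique $c'$-sortable element $v'\in W_J$ whose Cambrian cone (in the root system $\Phi_J$) contains $w_JD_J$, and compare this with the characterization of $\pidown^{c'}(w_J)$. Concretely, I would combine Theorem~\ref{order preserving} (that $\pidown^c$ is order preserving) with the maximality characterizations: since $w_J\le w$, order-preservation gives $\pidown^c(w_J)\le\pidown^c(w)$, and since $\pidown^c(w_J)\in W_J$ we get $\pidown^c(w_J)\le(\pidown^c(w))_J$; conversely $(\pidown^c(w))_J$ is a $c'$-sortable element of $W_J$ (by Proposition~\ref{sort para}) lying below $w_J$, hence below the maximal such element $\pidown^{c'}(w_J)=\pidown^c(w_J)$. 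The two inequalities give equality.

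The main obstacle I anticipate is the bookkeeping around the two distinct projections $\pidown^c$ (into $c$-sortable elements of $W$) and $\pidown^{c'}$ (into $c'$-sortable elements of $W_J$): one must verify carefully that, restricted to $W_J$, these agree, i.e.\ that $\pidown^c|_{W_J}=\pidown^{c'}$. This hinges on the fact that $c'$-sortability in $W_J$ coincides with $c$-sortability for elements of $W_J$ (Proposition~\ref{sort para_easy}, i.e.\ Proposition~\ref{sort para easy}) and that the weak order on $W_J$ is the restriction of the weak order on $W$, so that ``maximal $c$-sortable element of $W$ below $u$'' agrees with ``maximal $c'$-sortable element of $W_J$ below $u$'' whenever $u\in W_J$. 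Once that identification is in hand, the order-preservation argument in the previous paragraph closes the proof with two short inequalities, and the only genuinely delicate point is confirming that $(\pidown^c(w))_J$ is indeed $\le w_J$, which follows because $\pidown^c(w)\le w$ implies, via $\inv((\pidown^c(w))_J)=\inv(\pidown^c(w))\cap W_J\subseteq\inv(w)\cap W_J=\inv(w_J)$, that $(\pidown^c(w))_J\le w_J$ in the weak order.
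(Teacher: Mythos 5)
Your proposal is correct and, in its final form, is essentially the paper's own argument: the two-inequality sandwich obtained from order-preservation of $\pidown^c$ (giving $\pidown^c(w_J)\le(\pidown^c(w))_J$) and from the maximality of $\pidown^c(w_J)$ applied to the $c$-sortable element $(\pidown^c(w))_J\le w_J$ (giving the reverse inequality), using Propositions on parabolic sortability. The extra machinery you float along the way (the fiber description via cones, the inversion-set computation) is not needed and is not used in the paper, but it does no harm.
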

\begin{proof}
Since $w\ge w_J$, Theorem~\ref{order preserving} says that $\pidown^c(w)\ge \pidown^c(w_J)$.
Since $x\mapsto x_J$ is order preserving, since $\pidown^c(w_J)\in W_J$, and since $W_J$ is an order ideal in the weak order on $W$, 
we have $(\pidown^c(w))_J\ge (\pidown^c(w_J))_J=\pidown^c(w_J)$.
On the other hand, since $\pidown^c(w)\le w$, we have $(\pidown^c(w))_J\le w_J$, and $(\pidown^c(w))_J$ is $c$-sortable by Propositions~\ref{sort para easy} and~\ref{sort para}.
But $\pidown^c(w_J)$ is the maximal $c$-sortable element below $w_J$, so $\pidown^c(w_J)\ge(\pidown^c(w))_J$.
\end{proof}

Let $w\in W$.
Earlier, we used the notation $\inv(w)$ to denote the inversions of $W$ as a set of reflections.
It is convenient to use the bijection between reflections and positive roots to also think of $\inv(w)$ as a set of positive roots.
We observe that $\inv(w)$ is the set of positive roots $\beta$ such that $wD$ is contained in $\set{x\in V^*:\br{x,\beta}\le 0}$.

Let $\beta$ and $\gamma$ be distinct, positive, real roots in $\Phi$ such that all of the positive roots in the linear span of $\beta$ and $\gamma$ are in the \emph{nonnegative} 
span of $\beta$ and $\gamma$.
We call $\Phi'=\Span_{\RR}(\beta,\gamma)\cap\Phi$ a \newword{generalized rank-two parabolic} sub root system of $\Phi$ and call $\beta$ and $\gamma$ the \newword{canonical roots} of $\Phi'$.
An element $w\in W$ is \newword{$c$-aligned with respect to $\Phi'$} if one of the following cases holds:
\begin{enumerate}
\item[(i)] $\omega_c(\beta,\gamma)=0$ and $\inv(w)\cap\Phi'$ is $\emptyset$ or $\set{\beta}$ or $\set{\gamma}$ or $\set{\beta, \gamma}$.
\item[(ii)] $\omega_c(\beta,\gamma)>0$ and either $\gamma\not\in\inv(w)$ or $\inv(w)\cap\Phi'=\set{\gamma}$ or $\inv(w) \supseteq \Phi'$.
\item[(iii)] $\omega_c(\beta,\gamma)<0$ and either $\beta\not\in\inv(w)$ or $\inv(w)\cap\Phi'=\set{\beta}$  or $\inv(w) \supseteq \Phi'$.
\end{enumerate} 
The last option in Case~(i) can only occur if $\beta$ and $\gamma$ are perpendicular.
However, it is possible to have $\omega_c(\beta,\gamma)=0$ even when $\beta$ and $\gamma$ are not perpendicular.
(For example, looking forward to Examples~\ref{affineG2 ex} and~\ref{affineG2 slice ex}, define $\beta=s_3s_1\alpha_2$ and $\gamma=s_2s_2\alpha_2$.
Then $\omega_c(\beta\ck,\gamma)=\omega_c(3\alpha_i\ck+\alpha_2\ck+3\alpha_3\ck,\alpha_1+2\alpha_2)=0$, but $K(\beta\ck,\gamma)=-2$.) 
The last options in Cases~(ii) and~(iii) can only occur if $\Phi'$ is finite.

The following is a slight weakening of \cite[Theorem~4.3]{typefree}.

\begin{theorem} \label{sortable is aligned}
An element~$w$ of~$W$ is $c$-sortable if and only if $w$ is $c$-aligned with respect to every generalized rank-two parabolic sub root system of $\Phi$.
\end{theorem}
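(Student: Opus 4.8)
The plan is to prove Theorem~\ref{sortable is aligned} by a double induction, on the rank $n=|S|$ and on the length $\ell(w)$, mirroring the recursive definition of $c$-sortable elements. The whole argument hinges on understanding how both the property ``$c$-sortable'' and the property ``$c$-aligned with respect to every generalized rank-two parabolic'' transform under the two reduction moves available when $s$ is initial in $c$: passage to the parabolic $W_{\br{s}}$ (reducing rank) when $w\not\ge s$, and the map $w\mapsto sw$ with $c\mapsto scs$ (reducing length) when $w\ge s$. I will fix an initial letter $s$ of $c$ at the outset and split into these two cases.

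First I would set up the language of inversions as positive roots, recalling that $\inv(w)$ is the set of positive roots $\beta$ with $wD\subseteq\set{x:\br{x,\beta}\le 0}$, and that $\inv(w_J)=\inv(w)\cap W_J$ translates into $\inv(w_{\br{s}})=\inv(w)\cap\Phi_{\br{s}}$. The key structural facts I expect to use are: when $w\ge s$, the reflection $s$ is an inversion of $w$ and $\inv(sw)=s(\inv(w)\setminus\set{\alpha_s})$; the form transforms by $\omega_{scs}(s\beta\ck,s\gamma)=\omega_c(\beta\ck,\gamma)$ since $s$ preserves $\omega$ up to the sign bookkeeping encoded in Table~\ref{conv}; and the reflection $s$ sends the generalized rank-two parabolics of $\Phi$ not equal to the one spanned by $\alpha_s$ bijectively to those of the $scs$-system. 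For the rank-reduction case $w\not\ge s$, I would use Proposition~\ref{sort para easy} to identify $c$-sortability in $W_{\br{s}}$ with $sc$-sortability, together with the fact that when $w\in W_{\br{s}}$ every inversion of $w$ lies in $\Phi_{\br{s}}$, so the only rank-two parabolics that can witness alignment are those contained in $\Phi_{\br{s}}$ — except that one must separately check alignment for parabolics involving $\alpha_s$, which are automatically aligned because $\gamma\not\in\inv(w)$ (or the analogous condition) holds when the relevant root involves $\alpha_s\notin\inv(w)$.

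For each direction of the equivalence I would argue as follows. In the length-reduction case ($w\ge s$), I would show that $w$ is $c$-aligned with respect to every rank-two parabolic $\Phi'$ if and only if $sw$ is $scs$-aligned with respect to every rank-two parabolic, by checking the three cases (i), (ii), (iii) pass through the substitution $\beta\mapsto s\beta$, $\gamma\mapsto s\gamma$, $\inv(w)\mapsto\inv(sw)$, handling the single exceptional parabolic spanned by $\alpha_s$ by hand (here $s\in\inv(w)$, so the alignment condition reads off directly). Combined with the recursive definition ``$w$ is $c$-sortable iff $sw$ is $scs$-sortable,'' the inductive hypothesis on length then closes this case. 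The rank-reduction case is similar but uses the inductive hypothesis on rank applied in $W_{\br{s}}$ with Coxeter element $sc$ (via Proposition~\ref{sort para easy}), after verifying that alignment of $w\in W_{\br{s}}$ against all parabolics of $\Phi$ reduces to alignment against all parabolics of $\Phi_{\br{s}}$.

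The main obstacle I anticipate is the careful case analysis in the length-reduction step, specifically tracking how the canonical roots $\beta,\gamma$ of a generalized rank-two parabolic and the sign of $\omega_c(\beta,\gamma)$ behave under the reflection $s$, including the possibility that $s\beta$ or $s\gamma$ becomes negative so that the ``canonical roots'' of the image parabolic must be renormalized to positive representatives. Ensuring that the three alignment cases are genuinely preserved — and that the exceptional parabolic through $\alpha_s$ is correctly accounted for in both directions — is where the bookkeeping is most delicate, and it is the reason the theorem in \cite{typefree} is proved with substantial care; here I would lean on Theorem~\ref{c to scs} and Proposition~\ref{Cc final} to keep the sortability recursion and the alignment recursion in lockstep.
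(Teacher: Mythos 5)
First, a point of comparison: this paper does not prove Theorem~\ref{sortable is aligned} at all --- it is imported as a slight weakening of \cite[Theorem~4.3]{typefree} --- so there is no internal proof to match against. Your double induction on rank and on length, tracking sortability and alignment simultaneously through the two reduction moves attached to an initial letter $s$, is indeed the skeleton of the argument in that reference, so the strategy is the right one. However, as written the plan has a genuine gap in the direction ``aligned $\Rightarrow$ sortable'' when $w\not\ge s$. The recursive definition of sortability requires in that case that $w$ lie in $W_{\br{s}}$ \emph{and} be $sc$-sortable; your sketch addresses only the second half, and the phrase ``when $w\in W_{\br{s}}$ every inversion of $w$ lies in $\Phi_{\br{s}}$'' assumes exactly what must be proved, since $w\not\ge s$ does not by itself put $w$ in $W_{\br{s}}$. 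You must show: if $w$ is $c$-aligned with respect to every generalized rank-two parabolic and $\alpha_s\notin\inv(w)$, then $\inv(w)\subseteq\Phi_{\br{s}}$. This is not automatic. Taking a putative $\beta\in\inv(w)\setminus\Phi_{\br{s}}$ and applying alignment to $\Phi'=\Phi\cap\Span_{\RR}(\alpha_s,\beta)$, whose canonical roots are $\alpha_s$ and some $\gamma$ with $\omega_c(\alpha_s,\gamma)\ge 0$ because $s$ is initial, does not produce a contradiction: the configuration $\inv(w)\cap\Phi'=\set{\gamma}$ is explicitly permitted by case~(ii), and $\gamma$ need not lie in $\Phi_{\br{s}}$. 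In \cite{typefree} this step is a separate lemma with its own argument, and without it the induction does not close.

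Two smaller issues. The assertion that, for $w\in W_{\br{s}}$, parabolics $\Phi'$ not contained in $\Phi_{\br{s}}$ are ``automatically aligned'' also needs an argument: such a $\Phi'$ meets $\inv(w)$ in at most one positive root (two distinct inversions would force $\Phi'\subseteq\Phi_{\br{s}}$), but in case~(i) a singleton consisting of a \emph{non-canonical} root of $\Phi'$ is not among the allowed sets $\emptyset,\set{\beta},\set{\gamma},\set{\beta,\gamma}$, so this configuration must be excluded rather than waved through. And in the length-reduction step you correctly identify, but do not resolve, the central bookkeeping problem: for the parabolic containing $\alpha_s$, the reflection $s$ fixes $\Phi'$ setwise while permuting its positive roots nontrivially, so $s$ does not carry canonical roots to canonical roots and the three alignment cases do not transform by naive substitution. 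Until these items are supplied, what you have is a correct outline of the known proof rather than a proof.
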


We now prove three key lemmas that relate the cones $\Cone_c(v)$ to the hyperplanes~$\alpha_s^\perp$.
Given a set $U\subseteq V^*$ and a root $\beta\in\Phi$, 
we say that $U$ is \newword{above} the hyperplane $\beta^\perp$ if every point in $U$ is either in $\beta^\perp$ or on the opposite side of $\beta^\perp$ from $D$.
If $U$ is above $\beta^\perp$ and does not intersect $\beta^\perp$, then we say that $U$ is \newword{strictly above} $\beta^\perp$.
Similarly, $U$ is \newword{below} $\beta^\perp$ if every point in $U$ is either contained in $\beta^\perp$ or on the same side of $\beta^\perp$ as $D$, and $U$ is \newword{strictly below} $\beta^\perp$ if $U$ is below $\beta^\perp$ and does not intersect $\beta^\perp$.

\begin{lemma} \label{AboveBelow}
Let $c$ be a Coxeter element, let $s\in S$, and let $v$ be $c$-sortable.
If $v \not\geq s$ then $\Cone_c(v)$ is below $\alpha_s^\perp$, and if $v \geq s$ then $\Cone_c(v)$ is above $\alpha_s^\perp$.  
\end{lemma}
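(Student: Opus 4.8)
My plan is to reduce the statement to a one-hyperplane question, settle it first inside the Tits cone using the fiber description of $\pidown^c$, and then face the real difficulty of extending to the whole cone.

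First I would reformulate. Saying $\Cone_c(v)$ is below $\alpha_s^\perp$ means $\br{x,\alpha_s}\ge 0$ for all $x\in\Cone_c(v)$, and above means $\br{x,\alpha_s}\le 0$. Since $C_c(v)$ is a basis (Proposition~\ref{basis}), $\Cone_c(v)$ is simplicial with dual cone the nonnegative span $\operatorname{cone}(C_c(v))$, so the two assertions are equivalent to $\alpha_s\in\operatorname{cone}(C_c(v))$ and $-\alpha_s\in\operatorname{cone}(C_c(v))$ respectively. I would also record the dictionary $v\ge s\Leftrightarrow\alpha_s\in\inv(v)\Leftrightarrow vD\subseteq\set{x:\br{x,\alpha_s}\le 0}$, so that the chamber $vD\subseteq\Cone_c(v)$ already lies on the side predicted by the lemma.

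Next I would prove the statement after intersecting with the interior of the Tits cone. By Theorem~\ref{pidown fibers} the chambers $wD$ contained in $\Cone_c(v)$ are exactly those with $\pidown^c(w)=v$, and since every wall of $\Cone_c(v)$ is a reflecting hyperplane $\beta^\perp$ ($\beta\in C_c(v)$), $\Cone_c(v)$ meets the interior of the Tits cone in precisely the union of these fiber chambers. So it suffices to show every such $w$ lies on the same side of $\alpha_s^\perp$ as $v$, i.e. $w\ge s\Leftrightarrow v\ge s$. Here Proposition~\ref{pidown commutes} is the key: taking $J=\set{s}$ and noting $w_{\set{s}}\in\set{e,s}$ (equal to $s$ iff $w\ge s$), together with $\pidown^c(e)=e$ and $\pidown^c(s)=s$, I obtain $v_{\set{s}}=(\pidown^c(w))_{\set{s}}=\pidown^c(w_{\set{s}})$, which forces $w\ge s\Leftrightarrow v\ge s$. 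Hence $\Cone_c(v)\cap(\text{interior of the Tits cone})$ lies on the correct side of $\alpha_s^\perp$.

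The step I expect to be the main obstacle is passing from this partial result to all of $\Cone_c(v)$. Convexity alone does not suffice: a closed cone can meet an open half-space on one side of a central hyperplane and still protrude across that hyperplane \emph{outside} the half-space, so I genuinely must rule out $\Cone_c(v)$ poking past $\alpha_s^\perp$ away from the Tits cone. I would attack this by the purely algebraic induction on the recursion~\eqref{C recur}, which avoids the Tits cone entirely. Writing $s_0$ for the initial letter of $c$, the case $v\not\ge s_0$ reduces to $W_{\langle s_0\rangle}$ (via Propositions~\ref{sort para easy} and~\ref{sort para}, using $C_c(v)=C_{s_0c}(v)\cup\set{\alpha_{s_0}}$), and the case $s=s_0$ is immediate from the reformulation, either because $\alpha_s\in C_c(v)$ or because $\Cone_c(v)=s_0\,\Cone_{s_0cs_0}(s_0v)$ with $s_0v\not\ge s_0$.

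The genuinely hard case is $s\ne s_0$ with $v\ge s_0$. There $C_c(v)=s_0\,C_{s_0cs_0}(s_0v)$ converts the question for $\alpha_s$ into the same question for the \emph{non-simple} positive root $s_0\alpha_s=\alpha_s-K(\alpha_{s_0}\ck,\alpha_s)\,\alpha_{s_0}$ at the shorter element $s_0v$, and a naive induction fails because the below/above dichotomy is simply false for a general positive root (an interior wall of a multi-chamber cone crosses its own hyperplane). The fix I would pursue is to prove the statement, by the same recursion, for the class of roots obtained from simple roots under $\beta\mapsto s_0\beta$: the condition $\beta\in\inv(v)$ is preserved under $(v,\beta)\mapsto(s_0v,s_0\beta)$, and the base case $v=e$, $\Cone_c(e)=D$ holds for \emph{every} positive root since each lies in $\operatorname{cone}(\Pi)$. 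The delicate part, and the crux of the whole argument, is certifying that this enlarged class is closed under the recursion so that only one-sided roots ever arise; this is exactly where one must feed back the fiber uniformity of the previous paragraph to guarantee that no root whose hyperplane is an interior wall is ever produced.
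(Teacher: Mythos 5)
Your argument has a genuine gap, and it sits exactly where you say it does. The statement you need but never prove is the dichotomy: that $\Cone_c(v)$ lies \emph{weakly on one side} of $\alpha_s^\perp$ at all. Your Tits-cone computation (via Theorem~\ref{pidown fibers} and Proposition~\ref{pidown commutes} with $J=\set{s}$) is correct and cleanly shows that $\Cone_c(v)\cap\int\Tits(A)$ is on the side determined by whether $v\ge s$; but in infinite type $\Cone_c(v)$ can have extreme rays outside the closure of the Tits cone, and nothing in that computation constrains them. Your proposed repair for the hard inductive case ($s\ne s_0$, $v\ge s_0$) is to prove the dichotomy for the class of roots generated from simple roots by $\beta\mapsto s_0\beta$ along the recursion, but you explicitly leave the closure of that class under the recursion unproven, and the mechanism you suggest for closing it --- ``feeding back the fiber uniformity'' --- cannot work: the fiber description of $\pidown^c$ only sees chambers $wD$, hence only the part of $\Cone_c(v)$ inside the Tits cone, which is precisely the part you have already handled. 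So the proposal does not close.

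The paper's proof is much shorter because it imports the missing dichotomy from two quoted results: Proposition~\ref{extreme} (the extreme rays of $\Cone_c(v)$ are spanned by the $\g$-vectors of the cluster variables in $\Seed(v)$) and Proposition~\ref{camb g sign-coherent} (sign-coherence of $\g$-vectors: all of these $\g$-vectors lie weakly on one side of $\alpha_s^\perp$). Together these give immediately that $\Cone_c(v)$, being the nonnegative span of vectors all weakly on one side of $\alpha_s^\perp$, is above or below that hyperplane; then $vD\subseteq\Cone_c(v)$ pins down which side, exactly as in your second paragraph. If you want to avoid quoting sign-coherence, you would need a genuinely new argument for the dichotomy; the inductive skeleton you sketch is a plausible place to look, but as written the crux is asserted rather than proved.
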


We prove Lemma~\ref{AboveBelow} by quoting two nontrivial results and giving an easy argument.  
Proposition~\ref{camb g sign-coherent} is \cite[Corollary~5.21]{framework}, 
and Proposition~\ref{extreme} appears as the second assertion of \cite[Corollary~5.15]{framework}. 
See also Theorem~\ref{framework exchange}\eqref{g vec}.

\begin{prop}\label{camb g sign-coherent}
Given any vertex $v$ in the Cambrian framework $(\Camb_c,C_c)$ and any $s\in S$, all of the $\g$-vectors of cluster variables in $\Seed(v)$ are weakly on the same side of the hyperplane $\alpha_s^\perp$ in $V^*$.
\end{prop}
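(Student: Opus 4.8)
The plan is to reinterpret the statement geometrically and then combine a clean argument inside the Tits cone with an induction for the part of the cone lying outside it. First I would reformulate. By Theorem~\ref{framework exchange}\eqref{g vec} the $\g$-vectors of the cluster variables in $\Seed(v)$ are the vectors $R(v,e)$ dual to the co-labels $C\ck(v)$, and by Proposition~\ref{basis} these are precisely the extreme rays of the simplicial cone $\Cone_c(v)$. Since $\alpha_s^\perp$ passes through the origin, the apex of the cone, the $\g$-vectors all lie weakly on one side of $\alpha_s^\perp$ if and only if the full cone $\Cone_c(v)$ does, i.e.\ if and only if $\alpha_s^\perp$ does not pass through the interior of $\Cone_c(v)$. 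So it suffices to prove this non-crossing statement, and the sign of $\br{\cdot,\alpha_s}$ on the cone records the common side.

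Next I would settle the part of the cone inside the Tits cone $T$. By Theorem~\ref{pidown fibers}, $\Cone_c(v)\cap T$ is the union of the chambers $wD$ with $\pidown^c(w)=v$. For any such $w$ I claim $w\ge s$ iff $v\ge s$: if $v\ge s$ then $s\le v\le w$ gives $w\ge s$; conversely, if $w\ge s$ then $w_{\set{s}}=s$, so Proposition~\ref{pidown commutes} gives $v_{\set{s}}=(\pidown^c(w))_{\set{s}}=\pidown^c(w_{\set{s}})=\pidown^c(s)=s$ (using that $s$ is $c$-sortable), whence $v\ge s$. Since the chamber $wD$ lies on the $D$-side of $\alpha_s^\perp$ exactly when $s\notin\inv(w)$, i.e.\ when $w\not\ge s$, every chamber making up $\Cone_c(v)\cap T$ lies on the single side of $\alpha_s^\perp$ determined by whether $v\ge s$. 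Thus $\Cone_c(v)\cap T$ does not cross $\alpha_s^\perp$.

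The hard part is that $\Cone_c(v)$ can protrude beyond the Tits cone (as already in Figure~\ref{A1tilde Camb}), so the previous step does not by itself control the protruding extreme rays. To finish I would induct on the rank $n$ and then on $\ell(v)$, using the recursion~\eqref{C recur} for an initial letter $s_0$ of $c$. If $v\not\ge s_0$, then $\alpha_{s_0}\in C_c(v)$ contributes the extreme ray dual to $\alpha_{s_0}\ck$, which is exactly the fundamental weight $\rho_{s_0}$; this pairs to $0$ with $\alpha_s$ for every $s\ne s_0$ and to $d(s_0)>0$ with $\alpha_{s_0}$, so it never forces a crossing, while the remaining rays restrict to the rank-$(n-1)$ $\g$-vectors of $v$ in $W_{\br{s_0}}$ and are handled by the rank induction (note $\alpha_s\in V_{\br{s_0}}$ for $s\ne s_0$). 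If $v\ge s_0$, the recursion gives $C\ck(v)=s_0\,C\ck(s_0v)$, so the rays of $\Cone_c(v)$ are $s_0$ (acting on $V^*$) applied to those of the shorter element $s_0v$, and hence $\br{R(v,e),\alpha_s}=\br{R(s_0v,e'),s_0\alpha_s}$; when $s=s_0$ this is $-\br{\cdot,\alpha_{s_0}}$ and when $s$ commutes with $s_0$ it is $\br{\cdot,\alpha_s}$, both closing by the length induction.

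I expect the genuine obstacle to be the single remaining case: $v\ge s_0$ with $s$ not commuting with $s_0$, where $s_0\alpha_s$ is a non-simple positive root and the induction does not immediately reduce to a simple root --- equivalently, precisely the extreme rays that protrude beyond $T$. I would resolve this by pairing the side already pinned down inside $T$ with the explicit description of the extreme rays of $\Cone_c(v)$ in \cite[Corollary~5.15]{framework}, checking that each protruding ray pairs with $\alpha_s$ with the forced sign; alternatively, one can iterate the initial-letter twist $v\mapsto s_0v$, $c\mapsto s_0cs_0$, tracking the transformed root until the reduction lands in the clean case $v\not\ge s_0$.
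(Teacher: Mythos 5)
First, a point of comparison: the paper does not prove this proposition at all --- it is imported verbatim as \cite[Corollary~5.21]{framework}, and is then \emph{used} (together with Proposition~\ref{extreme}) to deduce Lemma~\ref{AboveBelow}. So you are attempting to reprove a result the paper deliberately outsources. Your reduction is sound as far as it goes: identifying the $\g$-vectors with the dual basis $R(v)$, hence with the extreme rays of the simplicial cone $\Cone_c(v)$, correctly converts the claim into ``$\alpha_s^\perp$ does not cut through the interior of $\Cone_c(v)$,'' and your argument inside the Tits cone (via Theorem~\ref{pidown fibers}, the equivalence $w\ge s\Leftrightarrow v\ge s$ for $\pidown^c(w)=v$, and Proposition~\ref{pidown commutes}) is correct. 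The clean cases of your induction ($v\not\ge s_0$; and $v\ge s_0$ with $s=s_0$ or $s$ commuting with $s_0$) also check out.

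The gap is the case you flag yourself, and neither of your proposed repairs closes it. The first (``check that each protruding ray pairs with $\alpha_s$ with the forced sign'') is not an argument: the protruding rays are exactly the ones the Tits-cone computation says nothing about, so this restates the goal. The second (iterate the twist $v\mapsto s_0v$, $c\mapsto s_0cs_0$ and ``track the transformed root'') fails for a structural reason: after one twist the hyperplane you must control is $\beta^\perp$ for the non-simple positive root $\beta=s_0\alpha_s$, and the analogue of the proposition for arbitrary roots $\beta$ is \emph{false}. Indeed, if every cone $\Cone_c(u)$ lay weakly on one side of $\beta^\perp$ for every root $\beta$, then each $\beta^\perp$ would be a union of faces of $\F_c$, so $\F_c$ would refine the Coxeter fan; but $\F_c$ is a proper coarsening of the Coxeter fan whenever $W$ is not of type $A_1^n$. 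So you cannot simply transport the induction hypothesis to the twisted data; you would have to exploit the specific relationship between the root $s_0\alpha_s$ (and its further images under prefixes of the sorting word) and the element $s_0v$. That compatibility is essentially the content of sign-coherence itself, and it is precisely what the proof of \cite[Corollary~5.21]{framework} supplies. As written, your proposal leaves the entire difficulty of the statement unresolved.
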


\begin{prop}\label{extreme}
 Given any vertex $v$ in the Cambrian framework $(\Camb_c,C_c)$, the extreme rays of the cone $\Cone_c(v)$ are spanned by the $\g$-vectors of cluster variables in $\Seed(v)$.
\end{prop}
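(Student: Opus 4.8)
The plan is to split the statement into two pieces: first, that $\Cone_c(v)$ is a simplicial cone whose $n$ extreme rays are spanned by the basis $R(v)$ dual to the co-labels $C\ck(v)$; and second, that each dual basis vector $R(v,e)$ is precisely the $\g$-vector $\g^v_e$ of the cluster variable $x^v_e$ in $\Seed(v)$. The first piece is the genuinely easy argument, and the second is where the work lies.

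For the first piece, Proposition~\ref{basis} tells us that $C\ck(v)$ is a basis for the co-root lattice, hence a basis of $V$. Each label $\beta\in C_c(v)$ is a real root, so $K(\beta,\beta)>0$ and $\beta\ck=2\beta/K(\beta,\beta)$ is a positive multiple of $\beta$; consequently the half-space $\set{x:\br{x,\beta}\ge0}$ equals $\set{x:\br{x,C\ck(v,e)}\ge0}$, and
\[\Cone_c(v)=\bigcap_{e\in I(v)}\set{x\in V^*:\br{x,C\ck(v,e)}\ge 0}\]
is cut out by $n$ inequalities whose normals form a basis of $V$. Expanding an arbitrary $x\in V^*$ in the basis $R(v)$ dual to $C\ck(v)$ shows that $x\in\Cone_c(v)$ if and only if all of its $R(v)$-coordinates are nonnegative; thus $\Cone_c(v)$ is the nonnegative span of $R(v)$, so it is simplicial and its extreme rays are exactly $\reals_{\ge0}\,R(v,e)$ for $e\in I(v)$.

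For the second piece, I would induct along full edges of the connected quasi-graph $\Camb_c$, starting from the base vertex $v_b$. There $C\ck(v_b)$ is the set of simple co-roots, so $R(v_b,e)$ is the fundamental weight $\rho_i$ dual to $\alpha_i\ck=C\ck(v_b,e)$, which is by definition the $\g$-vector of the initial cluster variable $x_i$. For the inductive step across a full edge $e$ joining $v$ to $v'$, Proposition~\ref{dual adjacent} gives $R(v,f)=R(v',\mu_e(f))$ for every $f\ne e$, matching the unchanged directions $\g^v_q=\g^{v'}_{\mu_e(q)}$ of the mutation rule~\eqref{gRecurrence}. It then remains to check the mutated direction: that $R(v',e)$, which Proposition~\ref{dual adjacent} locates on the opposite side of the span of $R(v)\cap R(v')$ from $R(v,e)$, agrees with the $q=e$ case of~\eqref{gRecurrence}.

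The main obstacle is exactly this mutated-direction identity. To establish it I would use the Reflection condition to write $C(v')$ in terms of $C(v)$ and the reflection $t$ with $C(v,e)=\pm\beta_t$, express $R(v',e)$ in the dual basis, and match coefficients against~\eqref{gRecurrence}, using the structural relation $b^v_{pq}=\omega(C\ck(v,p),C(v,q))$ attaching the exchange matrix to each framework vertex (cf.\ Theorem~\ref{framework exchange}\eqref{exch mat}) together with the identities $K(\alpha,\beta)=E(\alpha,\beta)+E(\beta,\alpha)$ and $\omega(\alpha,\beta)=E(\alpha,\beta)-E(\beta,\alpha)$. One caveat deserves mention: Theorem~\ref{framework exchange}, including its part~\eqref{g vec}, is stated only for \emph{complete} frameworks, whereas $\Camb_c$ generally carries half-edges and is not complete, so that theorem cannot be invoked wholesale here. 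This causes no difficulty, because the entire induction travels along full edges and never crosses a half-edge; the identity $R(v,e)=\g^v_e$ therefore propagates from $v_b$ to every $c$-sortable element with no completeness hypothesis. Combining the two pieces proves the proposition.
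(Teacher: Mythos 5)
Your first step is correct and complete: by Proposition~\ref{basis} the co-labels $C\ck(v)$ form a basis of $V$, each co-root $\beta\ck$ is a positive multiple of $\beta$, and hence $\Cone_c(v)$ is the simplicial cone spanned by the dual basis $R(v)$, with extreme rays $\RR_{\ge0}R(v,e)$. But this only reduces the proposition to the identity $R(v,e)=\g^v_e$, and that is where your argument has a genuine gap. For the mutated direction you propose to ``match coefficients'' in \eqref{gRecurrence} using $b^v_{pq}=\omega(C\ck(v,p),C(v,q))$; note that \eqref{gRecurrence} also involves the entries $b^v_{ie}$ for $i\in I$, i.e.\ the coefficient part $H^v$, so you would need Theorem~\ref{framework exchange}\eqref{extended mat} (the $\c$-vectors equal the labels) in addition to part~\eqref{exch mat}. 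By your own correct observation, neither part can be invoked here, since $(\Camb_c,C_c)$ is not complete when $W$ is infinite. So the ingredients of your coefficient-matching are precisely statements you have declared unavailable, and the induction cannot proceed unless you either prove parts~\eqref{exch mat} and~\eqref{extended mat} simultaneously along the same induction (a substantial argument, essentially reproducing Theorems~3.25--3.26 of \cite{framework}) or quote the versions of these results proved in \cite{framework} for exact, not-necessarily-complete frameworks. There is a second silent assumption of the same kind: the map $v\mapsto\Seed(v)$ is only defined for the incomplete framework $(\Camb_c,C_c)$ through the results of \cite{framework}, since path-independence of mutation along the graph requires ampleness; your inductive hypothesis presupposes this. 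Finally, the coefficient-matching computation itself -- the heart of the matter -- is never carried out, only announced.

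For comparison, the paper does not prove this proposition from the framework axioms at all: it quotes it as the second assertion of \cite[Corollary~5.15]{framework}, precisely because the content you are trying to rebuild (the seed correspondence and the identity $R(v,e)=\g^v_e$ for exact frameworks without completeness) is established in that earlier paper. Your outline is a plausible sketch of how that machinery works, but as written it defers the hard steps to results it has itself ruled out.
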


\begin{proof}[Proof of Lemma~\ref{AboveBelow}]  
Propositions~\ref{extreme} and~\ref{camb g sign-coherent} combine to establish the following assertion:
For any $c$-sortable element $v$ and any $s\in S$, the cone $\Cone_c(v)$ is either above or below $\alpha_s^\perp$.
If $v\not\ge s$ then $vD$ is below $\alpha_s^\perp$, so Theorem~\ref{pidown fibers} implies that $\Cone_c(v)$ is below $\alpha_s^\perp$.
Similarly, if $v\ge s$, then $vD$ is above $\alpha_s^\perp$, so Theorem~\ref{pidown fibers} implies that $\Cone_c(v)$ is above $\alpha_s^\perp$.
\end{proof}

\begin{lemma} \label{PrettyLemma}
Let $s$ be initial in $c$. Let $v$ be $c$-sortable with $v \geq s$. 
Then ${\Cone_c(v) \cap \alpha_s^\perp}$ is a face of $\Cone_c(v_{\br{s}})$.
\end{lemma}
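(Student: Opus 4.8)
The plan is to identify $\Cone_c(v)\cap\alpha_s^\perp$ with the set-theoretic intersection $\Cone_c(v)\cap\Cone_c(v_{\br s})$ and then exploit the fact that $\F_c$ is a fan (Theorem~\ref{camb fan}): two maximal cones of a fan meet nicely, so their intersection is automatically a face of each. First I would record the relevant data on the two cones. Since $v_{\br s}\in W_{\br s}$, it is $c$-sortable by Propositions~\ref{sort para easy} and~\ref{sort para}, and $v_{\br s}\not\ge s$; hence the recursion~\eqref{C recur} gives $C_c(v_{\br s})=C_{sc}(v_{\br s})\cup\set{\alpha_s}$, so $\alpha_s$ is one of the defining normals of $\Cone_c(v_{\br s})$. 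By Lemma~\ref{AboveBelow}, $\Cone_c(v_{\br s})$ lies below $\alpha_s^\perp$ (i.e.\ $\br{x,\alpha_s}\ge0$ there) while $\Cone_c(v)$ lies above $\alpha_s^\perp$ (i.e.\ $\br{x,\alpha_s}\le0$ there), because $v\ge s$. Consequently $\Cone_c(v)\cap\Cone_c(v_{\br s})\subseteq\alpha_s^\perp$, and therefore $\Cone_c(v)\cap\Cone_c(v_{\br s})=\Cone_c(v)\cap\Cone_c(v_{\br s})\cap\alpha_s^\perp\subseteq\Cone_c(v)\cap\alpha_s^\perp$.

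The reduction is then completed by the reverse containment $\Cone_c(v)\cap\alpha_s^\perp\subseteq\Cone_c(v_{\br s})$: granting it, the two sets coincide, so $\Cone_c(v)\cap\alpha_s^\perp=\Cone_c(v)\cap\Cone_c(v_{\br s})$, which is a face of $\Cone_c(v_{\br s})$ by the fan property. To prove this containment I would reduce it to a containment of label sets. Indeed $\Cone_c(v_{\br s})$ is cut out by $\br{x,\beta}\ge0$ for $\beta\in C_{sc}(v_{\br s})$ together with $\br{x,\alpha_s}\ge0$; on $\alpha_s^\perp$ the last inequality holds with equality, so it suffices that each $\beta\in C_{sc}(v_{\br s})$ already defines a valid inequality on $\Cone_c(v)$. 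This is immediate once $C_{sc}(v_{\br s})\subseteq C_c(v)$, for then $\br{x,\beta}\ge0$ holds on all of $\Cone_c(v)$.

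So the crux, and the step I expect to be the main obstacle, is the label containment $C_{sc}(v_{\br s})\subseteq C_c(v)$: the labels of the parabolic projection must reappear among the labels of $v$. I would establish this by induction using the recursion~\eqref{C recur}: with $s$ initial and $v\ge s$ it reads $C_c(v)=s\,C_{scs}(sv)$, where $scs$ is again a Coxeter element, $sv$ is $scs$-sortable, and the restriction of $c$ (equivalently of $scs$) to $W_{\br s}$ is $sc$; one then tracks how the projection behaves under $v\mapsto sv$ and how the operator $s$ moves the parabolic labels. Alternatively, and perhaps more cleanly, I would use the non-recursive description of $C_c(v)$ via the $c$-sorting word (\cite[Section~5]{typefree}): deleting the letters outside $\br s$ from a $c$-sorting word yields an $sc$-sorting word for $v_{\br s}$, and the roots recorded by $C_{sc}(v_{\br s})$ then occur among those recorded by $C_c(v)$. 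In either route the genuine difficulty is not the polyhedral geometry but the bookkeeping of which labels are positive and which are negative; this sign information can be controlled using Lemma~\ref{lower walls} (which pins down the negative labels as $-\beta_t$ for cover reflections $t$) together with Propositions~\ref{Cc initial} and~\ref{Cc final}.
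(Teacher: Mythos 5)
Your global strategy is sound and matches the paper's: both reduce the lemma to the single containment $\Cone_c(v)\cap\alpha_s^\perp\subseteq\Cone_c(v_{\br{s}})$, after which the conclusion follows from the fan property of $\F_c$ (Theorem~\ref{camb fan}) together with the fact that $\Cone_c(v_{\br{s}})$ is below $\alpha_s^\perp$. The problem is the step you yourself flag as the crux: the label containment $C_{sc}(v_{\br{s}})\subseteq C_c(v)$ is false. Take $W$ of type $A_2$, $c=s_1s_2$, $s=s_1$, $v=s_1$. Then $v_{\br{s}}=e$ and $C_{sc}(v_{\br{s}})=C_{s_2}(e)=\set{\alpha_2}$, while $C_c(s_1)=\set{-\alpha_1,\,\alpha_1+\alpha_2}$ (by Proposition~\ref{lower walls} and Proposition~\ref{Cc initial}, or directly from \eqref{C recur}). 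So $\alpha_2\notin C_c(v)$. The correct relationship, and only in the case that $s$ is a cover reflection of $v$, is the one in Proposition~\ref{Cc initial}: each positive root of $C_{sc}(v_{\br{s}})$ appears in $C_c(v)$ \emph{after} applying the reflection $s$. That weaker statement still suffices for points of $\alpha_s^\perp$ (since $s$ fixes $\alpha_s^\perp$ pointwise, $\br{x,\beta}=\br{x,s\beta}$ there), but it does not give validity of the inequalities on all of $\Cone_c(v)$, which is what your argument invokes. Your alternative route via sorting words fails on the same example: deleting $s_1$ from the $c$-sorting word $s_1s_2$ of $v=s_1s_2$ yields $s_2$, but $v_{\set{s_2}}=e$.

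More importantly, even the corrected label comparison only covers the case where $-\alpha_s\in C_c(v)$, i.e.\ where $\alpha_s^\perp$ supports a facet of $\Cone_c(v)$. When $\Cone_c(v)\cap\alpha_s^\perp$ has dimension less than $n-1$, there is no direct relationship between $C_{sc}(v_{\br{s}})$ and $C_c(v)$ to exploit, and the paper has to work harder: it runs an induction on $\ell(v)$, locating a negative label $-\beta_t$ among the normals cutting out $\Cone_c(v)\cap\alpha_s^\perp$, descending to the Cambrian cover $u=\pidown^c(tv)\covered v$ (Lemma~\ref{Cambrian covers}, Corollary~\ref{cor:BdyFacet}), applying the inductive hypothesis to $u$, and then comparing $u_{\br{s}}$ with $v_{\br{s}}$ via Proposition~\ref{pidown commutes}. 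That descent is the genuinely nontrivial content of the proof, and nothing in your outline substitutes for it.
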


\begin{proof}
Proposition~\ref{sort para} says that $v_\br{s}$ is $sc$-sortable, and therefore it is $c$-sortable by Proposition~\ref{sort para easy}.
In particular, the notation $\Cone_c(v_{\br{s}})$ makes sense.

By Lemma~\ref{AboveBelow}, $\Cone_c(v)$ is above $\alpha_s^\perp$.
In particular, $\Cone_c(v) \cap \alpha_s^\perp$ is a face of $\Cone_c(v)$, which we call $F$.
We first claim that $F\subseteq\Cone_c(v_\br{s})$.
To prove the claim, we argue by induction on $\ell(v)$ and break into two cases.
The base case $v=s$ falls into Case 1, where we do not appeal to induction.

\noindent
\textbf{Case 1:} $F$ is a facet of $\Cone_c(v)$. 
Then $-\alpha_s\in C_c(v)$, so $s$ is a cover reflection of $v$ by Proposition~\ref{lower walls}. 
Equation \eqref{C recur} says that $C_c(v_{\br{s}})=C_{sc}(v_{\br{s}})\cup\set{\alpha_s}$.
By Proposition~\ref{Cc initial}, each element of $C_{sc}(v_{\br{s}})$ either coincides with an element of $C_c(v)\setminus\set{-\alpha_s}$ or is related to an element of $C_c(v)\setminus\set{-\alpha_s}$ by the reflection $s$.
Thus the defining inequalities of $\Cone_c(v)$ and $\Cone_c(v_\br{s})$ coincide for points on $\alpha^\perp$, so the claim is true in this case.

\noindent
\textbf{Case 2:} $\dim(F)<n-1$.
Write $F=\Cone_c(v)\cap(\beta_1^{\perp}\cap\cdots\cap\beta_k^{\perp})$ for distinct $\beta_1,\ldots,\beta_k\in C_c(v)$.
Let $y$ be in the relative interior of $F$ and let $x$ be in the interior of $D$.
If every $\beta_i$ is positive, then in particular, $x$ is strictly below each $\beta_i^\perp$.
But also, $\Cone_c(v)$ is weakly below each $\beta_i^\perp$, and therefore for small enough $\ep>0$, the point $y+\ep x$ is in the interior of $\Cone_c(v)$.
But $y+\ep x$ is strictly below $\alpha_s^\perp$, and this contradiction to Lemma~\ref{AboveBelow} shows we can choose $i$ such that $\beta_i=-\beta_t$ for some reflection $t$.
Let $u = \pidown^c(tv)$, which, by Lemma~\ref{Cambrian covers}, is covered by $v$ in $\Camb_c$.
Corollary~\ref{cor:BdyFacet} (in light of Theorem~\ref{camb good}) says that $\Cone_c(v)$ and $\Cone_c(u)$ intersect in a facet, so that in particular $F$ is a face of $\Cone_c(u)$.
By induction, $F'=\Cone_c(v') \cap \alpha_s^\perp$ is contained in $\Cone_c(u_{\br{s}})$, but $F\subseteq F'$, so $F\subseteq\Cone_c(u_{\br{s}})$.

Now, Proposition~\ref{pidown commutes} says that $u_{\br{s}}=\pidown^c((tv)_{\br{s}})$ and that $v_{\br{s}}=(\pidown^c(v))_{\br{s}}=\pidown^c(v_{\br{s}})$.
If $(tv)_{\br{s}}=v_{\br{s}}$, then $u_{\br{s}}=v_{\br{s}}$, and we are done.
If not, then 
since $\inv(v_\br{s})=\inv(v)\cap W_\br{s}$ and $\inv ((tv)_\br{s})=\inv(tv)\cap W_\br{s}$, and since $\inv(tv)=\inv(v)\setminus\set{t}$, we conclude that $\inv((tv)_\br{s})=\inv(v_\br{s})\setminus\set{t}$.
In particular, the reflection $t$ is in $W_{\br{s}}$, and $t(v_{\br{s}})=(tv)_{\br{s}}$ is covered by $v_{\br{s}}$.
Thus $u_{\br{s}}\covered v_{\br{s}}$ in $\Camb_c$ by Lemma~\ref{Cambrian covers}, and Corollary~\ref{cor:BdyFacet} says that $\Cone_c(u_{\br{s}})$ and $\Cone_c(v_{\br{s}})$ intersect in a mutual facet defined by   $\beta_t^\perp$.
Therefore $F\subseteq\Cone_c(v_{\br{s}})$.

Having proven the claim, we prove the lemma.
Since $\Cone_c(v) \cap \alpha_s^\perp\subseteq\Cone_c(v_\br{s})$, the intersection $\Cone_c(v) \cap \alpha_s^\perp$ equals $(\Cone_c(v)\cap\Cone_c(v_\br{s}))\cap(\Cone_c(v_\br{s}) \cap \alpha_s^\perp)$. 
Theorem~\ref{camb fan} implies that $\Cone_c(v)\cap\Cone_c(v_\br{s})$ is a face of $\Cone_c(v_\br{s})$.
By Lemma~\ref{AboveBelow}, $\Cone_c(v_\br{s})$ is below $\alpha_s^\perp$, so $\Cone_c(v_\br{s}) \cap \alpha_s^\perp$ is a face of $\Cone_c(v_\br{s})$.
We have written $\Cone_c(v) \cap \alpha_s^\perp$ as an intersection of faces of $\Cone_c(v_\br{s})$.
\end{proof}

\begin{lemma} \label{DualPrettyLemma}
Let $s$ be final in $c$. 
Let $v$ be $c$-sortable with $v \not \geq s$. 
Then $s \join (sv)_{\br{s}}$ exists, and is $c$-sortable, and $\Cone_c(v) \cap \alpha_s^\perp$ is a face of $\Cone_c(s \join (sv)_{\br{s}})$.
\end{lemma}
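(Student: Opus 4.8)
The plan is to reduce to Lemma~\ref{PrettyLemma} by conjugating by $s$. Since $s$ is final in $c$, the element $c'=scs$ has $s$ initial. First I would translate the hypotheses across the bijection of Theorem~\ref{c to scs}, applied to $c'$: since $v$ is $c$-sortable with $v\not\geq s$, the inverse bijection sends $v$ to $sv$, so $sv$ is $c'$-sortable, and $v\not\geq s$ forces $sv\geq s$. Setting $w=(sv)_{\br{s}}$, Propositions~\ref{sort para} and~\ref{sort para easy} show $w$ is $c'$-sortable, and since $sv$ is an upper bound for $\{s,w\}$ the join $s\join w$ exists (by the weak-order semilattice property). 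Theorem~\ref{c to scs} then identifies $\eta(w)=s\join w=s\join(sv)_{\br{s}}$ as a $c$-sortable element (here $w\not\geq s$ because $w\in W_{\br{s}}$). This settles the first two assertions of the lemma.

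For the cone statement I would establish two facet identities and glue them. Using the recursion \eqref{C recur} for $c'$ on $sv\geq s$ gives $C_{c'}(sv)=sC_{scs}(v)=sC_c(v)$, so $\Cone_{c'}(sv)=s\cdot\Cone_c(v)$ for the dual $W$-action on $V^*$. Since $\alpha_s^\perp$ is exactly the fixed hyperplane of $s$ on $V^*$, the reflection $s$ fixes $\Cone_c(v)\cap\alpha_s^\perp$ pointwise, giving the identity $\Cone_{c'}(sv)\cap\alpha_s^\perp=\Cone_c(v)\cap\alpha_s^\perp$. Applying Lemma~\ref{PrettyLemma} to $c'$ and $sv$ then shows that $\Cone_c(v)\cap\alpha_s^\perp$ is a face of $\Cone_{c'}((sv)_{\br{s}})=\Cone_{c'}(w)$.

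Second, I would compare $\Cone_{c'}(w)$ with $\Cone_c(s\join w)$. Writing $c_1=cs$ (a Coxeter element of $W_{\br{s}}$, with $sc'=cs=c_1$), the recursion \eqref{C recur} gives $C_{c'}(w)=C_{c_1}(w)\cup\{\alpha_s\}$, while Proposition~\ref{Cc final}---applicable since $s$ is final in $c$, $s\join w\geq s$, and $(s\join w)_{\br{s}}=w$---gives $C_c(s\join w)=C_{c_1}(w)\cup\{-\alpha_s\}$. Hence both cones arise from $P=\bigcap_{\beta\in C_{c_1}(w)}\{x\in V^*:\br{x,\beta}\geq 0\}$ by cutting with opposite halfspaces along $\alpha_s^\perp$, so their common facet is $F_0:=P\cap\alpha_s^\perp=\Cone_{c'}(w)\cap\alpha_s^\perp=\Cone_c(s\join w)\cap\alpha_s^\perp$.

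To conclude, I would note that $\Cone_c(v)\cap\alpha_s^\perp$ lies in $\alpha_s^\perp$ and is a face of $\Cone_{c'}(w)$, hence is contained in the face $F_0$ and is therefore a face of $F_0$; since $F_0$ is a face of $\Cone_c(s\join w)$, transitivity of ``is a face of'' (Section~\ref{poly sec}) finishes the proof. I expect the main obstacle to be the bookkeeping in the two label computations---correctly matching the restricted Coxeter elements $sc'=cs=c_1$ and checking that the recursions apply---together with the final polyhedral gluing, where one must verify that a face of $\Cone_{c'}(w)$ contained in the facet $F_0$ is genuinely a face of $F_0$, and hence of $\Cone_c(s\join w)$.
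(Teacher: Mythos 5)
Your proposal is correct and follows essentially the same route as the paper: reduce to Lemma~\ref{PrettyLemma} for the Coxeter element $scs$ (in which $s$ is initial), use \eqref{C recur} to identify $\Cone_c(v)\cap\alpha_s^\perp$ with $\Cone_{scs}(sv)\cap\alpha_s^\perp$, and then match $C_{scs}((sv)_{\br{s}})$ against $C_c(s\join(sv)_{\br{s}})$ via \eqref{C recur} and Proposition~\ref{Cc final} to exhibit the common facet in $\alpha_s^\perp$ through which the face relation is transferred. The only differences are cosmetic citations (you invoke Theorem~\ref{c to scs} for the sortability of $sv$ and of $s\join(sv)_{\br{s}}$, where the paper uses the recursive definition of sortability and Theorem~\ref{meets and joins}).
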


\begin{proof}
Since $sv \geq s$ and $sv > (sv)_{\br{s}}$, we know that $s \join (sv)_{\br{s}}$ exists. 
By the recursive definition of sortability, the element $sv$ is $scs$-sortable, so $(sv)_{\br{s}}$ is $cs$-sortable by Proposition~\ref{sort para}, and thus $c$-sortable by Proposition~\ref{sort para easy}.
Now $s \join (sv)_{\br{s}}$ is $c$-sortable by Theorem~\ref{meets and joins}.  

By Lemma~\ref{PrettyLemma}, $\Cone_{scs}(sv) \cap \alpha_s^\perp$ is a face of $\Cone_{scs}((sv)_{\br{s}})$. 
Because $\Cone_c(v)$ equals $s \Cone_{scs}(sv)$ by \eqref{C recur}, 
the intersection $\Cone_c(v)\cap\alpha_s^\perp$ equals $\Cone_{scs}(sv)\cap\alpha_s^\perp$.
Thus $\Cone_c(v)\cap\alpha_s^\perp$ is a face of $\Cone_{scs}((sv)_{\br{s}})$.

By \eqref{C recur}, we have $C_{scs}((sv)_{\br{s}})=C_{cs}((sv)_{\br{s}})\cup\set{\alpha_s}$. 
On the other hand, Proposition~\ref{Cc final} (with $s \join (sv)_{\br{s}}$ replacing $v$) says that $C_c(s \join (sv)_{\br{s}})$ equals ${C_{cs}((sv)_{\br{s}})\cup\set{-\alpha_s}}$.  
We see that $\Cone_{scs}((sv)_{\br{s}})$ and $\Cone_c(s\join(sv)_{\br{s}})$ intersect in a common facet $F$ defined by $\alpha_s^\perp$.
Since $\Cone_c(v)\cap\alpha_s^\perp$ is a face of $\Cone_{scs}((sv)_{\br{s}})$ contained in $\alpha_s^\perp$, it is also a face of $F$, and thus a face of $\Cone_c(s\join(sv)_{\br{s}})$.
\end{proof}

\subsection{Doubling the Cambrian fan}\label{DFc sec} 
We construct the doubled Cambrian framework by ``gluing'' together two opposite Cambrian frameworks.
We begin by doubling the Cambrian fan.

We continue the notation $B$ for an acyclic exchange matrix with $A=\Cart(B)$, 
Coxeter group $W$, and Coxeter element $c$.
The notation $\Tits(A)$ denotes the Tits cone and $-\Tits(A)$ is its negation.
We write $\DF_c$ for the 
collection consisting of the cones in the Cambrian fan $\F_{c}$ and the negations of cones in the Cambrian fan $\F_{c^{-1}}$.
In symbols, $\DF_c=\F_c\cup(-\F_{c^{-1}})$.
We call $\DF_c$ the \newword{doubled Cambrian fan}  
and justify this name in Theorem~\ref{DoubleFan}.

\begin{example}\label{affineG2 ex}
We consider the doubled Cambrian fan for the exchange matrix $B=\begin{bsmallmatrix*}[r]0&1&1\\-3&0&0\\-1&0&0 \end{bsmallmatrix*}$.
The corresponding Coxeter group is of type $\widetilde{G}_2$ and the corresponding Coxeter element is $c=s_1s_2s_3$.
(Note that the indices $1$, $2$, and $3$ do \emph{not} give the standard linear numbering of the Dynkin diagram.)
Figure~\ref{G2tildeCamb pos} shows the $c$-Cambrian fan $\F_c$ for this example.
(Compare \cite[Figure~6]{typefree}.)
\begin{figure}
\scalebox{0.87}{\includegraphics{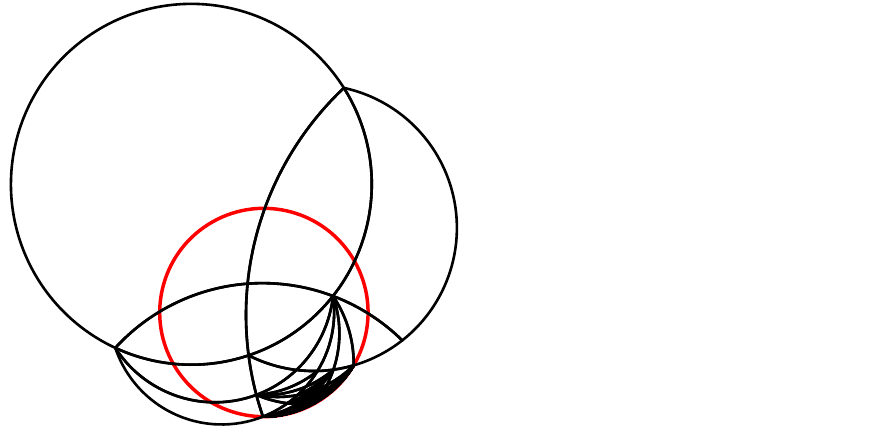}}
\caption{The $c$-Cambrian fan for $B$ as in Example~\ref{affineG2 ex}}
\label{G2tildeCamb pos}
\bigskip
\bigskip
\bigskip
\scalebox{0.87}{\includegraphics{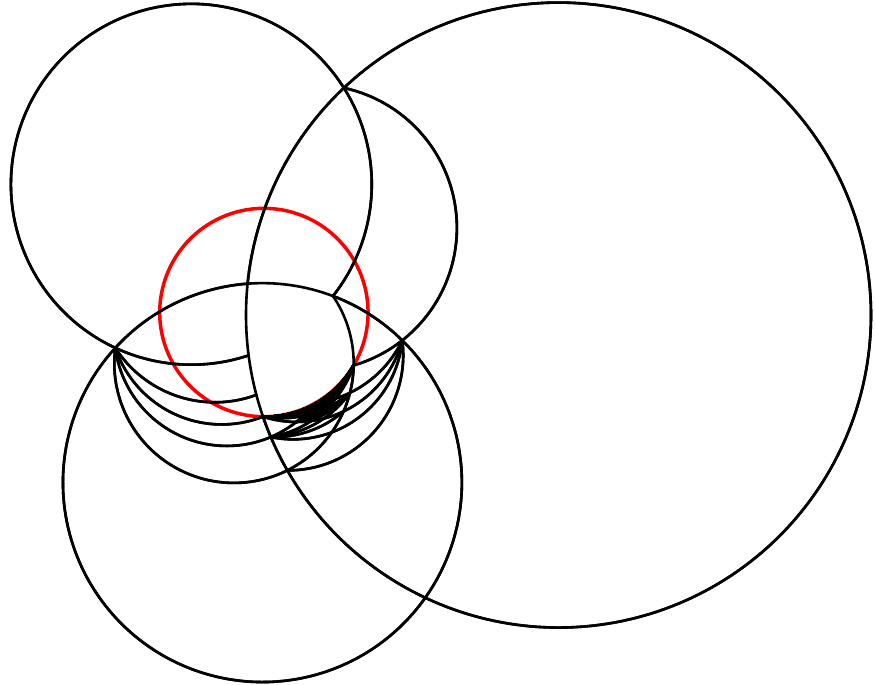}}
\caption{The antipodal $c^{-1}$-Cambrian fan for $B$ as in Example~\ref{affineG2 ex}}
\label{G2tildeCamb neg}
\bigskip
\bigskip
\bigskip
\scalebox{0.87}{\includegraphics{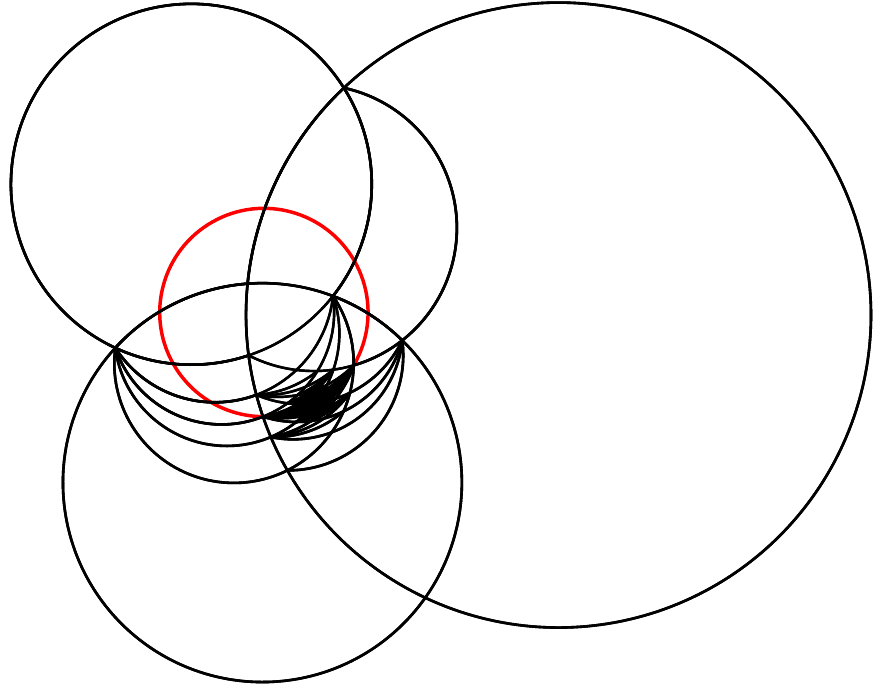}}
\caption{The doubled Cambrian fan for $B$ as in Example~\ref{affineG2 ex}}
\label{G2tildeCamb both}
\end{figure}
The picture should be interpreted as follows:  The $c$-Cambrian fan, intersected with a unit sphere about the origin, is a collection of spherical triangles and their faces.
The picture shows this collection of triangles stereographically projected to the plane.
The red circle marks the boundary of the Tits cone.
Figure~\ref{G2tildeCamb neg} is a similar projection of the antipodal image $-\F_{c^{-1}}$ of the $c^{-1}$-Cambrian fan.
Figure~\ref{G2tildeCamb both} shows the doubled Cambrian fan $\DF_c$, the union of the two fans pictured in Figures~\ref{G2tildeCamb pos} and~\ref{G2tildeCamb neg}.
In each of Figures~\ref{G2tildeCamb neg} and~\ref{G2tildeCamb both}, the triangular exterior region in the stereographic projection represents a cone in the fan.
Figure~\ref{G2tildeCamb both alt} shows the same doubled Cambrian fan in a different stereographic projection.
The boundary of the Tits cone is the red vertical line in the center and the Tits cone projects to the left of the line.
\end{example}
\begin{figure}
\scalebox{0.7}{\includegraphics{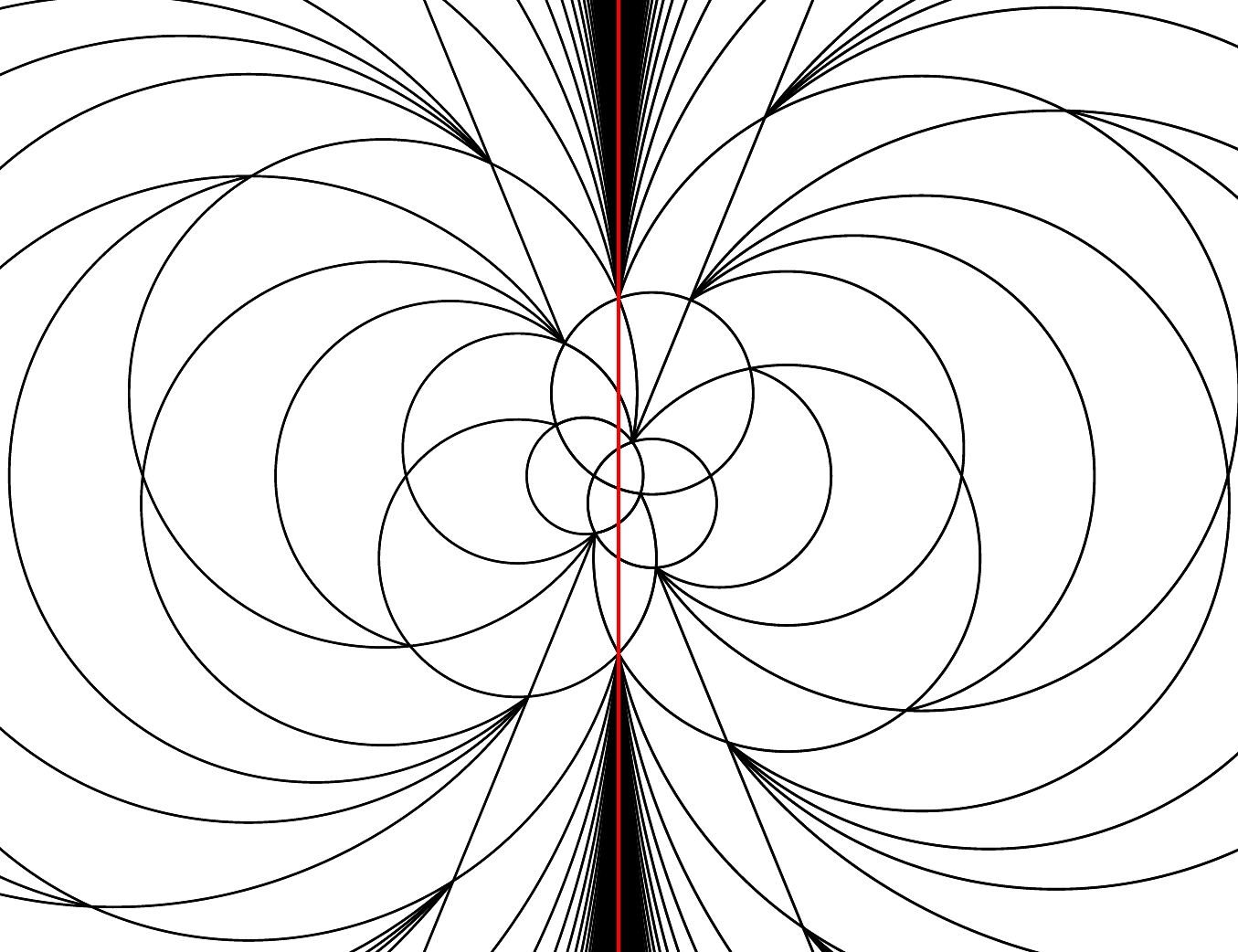}}
\caption{Another view of the doubled Cambrian fan}
\label{G2tildeCamb both alt}
\end{figure}

\begin{remark}\label{visualization preferences}
Preferences for visualizing $3$-dimensional fans vary, even among coauthors.
Readers who find the stereographic projections referenced in Example~\ref{affineG2 ex} difficult to interpret will find a visualization of the same example, via affine slices, in Example~\ref{affineG2 slice ex}.
\end{remark}

\begin{theorem} \label{DoubleFan}
For any acyclic exchange matrix $B$, the collection $\DF_c$ of cones is a simplicial fan.
\end{theorem}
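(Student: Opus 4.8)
The plan is to induct on the rank $n=|S|$, establishing for every rank-$n$ Coxeter group and Coxeter element $c$ that $\DF_c$ is a simplicial fan. Simpliciality is automatic once $\DF_c$ is known to be a fan: by Theorem~\ref{camb fan} each $\Cone_c(v)$ and each $\Cone_{c^{-1}}(w)$ is simplicial, and the antipodal map carries simplicial cones to simplicial cones. Closure under taking faces also holds automatically, since $\F_c$ and $\F_{c^{-1}}$ are fans, so by the criterion recalled in Section~\ref{poly sec} it suffices to show that any two maximal cones of $\DF_c$ meet nicely. Two maximal cones from $\F_c$ meet nicely because $\F_c$ is a fan, and likewise for two from $-\F_{c^{-1}}$; the work is entirely in the mixed case, a cone $\Cone_c(v)$ against a cone $-\Cone_{c^{-1}}(w)$.

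To organize the induction, fix $s$ initial in $c$, so that $s$ is final in $c^{-1}$. By Lemma~\ref{AboveBelow} every maximal cone of $\DF_c$ lies weakly on one side of $\alpha_s^\perp$: writing $H^-=\set{x:\br{x,\alpha_s}\ge0}$ and $H^+$ for the opposite half-space, $\Cone_c(v)$ lies in $H^-$ exactly when $v\not\ge s$, and $-\Cone_{c^{-1}}(w)$ lies in $H^-$ exactly when $w\ge s$. Let $\DF_c^-$ and $\DF_c^+$ be the subcollections of $\DF_c$ lying in $H^-$ and $H^+$; together these account for all maximal cones. I would first show $\DF_c^-$ is a fan, then deduce the same for $\DF_c^+$ by applying the reflection $s$, and finally glue the two along $\alpha_s^\perp$.

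For $\DF_c^-$: when $v\not\ge s$, equation~\eqref{C recur} gives $C_c(v)=C_{c'}(v)\cup\set{\alpha_s}$ with $c'$ the restriction of $c$ to $W_{\br{s}}$, so $\Cone_c(v)=\Cone_{c'}(v)+\RR_{\ge0}\rho_s$ is the prism over the rank-$(n-1)$ Cambrian cone $\Cone_{c'}(v)\subset\alpha_s^\perp$ in the direction $\rho_s$, which is transverse to $\alpha_s^\perp$ since $\br{\rho_s,\alpha_s}>0$. Dually, when $w\ge s$, Proposition~\ref{Cc final} applied to $c^{-1}$ gives $C_{c^{-1}}(w)=C_{(c')^{-1}}(w_{\br{s}})\cup\set{-\alpha_s}$, so $-\Cone_{c^{-1}}(w)=(-\Cone_{(c')^{-1}}(w_{\br{s}}))+\RR_{\ge0}\rho_s$ is the prism, in the \emph{same} direction $\rho_s$, over a maximal cone of the rank-$(n-1)$ doubled Cambrian fan $\DF_{c'}$ (here one uses that the restriction of $c^{-1}$ to $W_{\br{s}}$ is $(c')^{-1}$). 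Since $\DF_{c'}$ is a fan by the inductive hypothesis, and since two prisms over nicely-meeting bases in a common transverse direction again meet nicely — an elementary fact proved via the splitting $V^*=\alpha_s^\perp\oplus\RR\rho_s$ — the collection $\DF_c^-$ is a fan. The identical argument shows $\DF_\gamma^-$ is a fan for any Coxeter element $\gamma$ with $s$ initial. Applying this to $\gamma=scs$ and then the linear automorphism $s$, and using $\Cone_c(v)=s\Cone_{scs}(sv)$ for $v\ge s$ together with the matching identity $s\Cone_{c^{-1}}(w)=\Cone_{sc^{-1}s}(sw)$ for $w\not\ge s$ (both instances of \eqref{C recur}, with the sortability bijections of Theorem~\ref{c to scs}), identifies $\DF_c^+$ with $s\cdot\DF_{scs}^-$, so $\DF_c^+$ is a fan.

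It remains to glue. A maximal cone $F^+$ of $\DF_c^+$ and a maximal cone $F^-$ of $\DF_c^-$ satisfy $F^+\cap F^-\subseteq H^+\cap H^-=\alpha_s^\perp$, hence $F^+\cap F^-=(F^+\cap\alpha_s^\perp)\cap(F^-\cap\alpha_s^\perp)$. Each prism meets $\alpha_s^\perp$ in its base, a maximal cone of $\DF_{c'}$ (using that $s$ acts as the identity on $\alpha_s^\perp$ for the $\DF_c^+$ side); since $\DF_{c'}$ is a fan, these two bases meet nicely, so their intersection is a common face of both. As $\alpha_s^\perp$ supports both $F^+$ and $F^-$, the sets $F^\pm\cap\alpha_s^\perp$ are faces of $F^\pm$, and transitivity of ``is a face of'' shows $F^+\cap F^-$ is a face of both $F^+$ and $F^-$. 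Thus all maximal cones meet nicely and $\DF_c$ is a fan. The main obstacle is the bookkeeping in the inductive step: confirming that the restrictions of $c$, $c^{-1}$, and $scs$ to $W_{\br{s}}$ are $c'$, $(c')^{-1}$, and $c'$; that the relevant sets of sortable elements are in the claimed bijections via $v\mapsto v_{\br{s}}$ and $v\mapsto sv$; and that the two families of prisms genuinely extend in the single common direction $\rho_s$, so that the elementary prism lemma applies uniformly.
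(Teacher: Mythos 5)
Your decomposition of $\DF_c$ into the part weakly below $\alpha_s^\perp$ and the part weakly above, and your prism description of $\DF_c^-=\DF_c^{\bel(s)}$, are correct and match Proposition~\ref{recursive fan} and Case~2 of the paper's proof of Theorem~\ref{DoubleFan}. The argument breaks, however, at the step where you claim that $\DF_{scs}^{\bel(s)}$ is a fan ``by the identical argument.'' That argument requires $s$ to be \emph{initial} in the Coxeter element, whereas $s$ is \emph{final} in $scs$ (if $c=sc_2\cdots c_n$ then $scs=c_2\cdots c_n s$). Consequently the maximal cones of $\DF_{scs}$ lying below $\alpha_s^\perp$ are not prisms in the direction $\rho_s$: equation~\eqref{C recur} and Proposition~\ref{Cc final} supply the prism structure only on the side singled out by an initial letter. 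Already in type $\widetilde A_1$ with $c=s_1s_2$ and $s=s_1$, the collection $\DF_{scs}^{\bel(s)}=s\bigl[\DF_c^{\ab(s)}\bigr]$ has infinitely many maximal cones, almost all of which meet $\alpha_s^\perp$ only at the origin and none of whose defining normals all lie in $V_{\br{s}}\cup\set{\pm\alpha_s}$. Since $\DF_c^{\ab(s)}=s\bigl[\DF_{scs}^{\bel(s)}\bigr]$ is exactly the statement you are trying to prove, reflected by $s$, the step is circular. The paper escapes this with a \emph{second} induction, on $\ell(v)$: in its Case~1 (both cones above $\alpha_s^\perp$) it reflects by $s$, which lowers $\ell(v)$, and then re-enters the entire case analysis for $scs$ using the \emph{new} initial letter of $scs$. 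An induction on rank alone cannot terminate this recursion.

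There is a second, related gap in the gluing step. For a maximal cone $F^+$ of $\DF_c^{\ab(s)}$ you assert that $F^+\cap\alpha_s^\perp$ is ``its base, a maximal cone of $\DF_{c'}$.'' But the cones of $\DF_c^{\ab(s)}$ are not prisms in the direction $\rho_s$, and $F^+\cap\alpha_s^\perp$ is typically a proper, often zero-dimensional, face rather than a maximal cone of $\DF_{c'}$. What your computation $F^+\cap F^-=(F^+\cap\alpha_s^\perp)\cap(F^-\cap\alpha_s^\perp)$ actually requires is that $F^+\cap\alpha_s^\perp$ be \emph{some} cone of the lower-rank fan, i.e.\ a face of one of its maximal cones. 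That is precisely the content of Lemmas~\ref{PrettyLemma} and~\ref{DualPrettyLemma}, each of which needs its own inductive proof (again descending through lengths via the cover relations of $\Camb_c$); it is not automatic from Lemma~\ref{AboveBelow}. With those two lemmas supplied, your gluing does go through and is essentially the paper's Cases~3 and~4. So the architecture of your proof is recoverable, but as written it omits the length induction that powers Case~1 and the two nontrivial lemmas that power the gluing.
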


\begin{proof}
Once we know that $\DF_c$ is a fan, the assertion that the fan is simplicial is immediate from the definition, in light of Theorem~\ref{camb fan}.
As explained in Section~\ref{poly sec}, to see that $\DF_c$ is a fan, we only need to check that the maximal cones meet nicely. 
Moreover, we already know (Theorem~\ref{camb fan}) that two maximal cones of $\F_c$ meet nicely, as do two cones of $-\F_{c^{-1}}$. 
So we need to show that, if $v$ is $c$-sortable and $u$ is $c^{-1}$-sortable, then $\Cone_c(v)$ and $-\Cone_{c^{-1}}(u)$ meet nicely. 

We argue by induction first on the rank of $W$ and then, holding the rank constant, on $\ell(v)$.
Our base case is the vacuous case where $W$ has rank $0$. If the rank is positive but $v=e$, so $\ell(v)=0$, this is not a base case but rather a case where we induct on rank; this scenario is covered by Cases~2 and~4 below.

We now consider the inductive case where the rank of $W$ be positive, so $c$ is not $e$, and let $s$ be initial in $c$.  
We break into four cases.

\textbf{Case 1:} $v \geq s$ and $u \not \geq s$. Then $\Cone_c(v) = s \Cone_{scs}(sv)$ and $-\Cone_{c^{-1}}(u) = - s\Cone_{s c^{-1} s}(s u)$ by \eqref{C recur}. 
By induction on $\ell(v)$ we see that $\Cone_{scs}(sv)$ and $- \Cone_{s c^{-1} s}(s u)$ meet nicely, so $\Cone_c(v)$ and $-\Cone_{c^{-1}}(u)$ meet nicely. 

\textbf{Case 2:} $v \not \geq s$ and $u \geq s$. 
Then $v$ is in $W_{\br{s}}$ and $C_c(v)=C_{sc}(v)\cup\set{\alpha_s}$ by \eqref{C recur}.
Also, $u = s \join u_{\br{s}}$ and $-C_{c^{-1}}(u)=-C_{c^{-1}s}(u_{\br{s}})\cup\set{\alpha_s}$ by Proposition~\ref{Cc final}.
By induction on rank, $\Cone_{sc}(v)$ and $- \Cone_{c^{-1} s}(u_{\br{s}})$ meet nicely in $V_{\br{s}}$. 
Therefore the cones in $V$ defined by $C_{sc}(v)$ and $-C_{c^{-1}s}(u_{\br{s}})$ also meet nicely, and this nice meeting is preserved by intersecting with the halfspace consisting of points below~$\alpha_s^\perp$.

\textbf{Case 3:} $v \geq s$ and $u \geq s$. 
Then $\Cone_c(v)$ is above $\alpha_s^\perp$ and $-\Cone_{c^{-1}}(u)$ is below $\alpha_s^\perp$, by Lemma~\ref{AboveBelow}, used twice.
Thus
\begin{equation}\label{case3eq}
\Cone_c(v) \cap ( -\Cone_{c^{-1}}(u) ) = \left( \Cone_c(v) \cap \alpha_s^\perp \right) \cap ( -\Cone_{c^{-1}}(u) ).
\end{equation}
We now apply the Case 2 argument to $(v_{\br{s}}, u)$ to conclude that $\Cone_c(v_{\br{s}})$ and $- \Cone_{c^{-1}}(u)$ meet nicely.
This uses induction on rank.
By Lemma~\ref{PrettyLemma}, $\Cone_c(v) \cap \alpha_s^\perp$ is a face of $\Cone_c(v_{\br{s}})$, so $\Cone_c(v) \cap \alpha_s^\perp$ and $- \Cone_{c^{-1}}(u)$ also meet nicely.  
That is, $\left( \Cone_c(v) \cap \alpha_s^\perp \right)$ is a face of  $\Cone_c(v) \cap \alpha_s^\perp$ and of $- \Cone_{c^{-1}}(u)$. 
But then
 by \eqref{case3eq}, $\Cone_c(v) \cap ( -\Cone_{c^{-1}}(u) ) $ is a face of  $\Cone_c(v) \cap \alpha_s^\perp$ and of $- \Cone_{c^{-1}}(u)$. 
Since $\Cone_c(v) \cap \alpha_s^\perp$ is a face of $\Cone_c(v)$, we see that  $\Cone_c(v) \cap ( -\Cone_{c^{-1}}(u) ) $ is a face of $\Cone_c(v)$ as well.

\textbf{Case 4:} $v \not \geq s$ and $u \not \geq s$. 
Then $\Cone_c(v)$ is below $\alpha_s^\perp$ and $-\Cone_{c^{-1}}(u)$ is above, so 
\begin{equation}\label{case4eq}
\Cone_c(v) \cap ( -\Cone_{c^{-1}}(u) ) = \Cone_c(v)\cap \left( -\Cone_{c^{-1}}(u)  \cap \alpha_s^\perp \right).
\end{equation}
By Lemma~\ref{DualPrettyLemma}, $-\Cone_{c^{-1}}(u) \cap\alpha_s^\perp$ is a face of $-\Cone_{c^{-1}}( s \join (su)_{\br{s}})$. 
We first argue as in Case 2, replacing $(v,u)$ by $(v, s \join (su)_{\br{s}})$, and conclude, using induction on rank, that $\Cone_c(v)$ and $- \Cone_{c^{-1}}(s \join (su)_{\br{s}})$ meet nicely.
By Lemma~\ref{DualPrettyLemma}, $-\Cone_{c^{-1}}(u)\cap\alpha_s^\perp$ is a face of $-\Cone_{c^{-1}}(s\join(su)_{\br{s}})$, so $\Cone_c(v)$ and $-\Cone_{c^{-1}}(u)\cap\alpha_s^\perp$ meet nicely.  
By \eqref{case4eq}, $\Cone_c(v)\cap(-\Cone_{c^{-1}}(u))$ is a face of $\Cone_c(v)$ and of $-\Cone_{c^{-1}}(u)\cap\alpha_s^\perp$.
Since $-\Cone_{c^{-1}}(u)\cap\alpha_s^\perp$ is a face of $-\Cone_{c^{-1}}(u)$, we see that $\Cone_c(v)\cap(-\Cone_{c^{-1}}(u))$ is a face of $-\Cone_{c^{-1}}(u)$ as well.
\end{proof}

\begin{remark}
There is an alternate strategy for proving Theorem~\ref{DoubleFan}.
The fact that the $\g$-vector fan is indeed a fan is one of the structural conjectures mentioned in the introduction.
It was proved in the acyclic case (and beyond) in \cite{Demonet} and in general in \cite{GHKK}.
As a consequence of Theorem~\ref{framework exchange}\eqref{g vec}, the Cambrian fan $\F_c$ is a subfan of the $\g$-vector fan.
Thus, to prove Theorem~\ref{DoubleFan}, it would be enough to show that $-\F_{c^{-1}}$ is also a subfan of the $\g$-vector fan.
This proof (or an even simpler approach using the mutation fan of \cite{universal} in place of the $\g$-vector fan) can indeed be carried out.
However, here we prefer to construct the doubled Cambrian fan completely in terms of the combinatorics and geometry of Coxeter groups and root systems, without relying on any other heavy machinery.
\end{remark}

The same cone may occur in both $\F_{c}$ and $-\F_{c^{-1}}$.
In fact, 
since the Cambrian fan covers the Tits cone, we have the following immediate corollary to Theorem~\ref{DoubleFan}.

\begin{cor} \label{DoubledConeExists} 
For every $c$-sortable element $v$ such that the interior of $\Cone_c(v)$ meets $-\Tits(A)$, there is a $c^{-1}$-sortable element $v'$ with $\Cone_c(v) = - \Cone_{c^{-1}}(v')$. 
\end{cor}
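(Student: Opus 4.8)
The plan is to deduce the statement directly from Theorem~\ref{DoubleFan} together with the already-cited fact that the Cambrian fan covers the Tits cone. First I would invoke the hypothesis to fix a point $p$ lying both in the interior of $\Cone_c(v)$ and in $-\Tits(A)$. Since $\F_{c^{-1}}$ covers $\Tits(A)$, its antipodal image $-\F_{c^{-1}}$ covers $-\Tits(A)$, so the maximal cones of $-\F_{c^{-1}}$ cover $-\Tits(A)$. In particular $p$ lies in some maximal cone $-\Cone_{c^{-1}}(v')$ of $-\F_{c^{-1}}$, for a suitable $c^{-1}$-sortable element $v'$.

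Next I would use the fan property. By Theorem~\ref{DoubleFan}, the collection $\DF_c$ is a fan, so the maximal cones $\Cone_c(v)$ and $-\Cone_{c^{-1}}(v')$ meet nicely; that is, their intersection $F=\Cone_c(v)\cap(-\Cone_{c^{-1}}(v'))$ is a face of each. Since $F$ contains the point $p$, which lies in the interior of $\Cone_c(v)$, and since the only face of a cone that meets the interior of that cone is the cone itself, we get $F=\Cone_c(v)$. Thus $\Cone_c(v)\subseteq -\Cone_{c^{-1}}(v')$, and $\Cone_c(v)$ is a face of $-\Cone_{c^{-1}}(v')$.

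To finish, I would compare dimensions. Both $\Cone_c(v)$ and $-\Cone_{c^{-1}}(v')$ are maximal cones of simplicial fans (Theorem~\ref{camb fan}), hence full-dimensional of dimension $n$. A full-dimensional face of a cone is the whole cone, so $\Cone_c(v)=-\Cone_{c^{-1}}(v')$, as desired.

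I do not expect any serious obstacle here: essentially all of the content is packaged into Theorem~\ref{DoubleFan} (the fan property) and into the fact that the Cambrian fan covers the Tits cone. The only point requiring care is the elementary polyhedral bookkeeping in the last two paragraphs, namely that a face meeting the interior of a cone is the whole cone, and that a full-dimensional face must be the whole cone. This is why the statement can be billed as an immediate corollary of Theorem~\ref{DoubleFan}.
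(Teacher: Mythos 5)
Your proposal is correct and follows exactly the route the paper intends: the paper states this as an immediate consequence of Theorem~\ref{DoubleFan} together with the fact that the Cambrian fan covers the Tits cone (via Theorem~\ref{pidown fibers}), and your argument simply makes explicit the covering step and the elementary polyhedral bookkeeping (a face meeting the interior of a full-dimensional cone is the whole cone). No gaps.
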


\begin{remark}\label{double finite}
When $W$ is finite, Corollary~\ref{DoubledConeExists} becomes the statement that $\F_c$ and $-\F_{c^{-1}}$ are identical.
This is essentially \cite[Proposition~1.3]{sort_camb}, although that result is phrased in terms of Cambrian congruences.
In that case, $v' = \pidown^{c^{-1}}(v w_0)$.
\end{remark}

We pause to record some additional useful facts, which reproduce the finite case of \cite[Theorem~9.6]{typefree} and \cite[Proposition~9.7]{typefree} (cf.\ \cite[Proposition~7.4]{camb_fan}) and strengthen the general case of the same theorem and proposition.
These are not needed for the remainder of the paper, but add insight into the structure of $\DF_c$.
For $s$ in $S$, define $\DF_c^{\ab(s)}$ to be the set of cones in $\DF_c$ that are above $\alpha_s^\perp$ and define $\DF_c^{\bel(s)}$ to be the set of cones in $\DF_c$ below $\alpha_s^\perp$.
Suppose $J\subseteq S$ and recall that the restriction of $c$ to $W_J$ is the Coxeter element $c'$ of $W_J$ obtained by deleting the letters in $S\setminus J$ from a reduced word for $c$.
Let $A_J$ be the Cartan matrix obtained by deleting from $A$ the rows and columns indexed by $S\setminus J$, and let $\Phi_J$ be the corresponding sub root system of $\Phi$.
Let $V_J$ be the subspace of $V$ spanned by $\set{\alpha_s:s \in J}$. 
The parabolic subgroup $W_J$ fixes $V_J$ as a set.
Let $\Proj_J$ stand for the surjection from $V^*$ onto $(V_J)^*$ that is dual to the inclusion of $V_J$ into $V$.
Recall that $\br{s}$ means $S\setminus\set{s}$.
When $s$ is initial in $c$, we write $\DF_{sc}$ for the doubled Cambrian fan constructed in $V_\br{s}$ using the Coxeter element $sc$ of $W_\br{s}$.

\begin{prop}\label{recursive fan}
Let $s$ be initial in $c$.
Then    
\begin{enumerate}
\item $\DF_c=\DF_c^{\ab(s)}\cup\DF_c^{\bel(s)}$.
\item $\DF_c^{\ab(s)}=s\bigl[\DF_{scs}^{\bel(s)}\bigr]$.
\item The map $F\mapsto \Proj_{\br{s}}^{-1}(F)\cap\set{x\in V^*: \br{x,\alpha_s}\ge 0}$ is a bijection from the set of maximal cones of $\DF_{sc}$ to the set of maximal cones of $\DF_c^{\bel(s)}$.
\item The map $F\mapsto \Proj_{\br{s}}^{-1}(F)\cap\set{x\in V^*: \br{x,\alpha_s}\le 0}$ is a bijection from the set of maximal cones of $\DF_{sc}$ to the set of maximal cones of $\DF_{scs}^{\ab(s)}$.
\end{enumerate}
\end{prop}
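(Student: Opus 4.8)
The plan is to prove (1) directly from Lemma~\ref{AboveBelow}, and to obtain (2)--(4) by unfolding the recursion \eqref{C recur} and Proposition~\ref{Cc final} a single step, applied separately to the two halves $\F_c$ and $-\F_{c^{-1}}$ of $\DF_c$ (and likewise to $\F_{scs}$ and $-\F_{sc^{-1}s}$ for $\DF_{scs}$). No induction is needed beyond knowing, from Theorem~\ref{DoubleFan} applied in $W_{\br{s}}$, that $\DF_{sc}$ is already a fan. For (1), Lemma~\ref{AboveBelow} puts each maximal cone of $\F_c$ and each $\Cone_{c^{-1}}(u)$ weakly on one side of $\alpha_s^\perp$; since faces inherit this property and negation interchanges the two sides, every cone of $\DF_c=\F_c\cup(-\F_{c^{-1}})$ is above or below $\alpha_s^\perp$.

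For (3) I would compute the two families of below-cones. When $v$ is $c$-sortable with $v\not\ge s$, the recursive definition of sortability forces $v\in W_{\br{s}}$, and then $v$ is $sc$-sortable (Proposition~\ref{sort para easy}) while \eqref{C recur} gives $C_c(v)=C_{sc}(v)\cup\set{\alpha_s}$; since the roots of $C_{sc}(v)$ lie in $V_{\br{s}}$, this exhibits $\Cone_c(v)=\Proj_{\br{s}}^{-1}(\Cone_{sc}(v))\cap\set{x:\br{x,\alpha_s}\ge 0}$. Dually, $-\Cone_{c^{-1}}(u)$ is below $\alpha_s^\perp$ exactly when $u\ge s$; as $s$ is final in $c^{-1}$, Proposition~\ref{Cc final} gives $C_{c^{-1}}(u)=C_{c^{-1}s}(u_{\br{s}})\cup\set{-\alpha_s}$, whence $-\Cone_{c^{-1}}(u)=\Proj_{\br{s}}^{-1}(-\Cone_{c^{-1}s}(u_{\br{s}}))\cap\set{x:\br{x,\alpha_s}\ge 0}$. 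Because $\DF_{sc}=\F_{sc}\cup(-\F_{c^{-1}s})$ (using $(sc)^{-1}=c^{-1}s$), these two computations show the map lands on precisely the maximal cones of $\DF_{sc}$; that it is a bijection follows since the underlying correspondences $v\mapsto v$ and $u\mapsto u_{\br{s}}$ on sortable elements are bijections and distinct sortable elements label distinct cones (Theorem~\ref{camb fan}, and its image under negation).

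Part (4) is the same unfolding applied to $\DF_{scs}$, now using that $s$ is final in $scs$ and initial in $sc^{-1}s=(scs)^{-1}$: Proposition~\ref{Cc final} rewrites the above-cones of $\F_{scs}$ (those $\Cone_{scs}(w)$ with $w\ge s$, via $scs\cdot s=sc$) as cylinders over $\Cone_{sc}(w_{\br{s}})$ cut by $\set{x:\br{x,\alpha_s}\le 0}$, and \eqref{C recur} rewrites the above-cones of $-\F_{sc^{-1}s}$ (those with $u\not\ge s$, via $s\cdot sc^{-1}s=c^{-1}s$) as cylinders over $-\Cone_{c^{-1}s}(u)$ cut by the same halfspace; together these again cover $\F_{sc}\cup(-\F_{c^{-1}s})=\DF_{sc}$. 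For (2) I would instead use the identity $\Cone_c(v)=s\Cone_{scs}(sv)$ from \eqref{C recur} together with the bijection $v\mapsto sv$ of Theorem~\ref{c to scs}, which matches $c$-sortable $v\ge s$ with $scs$-sortable $sv\not\ge s$; applying the same matching to the inverse Coxeter elements identifies $(-\F_{c^{-1}})^{\ab(s)}$ with $s\bigl[(-\F_{sc^{-1}s})^{\bel(s)}\bigr]$, and taking the union of the two halves yields $\DF_c^{\ab(s)}=s\bigl[\DF_{scs}^{\bel(s)}\bigr]$.

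The main obstacle is bookkeeping rather than a single hard idea: one must keep straight the four Coxeter elements $sc$, $scs$, $c^{-1}s$, and $sc^{-1}s$ and determine in each case whether $s$ is initial or final, so that the correct tool (\eqref{C recur} for initial $s$, Proposition~\ref{Cc final} for final $s$) is invoked and the cylinder-plus-halfspace description comes out with the right inequality sign after negation. A secondary point demanding care is bijectivity onto maximal cones: I must verify both that each cone produced is genuinely full-dimensional (hence maximal) in the target fan and that no two distinct source cones collide, which rests on the distinctness statement of Theorem~\ref{camb fan} (and its negation) together with the bijectivity of the sortable-element maps used above.
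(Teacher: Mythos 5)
Your proof follows essentially the same route as the paper's: part (1) from Lemma~\ref{AboveBelow} applied to both $\F_c$ and $-\F_{c^{-1}}$, part (2) from Theorem~\ref{c to scs} together with \eqref{C recur} applied to each half, and parts (3) and (4) from \eqref{C recur} and Proposition~\ref{Cc final} with their roles interchanged. The paper's own proof is considerably terser, and your bookkeeping of which of $sc$, $scs$, $c^{-1}s$, $sc^{-1}s$ has $s$ initial or final correctly supplies the details it leaves implicit.
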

\begin{proof} 
Assertion (1) is the statement that every cone of $\DF_c$ is either above or below $\alpha_s^\perp$.
This is true, independent of the hypothesis that $s$ is initial, by Lemma~\ref{AboveBelow} applied to $\F_c$ and to $-\F_{c^{-1}}$.
Theorem~\ref{c to scs} and equation \eqref{C recur} show that the reflection $s$ takes the cones of $\F_c$ above $\alpha_s^\perp$ bijectively to the cones of $\F_{scs}$ below $\alpha_s^\perp$.
The same theorem and equation show that $s$ takes the cones of $-\F_{c^{-1}}$ above $\alpha_s^\perp$ bijectively to the cones of $-\F_{sc^{-1}s}$ below $\alpha_s^\perp$.
Thus (2) holds. 
Assertion~(3) follows from \eqref{C recur} for cones of $\F_c$ below $\alpha_s^\perp$ and from Proposition~\ref{Cc final} for cones of $-\F_{c^{-1}}$ below $\alpha_s^\perp$.
Assertion (4) holds similarly with the roles of \eqref{C recur} and Proposition~\ref{Cc final} reversed.
\end{proof}

\subsection{Doubling the Cambrian framework}\label{DCambc sec}
Theorem~\ref{DoubleFan} is the key to constructing the doubled Cambrian reflection framework $(\DCamb_c,\DC_c)$.
We define $\DCamb_c$ to be the dual graph to $\DF_c$.
More directly, let $-\Camb_{c^{-1}}$ be $\Camb_{c^{-1}}$ realized as a graph on formal negations $-v$ of $c^{-1}$-sortable elements $v$ with an edge $(-u,-v)$ in $-\Camb_{c^{-1}}$ if and only if $(u,v)$ is an edge in $\Camb_{c^{-1}}$.
Since there is typically no abelian group structure on $c$-sortable elements, this notation should cause no confusion. 
We start with a disjoint union of $\Camb_c$ and $-\Camb_{c^{-1}}$ with labels $C_c(v)$ on each vertex $v$ of $\Camb_c$ and labels $-C_{c^{-1}}(v)$ on each vertex $-v$ of $-\Camb_c$.
We identify a vertex $v$ of $\Camb_c$ with a vertex $-v'$ of $\Camb_{c^{-1}}$ if $\Cone_c(v)=-\Cone_{c^{-1}}(v')$.
In this case, since $\Cone_c(v)$ and $\Cone_{c^{-1}}(v')$ are both defined by roots in the root system associated to $A$, we must have $C_c(v)=-C_{c^{-1}}(v')$.
We think of $C_c$ as a labeling of incident pairs as in Section~\ref{camb sec}, and use the equality $C_c(v)=-C_{c^{-1}}(v')$ of sets to identify the sets $I(v)$ and $I(-v')$ of edges
in the obvious way.
If $v$ has a full edge connecting $v$ to $u$ but the corresponding edge of $-v'$ is a half-edge, then the identified edge is a full edge connecting the identified vertex $v=-v'$ to the vertex $u$, and similarly if $v$ has a half-edge where $-v'$ has a full edge.

This way of defining $\DCamb_c$ also yields a labeling $\DC_c$ of incident pairs of $\DCamb_c$.
For an incident pair $(\tilde v,\tilde e)$ in $\DCamb_c$ corresponding to an incident pair $(v,e)$ in $\Camb_c$, we set $\DC_c(\tilde v,\tilde e)=C_c(v,e)$.
For an incident pair $(\tilde v,\tilde e)$ in $\DCamb_c$ corresponding to an incident pair $(-v,-e)$ in $-\Camb_{c^{-1}}$, we set $\DC_c(\tilde v,\tilde e)=-C_{c^{-1}}(v,e)$.
If a given incident pair in $\DCamb_c$ takes both of these forms, then, by construction, the two ways of computing $\DC_c$ on that incident pair agree.

\begin{example}\label{affineG2 frame ex} 
This example continues Example~\ref{affineG2 ex}.
Figure~\ref{G2tilde frame} shows the graph $\DCamb_c$.
The vertices and edges that exist both in $\Camb_c$ and $-\Camb_{c^{-1}}$ are shown in red.
These correspond to the triangles in Figure~\ref{G2tildeCamb both alt} whose interiors intersect the equator (the red line).
\begin{figure}[ht]
\scalebox{0.87}{\includegraphics{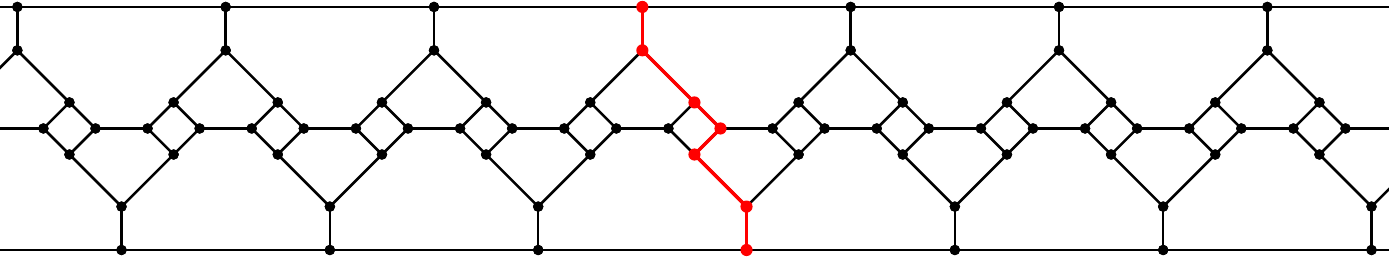}}
\caption{$\DCamb_c$}
\label{G2tilde frame}
\end{figure}
\end{example}

\begin{theorem} \label{DoubleFramework}
If $\DCamb_c$ is connected, then $(\DCamb_c, \DC_c)$ is a polyhedral reflection framework. 
\end{theorem}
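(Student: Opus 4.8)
The plan is to exploit the fact that every defining condition of a reflection framework is \emph{local}: the Base and Root conditions and the Euler conditions (E1)--(E3) refer to a single vertex, while the Reflection condition refers to a single full edge together with its two endpoints. Since $\DCamb_c$ is obtained by gluing $\Camb_c$ and $-\Camb_{c^{-1}}$, each vertex of $\DCamb_c$ carries the label set $C_c(v)$ of a $c$-sortable element or the label set $-C_{c^{-1}}(v')$ of a $c^{-1}$-sortable element (and, on a glued vertex, these two sets coincide, so the two descriptions never conflict), and each full edge of $\DCamb_c$ arises from a full edge of $\Camb_c$ or of $-\Camb_{c^{-1}}$. The $n$-regularity is built into the construction and connectivity is the hypothesis, so it remains to verify the four labeling conditions and the polyhedral property. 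For vertices and edges inherited from $\Camb_c$ these all hold verbatim by Theorem~\ref{camb good}; the content of the proof is to transfer them across the negation map to the $-\Camb_{c^{-1}}$ part.

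The Base condition holds at the vertex carrying the identity element, whose label set is $\Pi$, and the Root condition is immediate since the negative of a root is a root. For the Euler and Reflection conditions on the negated copy, I would first record the elementary identities relating the two forms: because $-B$ is the exchange matrix attached to $c^{-1}$ we have $\omega_{c^{-1}}=-\omega_c$, while $K$ depends only on the common Cartan companion $\Cart(B)=\Cart(-B)$ and is unchanged; combined with $E=\tfrac12(K+\omega)$ this gives $E_{c^{-1}}(\beta,\gamma)=E_c(\gamma,\beta)$ and hence $E_c(-\delta_1,-\delta_2)=E_{c^{-1}}(\delta_2,\delta_1)$ for $\delta_1,\delta_2\in C_{c^{-1}}(v')$. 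Since negation reverses the sign of a root, a short sign-chase then shows that (E1) and (E2) for the label set $-C_{c^{-1}}(v')$ with respect to $E_c$ follow from (E1) and (E2) for $C_{c^{-1}}(v')$ with respect to $E_{c^{-1}}$, with the two arguments interchanged; and the graph $\Gamma$ of the negated vertex is the edge-reversal of $\Gamma(v')$ under $\delta\mapsto-\delta$, so (E3) transfers because reversing all edges of a directed graph preserves acyclicity. For the Reflection condition, I would use that $t$ acts linearly, that $-\beta_t$ names the same reflection $t$, and that $\omega_c(\beta_t\ck,-\delta)=\omega_{c^{-1}}(\beta_t\ck,\delta)$, to see that the root the condition demands in the target label set is exactly the negation of the root supplied by the Reflection condition in $\Camb_{c^{-1}}$.

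Finally, for the polyhedral property I would compute that $\Cone(\tilde v)$ equals $\Cone_c(v)$ on a $\Camb_c$ vertex and $-\Cone_{c^{-1}}(v')$ on a $-\Camb_{c^{-1}}$ vertex, so the collection of maximal cones $\set{\Cone(\tilde v)}$ is precisely the set of maximal cones of $\DF_c$; this is a fan by Theorem~\ref{DoubleFan}, giving part~(1) of the polyhedral property. Part~(2), that distinct vertices define distinct cones, is immediate from the construction: distinct $c$-sortable (resp.\ $c^{-1}$-sortable) elements label distinct maximal cones by Theorem~\ref{camb fan}, and the gluing identifies exactly those pairs of vertices whose cones coincide, so $\tilde v\mapsto\Cone(\tilde v)$ is a bijection onto the maximal cones of $\DF_c$. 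I expect the only genuine obstacle to be the sign bookkeeping of the previous paragraph; once the reflection-framework axioms are seen to survive negation, the polyhedral property is essentially a restatement of Theorem~\ref{DoubleFan}.
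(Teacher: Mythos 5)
Your proposal is correct and follows essentially the same route as the paper's proof: verify the local framework axioms separately on the $\Camb_c$ and $-\Camb_{c^{-1}}$ parts using the identities $E_{c^{-1}}(\beta,\gamma)=E_c(\gamma,\beta)$ and $\omega_{c^{-1}}=-\omega_c$ (noting that $\beta_t$ stays positive while the other labels change sign), and then obtain the polyhedral property directly from Theorem~\ref{DoubleFan} together with the distinctness of cones from Theorem~\ref{camb good}. The sign bookkeeping you outline is exactly the content of the paper's argument.
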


\begin{proof}
The Base condition holds in $\DCamb_c$ because it holds in $\Camb_c$.
The Root condition and the Euler conditions hold in $\DCamb_c$ because they hold in $\Camb_c$ and $\Camb_{c^{-1}}$, because $E_{c^{-1}}(\beta, \gamma) = E_c(\gamma, \beta)$, and because passing from $\Camb_c$ to $-\Camb_c$ corresponds to changing the signs of all labels.
Finally, each full edge~$e$ in $\DCamb_c$ comes from a full edge in $\Camb_c$ or a full edge in $-\Camb_{\c^{-1}}$, or both.
If~$e$ comes from a full edge in $\Camb_c$, then the Reflection condition holds in $\DCamb_c$ because it holds in $\Camb_c$. 
If $e$ comes from a full edge in $\Camb_{c^{-1}}$, then the Reflection condition holds in $\Camb_{c^{-1}}$.
Passing from the Reflection condition in $\Camb_{c^{-1}}$ to the Reflection condition in $\DCamb_c$, the root $\beta_t$ does not change sign because it is positive by definition, but $\gamma$ does change sign because we pass from label sets $C_{c^{-1}}$ 
in $\Camb_{c^{-1}}$ to label sets $-C_{c^{-1}}$ in $\DCamb_c$.
However, $\omega_{c}(\beta_t, -\gamma) = \omega_{c^{-1}}(\beta_t, \gamma)$, so the Reflection condition is preserved.
Thus $\DCamb_c$ satisfies the definition of a reflection framework. 
By construction, $\DF_c$ is the collection of cones associated to vertices of $\DCamb_c$.
By Theorem~\ref{DoubleFan}, $\DF_c$ is a fan.
Also by construction, if a cone of $\F_c$ and a cone of $-\F_{c^{-1}}$ coincide, then the corresponding vertices are identified in $\DCamb_c$.
In light of Theorem~\ref{camb good}, we see that distinct vertices of $\DCamb_c$ label distinct maximal cones of $\DF_c$.
Thus $(\DCamb_c, \DC_c)$ is polyhedral.
\end{proof}

The hypothesis of Theorem~\ref{DoubleFramework} only holds when $\F_c$ and $-\F_{c^{-1}}$ have full-dimensional cones in common.  
The following proposition tells us that this happens in many cases.
As special cases of the proposition, $\DCamb_c$ is connected in the affine case and, more generally, whenever every proper parabolic subgroup of $W$ is finite.

\begin{prop} \label{OverlapExists}  
Suppose $s_1\cdots s_n$ is a reduced word for a Coxeter element $c$ of $W$, and suppose that, for some $i\in\set{1,\ldots,n-1}$, the parabolic subgroups $W_\set{s_1,\ldots,s_i}$ and $W_\set{s_{i+1},\ldots,s_n}$ are both finite.
Then $\DCamb_c$ is connected.
\end{prop}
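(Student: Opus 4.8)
The plan is to reduce the statement to producing a single full-dimensional cone lying in both $\F_c$ and $-\F_{c^{-1}}$, and then to exhibit such a cone explicitly. First I would observe that, since $\Camb_c$ and $-\Camb_{c^{-1}}$ are each connected (Theorem~\ref{camb good}), the graph $\DCamb_c$ is connected as soon as a single vertex of $\Camb_c$ is identified with a single vertex of $-\Camb_{c^{-1}}$; by the construction in Section~\ref{DCambc sec}, such an identification occurs exactly when some cone $\Cone_c(v)$ coincides with some $-\Cone_{c^{-1}}(v')$. By Corollary~\ref{DoubledConeExists}, it therefore suffices to find a $c$-sortable element $v$ whose cone $\Cone_c(v)$ has interior meeting $-\Tits(A)$.

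Write $J_1=\set{s_1,\dots,s_i}$ and $J_2=\set{s_{i+1},\dots,s_n}$, so that $c=c_1c_2$ where $c_1$ and $c_2$ are the restrictions of $c$ to the finite groups $W_{J_1}$ and $W_{J_2}$. The candidate element is $v=w_2$, the longest element of $W_{J_2}$; it is $c_2$-sortable (being the top of the Cambrian lattice of $W_{J_2}$) and hence $c$-sortable by Proposition~\ref{sort para easy}. The central computation is that
\[\Cone_c(w_2)=\set{x\in V^*:\br{x,\alpha_s}\ge 0\ (s\in J_1),\ \br{x,\alpha_s}\le 0\ (s\in J_2)}.\]
I would obtain this from the label set $C_c(w_2)=\set{\alpha_s:s\in J_1}\cup\set{-\alpha_s:s\in J_2}$, computed via the recursion \eqref{C recur}: since $w_2\in W_{J_2}$ we have $w_2\not\ge s_j$ for each $s_j\in J_1$, so peeling off the initial letters $s_1,\dots,s_i$ in turn contributes the simple roots $\alpha_{s_1},\dots,\alpha_{s_i}$ and leaves the label set $C_{c_2}(w_2)$ computed inside $W_{J_2}$. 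Since $w_2$ is the longest element of $W_{J_2}$, it has $|J_2|$ cover reflections, so all of its labels are negative by Proposition~\ref{lower walls}, and they are precisely the negatives $\set{-\alpha_s:s\in J_2}$ of the simple roots of $W_{J_2}$.

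It then remains to show that the interior of this cone meets $-\Tits(A)$. I would take the point $y\in V^*$ determined by $\br{y,\alpha_s}=1$ for $s\in J_1$ and $\br{y,\alpha_s}=-M$ for $s\in J_2$ with $M$ large; clearly $y$ lies in the interior of $\Cone_c(w_2)$. To see that $y\in-\Tits(A)$, let $w_1$ be the longest element of $W_{J_1}$ and consider $w_1(-y)$. For $s\in J_1$, writing $w_1\alpha_s=-\alpha_{s'}$, one computes $\br{w_1(-y),\alpha_s}=\br{y,\alpha_{s'}}=1>0$, while for $s\in J_2$ one has $w_1\alpha_s=\alpha_s+\sum_{t\in J_1}m_t\alpha_t$ with all $m_t\ge 0$, so $\br{w_1(-y),\alpha_s}=M-\sum_{t\in J_1}m_t>0$ once $M$ is large. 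Thus $w_1(-y)$ lies in the interior of the fundamental chamber $D$, so $-y$ lies in the interior of $\Tits(A)$ and $y$ in the interior of $-\Tits(A)$. Applying Corollary~\ref{DoubledConeExists} to $w_2$ then produces a $c^{-1}$-sortable $v'$ with $\Cone_c(w_2)=-\Cone_{c^{-1}}(v')$, giving the required identified vertex and hence connectivity.

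The main obstacle I anticipate is the label computation $C_c(w_2)=\set{\alpha_s:s\in J_1}\cup\set{-\alpha_s:s\in J_2}$: one must apply the recursion \eqref{C recur} carefully, verifying at each stage that the initial letter being removed lies in $J_1$ and that $w_2$ is not weakly above it, and then correctly identify the label set of the longest element of $W_{J_2}$. Everything after that — the explicit choice of $y$ and the verification that $w_1(-y)$ is dominant — is a routine finite-type calculation, made possible precisely by the hypothesis that $W_{J_1}$ and $W_{J_2}$ are finite (so that the longest elements $w_1$, $w_2$ exist and act as involutions sending the respective simple roots to negative simple roots).
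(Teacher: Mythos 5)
Your proof is correct, and its first half is essentially the paper's: your label computation $C_c((w_0)_{J_2})=\set{\alpha_s:s\in J_1}\cup\set{-\alpha_s:s\in J_2}$, obtained by peeling off the initial letters $s_1,\dots,s_i$ via \eqref{C recur} and then identifying the labels of the longest element through its cover reflections (Proposition~\ref{lower walls}), is exactly Lemma~\ref{w0Region}. Where you diverge is in certifying that this cone also lies in $-\F_{c^{-1}}$. The paper simply runs the same label computation a second time, for the reversed word $s_n\cdots s_1$ of $c^{-1}$ and the parabolic $W_{J_1}$, and observes that $C_{c^{-1}}((w_0)_{J_1})$ is the negative of $C_c((w_0)_{J_2})$, so the two cones coincide on the nose; this is shorter, purely combinatorial, and independent of Theorem~\ref{DoubleFan}. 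You instead exhibit an explicit interior point $y$ of $\Cone_c((w_0)_{J_2})$ lying in $-\Tits(A)$ (using the longest element $w_1$ of $W_{J_1}$ to drag $-y$ into $D$) and invoke Corollary~\ref{DoubledConeExists}. Your verification is sound -- $w_1$ sends $\set{\alpha_s:s\in J_1}$ to negative simple roots and sends each $\alpha_s$, $s\in J_2$, to a positive root of the form $\alpha_s+\sum_{t\in J_1}m_t\alpha_t$ with $m_t\ge0$, so taking $M$ large works -- and there is no circularity, since Theorem~\ref{DoubleFan} and its corollary are established before this proposition. The trade-off is that your route makes the geometry visible (the cone genuinely pokes through $\partial\Tits(A)$) at the cost of leaning on the fan structure of $\DF_c$, whereas the paper's symmetric double application of Lemma~\ref{w0Region} needs nothing beyond the Cambrian recursion itself.
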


We first need the following lemma. We write $(w_0)_J$ for the longest element of $W_J$, when $W_J$ is finite.

\begin{lemma} \label{w0Region}
Let $c = s_1 s_2 \cdots s_n$ and let $J=\set{s_{m+1},\ldots,s_n}$ for some $m\ge1$. 
Suppose that $W_J$  
 is finite. 
Then $C_c((w_0)_J)=\set{\alpha_1,\alpha_2,\ldots,\alpha_m,-\alpha_{m+1},\ldots,-\alpha_n}$.
\end{lemma}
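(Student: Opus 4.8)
We want to show that for $c = s_1 s_2 \cdots s_n$ and $J = \set{s_{m+1}, \ldots, s_n}$ with $W_J$ finite, the label set $C_c((w_0)_J)$ equals $\set{\alpha_1, \ldots, \alpha_m, -\alpha_{m+1}, \ldots, -\alpha_n}$. Let me think about what $(w_0)_J$ looks like and how its label set is computed.

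First, the structure of $(w_0)_J$: it is the longest element of the parabolic subgroup $W_J$, so it lives entirely in $W_J$. In particular, $(w_0)_J \not\geq s_i$ for $i \leq m$ (since $s_1, \ldots, s_m \notin J$), and $(w_0)_J \geq s_i$ for $i > m$.

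**The approach.** I'd prove this by induction. The natural recursion to exploit is equation (C recur), which computes $C_c(v)$ by peeling off initial letters of $c$. Since $s_1$ is initial in $c$ and $(w_0)_J \not\geq s_1$ (as $s_1 \notin J$), equation (C recur) gives
\[
C_c((w_0)_J) = C_{s_1 c}((w_0)_J) \cup \set{\alpha_1}.
\]
Now $s_1 c = s_2 \cdots s_n$ is a Coxeter element of $W_{\br{s_1}}$, and $(w_0)_J$ is still the longest element of $W_J$, viewed inside $W_{\br{s_1}}$. So by induction on the rank (peeling off $s_1, s_2, \ldots, s_m$ one at a time), I'd obtain the positive simple roots $\alpha_1, \ldots, \alpha_m$ and reduce to computing $C_{s_m \cdots s_1 c}((w_0)_J) = C_{c'}((w_0)_J)$ where $c' = s_{m+1} \cdots s_n$ is a Coxeter element of $W_J$ itself.

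**The base case: $J = S$.** The heart of the argument is the case where $m = 0$, i.e.\ $W = W_J$ is finite with Coxeter element $c = s_1 \cdots s_n$ and $v = w_0$ is the longest element of all of $W$. Here I must show $C_c(w_0) = \set{-\alpha_1, \ldots, -\alpha_n} = -\Pi$. One clean way: since $w_0$ is the top element, every $s_i$ is a cover reflection of $w_0$ (as $w_0 s_i \covered w_0$ for all $i$), so by Proposition~\ref{lower walls} the negative roots in $C_c(w_0)$ are exactly $\set{-\beta_t : t \in \cov(w_0)}$. Because $w_0$ is the maximum, $\cov(w_0) = \set{s_1, \ldots, s_n}$, giving all $n$ simple reflections; thus $C_c(w_0) \supseteq \set{-\alpha_1, \ldots, -\alpha_n}$. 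Since $|C_c(w_0)| = n$ by Proposition~\ref{basis}, this forces equality. Alternatively, Proposition~\ref{Cc final} (with any final letter $s$ of $c$) recursively strips off $-\alpha_s$ and reduces to the longest element of a smaller parabolic, which would give the same conclusion by induction.

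**Assembling and the main obstacle.** Combining the two parts: stripping the initial letters $s_1, \ldots, s_m$ via (C recur) contributes the positive roots $\alpha_1, \ldots, \alpha_m$, and the base case applied inside $W_J$ contributes $-\alpha_{m+1}, \ldots, -\alpha_n$, yielding the claimed set. The step I expect to require the most care is verifying that the roots $\alpha_1, \ldots, \alpha_m$ emerging from (C recur) are genuinely the simple roots $\alpha_i$ of $V$ and not modified versions: each time (C recur) adds $\alpha_s$ in the branch $v \not\geq s$, it adds the \emph{unchanged} simple root, so these do survive untouched into $C_c((w_0)_J)$. One must check that the inductive hypothesis inside $W_{\br{s_1}}$ really applies—that $(w_0)_J$ remains the longest element of $W_J$ when $W_J$ is regarded as a parabolic of the smaller group $W_{\br{s_1}}$, and that $s_1 c$ restricted appropriately still has $J$ as a ``terminal'' segment—but this is immediate since $J \subseteq \br{s_1}$ and the ordering of letters is inherited. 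The only real subtlety is bookkeeping the rank induction cleanly so that at each stage the conditions $(w_0)_J \not\geq s_i$ for $i \leq m$ are correctly invoked.
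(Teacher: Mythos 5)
Your proof is correct, but it takes a different route from the paper's. The paper runs a single induction on $n-m$, peeling off \emph{final} letters of $c$: with $s=s_n$ and $J'=\set{s_{m+1},\ldots,s_{n-1}}$, Proposition~\ref{Cc final} gives $C_c((w_0)_J)=C_{cs}((w_0)_{J'})\cup\set{-\alpha_s}$, and the base case $n-m=0$ is the identity element, labeled by the simple roots; this produces the positive and negative labels in one uniform recursion. You instead peel off the \emph{initial} letters $s_1,\ldots,s_m$ via \eqref{C recur} to collect $\alpha_1,\ldots,\alpha_m$, reduce to the longest element of the finite group $W_J$, and then handle that case in one stroke: $\cov(w_0)=S$, so Proposition~\ref{lower walls} puts all $n$ negative simple roots into the label set, and Proposition~\ref{basis} (cardinality $n$) forces equality. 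Your version isolates the conceptually appealing fact that the longest element of a finite group is labeled exactly by $-\Pi$, at the cost of invoking two extra results; the paper's version is a shorter mechanical induction with a trivial base case. One small imprecision in yours: $w_0 s_i\covered w_0$ shows that $w_0 s_i w_0^{-1}$ is a cover reflection of $w_0$, not that $s_i$ is; you need the standard fact that conjugation by $w_0$ permutes $S$ to conclude $\cov(w_0)=S$. This is easily repaired and does not affect the argument. (Your parenthetical ``alternative'' for the base case via Proposition~\ref{Cc final} is essentially the paper's proof specialized to $m=0$.)
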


\begin{proof}  
We argue by induction on $n-m$. 
If $n-m=0$, so that $(w_0)_J$ is the identity, this is immediate. 
If $n-m>0$, then let $s$ stand for $s_n$ and write $J'=\set{s_{m+1},\ldots,s_{n-1}}$.
Then $C_{cs}((w_0)_{J'})=\set{\alpha_1,\alpha_2,\ldots,\alpha_m,-\alpha_{m+1},\ldots,-\alpha_{n-1}}$ by induction.  
Since $(w_0)_J\ge s$ and $[(w_0)_J]_\br{s}=(w_0)_{J'}$, we apply Proposition~\ref{Cc final} to conclude that $C_c((w_0)_J)=C_{cs}((w_0)_{J'})\cup\set{-\alpha_s}$.
\end{proof}

\begin{proof}[Proof of Proposition~\ref{OverlapExists}]  
Write $J=\set{s_{i+1},\ldots,s_n}$.
The element $(w_0)_J$ is $c$-sortable by Theorem~\ref{meets and joins} because it is the join of $J$.
Similarly, $(w_0)_{S\setminus J}$ is $c^{-1}$-sortable.
Lemma~\ref{w0Region} implies that
\[C_c((w_0)_J) = \{ \alpha_1, \ldots,\alpha_i,-\alpha_{i+1}, \ldots, - \alpha_n \} = - C_{c^{-1}}((w_0)_{S\setminus J}).\] 
We have identified a cone that is in $\F_c$ and in $-\F_{c^{-1}}$.
\end{proof}

\begin{remark}\label{rank 2}
If $B$ is $2\times2$, then by Proposition~\ref{OverlapExists} (or by an easy direct check), $\DCamb_c$ is connected.
Thus $(\DCamb_c, \DC_c)$ is a polyhedral reflection framework by Theorem~\ref{DoubleFramework}.
One can also easily check that $(\DCamb_c, \DC_c)$ is complete, exact, simply connected, and 
 well-connected.
\end{remark}

\begin{remark} \label{double green remark}  
The identity element is $c^{-1}$-sortable for any $c$, so the formal negative of the identity is a vertex of $\DCamb_c$.
The labels on the negative of the identity are the negative simple roots.
Consider a path $v_0,\ldots,v_p$ in $\DCamb_c$ such that $v_0$ is the identity and write $e_i$ for the edge connecting $v_{i-1}$ to $v_i$.
As pointed out in \cite[Remark~4.19]{framework} 
for general frameworks, 
 such a path is a \newword{green sequence} in the sense of~\cite{MGS} (see also~\cite{MGS2}) if and only if all of the roots $\DC_c(v_i, e_i)$ are positive.
We also observe that this green sequence is a \newword{maximal green sequence} if and only if $v_p$ is the negative of the identity.
The following proposition is worth mentioning, even though it does not add to the list of exchange matrices $B$ for which maximal green sequences are known to exist.
(Maximal green sequences are known \mbox{\cite[Lemma~2.20]{MGS2}} to exist for all acyclic $B$.)  
\end{remark}

\begin{prop}\label{double green prop}
If $\DCamb_c$ is connected, then it contains a maximal green sequence.
\end{prop}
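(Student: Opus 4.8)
The plan is to find a green sequence that ends at the negative of the identity. By Remark~\ref{double green remark}, a path from the identity $e$ (which is $c$-sortable with all simple-root labels) to its formal negative $-e$ in $\DCamb_c$ is a maximal green sequence precisely when every step crosses a wall in the positive direction, i.e.\ when $\DC_c(v_i,e_i)$ is positive for each $i$. So the task reduces to producing \emph{some} path from $e$ to $-e$ all of whose crossing roots are positive; connectedness of $\DCamb_c$ is exactly what makes such a path possible to search for.

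First I would locate the two endpoints geometrically. The cone $\Cone_c(e)=D$ is the dominant cone, and $-\Cone_{c^{-1}}(e)=-D$ is its antipode; both are maximal cones of $\DF_c$ by construction. The key structural fact I would exploit is that $\Camb_c$ is a framework built from the $c$-sortable elements ordered by weak order, and within $\Camb_c$ the identity is the bottom element. I would therefore first move ``upward'' through $\Camb_c$ along cover relations: starting at $e$, repeatedly pass to a covering $c$-sortable element. By Lemma~\ref{cov beta part 2}, the root labeling each such upward cover edge is the positive root $\beta_t$ associated to the relevant cover reflection, so every such step contributes a positive crossing root. Following the weak order upward is consistent with building a green sequence.

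The heart of the argument is to reach the antipodal region $-D$ while only ever crossing walls positively. Here I would use the overlap guaranteed by Theorem~\ref{DoubleFan} and Corollary~\ref{DoubledConeExists}: since $\DCamb_c$ is connected, there is at least one maximal cone lying in both $\F_c$ and $-\F_{c^{-1}}$, and the identified vertices let a path pass from the ``$\Camb_c$ side'' to the ``$-\Camb_{c^{-1}}$ side'' of $\DCamb_c$. Concretely, I would climb in $\Camb_c$ until reaching a $c$-sortable element $v$ whose cone $\Cone_c(v)$ equals $-\Cone_{c^{-1}}(v')$ for some $c^{-1}$-sortable $v'$, then continue the path inside $-\Camb_{c^{-1}}$, descending in the $c^{-1}$-weak order down to $-e$. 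The sign bookkeeping is the subtle point: on the $-\Camb_{c^{-1}}$ side the labels are the \emph{negatives} of the $C_{c^{-1}}$-labels, so a step that decreases in $c^{-1}$-weak order crosses a wall whose $c^{-1}$-label is the positive cover root $\beta_t$, which becomes $+\beta_t$ as a $\DC_c$-label — again positive. Thus descending toward $-e$ on the negated side produces positive crossings, matching the ascending behavior on the positive side.

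The main obstacle I anticipate is verifying that a path of this ``up then across then down'' shape genuinely exists and stays inside $\DCamb_c$ with positive crossings at the transition between the two halves. The cleanest way to handle this is not to construct the path by hand but to appeal to the fan structure: by Theorem~\ref{DoubleFan} the maximal cones of $\DF_c$ are separated by finitely many hyperplanes $\beta^\perp$, and a generic oriented line segment from an interior point of $D$ to an interior point of $-D$ crosses these walls one at a time, each crossing taking us from one maximal cone to an adjacent one. Crossing $\beta^\perp$ in the direction from $D$ toward $-D$ means moving to the side of $\beta^\perp$ away from $D$, which is exactly the condition that the crossing root is positive (as $\inv(w)$ consists of the positive roots $\beta$ with $wD$ on the negative side of $\beta^\perp$). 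Since the segment lies within the support $\DF_c$ and each successive pair of cones shares the crossed facet, this sequence of cones gives the desired path in the dual graph $\DCamb_c$ with all crossings positive, terminating at $-e$. Hence the path is a maximal green sequence.
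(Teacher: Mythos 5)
Your first three paragraphs reproduce the paper's proof: connectedness supplies a $c$-sortable $u$ and a $c^{-1}$-sortable $v'$ with $\Cone_c(u)=-\Cone_{c^{-1}}(v')$; an unrefinable chain in $\Camb_c$ from the identity up to $u$ is green by Lemma~\ref{cov beta part 1}, and the reversed image in $-\Camb_{c^{-1}}$ of an unrefinable chain from the identity up to $v'$ is green for the sign reason you indicate; concatenating the two at the identified vertex gives a maximal green sequence ending at the negative of the identity. The ``obstacle'' you then worry about is not one: $u$ and $v'$ have finite length, so finite unrefinable chains from the identity up to each of them exist, and that is the only existence statement needed. (One small sign slip in your bookkeeping: the label that must be positive sits at the vertex you are \emph{leaving}; for a step from $-v_j$ to $-v_{j-1}$ with $v_{j-1}\covered v_j$ in $\Camb_{c^{-1}}$, the $C_{c^{-1}}$-label at $v_j$ on that edge is the \emph{negative} root $-\beta_t$, whence the $\DC_c$-label is $+\beta_t$; you negated the label at the wrong end, though the conclusion is right.)

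The argument you actually commit to in the last paragraph --- a generic straight segment from the interior of $D$ to the interior of $-D$ --- is a genuine gap. First, the walls of $\DF_c$ are not ``finitely many hyperplanes'': there are infinitely many, and they accumulate at $\partial\Tits(A)$, which every such segment must cross. Unless the crossing point lies in the interior of a single maximal cone of $\DF_c$ straddling $\partial\Tits(A)$, the segment meets infinitely many walls and induces no finite path in $\DCamb_c$; arranging the crossing point to lie in such a cone is exactly the content of Corollary~\ref{DoubledConeExists} and brings you back to the concatenation argument. Second, $\DF_c$ need not be complete even when $\DCamb_c$ is connected: in Example~\ref{344 ex} the complement of the support is full-dimensional, and even in affine type $V^*\setminus|\DF_c|$ is a nonempty cone in $\partial\Tits(A)$ (Corollary~\ref{DFc complement}), so a ``generic'' segment can simply leave the fan. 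Discard the segment argument and keep the combinatorial concatenation; with the existence of the two finite unrefinable chains noted above, your paragraphs one through three already constitute the paper's proof.
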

\begin{proof}
If $\DCamb_c$ is connected, then there exists a $c$-sortable element $u$ and a $c^{-1}$-sortable element $v$ such that $\Cone_c(u)=-\Cone_{c^{-1}}(v)$.
Lemma~\ref{cov beta part 1} and the definition of the Cambrian framework $(\Camb_c,C_c)$ imply that any unrefinable chain $u_0,\cdots,u_k$ from the identity to $u$ is a green sequence in $\Camb_c$.
Similarly, any unrefinable chain $v_0,\cdots,v_\ell$ from the identity to $v$ is a green sequence in $\Camb_{c^{-1}}$.
Using the symbols $u_i$ and $-v_i$ to label the corresponding vertices in $\DCamb_c$, the sequence $u_0,\ldots,u_k=-v_\ell,\ldots,-v_0$ is a maximal green sequence.
\end{proof}

In the next section, we establish some additional properties of the doubled Cambrian framework when $B$ is acyclic and $\Cart(B)$ is of affine type,
including completeness.
The Cartan companion in Examples~\ref{affineG2 ex} and~\ref{affineG2 frame ex} is of affine type, 
and the completeness of the doubled Cambrian framework is seen in the fact that the dual graph to $\DF_c$ depicted in Figure~\ref{G2tilde frame} is $3$-regular.
Before we confine our attention to the affine case, we give an example where the doubled Cambrian framework is not complete.

\begin{figure}[ht]
\includegraphics{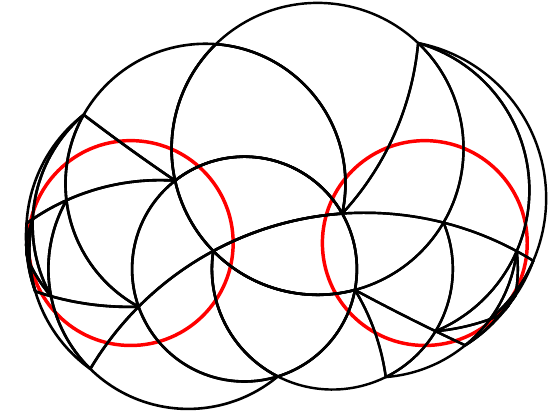}
\caption{A fan $\DF_c$ whose corresponding framework is not complete}
\label{344 fig}
\end{figure}
\begin{example}\label{344 ex}
Figure~\ref{344 fig} shows the doubled Cambrian fan for $B=\begin{bsmallmatrix*}[r]0&-1&-2\\1&0&-2\\1&1&0 \end{bsmallmatrix*}$.
The two red circles show the boundary of the Tits cone and the boundary of the negative Tits cone.
The Cambrian fan $\F_c$ consists of the cones whose projections (curvilinear triangles) intersect the inside of the left circle.
The fan $-\F_{c^{-1}}$ consists of cones whose projections intersect the inside of the right circle.
The dual graph $\DCamb_c$ to $\DF_c$ is not $3$-regular, so $(\DCamb_c, DC_c)$ is not a complete framework in this case.
Specifically, the exterior of the figure is not a cone of $\DF_c$, so the cones that border the exterior correspond to vertices of degree less than $3$.
However, the dual graph 
is connected because there are cones that meet both the interior of $\Tits(A)$ and the interior of $-\Tits(A)$. 
\end{example}

\section{Affine type}\label{affine sec}  
In this section, we discuss root systems of affine type, 
gather some useful facts about them, and complete the proof of Theorem~\ref{AffineDoubleFramework}.
This theorem says that when $B$ is acyclic and its Cartan companion $\Cart(B)$ is of affine type, 
$(\DCamb_c, \DC_c)$ is a complete, exact, well-connected, polyhedral, simply connected reflection framework.  

By Theorem~\ref{DoubleFramework}, if $(\DCamb_c, \DC_c)$ is connected, then it is a polyhedral reflection framework. 
Recall that the polyhedral property implies injectivity, that simple connectivity implies ampleness, and that exact means injective and ample.
Thus the task is to show connectedness, completeness, well-connectedness, and simple connectivity.
We do this in Propositions~\ref{connected}, \ref{complete}, \ref{well-connected}, and~\ref{simply}.

\subsection{Affine type and affine root systems} \label{affine root sec} 
A good reference on affine root systems is~\cite[Chapters 4-6]{Kac}.
See also~\cite{Macdonald}.  

As before, let $B$ be a skew-symmetrizable matrix, with Cartan companion $A=\Cart(B)$, 
with root system $\Phi$ and symmetric bilinear form $K$. 
The Cartan matrix $A$ is of \newword{affine type} if $K$ is positive semidefinite, $K$ is not positive definite and, for any proper subset $J \subsetneq S$, the restriction of $K$ to $V_J$ is positive definite.   
This definition of affine type is equivalent to that of~\cite{Kac}. 
The equivalence is~\cite[Proposition 4.7.b]{Kac}, up to a technical point: 
Whereas~\cite{Kac} explicitly requires that affine Cartan matrices are indecomposable, our definition (and in particular the positive definiteness of restrictions to $V_J$) easily implies indecomposability.
When $A$ is of affine type, we say that the Coxeter group $W$ and the root system $\Phi$ are affine.  

The affine Coxeter groups are in bijection with the indecomposable finite (crystallographic) root systems.  
Specifically, if $W$ is an affine Coxeter group, then there is an $(n-1)$-element subset $S_0$ of $S$, and an indecomposable finite root system $\Phi_0$ with Coxeter group $W_0$, such that $W$ is isomorphic to a semidirect product of $W_0$ with the lattice generated by the co-roots $\Phi_0\ck$.
The subset $S_0$ may not be uniquely determined, but we will fix a choice of $S_0$.
Write $s_{\aff}$ for the unique element of $S \setminus S_0$.
We say that $W$ has type $\tilde{X}$, where $X$ is the type of $\Phi_0$. 
(The type of $\Phi_0$ does not depend on the choice of $S_0$.)

Since our goal is to construct a framework for the exchange matrix $B$, we must talk about roots.
This raises an issue which is sometimes overlooked in discussions of affine Coxeter groups: 
Just as the finite Coxeter group of type $B_n$ comes from two different root systems ($B_n$ and $C_n$),
 there are affine Coxeter groups that come from more than one root system.
For example, the Cartan matrices $\begin{bsmallmatrix*}[r] 2 & -1 \\ -4 & 2 \end{bsmallmatrix*}$ and $\begin{bsmallmatrix*}[r] 2 & -2 \\ -2 & 2 \end{bsmallmatrix*}$ both specify the Coxeter group of type $\tilde{A}_1$.
The root systems for these two Cartan matrices are different and this difference can be seen when we construct frameworks for corresponding exchange matrices. 

We now review the theory of affine root systems.
Let $\Phi$ be an affine (real) root system associated to a Cartan matrix $A$.
The associated symmetric bilinear form $K$ has a one-dimensional kernel.
The nonzero 
elements of the root lattice contained in the kernel are called the \newword{imaginary roots}, and like the real roots, these consist of two classes, the \newword{positive imaginary roots} contained in the nonnegative span of the simple roots $\Pi$ and the \newword{negative imaginary roots} contained in the nonpositive span of $\Pi$.
Let $\delta$ be the shortest positive imaginary root (the nonzero 
element of the root lattice closest to the origin in the ray spanned by positive imaginary roots).
The vector $\delta$ has strictly positive coordinates with respect to the basis $\Pi$ of simple roots.
Since $\delta$ is in the kernel of $K$, it is fixed by the action of $W$.

Corresponding to the decomposition $S=S_0\sqcup\set{s_\aff}$ of the simple reflections in affine \emph{Coxeter groups} is a decomposition $\Pi=\Pi_0\sqcup\set{\alpha_\aff}$ with the following properties:
The subset $\Pi_0$ spans an indecomposable finite root system $\Phi_0$;
and the simple root $\alpha_\aff$ is a positive scaling of $\delta-\theta$, where $\theta$ is a positive root in $\Phi_0$.
(The root $\theta$ is either the highest root of $\Phi_0$ or the highest short root of $\Phi_0$.)
Every root in $\Phi$ is a positive scaling of a vector of the form $\beta+k\delta$, where $\beta\in\Phi_0$ and $k\in\integers$.
A root obtained as a positive scaling of $\beta+k\delta$ is a positive root if and only if either $k$ is positive or $k=0$ and $\beta$ is positive.
In some affine root systems $\Phi$, there exist vectors $\beta+k\delta$ with $\beta\in\Phi_0$ and $k\in\integers$ that are not scalings of roots.
However, for each $\beta\in\Phi_0$, there are infinitely many positive integers $k$ and infinitely many negative integers $k$ such that $\beta+k\delta$ is a positive scaling of a root.
See \cite[Proposition~6.3]{Kac} for more details.

Let $W$ and $W_0$ be the Coxeter groups associated to $\Phi$ and $\Phi_0$ respectively, let $V$ and $V_0$ be their respective reflection representations, with $V_0\subset V$, and let $V^*$ and $V_0^*$ be the respective dual representations.
Thus $V = V_0 \oplus \RR \delta$.
We write $\pi$ for the projection onto $V_0$ with kernel $\reals\delta$.
The Tits cone $\Tits(A)$ is ${\set{x\in V^*:\br{x,\delta} >0}\cup\set{0}}$.  
The boundary $\partial\Tits(A)$ of the Tits cone is $\delta^{\perp}$. 
The inclusion of $V_0^*$ into $V^*$, dual to the projection $\pi$, identifies $\partial\Tits(A)$ with $V_0^*$.

The action of $W$ preserves the set $V_1^*:=\set{x\in V^*:\br{x,\delta}=1}$. 
The action of $W$ on $V_1^*$ is generated by reflections in affine hyperplanes $\set{x\in V_1^*:\br{x,\beta}=k}$ in $V_1^*$ for $k\in\ZZ$ and $\beta\in\Phi_0$.
Thus 
we can view $W$ as a group of affine Euclidean motions of an $(n-1)$-dimensional vector space. 
In this context, we define $D_1$ to be $D \cap V_1^*$ where, as before, $D$ is the cone $\bigcap_{s \in S} \set{x\in V^*: \br{x,\alpha_s}\ge 0}$.
Then $V_1^{\ast}$ is tiled by translates of the simplex $D_1$.
For an element $w$ of $W$, the inversions of $w$ correspond to the affine hyperplanes separating $w D_1$ from $D_1$.
For a presentation from this perspective, see~\cite[Section VI.2]{Bour}.

The best-known affine root systems are the ``standard" affine root systems, in bijection with, and constructed directly from, the finite root systems.
The Dynkin diagrams of these root systems are shown in Table Aff~1 of \cite[Chapter~4]{Kac}.
In the standard affine root systems, the roots are exactly the vectors of the form $\beta+k\delta$ and the reflecting hyperplanes in $V^*$ are precisely the hyperplanes of the form $\br{\beta,x}=k\br{\delta,x}$ for $k\in\ZZ$ and $\beta\in \Phi_0$.
Every affine root system can be obtained from a standard affine root system
 by rescaling roots (necessarily with consistent rescaling within $W$-orbits of roots) and leaving the underlying bilinear form $K$ unchanged.
Rescaling the root system does not change the Coxeter group or the reflection representation, so the reader may ignore this issue in any discussion which refers only to these concepts.

\begin{example}\label{nonstandard}
As an example of a ``nonstandard'' affine root system, take $A =\begin{bsmallmatrix*}[r] 2 & -1 \\ -4 & 2 \end{bsmallmatrix*}$ and construct a root system $\Phi$ with simple roots $\alpha_1$ and $\alpha_2$ and simple co-roots $\alpha_1\ck=\alpha_1$ and $\alpha_2\ck=4\alpha_2$.
Set $\Phi_0=\set{\pm \alpha_1}$, so that $S_0=\set{s_1}$ and ${\alpha_\aff=\alpha_2}$.  
The kernel of $A$ is the real span of $\delta=\alpha_1+2\alpha_2$.
The highest root of $\Phi_0$ is $\theta=\alpha_1$, so $\alpha_\aff=\alpha_2=\frac12(\delta-\theta)$.
The roots in the $W$-orbit of $\alpha_1$ are of the form $\pm\alpha_1+2k\delta$ for integers $k$, while roots in the $W$-orbit of $\alpha_2$ are $\frac{1}{2}(\pm\alpha_1+(2k+1)\delta)$ for integers~$k$.

This root system can be rescaled to a standard affine root system with simple roots $\alpha'_1=\alpha_1$ and $\alpha'_2=2\alpha_2$.
We have $K(\alpha_1',\alpha_1')=K(\alpha_1,\alpha_1)=2$ and $K(\alpha_2',\alpha_2')=K(2\alpha_2,2\alpha_2)=2$, so $(\alpha'_1)\ck=\alpha_1'$, and $(\alpha_2')\ck=\alpha_2'$.
The Cartan matrix $A'$ for $\Phi'$ is $A' =\begin{bsmallmatrix*}[r] 2 & -2 \\ -2 & 2 \end{bsmallmatrix*}$.
The vector $\delta$ is $\alpha'_1+\alpha_2'$, and the roots in $\Phi'$ are $\pm\alpha_1+k\delta$ for integers $k$.
\end{example}

One more technical observation will be useful.
Write $t_\theta$ for the reflection with respect to the root $\theta$.
Since $\theta$ is a root in $\Phi_0\subset\Phi$, the reflection $t_\theta$ makes sense in both reflection representations $V$ and $V_0$.
Since $\alpha_\aff$ is a positive scaling of $\delta-\theta$ and since $K(\delta,x)=0$ for all $x\in V$, we can rewrite the action of $s_\aff$ on $V$ in terms of $t_\theta$ and $\delta$ as follows. 
\begin{equation}\label{simplify reflection}
s_\aff(x)=t_\theta(x)+\br{\theta\ck,x}\delta.
\end{equation}

\subsection{Connectedness and completeness}\label{con com sec}
We now establish the properties of connectedness and completeness for $(\DCamb_c,\DC_c)$.

\begin{prop}\label{connected}
If $A$ is of affine type, then $\DCamb_c$ is connected.
\end{prop}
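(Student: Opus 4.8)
The plan is to deduce this directly from Proposition~\ref{OverlapExists}, which reduces the connectedness of $\DCamb_c$ to exhibiting, in some reduced word $s_1\cdots s_n$ for $c$, an index $i$ for which both parabolic subgroups $W_{\set{s_1,\ldots,s_i}}$ and $W_{\set{s_{i+1},\ldots,s_n}}$ are finite. The underlying geometry is that the conclusion of Proposition~\ref{OverlapExists} produces a full-dimensional cone lying in both $\F_c$ and $-\F_{c^{-1}}$; since $\Camb_c$ and $-\Camb_{c^{-1}}$ are each connected on their own, a shared maximal cone glues their vertices together in $\DCamb_c$ and forces the whole graph to be connected.

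First I would fix any reduced word $s_1\cdots s_n$ for $c$. Because $A$ is of affine type, its rank $n$ is at least $2$ (a rank-one Cartan matrix $[2]$ has positive definite $K$, so cannot be affine), and hence the index $i=1$ is available; in fact any $i\in\set{1,\ldots,n-1}$ will serve. For such an $i$, both $J_1=\set{s_1,\ldots,s_i}$ and $J_2=\set{s_{i+1},\ldots,s_n}$ are nonempty and each omits at least one element of $S$, so both are proper subsets of $S$.

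The one substantive ingredient is the passage from the affine-type hypothesis to the finiteness of $W_{J_1}$ and $W_{J_2}$. By the definition of affine type recalled in Section~\ref{affine root sec}, the restriction of the symmetric form $K$ to $V_J$ is positive definite for every proper subset $J\subsetneq S$. A standard fact about Coxeter groups is that a standard parabolic subgroup $W_J$ is finite precisely when $K$ restricted to $V_J$ is positive definite; applying this to $J_1$ and $J_2$ shows both are finite. Proposition~\ref{OverlapExists} then applies verbatim and yields that $\DCamb_c$ is connected.

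I do not expect a genuine obstacle here: the affine hypothesis is tailored so that every proper standard parabolic is finite, and the only thing that must be checked is the elementary bookkeeping that splitting a reduced word for $c$ into an initial block and a final block produces two proper (hence spherical) parabolic subgroups. Indeed, the remark preceding Proposition~\ref{OverlapExists} already records the affine case as a special case, so the present proposition is essentially a formal corollary, with the positive-definite-implies-finite statement as its only non-bookkeeping input.
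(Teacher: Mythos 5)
Your argument is correct and matches the paper's proof essentially verbatim: both deduce the result from Proposition~\ref{OverlapExists} by observing that every proper standard parabolic subgroup of an affine Coxeter group is finite (via positive definiteness of the restricted form $K$). The extra bookkeeping you include (that $n\ge 2$ so a valid index $i$ exists, and that both blocks are proper subsets of $S$) is fine and is implicit in the paper's one-line proof.
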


\begin{proof}
When $\Cart(B)$ is of affine type, the corresponding Coxeter group $W$ is of affine type. 
Thus every proper parabolic subgroup of $W$ is of finite type, so the result follows from Proposition~\ref{OverlapExists}.
\end{proof}

\begin{prop}\label{complete}
If $A$ is of affine type, then $(\DCamb_c,\DC_c)$ is complete.
\end{prop}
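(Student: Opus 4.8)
The plan is to prove completeness by showing directly that $\DF_c$ has no boundary facets, which is exactly the condition that $(\DCamb_c,\DC_c)$ has no half-edges. Recall that each maximal cone $\Cone_c(v)$ is simplicial (Theorem~\ref{camb fan}), with its $n$ facets $\Cone_c(v)\cap\beta^\perp$ in bijection with the roots $\beta\in C_c(v)$, and that these facets correspond to the incident pairs at $v$; the incident edge is full exactly when the corresponding facet is shared by a second maximal cone of $\DF_c$, and is a half-edge otherwise. The same holds for the cones $-\Cone_{c^{-1}}(u)$. So it suffices to show that every facet of every maximal cone of $\DF_c$ is shared by two maximal cones.

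First I would record the covering facts. By Theorem~\ref{pidown fibers}, $wD\subseteq\Cone_c(\pidown^c(w))$ for every $w\in W$, so $\F_c$ covers the Tits cone $\Tits(A)$; negating, $-\F_{c^{-1}}$ covers $-\Tits(A)$. In affine type $\Tits(A)=\set{x:\br{x,\delta}>0}\cup\set{0}$, so these two regions together cover all of $V^*$ off the hyperplane $\delta^\perp$. I would also use that the Coxeter chambers $\set{wD}$ tile $\Tits(A)$ with local finiteness in its interior, so that every point of the open Tits cone has a neighborhood meeting only finitely many cones of $\F_c$ (and symmetrically for $-\F_{c^{-1}}$).

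The crucial observation is that a facet cannot lie inside $\delta^\perp$. Let $G=\sigma\cap\beta^\perp$ be a facet of a maximal cone $\sigma$ of $\DF_c$, where $\beta\in\Phi$ is the associated real root. Since $\beta$ is a real root, $K(\beta,\beta)>0$, whereas $\delta$ spans the kernel of $K$; hence $\beta$ is not a scalar multiple of $\delta$, so $\beta^\perp\neq\delta^\perp$. As $G$ is $(n-1)$-dimensional with linear span $\beta^\perp$, it cannot be contained in the hyperplane $\delta^\perp$, and therefore its relative interior contains a point $y$ with $\br{y,\delta}\neq0$. Suppose $\br{y,\delta}>0$, the case $\br{y,\delta}<0$ being symmetric with $-\F_{c^{-1}}$ in place of $\F_c$. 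Then $y$ lies in the open Tits cone, where a neighborhood $N$ of $y$ is covered by finitely many cones of $\F_c$. Points of $N$ on the far side of $\beta^\perp$ (where $\br{\cdot,\beta}<0$) lie outside $\sigma$, hence in some maximal cone $\sigma'\neq\sigma$; since only finitely many cones meet $N$ and all are closed, we may take $\sigma'$ with $y\in\sigma'$. Because $\DF_c$ is a fan (Theorem~\ref{DoubleFan}), $\sigma\cap\sigma'$ is a common face; it lies in $\beta^\perp$ (as $\sigma\subseteq\set{\br{\cdot,\beta}\ge0}$ while $\sigma'$ has points with $\br{\cdot,\beta}<0$) and contains the relative-interior point $y$ of the facet $G$, forcing $\sigma\cap\sigma'=G$. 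Thus $G$ is shared, its edge is full, and completeness follows.

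I expect the main obstacle to be the last step: isolating a relative-interior point of the facet off $\delta^\perp$ and then locating a genuinely adjacent cone across that facet. The delicate point is the accumulation of Cambrian cones along $\delta^\perp=\partial\Tits(A)$, where $\DF_c$ fails to be locally finite; this is precisely why $y$ must be chosen strictly inside $\pm\Tits(A)$ and why $\beta\notin\RR\delta$ is used to guarantee such a $y$ exists. Once $y$ is placed in the interior of one of the two halves, the covering statement and the fan property of $\DF_c$ do the rest routinely.
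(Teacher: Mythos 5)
Your geometric argument correctly shows that in affine type every facet of every maximal cone of $\DF_c$ is shared by a second maximal cone (the observation that a real root $\beta$ is never proportional to $\delta$, so a facet cannot sit inside $\partial\Tits(A)$ and must have relative-interior points in the open cone $\pm\Tits(A)$, where local finiteness of the chambers $wD$ lets you locate the neighboring cone, is essentially the paper's Proposition~\ref{face in boundary} plus a clean local-finiteness argument). The gap is in your opening reduction: you ``recall'' that an incident edge of $\DCamb_c$ is full exactly when the corresponding facet is shared by a second maximal cone of $\DF_c$, but only one direction of this is available (Corollary~\ref{cor:BdyFacet}: full edge $\Rightarrow$ shared facet). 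The edges of $\DCamb_c$ are defined combinatorially, by gluing the quasi-graphs $\Camb_c$ and $-\Camb_{c^{-1}}$ whose full edges are cover relations in the Cambrian semilattices; nothing in the definition says that a geometrically shared facet is realized by a cover relation. Indeed, the implication ``shared facet $\Rightarrow$ full edge'' is essentially equivalent to the completeness you are trying to prove, so assuming it begs the question.

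That implication is true, but proving it is where the combinatorial content lives, and it is exactly what the paper's proof supplies via the Full edge condition. Concretely: if the facet of $\Cone_c(v)$ normal to $\beta=C_c(v,e)$ is shared with $\Cone_c(u)$, then $-\beta\in C_c(u)$; whichever of $\beta,-\beta$ is negative triggers the Full edge condition at the corresponding vertex, producing a cover relation whose shared facet, by the fan property and Theorem~\ref{camb fan}, must be this one — so $e$ is full. When the sharing cone comes from $-\F_{c^{-1}}$ rather than $\F_c$ (your case $\br{y,\delta}<0$), one must first invoke Corollary~\ref{DoubledConeExists} to re-represent $\Cone_c(v)$ as $-\Cone_{c^{-1}}(v')$ so that the label becomes negative and the Full edge condition applies in $\Camb_{c^{-1}}$. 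With that bridge added, your argument closes; note that the paper's own proof avoids your local-finiteness step by instead producing adjacent chambers $w_1 D\covered w_2 D$ straddling the facet and applying $\pidown^c$.
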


\begin{proof}
The assertion is that $\DCamb_c$ has no half-edges.
Given a vertex $\tilde v$ in $\DCamb_c$, we may as well (up to swapping $c$ and $c^{-1}$ and negating all labels) 
assume that the vertex is represented by a $c$-sortable element $v$, i.e.\ a vertex of $\Camb_c$.
Let~$\tilde e$ be an edge incident to $\tilde v$ in $\DCamb_c$, and let $e$ be the corresponding edge in $\Camb_c$.
If $C_c(v,e)$ is a negative root, then by the Full edge condition on $\Camb_c$, the edge $e$ is a full edge, so $\tilde e$ is a full edge.
Suppose $C_c(v,e)$ is positive.
If the interior of $\Cone_c(v)$ intersects $-\Tits(A)$, then Theorem~\ref{DoubledConeExists} says that there exists a $c^{-1}$-sortable element 
 $v'$ with $\Cone_c(v) = - \Cone_{c^{-1}}(v')$. 
The edge $\tilde e$ corresponds to an edge $e'$ of $-\Camb_{c^{-1}}$, which must be a full edge, by the Full edge condition, since $C_{c^{-1}}(v',e')=-C_c(v,e)$ is negative.
Thus $\tilde e$ is a full edge.
If the interior of $\Cone_c(v)$ is disjoint from $-\Tits(A)$, then since the closure of $\Tits(A)\cup(-\Tits(A))$ is $V$, the interior of $\Cone_c(v)$ is contained in $\Tits(A)$.
Thus, since $C_c(v,e)$ is a positive root, there are elements $w_1\covered w_2$ in $W$ such that $w_1 D\in\Cone_c(v)$ and $w_2 D\not\in\Cone_c(v)$, with the cones $w_1 D$ and $w_2 D$ separated by $C_c(v,e)^{\perp}$.
Then $v=\pidown^c(w_1)$, and we define $u=\pidown^c(w_2)$.
Since $\F_c$ is a fan (Theorem~\ref{camb fan}) and because $C(v,e)$ is an inward-facing normal to a facet of $\Cone_c(v)$, the vector $-C_c(v,e)$ is an inward facing normal to a facet of $\Cone_c(u)$.
Thus $-C_c(v,e)$ is $C_c(u,f)$ for some $f\in I(u)$. 
The Full edge condition says that $f$ connects $u$ to a vertex $w$, and thus $\Cone_c(u)$ and $\Cone_c(w)$ share a facet defined by $C_c(u,f)$.
Since $\F_c$ is a fan, we must have $w=v$ and $f=e$.
In particular, $\tilde e$ is a full edge.
\end{proof}

As consequences of Proposition~\ref{complete}, we obtain some important facts about $\DF_c$ when $\Cart(B)$ is of affine type.
First, since the maximal cones of $\DF_c$ are simplicial and $n$-dimensional, every maximal cone in $\DF_c$ is adjacent to exactly $n$ other maximal cones.
Also, every codimension-$1$ cone in $\DF_c$ is contained in exactly two full-dimensional cones.

\subsection{Faces of $\DF_c$ and the boundary of $\Tits(A)$}\label{faces boundary sec}
We now prove some key facts about how faces of the affine doubled Cambrian fan intersect the boundary of the Tits cone.  
We will use repeatedly the fact that each cone of $\DF_c$ is the intersection of half-spaces of the form $\set{x\in V^* : \br{x,\beta} \geq 0}$ for $\beta \in \Phi$. 

\begin{prop}\label{face in boundary}  
If $A$ is of affine type, then every face of $\DF_c$ contained in $\partial\Tits(A)$ has dimension less than $n-1$. 
All $n-1$ dimensional cones in $\DF_c \cap \partial \Tits(A)$ are of the form $F \cap \partial \Tits(A)$ where $F$ is an $n$-dimensional cone of $\DF_c$ and $\partial \Tits(A)$ passes through the interior of $F$.
\end{prop}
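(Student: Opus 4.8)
The plan rests on two inputs. First, each maximal cone of $\DF_c$ is an $n$-dimensional simplicial cone cut out by inequalities $\br{x,\beta}\ge 0$ whose normals $\beta$ are real roots; this is the definition of $\Cone_c(v)$ and $-\Cone_{c^{-1}}(u)$ together with the Root condition and Proposition~\ref{basis}. Second, $\delta$ is an imaginary root. The crucial consequence I would extract is that $\beta^\perp\neq\delta^\perp$ for every real root $\beta$: since $W$ preserves $K$ and $K(\alpha_s,\alpha_s)>0$, we have $K(\beta,\beta)>0$ for every real root $\beta$, whereas $\delta$ lies in the radical of $K$, so $K(\delta,\delta)=0$; thus $\beta$ and $\delta$ can never span the same line.

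\textbf{First assertion.} I would argue by contradiction. Suppose $G$ is an $(n-1)$-dimensional face of $\DF_c$ with $G\subseteq\partial\Tits(A)=\delta^\perp$. Every face of $\DF_c$ is a face of some maximal cone $F$, and since all maximal cones are $n$-dimensional (Theorem~\ref{DoubleFan}), $G$ is a facet of $F$. Writing $F$ as the intersection of the halfspaces $\br{x,\beta}\ge 0$ over its real-root labels $\beta$, the facet $G$ equals $F\cap\beta^\perp$ for a single label $\beta$, and hence $\mathrm{span}(G)=\beta^\perp$. But $\dim G=n-1$ and $G\subseteq\delta^\perp$ force $\mathrm{span}(G)=\delta^\perp$, so $\beta^\perp=\delta^\perp$, contradicting the previous paragraph.

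\textbf{Second assertion.} Here I would use that $\DF_c$ is a complete fan (Proposition~\ref{complete}), so the restriction $\set{H\cap\delta^\perp:H\in\DF_c}$ is a complete fan in the hyperplane $\delta^\perp$ whose maximal cones are exactly its $(n-1)$-dimensional cones. Given such a maximal cone $\sigma$, I would choose $y$ in its relative interior and let $F$ be the unique cone of $\DF_c$ with $y\in\relint(F)$. Because $y$ is interior to $F$, a full neighborhood of $y$ within $\delta^\perp$ lies in $F$, whence $\sigma=F\cap\delta^\perp$, $y\in\relint(F\cap\delta^\perp)$, and $\dim(F\cap\delta^\perp)=n-1$. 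If $\dim F$ were $n-1$, then $F\subseteq\delta^\perp$, making $F$ an $(n-1)$-dimensional face of $\DF_c$ inside $\delta^\perp$ and contradicting the first assertion; hence $\dim F=n$ and $F$ is maximal. Finally, $y\in\int(F)\cap\delta^\perp$ with $F\not\subseteq\delta^\perp$ shows that $\partial\Tits(A)$ passes through the interior of $F$, and $\sigma=F\cap\partial\Tits(A)$ has the required form.

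Both assertions are short once these inputs are assembled, and I expect the only delicate point to be the bookkeeping in the second part: confirming that the maximal cones of the restricted fan are genuinely the sections $F\cap\delta^\perp$ of full-dimensional cones crossed by $\delta^\perp$, which is exactly what the first assertion secures by excluding the degenerate possibility $H\cap\delta^\perp=H$ for a low-dimensional $H\subseteq\delta^\perp$. It is worth keeping in mind (though it creates no difficulty here) that individual Cambrian cones may protrude beyond $\overline{\Tits(A)}$ and cross $\delta^\perp$, as witnessed by Corollary~\ref{DoubledConeExists}; this is precisely why full-dimensional cones $F$ meeting $\delta^\perp$ in their interiors exist.
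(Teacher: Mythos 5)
Your overall strategy coincides with the paper's: the first assertion reduces to the fact that the linear span of an $(n-1)$-dimensional cone of $\DF_c$ is $\beta^\perp$ for a real root $\beta$, while $\partial\Tits(A)=\delta^\perp$ and $\delta$ is not a scaling of a real root (your justification via $K(\beta,\beta)>0$ for real roots versus $K(\delta,\delta)=0$ is correct); the second assertion is then the observation that an $(n-1)$-dimensional cone of $\DF_c\cap\partial\Tits(A)$ must arise from an $n$-dimensional cone whose interior meets $\partial\Tits(A)$, since the only alternative would produce an $(n-1)$-dimensional cone of $\DF_c$ inside $\delta^\perp$. This is exactly the paper's dichotomy.

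One step as written is, however, false. You assert that $\DF_c$ is a complete fan, citing Proposition~\ref{complete}. That proposition says the \emph{framework} is complete, i.e.\ $\DCamb_c$ has no half-edges; it does not say $|\DF_c|=V^*$, and in fact $|\DF_c|\neq V^*$: Theorem~\ref{TitsCapBdry} and Corollary~\ref{DFc complement} show that $V^*\setminus|\DF_c|$ is a nonempty open cone inside $\partial\Tits(A)$, so the restricted collection $\set{H\cap\delta^\perp : H\in\DF_c}$ is \emph{not} a complete fan in $\delta^\perp$. Fortunately the claim is not load-bearing: the statement concerns arbitrary $(n-1)$-dimensional cones of $\DF_c\cap\partial\Tits(A)$, and your argument applies to any such cone without knowing it is ``maximal'' in a complete fan, so you should simply delete the appeal to completeness. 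A second, smaller repair: the inference ``because $y$ is interior to $F$, a full neighborhood of $y$ within $\delta^\perp$ lies in $F$'' presupposes that $\dim F=n$, or that $\dim F=n-1$ with $\Span(F)=\delta^\perp$ --- facts you only establish afterwards. The clean order is the paper's: if $\sigma=H\cap\delta^\perp$ is $(n-1)$-dimensional, then either $\delta^\perp$ meets the interior of $H$ (forcing $\dim H=n$, the desired case) or $H\cap\delta^\perp$ is an $(n-1)$-dimensional face of $H$, hence an $(n-1)$-dimensional cone of $\DF_c$ contained in $\delta^\perp$, which the first assertion forbids.
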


\begin{proof}
Every $(n-1)$-dimensional cone of $\DF_c \cap \partial \Tits(A)$ is either (1) of the form $F \cap \partial \Tits(A)$ for $F$ an $n$-dimensional cone of $\DF_c$ whose interior meets $\partial \Tits(A)$ or (2) an $(n-1)$-dimensional cone of $\DF_c$. So the first sentence implies the second.

To see the truth of the first sentence, let $G$ be an $(n-1)$-dimensional cone of $\DF_c$. Then the normal vector to $G$ is a real root. But the normal to $\partial \Tits(A)$ is $\delta$, which is not a scaling of a real root.
\end{proof}

Recall that $\pi$ is the projection of $V$ onto $V_0$ with kernel $\reals\delta$.
The intersection $\{ x\in V^* : \br{x,\beta} \geq 0 \}\cap\partial \Tits(A)$ is ${\{ x\in V_0^* : \br{x,\pi(\beta)} \geq 0 \}}$, and $\pi(\beta)$ is a scaling of a root in $\Phi_0$.
Thus each cone of $\DF_c$ meets $\partial \Tits(A)$ in a cone of $V_0^*$ defined by half spaces corresponding to roots in $\Phi_0$.
In other words, each cone of $\DF_c$ meets $\partial \Tits(A)$ in a cone that is a union of cones in the Coxeter fan of $W_0$.
(This is the fan whose maximal cones are the closures of the connected components of the complement, in $V_0^*$, of the union of the reflecting hyperplanes of $W_0$.)  Since there are only finitely many cones in the Coxeter fan of $W_0$, we obtain the following fact. 

\begin{prop}\label{affine finitely non-maximal}
If $A$ is of affine type, then there are only finitely many different cones of the form $F\cap\partial\Tits(A)$ where $F$ is a cone in $\DF_c$.
\end{prop}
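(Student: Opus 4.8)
The plan is to read off the result directly from the reduction to the finite Coxeter fan of $W_0$ that is carried out in the discussion immediately preceding the statement, so the whole argument amounts to a finiteness count.

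First I would fix an arbitrary cone $F$ in $\DF_c$ and use the fact, recorded just above, that $F$ is an intersection of half-spaces $\set{x\in V^*:\br{x,\beta}\ge 0}$ with $\beta\in\Phi$. Intersecting each such half-space with $\partial\Tits(A)=\delta^\perp$ and using the identification of $\partial\Tits(A)$ with $V_0^*$ dual to $\pi$, the half-space becomes $\set{x\in V_0^*:\br{x,\pi(\beta)}\ge 0}$. The point I would emphasize is that $\pi(\beta)$ is a nonzero scaling of a root of $\Phi_0$: since $\beta$ is a real root it is a positive scaling of some $\beta_0+k\delta$ with $\beta_0\in\Phi_0$ and $k\in\ZZ$, and $\pi$ annihilates $\delta$ while fixing $V_0$, so $\pi(\beta)$ is a positive scaling of $\beta_0\neq 0$. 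Hence $F\cap\partial\Tits(A)$ is a cone in $V_0^*$ cut out by half-spaces whose bounding hyperplanes are reflecting hyperplanes of $W_0$.

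Next I would invoke the standard fact that a cone cut out by half-spaces whose bounding hyperplanes all lie in a hyperplane arrangement is a union of closed cones of the fan induced by that arrangement. Concretely, each relatively open cell of the Coxeter arrangement of $W_0$ lies strictly on one side of, or inside, every reflecting hyperplane, so the sign of $\br{\cdot,\pi(\beta)}$ is constant on such a cell; therefore if an open cell meets $F\cap\partial\Tits(A)$ its closure is contained in $F\cap\partial\Tits(A)$. It follows that $F\cap\partial\Tits(A)$ is a union of cones of the Coxeter fan of $W_0$.

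Finally, since $A$ is of affine type, $W_0$ is a finite Coxeter group, so its Coxeter fan has only finitely many cones; there are then only finitely many unions of those cones, and in particular only finitely many distinct cones of the form $F\cap\partial\Tits(A)$. The only step that needs any care is the arrangement fact in the second paragraph, but it is elementary once one argues with relatively open cells, so I expect no genuine obstacle — consistent with the fact that the surrounding text has already reduced the proposition to counting cells of a finite fan.
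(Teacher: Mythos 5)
Your argument is correct and is essentially the paper's own: the paper also observes that each cone of $\DF_c$ meets $\partial\Tits(A)$ in a cone cut out by half-spaces corresponding to roots of $\Phi_0$ (since $\pi(\beta)$ is a scaling of a root of $\Phi_0$), hence a union of cones of the finite Coxeter fan of $W_0$, of which there are only finitely many. Your extra justification of the arrangement fact via relatively open cells is fine but not needed beyond what the paper already records.
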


By Proposition~\ref{face in boundary}, the intersection of a maximal cone $F$ with $\partial\Tits(A)$ is $(n-1)$-dimensional if and only if the interior of $F$ intersects $\partial\Tits(A)$.
Since distinct maximal cones of $\DF_c$ have disjoint relative interiors (by Theorem~\ref{DoubleFan}), we also have the following observation.

\begin{prop}\label{affine finitely}
If $A$ is of affine type, then there are only finitely many maximal cones in $\DF_c$ whose interior intersects $\partial\Tits(A)$.  
\end{prop}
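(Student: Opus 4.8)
The plan is to derive this finiteness from the two preceding propositions by showing that the assignment $F\mapsto F\cap\partial\Tits(A)$ is injective on the collection of maximal cones of $\DF_c$ whose interior meets $\partial\Tits(A)$. As observed in the paragraph preceding the statement, Proposition~\ref{face in boundary} tells us that for a maximal cone $F$ the slice $F\cap\partial\Tits(A)$ has dimension $n-1$ precisely when the interior of $F$ meets $\partial\Tits(A)$; so the cones we wish to count are exactly those producing $(n-1)$-dimensional slices. Since Proposition~\ref{affine finitely non-maximal} already bounds the number of \emph{distinct} cones of the form $F\cap\partial\Tits(A)$, injectivity of this map on the relevant cones will immediately finish the proof.

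The key step is therefore the injectivity. First I would record the geometric fact that, for a maximal cone $F$ whose interior meets $\partial\Tits(A)$, the relative interior of the $(n-1)$-dimensional cone $F\cap\partial\Tits(A)$ is contained in the interior of $F$. To see this, write $F=\bigcap_j\set{x:\br{x,\beta_j}\ge 0}$ with the $\beta_j$ the (real-root) facet normals of $F$, and recall from the proof of Proposition~\ref{face in boundary} that $\delta$, the normal to $\partial\Tits(A)$, is not a scaling of any real root. Hence each set $\set{x\in\partial\Tits(A):\br{x,\beta_j}\ge 0}$ is a genuine halfspace of the hyperplane $\partial\Tits(A)$, and a point in the relative interior of $F\cap\partial\Tits(A)$ must satisfy $\br{x,\beta_j}>0$ for every $j$, i.e.\ it lies in the interior of $F$.

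With this in hand, suppose $F_1$ and $F_2$ are distinct maximal cones, each meeting $\partial\Tits(A)$ in its interior, and suppose toward a contradiction that $F_1\cap\partial\Tits(A)=F_2\cap\partial\Tits(A)$. By Theorem~\ref{DoubleFan} the collection $\DF_c$ is a (simplicial) fan, so the distinct maximal cones $F_1$ and $F_2$ have disjoint interiors. But the previous paragraph places the common relative interior of $F_1\cap\partial\Tits(A)=F_2\cap\partial\Tits(A)$ simultaneously in the interior of $F_1$ and in the interior of $F_2$, a contradiction. Thus the map is injective, and the number of maximal cones whose interior meets $\partial\Tits(A)$ is at most the number of cones of the form $F\cap\partial\Tits(A)$, which is finite by Proposition~\ref{affine finitely non-maximal}.

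I expect the only real obstacle to be the containment of relative interiors asserted in the second paragraph: the reduction and the injectivity argument are formal once that is established, but the containment is exactly where one must use that the facet normals of $F$ are real roots while $\delta$ is not, so as to rule out the degenerate possibility that $\partial\Tits(A)$ meets $F$ along one of its facets rather than through its interior.
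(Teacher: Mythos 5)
Your proposal is correct and follows essentially the same route as the paper, which derives the statement as an immediate observation from Proposition~\ref{face in boundary}, Proposition~\ref{affine finitely non-maximal}, and the fact (Theorem~\ref{DoubleFan}) that distinct maximal cones of $\DF_c$ have disjoint relative interiors. Your second paragraph merely spells out, correctly, the small geometric point the paper leaves implicit, namely that the relative interior of an $(n-1)$-dimensional slice $F\cap\partial\Tits(A)$ lies in the interior of $F$.
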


Define $D_0 := \bigcap_{s \in S_0} \set{x\in V_0^*: \br{x,\alpha_s}\ge 0}$.
The cone $D_0$ is to $\Phi_0$ as $D$ is to $\Phi$.
The following proposition records a special case of the observation, made above, that each cone of $\DF_c$ meets $\partial \Tits(A)$ in a cone that is a union of cones in the Coxeter fan of $W_0$.

\begin{prop} \label{finite face}  
For $A$ of affine type, if $F$ is a maximal cone of $\DF_c$ with ${F \cap \partial \Tits(A)}$ of dimension $n-1$, then $F \cap \partial \Tits(A)$ is $\bigcup_{u_0} u_0 D_0$, with $u_0$ running over some subset of $W_0$. 
\end{prop}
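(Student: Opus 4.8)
The plan is to exploit the observation recorded just before Proposition~\ref{affine finitely non-maximal}: each cone of $\DF_c$ meets $\partial\Tits(A)$ in a cone of $V_0^*$ cut out by half-spaces whose normals are scalings of roots in $\Phi_0$. Write $C=F\cap\partial\Tits(A)$. Since $\partial\Tits(A)$ is identified with $V_0^*$ and $\dim V_0^*=n-1$, the hypothesis that $C$ has dimension $n-1$ says exactly that $C$ is full-dimensional in $V_0^*$. Moreover $C$ is convex, being the intersection with $\partial\Tits(A)$ of the half-spaces $\set{x:\br{x,\beta}\ge0}$ (for $\beta\in\Phi$) that define the maximal cone $F$.

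Next I would pin down the boundary of $C$. Each facet of $C$ lies on a hyperplane $\pi(\beta)^\perp$ for some real root $\beta$, and $\pi(\beta)$ is a nonzero scaling of a root in $\Phi_0$; it is nonzero because $\beta$, being a real root, is not proportional to the imaginary root $\delta$. Hence every facet of $C$ lies on a reflecting hyperplane of $W_0$, so $\partial C\subseteq\H$, where $\H$ denotes the (finite) union of reflecting hyperplanes of $W_0$.

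The crux is then the purely geometric statement that a full-dimensional closed convex cone $C\subseteq V_0^*$ with $\partial C\subseteq\H$ is a union of the maximal cones $u_0 D_0$ of the Coxeter fan of $W_0$. I would argue this by connectedness. Let $R$ be a connected component of $V_0^*\setminus\H$, so that $\overline R$ is one of the chambers $u_0 D_0$ and $R\cap\H=\emptyset$. If $R$ met both the open set $\mathrm{int}(C)$ and the open set $V_0^*\setminus C$, then by connectedness it would contain a point of $\partial C\subseteq\H$, a contradiction; since $R$ also avoids $\partial C$, each $R$ is either contained in $\mathrm{int}(C)$ or disjoint from it. Because $\H$ has measure zero and $C=\overline{\mathrm{int}(C)}$ for a full-dimensional convex set, taking closures of $\mathrm{int}(C)\setminus\H=\bigsqcup_{R\subseteq\mathrm{int}(C)}R$ yields $C=\bigcup\set{\overline R:R\subseteq\mathrm{int}(C)}$. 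As the chambers $\overline R$ are precisely the cones $u_0 D_0$ for $u_0\in W_0$, this is the desired conclusion, with $u_0$ ranging over those elements with $u_0 D_0\subseteq C$.

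I expect the main obstacle to be this final geometric step, and specifically the point that convexity (not merely being a union of Coxeter-fan cones) is what prevents a chamber from straddling $\partial C$. Everything else is a direct appeal to the preceding discussion together with the equality $\dim\partial\Tits(A)=n-1$.
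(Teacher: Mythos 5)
Your proposal is correct and follows essentially the same route as the paper: the paper presents this proposition as an immediate special case of the observation (made just before Proposition~\ref{affine finitely non-maximal}) that each cone of $\DF_c$ meets $\partial\Tits(A)$ in a cone cut out by half-spaces normal to roots of $\Phi_0$, hence a union of cones of the $W_0$-Coxeter fan. You simply spell out the elementary convex-geometry step (a full-dimensional convex cone whose boundary lies in the reflection arrangement is a union of closed chambers $u_0D_0$) that the paper treats as self-evident.
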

\subsection{The support of the doubled Cambrian fan} \label{affine fan support} 
The \newword{support} $|\F|$ of a fan  $\F$ is $\bigcup_{F\in\F} F$.
We now 
present a theorem and a corollary 
which describe the support of the doubled Cambrian fan $\DF_c$ in the affine case.  
We define 
\[ \begin{array}{lcl}
\Phi_0^{\omega+} &=& \{ \beta \in \Phi_0 \ : \ \omega_c(\beta, \delta) > 0 \} \\[1pt]
\Phi_0^{\omega0} &=& \{ \beta \in \Phi_0 \ : \ \omega_c(\beta, \delta) = 0 \} \\[1pt]
\Phi_0^{\omega-} &=& \{ \beta \in \Phi_0 \ : \ \omega_c(\beta, \delta) < 0 \}. 
\end{array} \]
If we use the skew symmetric form $\omega$ to define a map $V \to V^{\ast}$, then $\Phi_0^{\omega+}$, $\Phi_0^{\omega0}$ and $\Phi_0^{\omega-}$ are the roots of $\Phi_0$ which are sent to the interior of the Tits cone, boundary of the Tits cone, and interior of the negative Tits cone respectively.  

The map $\gamma\mapsto-\omega_c(\gamma,\delta)$ is a linear functional on $V_0$ and thus a point in $V^*_0$.
We call this point $x_c$.  

With this notation in place, we state the main results of this section.

\begin{theorem} \label{TitsCapBdry}
If $A$ is of affine type, then $|\F_c| \cap \partial \Tits(A) =|-\F_{c^{-1}}| \cap \partial \Tits(A)$, and thus both equal $|\DF_c| \cap \partial \Tits(A)$.  
Furthermore, $|\DF_c| \cap \partial \Tits(A)$ is nonempty and
\[|\DF_c| \cap \partial \Tits(A) = \bigcup_{\beta \in \Phi_0^{\omega +}} \{ x \in \partial \Tits(A) : \br{x,\beta} \geq 0 \}.\]
\end{theorem}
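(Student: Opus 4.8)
The plan is to reduce the whole statement to the single identity
\[
|\F_c| \cap \partial\Tits(A) = \bigcup_{\beta \in \Phi_0^{\omega+}} \set{ x \in \partial\Tits(A) : \br{x,\beta} \geq 0 },
\]
which I will call $(\ast)$ and prove for every acyclic $B$ whose companion is affine, and then to recover the rest by an antipodal symmetry. Since $|{-}\F_{c^{-1}}| = -|\F_{c^{-1}}|$ and $\partial\Tits(A)=\delta^\perp$ is fixed by $x\mapsto -x$, we get $|{-}\F_{c^{-1}}|\cap\partial\Tits(A) = -\bigl(|\F_{c^{-1}}|\cap\partial\Tits(A)\bigr)$. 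As $c^{-1}$ is the Coxeter element attached to $-B$, its form is $\omega_{c^{-1}}=-\omega_c$, so the set $\Phi_0^{\omega+}$ computed for $c^{-1}$ is $\Phi_0^{\omega-}$. Applying $(\ast)$ to $c^{-1}$ and negating sends $\bigcup_{\beta\in\Phi_0^{\omega-}}\set{\br{\cdot,\beta}\ge0}$ to $\bigcup_{\gamma\in\Phi_0^{\omega+}}\set{\br{\cdot,\gamma}\ge0}$, which is the right-hand side of $(\ast)$ for $c$; this gives $|\F_c|\cap\partial\Tits(A)=|{-}\F_{c^{-1}}|\cap\partial\Tits(A)$, and the union over $\DF_c$ is then the same set. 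For nonemptiness it suffices that $\Phi_0^{\omega+}\neq\emptyset$, i.e.\ that $\omega_c(\cdot,\delta)\not\equiv0$ on $V_0$. Reordering $S$ so $b_{ij}>0$ for $i<j$, the matrices $A$ and $B$ agree below the diagonal and are negatives above it, with diagonals $2$ and $0$, so $A-B=2(I-U)$ with $U$ the nilpotent strictly upper-triangular part of $B$. If $\omega_c(\cdot,\delta)$ vanished on all of $V$, then $B\delta=0$, and with $A\delta=0$ this forces $U\delta=\delta$, hence $\delta=0$, a contradiction. Since $\omega_c(\delta,\delta)=0$ and $\delta-\theta$ is a positive multiple of $\alpha_\aff$ with $\theta\in\Phi_0$, nonvanishing on $V$ forces nonvanishing on $V_0$, so some $\beta\in\Phi_0$ has $\omega_c(\beta,\delta)\neq0$ and one of $\pm\beta$ lies in $\Phi_0^{\omega+}$.

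To prove $(\ast)$ I would argue radially. Fix $x\in\partial\Tits(A)=V_0^*$ \emph{generic}, meaning $\br{x,\gamma}\neq0$ for all $\gamma\in\Phi_0$; because $\br{x,\delta}=0$, this gives $\br{x,\rho}\neq0$ for every real root $\rho$, so $x$ lies on no Cambrian wall. Pick $y_0$ in the interior of $D_1$ and set $z_t=y_0+tx\in V_1^*$. The chambers $w_tD_1$ containing $z_t$ have inversion sets increasing monotonically in $t$ (a line meets each reflecting hyperplane once), and $\inv(w_t)$ tends to $\set{\rho\in\Phi^+:\br{x,\rho}<0}$. Since $\F_c$ covers $\Tits(A)$ and $x$ avoids all walls, $x\in|\F_c|$ if and only if $x$ lies in the interior of some $\Cone_c(v)$, which by Theorem~\ref{pidown fibers} (together with disjointness of cone interiors) happens exactly when the sequence $\pidown^c(w_t)$, increasing by Theorem~\ref{order preserving}, stabilizes. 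So $(\ast)$ comes down to the dichotomy: $\pidown^c(w_t)$ stabilizes precisely when $\br{x,\beta}>0$ for some $\beta\in\Phi_0^{\omega+}$ (that is, $x\in R:=\bigcup_{\Phi_0^{\omega+}}\set{\br{\cdot,\beta}\ge0}$), and has length tending to infinity precisely when $\br{x,\beta}<0$ for every $\beta\in\Phi_0^{\omega+}$. Granting this for generic $x$, both $|\F_c|\cap\partial\Tits(A)$ and $R$ are closed unions of finitely many $W_0$-chambers (Propositions~\ref{affine finitely non-maximal} and~\ref{finite face} for the former), so equality on the dense generic locus yields $(\ast)$.

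The heart of the matter, and the step I expect to be hardest, is this stabilization dichotomy, which amounts to identifying the asymptotic directions of arbitrarily long $c$-sortable elements. Failure of stabilization is equivalent to the existence of $c$-sortable elements of unbounded length whose inversions all lie in $\inv_\infty(x)=\set{\rho\in\Phi^+:\br{x,\rho}<0}$. When $x\in R^c$, every positive root whose $\Phi_0$-part lies in $\Phi_0^{\omega+}$ lies in $\inv_\infty(x)$; I would produce the required unbounded family from the powers $c^k$ (each $c$-sortable, of unbounded length) by showing that the $\Phi_0$-parts of $\bigcup_k\inv(c^k)$ are exactly the roots of $\Phi_0^{\omega+}$. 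Conversely, if $\br{x,\beta_0}>0$ for some $\beta_0\in\Phi_0^{\omega+}$, I would invoke the $c$-alignment characterization (Theorem~\ref{sortable is aligned}) in the affine rank-two sub root system $\Span_\RR(\beta_0,\delta)\cap\Phi$: the sign $\omega_c(\beta_0,\delta)>0$ places its canonical data in the alignment case that bounds the number of inversions with $\Phi_0$-part a multiple of $\beta_0$, capping the length of any $c$-sortable element whose inversions lie in $\inv_\infty(x)$.

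Equation~\eqref{simplify reflection} for the action of $s_\aff$ and the identity $\br{x_c,\gamma}=-\omega_c(\gamma,\delta)$ are the computational tools that tie $\omega_c(\cdot,\delta)$ to the geometry of $\partial\Tits(A)$. The genuinely delicate points are (i) controlling the $\Phi_0^{\omega0}$ directions, showing that roots with $\Phi_0$-part in $\Phi_0^{\omega0}$ contribute only boundedly many inversions to the sortable families and so do not spoil the containment in $\inv_\infty(x)$, and (ii) making the alignment bound uniform over \emph{all} $\beta_0\in\Phi_0^{\omega+}$ rather than just along the single Coxeter axis $x_c$. These are exactly the places where the plan must do real work beyond the bookkeeping above.
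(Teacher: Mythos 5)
Your global reductions are fine: the antipodal symmetry argument correctly reduces everything to the single identity $(\ast)$ for $\F_c$, and your linear-algebra proof that $\Phi_0^{\omega+}\neq\emptyset$ (via $(A-B)\vec\delta=2(I-U)\vec\delta=0$ and nilpotency of $U$) is a valid, if different, substitute for the paper's Lemma~\ref{Phi+Nonempty}. Your radial dichotomy --- $x\in|\F_c|$ iff $\pidown^c(w_t)$ stabilizes along the ray, iff the lengths of $c$-sortable elements with inversions in $\inv_\infty(x)$ are bounded --- is also sound and is genuinely different in shape from the paper's argument, which instead proves ``$\subseteq$'' by exhibiting, for each sortable $v$, a single facet normal $\beta\in C_c(v)$ with $\pi(\beta)$ a positive scaling of an element of $\Phi_0^{\omega+}$ (Lemma~\ref{EasyCase}, an easy induction on $\ell(v)$), and proves ``$\supseteq$'' chamber by chamber by intersecting $\F_c$ with a simplicial prism $\Lambda$ over $u_0D_0$. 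However, both halves of your dichotomy are left at the level of unproved claims, and one of them is misreasoned as stated. For the stabilization direction, $c$-alignment in the \emph{single} rank-two system $\Phi\cap\Span_\RR(\beta_0,\delta)$ gives only $|\inv(v)\cap\Span_\RR(\beta_0,\delta)|\le 1$; this confines the chambers $vD_1$ to an unbounded slab and caps nothing. To bound $\ell(v)$ you need $n-1$ linearly independent roots $\beta_1,\dots,\beta_{n-1}$ of $\Phi_0$ with $|\inv(v)\cap\Span_\RR(\beta_i,\delta)|\le 1$ for every $i$ (this is exactly Lemma~\ref{InABox}), and the only source of the missing $n-2$ directions is $\Phi_0^{\omega 0}$, whose spanning of a hyperplane is Lemma~\ref{Phi0Rank} --- a nontrivial reflection-length computation. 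So the item you flag as delicate point (i) is not a side issue about ``not spoiling containment''; it is the load-bearing half of the finiteness argument, and without it the step fails.

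For the non-stabilization direction, your route through the powers $c^k$ needs two facts you do not establish: that $\ell(c^k)=kn$ (reduced powers of Coxeter elements in infinite groups --- a genuine theorem, not quoted anywhere in this paper), and that every inversion of every $c^k$ has $\Phi_0$-part in $\Phi_0^{\omega+}$ exactly. The second claim must hold for \emph{all} $k$ and with no exceptional inversions: if even one root of $\inv(c^k)$ has $\Phi_0$-part in $\Phi_0^{\omega 0}$, then $c^k\not\le w_t$ for roughly half of the generic $x\in R^c$ and the family is useless there, so your hedge that such roots ``contribute only boundedly many inversions'' would not rescue the argument. The paper sidesteps all of this: the containment $|\F_c|\cap\partial\Tits(A)\subseteq\bigcup_{\beta\in\Phi_0^{\omega+}}\set{x:\br{x,\beta}\ge 0}$ follows for every cone (including the nongeneric, lower-dimensional intersections your density argument does not reach) from Lemma~\ref{EasyCase}. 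I would recommend replacing the $c^k$ step by that lemma, and completing the stabilization step with Lemma~\ref{Phi0Rank} plus Lemma~\ref{InABox}; at that point your radial framing becomes a legitimate repackaging of the paper's proof, but as written the two central steps are gaps, not proofs.
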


\begin{cor} \label{DFc complement} 
If $A$ is of affine type, then $V^*\setminus|\DF_c|$ is a nonempty open cone in $\partial\Tits(A)$, and thus is of codimension $1$ in $V^*$.
This cone is
\[ V^*\setminus|\DF_c|=\bigcap_{\beta \in \Phi_0^{\omega +}} \{ x \in \partial \Tits(A) : \br{x,\beta} < 0 \}. \]
Also, $V^* \setminus |\DF_c|$ is the union of the relative interiors of those cones in the $W_0$-Coxeter fan that contain $x_c$.  
\end{cor}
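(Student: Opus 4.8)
The plan is to deduce everything from Theorem~\ref{TitsCapBdry} together with the fact that $\F_c$ covers $\Tits(A)$ (immediate from Theorem~\ref{pidown fibers}, since every $wD$ lies in $\Cone_c(\pidown^c(w))$) and, applying the same with $c^{-1}$ in place of $c$ and negating, that $-\F_{c^{-1}}$ covers $-\Tits(A)$. First I would observe that $|\DF_c|\supseteq\Tits(A)\cup(-\Tits(A))$, and since $\Tits(A)\cup(-\Tits(A))$ omits only the punctured boundary $\set{x\in\partial\Tits(A):x\neq 0}$, the complement $V^*\setminus|\DF_c|$ is forced to lie inside $\partial\Tits(A)$. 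Thus $V^*\setminus|\DF_c|=\partial\Tits(A)\setminus\bigl(|\DF_c|\cap\partial\Tits(A)\bigr)$, and Theorem~\ref{TitsCapBdry} evaluates the set being removed. Taking complements inside the hyperplane $\partial\Tits(A)$ turns the union in Theorem~\ref{TitsCapBdry} into the asserted intersection by De Morgan:
\[V^*\setminus|\DF_c|=\bigcap_{\beta\in\Phi_0^{\omega+}}\set{x\in\partial\Tits(A):\br{x,\beta}<0}.\]

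Next I would read off the topological claims from this formula. Since $\Phi_0$ is finite, this is a finite intersection of relatively open half-spaces of the hyperplane $\partial\Tits(A)=\delta^\perp$, each cut out by a homogeneous strict inequality, so it is a relatively open cone in $\partial\Tits(A)$; being a nonempty relatively open subset of $\delta^\perp$, it has dimension $n-1$ and hence codimension $1$ in $V^*$. For nonemptiness I would exhibit the point $x_c$: viewing $x_c\in V_0^*$ inside $\partial\Tits(A)$, the definition $\br{x_c,\beta}=-\omega_c(\beta,\delta)$ gives $\br{x_c,\beta}<0$ for every $\beta\in\Phi_0^{\omega+}$, so $x_c$ lies in the complement.

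Finally, for the $W_0$-Coxeter-fan description I would show that the relatively open cone $U:=V^*\setminus|\DF_c|$ equals the union of the relative interiors of those Coxeter cones whose closure contains $x_c$. Both sides are unions of relative interiors of cones of the (complete) $W_0$-Coxeter fan in $V_0^*$—the left side because, as noted just before Proposition~\ref{affine finitely non-maximal}, $|\DF_c|\cap\partial\Tits(A)$ is a union of closed Coxeter cones—so it suffices to check, for each Coxeter cone $\sigma$, that $\relint(\sigma)\subseteq U$ if and only if $x_c\in\sigma$. The engine is a sign comparison based on the partition $\Phi_0=\Phi_0^{\omega+}\sqcup\Phi_0^{\omega0}\sqcup\Phi_0^{\omega-}$ and the symmetry $\beta\in\Phi_0^{\omega+}\iff-\beta\in\Phi_0^{\omega-}$: from $\br{x_c,\beta}=-\omega_c(\beta,\delta)$ one gets $\br{x_c,\beta}<0$, $=0$, or $>0$ according as $\beta$ lies in $\Phi_0^{\omega+}$, $\Phi_0^{\omega0}$, or $\Phi_0^{\omega-}$, while $y\in U$ forces $\br{y,\beta}<0$ for $\beta\in\Phi_0^{\omega+}$ and hence $\br{y,\beta}>0$ for $\beta\in\Phi_0^{\omega-}$. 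Consequently $x_c$ and any $y\in U$ lie weakly on the same side of every reflecting hyperplane $\beta^\perp$ (with $x_c$ lying on $\beta^\perp$ itself when $\beta\in\Phi_0^{\omega0}$), which is exactly the condition for them to lie in a common Coxeter cone; this yields both inclusions once one notes that the strict sign of $\br{\cdot,\beta}$ is constant on $\relint(\sigma)$ whenever $\sigma\not\subseteq\beta^\perp$.

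I expect this last paragraph to be the only genuine obstacle: everything preceding it is formal manipulation of Theorem~\ref{TitsCapBdry}, whereas identifying the complement with the star of $x_c$ requires the (routine but delicate) verification that ``weakly on the same side of every reflecting hyperplane'' characterizes membership in a common Coxeter cone, and in particular handling the equality case $\Phi_0^{\omega0}$, where $x_c$ sits on the wall, correctly in both directions.
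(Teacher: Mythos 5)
Your proof is correct and follows essentially the same route as the paper's: reduce to $\partial\Tits(A)$ using Theorem~\ref{pidown fibers}, apply De Morgan to Theorem~\ref{TitsCapBdry}, and characterize the complement in terms of the $W_0$-Coxeter fan via the relation $\br{x_c,\beta}=-\omega_c(\beta,\delta)$. The only cosmetic differences are that the paper phrases the final step as a separating-hyperplane argument ($F\subseteq|\DF_c|$ if and only if $x_c\notin F$) where you use sign-vector domination, and that you exhibit $x_c$ directly as a point of the complement to get nonemptiness rather than deducing it from the Coxeter-fan description.
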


\begin{example} \label{affineG2 slice ex}
We illustrate these results by continuing Examples~\ref{affineG2 ex} and~\ref{affineG2 frame ex}, with exchange matrix $B=\begin{bsmallmatrix*}[r]0&1&1\\-3&0&0\\-1&0&0 \end{bsmallmatrix*}$.
The reader may wish to compare Figures~\ref{g2s1}, \ref{g2s-1} and~\ref{g2s0} with Figures~\ref{G2tildeCamb both} and \ref{G2tildeCamb both alt}, which display the same fan in stereographic projection.

The associated Coxeter group is of type $\widetilde{G}_2$. 
The Coxeter element defined by $B$ is $c=s_1s_2s_3$.
The subgroup $W_0$ is generated by $s_1$ and $s_2$.
Figure~\ref{g2root}
shows the root system $\Phi_0$ for $W_0$ and part of the weight lattice, with some weights labeled.
(Recall from Section~\ref{frame sec} that the fundamental weights $\rho_i$ are the dual basis to the simple co-roots.)
In this example, $\delta=2\alpha_1+3\alpha_2+\alpha_3$ and $\Phi_0^{\omega0}=\set{\pm(\alpha_1+2\alpha_2)}$.
The set $\Phi_0^{\omega+}$ is $\set{2\alpha_1+3\alpha_2,\alpha_1+\alpha_2,\alpha_1,-\alpha_2,-\alpha_1-3\alpha_2}$ and $x_c$ is $-4\rho_1+6\rho_2$.
\begin{figure}[ht]
\begin{picture}(0,0)
\end{picture}
\scalebox{0.9}{
\includegraphics{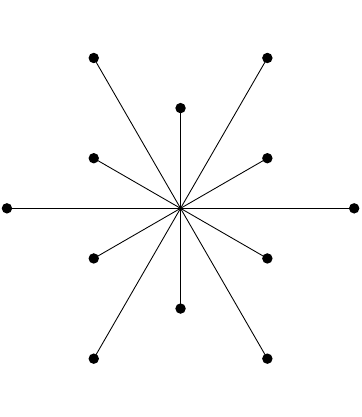}
\begin{picture}(0,0)(52,-60)
\put(-33,18){$\alpha_2$}
\put(43,4.5){$\alpha_1$}
\end{picture}
}
\qquad
\begin{picture}(0,0)
\end{picture}
\scalebox{0.9}{
\includegraphics{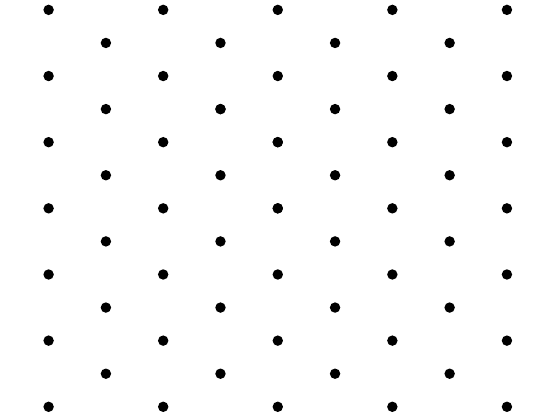}
\begin{picture}(0,0)(80,-60)
\put(-1.5,0){$0$}
\put(-8,23){$\rho_2$}
\put(9,32.5){$\rho_1$}
\put(-73,3.5){$x_c$}

\end{picture}
}
\caption{The root system $\Phi_0$ and weight lattice for Example~\ref{affineG2 slice ex}}
\label{g2root}
\vspace{30 pt}
\end{figure}

\begin{figure}[p]
\begin{picture}(0,0)
\end{picture}
\scalebox{.65}{
\includegraphics{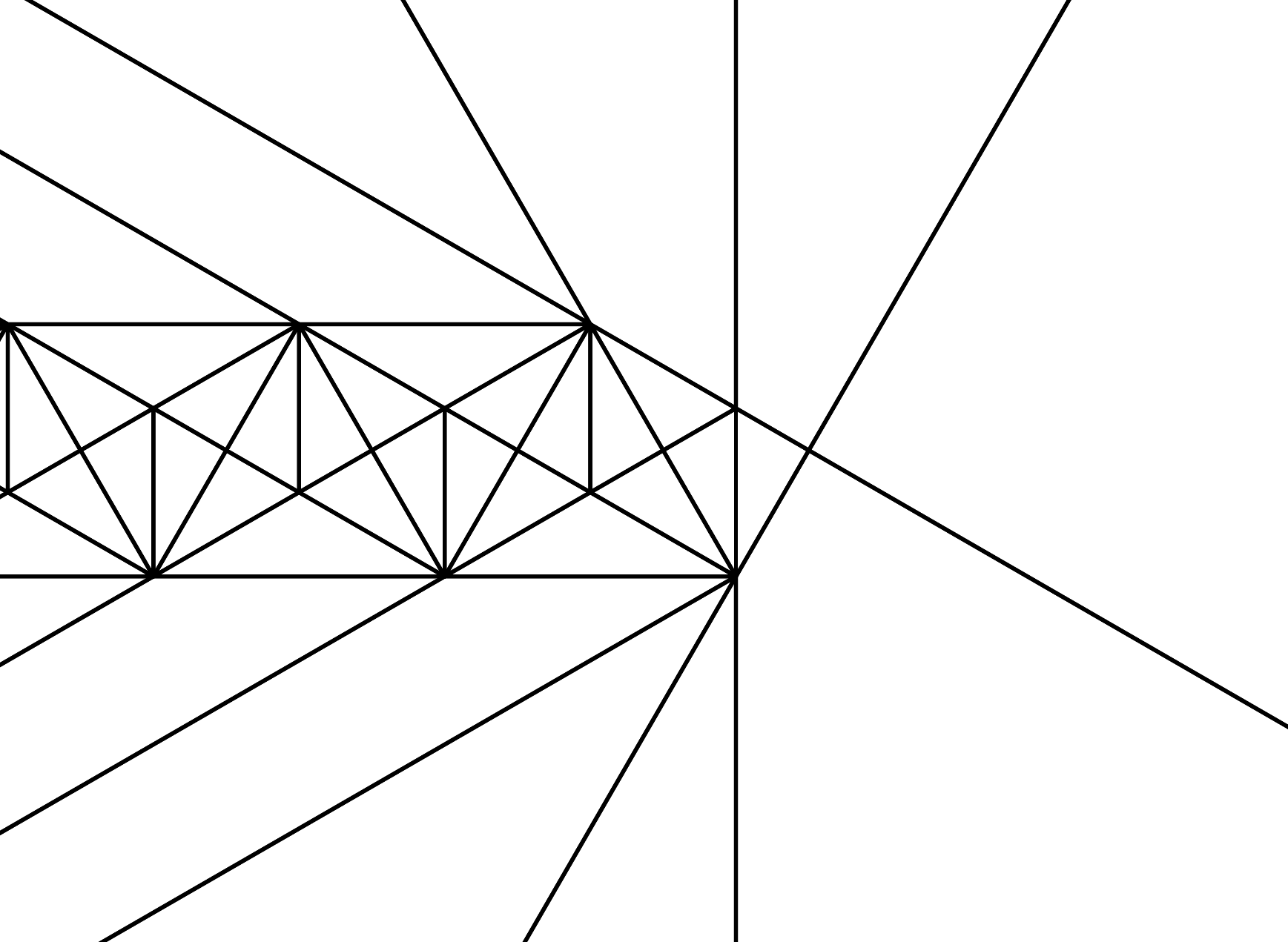}
\begin{picture}(0,0)(200,-191)
\put(-9,-10){\LARGE$e$}
\put(-34,-10){\LARGE$s_1$}
\put(5,60){\LARGE$s_3$}
\put(-53,17){\LARGE$s_1s_3$}
\put(-80,100){\LARGE$s_1s_3s_1$}
\put(-248,160){\LARGE$s_1s_2s_3s_1s_2s_1s_2s_1$}
\put(-63,-19){\LARGE$s_1s_2$}
\put(-92,-38){\LARGE$s_1s_2s_1$}
\put(-135,-75){\LARGE$s_1s_2s_1s_2$}
\put(-161,-155){\LARGE$s_1s_2s_1s_2s_1$}
\put(-93,-186){\LARGE$s_1s_2s_1s_2s_1s_2$}
\put(20,-80){\LARGE$s_2$}
\put(50,10){\LARGE$s_2s_3$}
\end{picture}
}
\caption{The Cambrian fan $\F_c$ of Example~\ref{affineG2 slice ex}}
\label{g2s1}
\vspace{30 pt}
\end{figure}
\begin{figure}[p]
\begin{picture}(0,0)
\end{picture}
\scalebox{.65}{
\includegraphics{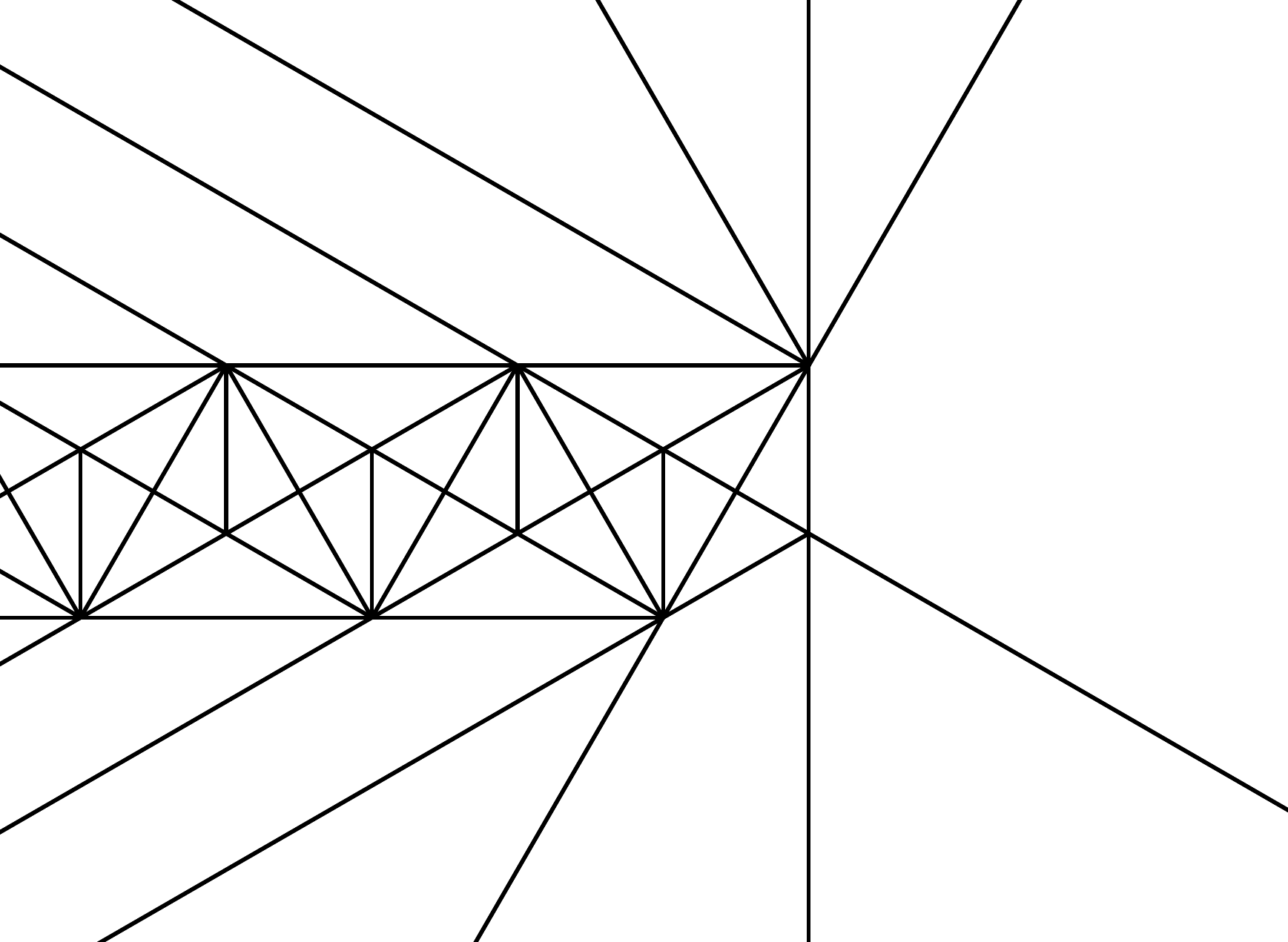}
\begin{picture}(0,0)(200,-175)
\put(-3,5){\LARGE$e$}
\put(50,10){\LARGE$s_1$}
\put(-19,-20){\LARGE$s_3$}
\put(-53,-140){\LARGE$s_3s_1$}
\put(36,-120){\LARGE$s_3s_1s_3$}
\put(-210,-160){\LARGE$s_3s_2s_1s_2s_1s_2s_1$}
\put(-27,15){\LARGE$s_2$}
\put(-57,33){\LARGE$s_2s_1$}
\put(-120,80){\LARGE$s_2s_1s_2$}
\put(-112,140){\LARGE$s_2s_1s_2s_1$}
\put(-54,168){\LARGE$s_2s_1s_2s_1s_2$}
\put(13,186){\LARGE$s_2s_1s_2s_1s_2s_1$}
\end{picture}
}
\caption{The opposite Cambrian fan $-\F_{c^{-1}}$ of Example~\ref{affineG2 slice ex}}
\label{g2s-1}
\end{figure}
Figure~\ref{g2s1} shows $\DF_c$ intersected with the affine plane $V_1^*=\set{x\in V^*:\br{x,\delta}=1}$.
Nonzero cones in $\DF_c$ intersect $V_1^*$ if and only if they intersect the interior of $\Tits(A)$, so Figure~\ref{g2s1} is a representation of $\F_c$.
(Essentially the same picture appears as \cite[Figure~4]{typefree}.)
Several of the maximal cones are labeled by the corresponding $c$-sortable element.
Figure~\ref{g2s-1} shows the intersection of $\DF_c$ with the opposite affine plane $V_{-1}^*=\set{x\in V^*:\br{x,\delta}=-1}$.
This is a representation of $-\F_{c^{-1}}$, and several maximal cones are labeled by the corresponding $c^{-1}$-sortable element.
Both pictures are from the point of view of a point in $\Tits(A)$ separated from the origin by $V_1^*$.

Figure~\ref{g2s0} shows the intersection of $\DF_c$ with $V_0^*=\partial\Tits(A)$, from the same point of view.
\begin{figure}
\begin{picture}(0,0)
\end{picture}
\scalebox{.65}{
\includegraphics{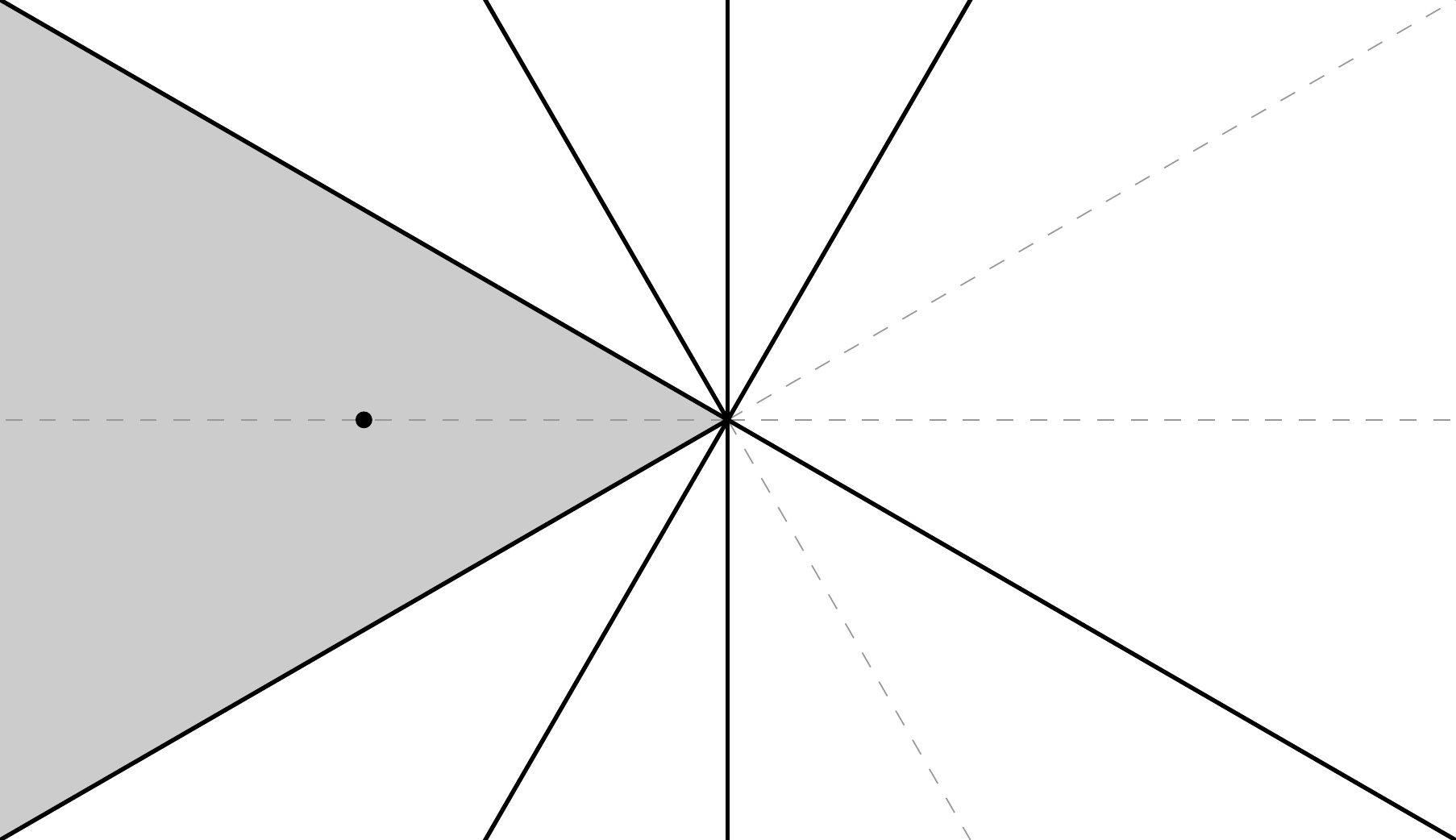}
\begin{picture}(0,0)(260,-150)
\put(-130,5){\LARGE$x_c$}
\put(5,60){\LARGE$s_3$}
\put(-51,100){\LARGE$s_1s_3s_1$}
\put(-175,110){\LARGE$s_1s_2s_3s_1s_2s_1s_2s_1$}
\put(-130,-90){\LARGE$s_1s_2s_1s_2s_1$}
\put(-81,-146){\LARGE$s_1s_2s_1s_2s_1s_2$}
\put(40,-60){\LARGE$s_2$}
\put(50,40){\LARGE$s_2s_3$}
\put(80,5){\LARGE\textcolor{red}{$s_1$}}
\put(-40,-95){\LARGE\textcolor{red}{$s_3s_1$}}
\put(30,-100){\LARGE\textcolor{red}{$s_3s_1s_3$}}
\put(-170,-116){\LARGE\textcolor{red}{$s_3s_2s_1s_2s_1s_2s_1$}}
\put(-105,70){\LARGE\textcolor{red}{$s_2s_1s_2s_1$}}
\put(-69,124){\LARGE\textcolor{red}{$s_2s_1s_2s_1s_2$}}
\put(0,142){\LARGE\textcolor{red}{$s_2s_1s_2s_1s_2s_1$}}
\end{picture}
}
\caption{$|\DF_c|\cap\partial\Tits(A)$ for Example~\ref{affineG2 slice ex}}
\label{g2s0}
\end{figure}
The shaded region indicates $V^*\setminus|\DF_c|$, and the vector $x_c$ is indicated.
Black lines indicate the boundaries of the intersections $F\cap\partial\Tits(A)$ for maximal cones $F$ of $\DF_c$.
Each maximal cone $F$ of $\DF_c$ such that $F\cap\partial\Tits(A)$ is \mbox{$2$-dimensional} is labeled in black by the $c$-sortable element $v$ such that $F=\Cone_c(v)$ and in red by the $c^{-1}$-sortable element $v'$ such that $F=-\Cone_{c^{-1}}(v')$.
Gray dashed lines indicate the decomposition of cones $F\cap\partial\Tits(A)$ into cones in the $W_0$-Coxeter fan.
A gray dashed line also indicates the decomposition of the cone $V^*\setminus|\DF_c|$ into two cones in the $W_0$-Coxeter fan.  
\end{example}

We now prepare to prove Theorem~\ref{TitsCapBdry} by establishing a sequence of lemmas.
The first is \cite[Lemma~3.8]{typefree} when $x$ and $y$ are roots, and holds for all $x$ and $y$ by linearity.
\begin{lemma} \label{OmegaInvarianceUseful}
If~$s$ is initial or final in $c$, then $\omega_c(x,y)=\omega_{scs}(sx,sy)$ for all vectors $x$ and $y$ in $V$.
\end{lemma}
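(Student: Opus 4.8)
The plan is to leverage the cited special case and extend it by bilinearity, since essentially all the content is already contained in \cite[Lemma~3.8]{typefree}. First I would recall that that lemma already establishes the identity $\omega_c(x,y)=\omega_{scs}(sx,sy)$ in the case where $x$ and $y$ are roots of $\Phi$. So the only work remaining is to promote this from roots to arbitrary vectors of $V$.

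The key observation is that both sides of the claimed identity are bilinear forms on $V\times V$. The left-hand side $\omega_c(x,y)$ is bilinear by the definition of $\omega_c$. For the right-hand side, the map $s$ acts linearly on $V$ and $\omega_{scs}$ is a bilinear form, so the composite $(x,y)\mapsto\omega_{scs}(sx,sy)$ is again bilinear. A bilinear form on $V\times V$ is determined by its values on any basis of $V$, so it suffices to verify the identity when $x$ and $y$ range over a fixed basis.

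I would then take the basis to be the set $\Pi=\set{\alpha_i:i\in I}$ of simple roots, which is a basis of $V$ and whose elements are in particular roots of $\Phi$. The cited special case applies to each pair $(\alpha_i,\alpha_j)$, giving $\omega_c(\alpha_i,\alpha_j)=\omega_{scs}(s\alpha_i,s\alpha_j)$ for all $i,j\in I$. Writing arbitrary $x$ and $y$ in the basis $\Pi$ and invoking the bilinearity of both sides then yields the identity for all $x,y\in V$.

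There is no substantial obstacle here: the entire mathematical content lives in the cited lemma, and the passage to all of $V$ is a routine bilinearity argument. The only point requiring the slightest care is checking that $(x,y)\mapsto\omega_{scs}(sx,sy)$ is bilinear, which is immediate from the linearity of the action of $s$ on $V$.
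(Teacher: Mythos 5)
Your proposal is correct and matches the paper's own justification, which states the identity for roots via \cite[Lemma~3.8]{typefree} and extends it to all of $V$ ``by linearity''; your write-up simply spells out that bilinearity argument (both sides are bilinear, so it suffices to check on the basis of simple roots). No gaps.
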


\begin{lemma} \label{EasyCase} 
If $v$ is $c$-sortable, then there exists $\beta\in C_c(v)$ such that $\pi(\beta)$ is a positive scaling of a root in $\Phi_0^{\omega +}$. 
\end{lemma}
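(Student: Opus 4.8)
The plan is to first reduce the stated condition to a single clean inequality, and then induct on $\ell(v)$. Since each label $\beta\in C_c(v)$ is a real root, the description of affine root systems in Section~\ref{affine root sec} shows that $\beta$ is a positive scaling of some $\beta_0+k\delta$ with $\beta_0\in\Phi_0$ and $k\in\integers$, so $\pi(\beta)$ is a positive scaling of the genuine $\Phi_0$-root $\beta_0$. Because $\beta-\pi(\beta)\in\reals\delta$ and $\omega_c(\delta,\delta)=0$, we have $\omega_c(\pi(\beta),\delta)=\omega_c(\beta,\delta)$, and passing to a positive scaling does not change the sign of $\omega_c(\cdot,\delta)$. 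Hence the condition ``$\pi(\beta)$ is a positive scaling of a root in $\Phi_0^{\omega +}$'' is equivalent to the single inequality $\omega_c(\beta,\delta)>0$. It therefore suffices to produce some $\beta\in C_c(v)$ with $\omega_c(\beta,\delta)>0$, and I would prove this for every Coxeter element simultaneously, by induction on $\ell(v)$.

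Fix $s$ initial in $c$ and split according to \eqref{C recur}. If $v\not\ge s$, then $\alpha_s\in C_c(v)$, and I claim that $\beta=\alpha_s$ already works. Writing $s=s_i$, initiality of $s$ forces $b_{ij}\ge0$ for all $j$ (a negative entry $b_{ij}<0$ would put $s_j$ before $s_i$). Since $\delta=\sum_j c_j\alpha_j$ with every $c_j>0$, the expansion $\omega_c(\alpha_i,\delta)=d(i)\sum_j c_j b_{ij}$ is a nonnegative sum; and because the affine Dynkin diagram is connected with at least two nodes, node $i$ has a neighbour $j_0$, forcing $b_{ij_0}>0$, so the sum is strictly positive. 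This disposes of the case $v\not\ge s$ (in particular the base case $v=e$) with no appeal to induction.

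If $v\ge s$, then $sv$ is $scs$-sortable with $\ell(sv)<\ell(v)$, and $C_c(v)=sC_{scs}(sv)$. Applying the inductive hypothesis to $sv$ and the Coxeter element $scs$ produces $\beta'\in C_{scs}(sv)$ with $\omega_{scs}(\beta',\delta)>0$. Setting $\beta=s\beta'\in C_c(v)$, Lemma~\ref{OmegaInvarianceUseful} gives $\omega_c(s\beta',\delta)=\omega_{scs}(s(s\beta'),s\delta)=\omega_{scs}(\beta',s\delta)$, and since $\delta$ is fixed by $W$ this equals $\omega_{scs}(\beta',\delta)>0$. Thus $\beta$ has the required property, which closes the induction.

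Since the argument is short, I do not expect a serious obstacle; the one point that must be set up correctly is the opening reduction, which is precisely what makes both cases fall out immediately. It is worth flagging two features of the scheme: the induction stays inside the fixed affine group $W$ and recurses only on length (the rank never drops, so the hypothesis is carried over all Coxeter elements of $W$), and it is exactly the connectivity of the affine diagram that guarantees the strict positivity of $\omega_c(\alpha_s,\delta)$ in the non-recursive case.
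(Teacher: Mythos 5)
Your proof is correct and follows essentially the same route as the paper's: the same reduction of the statement to the inequality $\omega_c(\beta,\delta)>0$, the same induction on $\ell(v)$ split by whether $v\ge s$ for $s$ initial, with $\alpha_s$ serving as the witness in the first case and Lemma~\ref{OmegaInvarianceUseful} together with the $W$-invariance of $\delta$ closing the inductive case. The only difference is that you spell out a bit more explicitly why $\omega_c(\alpha_s,\delta)>0$ (via the sign convention for initial letters and connectivity of the affine diagram), which the paper leaves more terse.
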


\begin{proof}
Since every root $\beta$ has $\pi(\beta)=\beta - r \delta$ for some $r$, we need to show that there exists $\beta\in C_c(v)$ such that $\omega_c(\beta,\delta)>0$.
We do so by induction on $\ell(v)$. 
Let $s$ be initial in $c$.

If $v \not \geq s$ then $\alpha_s\in C_c(v)$ and $\omega_c(\alpha_s,\alpha_{s'})\ge0$ for all $s'\in S$, with strict inequality for at least one $s'$.  
Since $\delta$ has strictly positive simple-root coordinates, we have $\omega_c(\alpha_s,\delta)>0$.

If $v \geq s$, then by induction there exists $\beta\in C_{scs}(sv)$ with $\omega_{scs}(\beta, \delta)>0$.
Equation \eqref{C recur} says $s\beta\in C_c(v)$ and Lemma~\ref{OmegaInvarianceUseful} implies that $\omega_{c}(s\beta, s\delta) >0$. 
The element $\delta$ is fixed by every element of $W$, so $\omega_{c}(s\beta, \delta) >0$.
\end{proof}

\begin{lemma}\label{Phi+Nonempty}
The sets $\Phi_0^{\omega +}$ and $\Phi_0^{\omega -}$ are nonempty.
\end{lemma}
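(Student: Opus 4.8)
The plan is to reduce the statement to the single assertion that $\Phi_0^{\omega+}$ is nonempty, and then to read that off from Lemma~\ref{EasyCase}. First I would record the negation symmetry: since $\Phi_0$ is a root system we have $\beta\in\Phi_0$ if and only if $-\beta\in\Phi_0$, and since $\omega_c$ is bilinear, $\omega_c(-\beta,\delta)=-\omega_c(\beta,\delta)$. Hence $\beta\in\Phi_0^{\omega+}$ if and only if $-\beta\in\Phi_0^{\omega-}$, so the two sets are nonempty simultaneously. It therefore suffices to exhibit a single root $\beta_0\in\Phi_0$ with $\omega_c(\beta_0,\delta)>0$.

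Such a root is handed to us by Lemma~\ref{EasyCase}. Applying that lemma to any $c$-sortable element $v$ (for instance the identity, which is $c$-sortable for every $c$) produces a root $\beta\in C_c(v)$ whose projection $\pi(\beta)$ is a positive scaling of a root lying in $\Phi_0^{\omega+}$. In particular $\Phi_0^{\omega+}$ contains that root and is nonempty, and by the previous paragraph so is $\Phi_0^{\omega-}$. This completes the argument.

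If one prefers not to quote Lemma~\ref{EasyCase} verbatim, the same computation can be inlined. Take $s$ initial in $c$, so that $\omega_c(\alpha_s,\alpha_{s'})\ge0$ for every $s'\in S$ with strict inequality for at least one $s'$ (because $b_{ss'}\ge0$ when $s$ is initial, and $s$ has a neighbor in the connected affine diagram); since $\delta$ has strictly positive simple-root coordinates this gives $\omega_c(\alpha_s,\delta)>0$. Writing the real root $\alpha_s$ as a positive scaling of $\beta_0+k\delta$ with $\beta_0\in\Phi_0$ and $k\in\ZZ$, and using $\omega_c(\delta,\delta)=0$, we obtain $\omega_c(\beta_0,\delta)>0$, so $\beta_0\in\Phi_0^{\omega+}$.

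There is really no obstacle here: the substantive content was already carried by Lemma~\ref{EasyCase}, and the present lemma only records the immediate consequence together with the negation symmetry. The one point worth stating carefully is that the $\RR\delta$-component of a root is invisible to the functional $\omega_c(\cdot,\delta)$, because $\omega_c(\delta,\delta)=0$; this is precisely what lets us pass from a root of $\Phi$ to a root of $\Phi_0$ without changing the sign of the pairing against $\delta$.
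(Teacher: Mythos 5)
Your argument is correct, and its core is the same computation the paper uses: for $s$ initial in $c$ one has $\omega_c(\alpha_s,\delta)>0$ because $\delta$ has strictly positive simple-root coordinates. The differences are in the packaging. The paper exhibits explicit elements of both $\Phi_0^{\omega+}$ and $\Phi_0^{\omega-}$ by a case split: if $s\neq s_\aff$ then $\alpha_s$ itself lies in $\Phi_0$, while if $s=s_\aff$ it instead takes a final letter $s'$ (necessarily $\neq s_\aff$) with $\omega_c(\alpha_{s'},\delta)<0$. You avoid that case split in two ways at once: the negation symmetry $\beta\mapsto-\beta$ reduces everything to the nonemptiness of $\Phi_0^{\omega+}$, and writing $\alpha_s$ as a positive scaling of $\beta_0+k\delta$ with $\beta_0\in\Phi_0$ (i.e.\ projecting by $\pi$ and using $\omega_c(\delta,\delta)=0$) lets you land in $\Phi_0$ uniformly, including when $s=s_\aff$, where $\beta_0=-\theta$. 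Your primary route through Lemma~\ref{EasyCase} is also legitimate and non-circular (that lemma is proved earlier and independently of Lemma~\ref{Phi+Nonempty}), but it is heavier than necessary: the base case of its induction is exactly the inline computation you give, so quoting it buys nothing beyond what your third paragraph already contains. Either version is a complete proof.
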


\begin{proof}
Let $s$ be initial in $c$. Since $\delta$ is a strictly positive combination of the simple roots, we have $\omega_c(\alpha_s,\delta)>0$.
If $s\neq s_\aff$, then $\alpha_s \in \Phi^{\omega +}_0$ and $- \alpha_s \in \Phi^{\omega -}_0$.
If $s=s_\aff$, then take $s'$ final in $c$, so that $\omega_c(\alpha_{s'},\delta)<0$.
Since $s_\aff$ cannot also be final in $c$, we have $\alpha_{s'} \in \Phi^{\omega -}_0$ and $- \alpha_{s'} \in \Phi^{\omega +}_0$.
\end{proof} 

It is less obvious whether $\Phi^{\omega 0}_0$ is empty. As a consequence of the following lemma, $\Phi^{\omega 0}_0$ is nonempty except when $\Phi$ has rank $2$. That is to say, except when the Cartan matrix is $\left[ \begin{smallmatrix} 2 & -2 \\ -2 & 2 \end{smallmatrix} \right]$, $\left[ \begin{smallmatrix} 2 & -4 \\ -1 & 2 \end{smallmatrix} \right]$, or $\left[ \begin{smallmatrix} 2 & -1 \\ -4 & 2 \end{smallmatrix} \right]$.

\begin{lemma} \label{Phi0Rank}
The real span of $\Phi_0^{\omega 0}$ is the hyperplane $\{ x \in V_0 : \omega_c(x, \delta) =0 \}$ in~$V_0$.
\end{lemma}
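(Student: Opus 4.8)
Set $\psi:=\omega_c(\,\cdot\,,\delta)\colon V\to\RR$, the \emph{defect} functional, so that $\Phi_0^{\omega 0}=\{\gamma\in\Phi_0:\psi(\gamma)=0\}$ and the hyperplane in question is $H=\{x\in V_0:\psi(x)=0\}$. By Lemma~\ref{Phi+Nonempty} the restriction $\psi|_{V_0}$ is nonzero, so $\dim H=n-2$ and $\Span(\Phi_0^{\omega0})\subseteq H$ is automatic; the whole point is the reverse inclusion, equivalently that $\Phi_0^{\omega0}$ has rank $n-2$. (If $n=2$ then $H=\{0\}$ and there is nothing to prove, so I assume $n\ge3$.) The plan is to realize $H$ as the reflecting hyperplane of a corank-one parabolic subgroup of $W_0$, using the point $x_c\in V_0^*$ determined by $\br{x_c,\gamma}=-\psi(\gamma)$.

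First I would show that $\psi$ is invariant under $c$, i.e.\ $\psi(cx)=\psi(x)$ for all $x\in V$, equivalently that $\psi$ vanishes on $\operatorname{im}(c-1)$. This comes from the standard identity expressing $c$ as the Coxeter transformation of the Euler form, $E_c(cx,y)=-E_c(y,x)$, together with $K(\delta,\,\cdot\,)=0$ and $c\delta=\delta$: a short computation then gives $\omega_c(cy,\delta)=-2E_c(\delta,y)=\omega_c(y,\delta)$. Because $W$ fixes $\delta$, the Coxeter element $c$ induces on $V/\RR\delta\cong V_0$ its linear part $\bar c\in W_0$ (with $s_\aff$ acting as $t_\theta$ by \eqref{simplify reflection}), and since $\psi(\delta)=0$ the functional $\psi$ descends to $V/\RR\delta$, where it agrees with $\psi|_{V_0}$. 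The $c$-invariance of $\psi$ thus says that $\psi|_{V_0}$ is $\bar c$-invariant, and dually that $x_c$ is fixed by $\bar c$.

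Next I would compute the fixed space of $\bar c$. The affine Coxeter element satisfies $\Fix_V(c)=\operatorname{rad}(K)=\RR\delta$ (one-dimensional), and moreover $\delta\in\operatorname{im}(c-1)$; the former is the standard fact that a Coxeter element fixes exactly the radical of $K$, and the latter follows from the previous paragraph, since $\psi(\delta)=0$ puts $\delta$ in $\ker\psi=\operatorname{im}(c-1)$ (the two sides being hyperplanes, one contained in the other). A direct count then gives
\[\dim\Fix_{V_0}(\bar c)=\dim\bigl[(c-1)^{-1}(\RR\delta)/\RR\delta\bigr]=\bigl(\dim\ker(c-1)+\dim(\RR\delta\cap\operatorname{im}(c-1))\bigr)-1=(1+1)-1=1.\]
As $\bar c$ fixes the nonzero vector $x_c$, this forces $\Fix_{V_0^*}(\bar c)=\RR x_c$. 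Finally, point stabilizers in a finite Coxeter group are parabolic, so $P:=\Stab_{W_0}(x_c)$ is a parabolic subgroup; it is generated by the reflections $t_\gamma$ with $\gamma\in\Phi_0$ and $\br{x_c,\gamma}=0$, so its root system is precisely $\Phi_0\cap x_c^\perp=\Phi_0^{\omega0}$. Since $\bar c\in P$, we get $\Fix_{V_0^*}(P)\subseteq\Fix_{V_0^*}(\bar c)=\RR x_c\subseteq\Fix_{V_0^*}(P)$, whence $\Fix_{V_0^*}(P)=\RR x_c$ has dimension one, $P$ has rank $n-2$, and its root system $\Phi_0^{\omega0}$ spans the $(n-2)$-dimensional space $H$, as desired. (In the language of Corollary~\ref{DFc complement}, this is exactly the statement that $x_c$ lies on a ray of the $W_0$-Coxeter fan.)

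The main obstacle is the first step: establishing the $c$-invariance of the defect $\psi$ within the conventions of this paper, i.e.\ verifying the Euler-form identity $E_c(cx,y)=-E_c(y,x)$ (equivalently, that $\psi$ annihilates $\operatorname{im}(c-1)$). The two structural inputs used afterward --- that $\Fix_V(c)=\RR\delta$ and that $\delta\in\operatorname{im}(c-1)$ (the single size-two Jordan block of $c$ at the eigenvalue $1$) --- are standard facts about affine Coxeter elements and can be cited to \cite{Kac}; everything following Step~1 is then elementary linear algebra together with the parabolic structure of point stabilizers.
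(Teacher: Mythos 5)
Your argument is correct, but it takes a genuinely different route from the paper's. The paper conjugates $c$ so that $s_\aff$ is final, forms the element $c_0t_\theta\in W_0$, and uses the reflection-length machinery (Carter's lemma: a minimal factorization of $w$ into reflections uses linearly independent roots spanning $\mathrm{Im}(w-\mathrm{Id})$) to see that $\mathrm{Im}(c_0t_\theta-\mathrm{Id})$ is a hyperplane of $V_0$ spanned by $n-2$ roots; it then identifies that hyperplane with $\{x\in V_0:\omega_c(x,\delta)=0\}$ by a direct computation using \eqref{simplify reflection} and the invariance $\omega_c(cy,\delta)=\omega_c(y,\delta)$. You instead use that same invariance up front to get $\operatorname{im}(c-1)=\ker\omega_c(\,\cdot\,,\delta)$ in $V$, combine it with the Jordan structure of $c$ at eigenvalue $1$ to show the induced element $\bar c\in W_0$ (which, after the paper's conjugation, is exactly $c_0t_\theta$) has one-dimensional fixed space $\RR x_c$, and then invoke Steinberg's theorem that point stabilizers in $W_0$ are generated by the reflections they contain. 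In effect the paper computes $\operatorname{im}(\bar c-\mathrm{Id})$ where you compute $\ker(\bar c-\mathrm{Id})$; your version has the pleasant side effect of essentially re-proving Proposition~\ref{detail}. Two small repairs: for Step 1 you do not need the Euler-form identity you flag as the main obstacle --- $n$ applications of Lemma~\ref{OmegaInvarianceUseful} give $\omega_c(cy,\delta)=\omega_c(y,c^{-1}\delta)=\omega_c(y,\delta)$ directly, exactly as in the paper's proof; and the input $\Fix_V(c)=\operatorname{rad}(K)$ is not stated anywhere in the paper and is not obviously in \cite{Kac}, so you should either cite a source on affine Coxeter transformations or include the short triangularity argument (if $cv=v$, peel off the simple reflections one at a time to conclude $K(\alpha_i\ck,v)=0$ for all $i$). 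With those two points addressed, your proof is complete.
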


In order to prove Lemma~\ref{Phi0Rank}, we recall some results on writing elements of $W_0$ as products of reflections. 
The \newword{reflection length} $\ell_T$ of an element $w\in W_0$ is the length of a minimal-length expression for $w$ as a product of reflections.
An expression $t_1\cdots t_k$ for $w$ as a product of reflections has minimal length if and only if the roots $\beta_{t_i}$ are linearly independent.
(This is a result of \cite{Carter};  see also \cite[Lemma~2.4.5]{Armstrong}.)
When the roots $\beta_{t_i}$ are linearly independent, their real span is the image of the map $w-\mathrm{Id}$ from $V_0$ to itself.
(This follows from results of \cite{BWorth,BWKpi1} and is established explicitly in the proof of \cite[Theorem~2.4.7]{Armstrong}.)
In particular, $\ell_T(w)$ is the dimension of the image of $w-\mathrm{Id}$.
Finally, if $t$ is a reflection, then $\ell_T(tw)=\ell_T(w)\pm1$; this follows from the well-known fact that there is a homomorphism from $W$ to $\set{\pm1}$ sending each reflection to $-1$.

\begin{proof}[Proof of Lemma~\ref{Phi0Rank}]
Define $U_c$ to be $\{ x \in V_0 : \omega_c(x, \delta) =0 \}$.
By Lemma~\ref{Phi+Nonempty}, $U_c$ is not all of $V_0$, so it is a hyperplane in $V_0$. 
The claim is that this hyperplane is spanned by the roots it contains.

By Lemma~\ref{OmegaInvarianceUseful} and since $\delta$ is fixed by the action of $W$, if $s$ is initial in $c$, we see that $U_{scs}=sU_c$.
Since the action of $s$ preserves the set of roots, the lemma for $c$ is equivalent to the lemma for $scs$.  
Accordingly, we may as well assume that $c=s_1s_2\cdots s_{n-1}s_{\aff}$, where  $s_1, \ldots, s_{n-1}$ are the elements of $S_0$ and $s_\aff$ is the remaining simple reflection.
Write $c_0$ for the Coxeter element $s_1s_2\cdots s_{n-1}$ of $W_0$.
We have $\ell_T(c_0)=n-1$ because the vectors $\alpha_s$ for $s\in S_0$ are independent.

Recall that $t_\theta$ is the reflection with respect to the root $\theta\in\Phi_0$ such that $\alpha_\aff$ is a positive scaling of $\delta-\theta$.
Since $t_\theta$ is a reflection, the element $c_0t_\theta$ has reflection length $\ell_T(c_0)\pm1=(n-1) \pm 1$,
but $n-1$ is the maximum reflection length of an element in $W_0$, so $\ell_T(c_0 t_{\theta}) = n-2$.
Thus $\mathrm{Im}(c_0t_\theta - \mathrm{Id})$ in $V_0$ is a hyperplane in $V_0$ which is spanned by $n-2$ roots of $\Phi_0$; call this hyperplane $U$.
We will show that $U=U_c$, so $U_c$ is also spanned by roots of $\Phi_0$.

Let $x \in U$, so that $x = c_0t_\theta y-y$ for some $y \in V_0$. 
Then using \eqref{simplify reflection}, we compute
\[cy=c_0s_{\aff}y=c_0(t_\theta y+\br{\theta\ck,y}\delta)=c_0t_\theta y+\br{\theta\ck,y}\delta=x+y+\br{\theta\ck,y}\delta.\]
Thus $x=cy-y-\br{\theta\ck,y}\delta$, and therefore
\[ \omega_c(x,\delta)=\omega_c(cy,\delta)-\omega_c(y,\delta) . \]
By $n$ applications of Lemma~\ref{OmegaInvarianceUseful}, we see that $\omega_c(cy, \delta)=\omega_c(y,c^{-1}\delta)$.
But $\delta$ is fixed by the action of $W$, so $\omega_c(cy,\delta)=\omega_c(y,\delta)$ and thus $\omega_c(x,\delta) =0$.
We have shown that $U\subseteq U_c$.
But $U$ and $U_c$ are both hyperplanes in $V_0$, so $U = U_c$ and we are done.
\end{proof}

We pause to point out an important consequence of Lemma~\ref{Phi0Rank}.
Recall from Section~\ref{faces boundary sec} the definition of the Coxeter fan of $W_0$ in $V_0^*$.

\begin{proposition}\label{detail}
The point $x_c$ is contained in the relative interior of a ray of the Coxeter fan of $W_0$.
\end{proposition}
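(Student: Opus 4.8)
The plan is to pin down the unique relatively open cone of the $W_0$-Coxeter fan containing $x_c$ by determining exactly which reflecting hyperplanes of $W_0$ pass through $x_c$. For a root $\beta\in\Phi_0$, the corresponding reflecting hyperplane in $V_0^*$ is $\set{y\in V_0^*:\br{y,\beta}=0}$, and it contains $x_c$ if and only if $\br{x_c,\beta}=0$. By the definition of $x_c$ as the functional $\gamma\mapsto-\omega_c(\gamma,\delta)$, this happens exactly when $\omega_c(\beta,\delta)=0$, that is, when $\beta\in\Phi_0^{\omega0}$. So the set of roots of $\Phi_0$ whose reflecting hyperplanes pass through $x_c$ is precisely $\Phi_0^{\omega0}$.

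Next I would invoke the standard description of the Coxeter fan: the relatively open face containing a given point $x\in V_0^*$ has dimension equal to $\dim V_0^*$ minus the rank of the set of roots vanishing at $x$, since the hyperplanes through $x$ cut out the linear span of that face and the face is an open cone of full dimension inside that span. Here $\dim V_0^*=n-1$, and the roots vanishing at $x_c$ are exactly $\Phi_0^{\omega0}$. Lemma~\ref{Phi0Rank} says that $\Phi_0^{\omega0}$ spans the hyperplane $\set{x\in V_0:\omega_c(x,\delta)=0}$ of $V_0$, which has dimension $n-2$. Therefore the face of the Coxeter fan whose relative interior contains $x_c$ has dimension $(n-1)-(n-2)=1$; it is a ray.

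It remains only to rule out $x_c=0$, so that this one-dimensional face is a genuine ray with $x_c$ in its relative interior rather than at the apex. This is immediate from Lemma~\ref{Phi+Nonempty}: choosing any $\beta\in\Phi_0^{\omega+}$, we have $\br{x_c,\beta}=-\omega_c(\beta,\delta)<0$, so $x_c\neq0$.

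The argument is short because essentially all of the substantive work is hidden in Lemma~\ref{Phi0Rank}; the only thing to be careful about is the passage between $V_0$ and $V_0^*$ and making sure the rank count in the Coxeter-fan dimension formula lines up correctly. I do not expect any real obstacle beyond stating that dimension formula precisely, so the writeup should be routine.
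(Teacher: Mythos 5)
Your proof is correct and follows essentially the same route as the paper's: both arguments use Lemma~\ref{Phi0Rank} to see that the reflecting hyperplanes of $W_0$ through $x_c$ (those for roots in $\Phi_0^{\omega0}$) cut out a line in $V_0^*$, and Lemma~\ref{Phi+Nonempty} to rule out $x_c=0$. The only cosmetic difference is that you phrase the first step as a general dimension count for the face of the Coxeter arrangement containing a point, while the paper explicitly picks $n-2$ linearly independent roots in $\Phi_0^{\omega0}$ and intersects their orthogonal hyperplanes.
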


\begin{proof}
Lemma~\ref{Phi0Rank} says that there are $n-2$ linearly independent roots $\beta_1,\ldots,\beta_{n-2}$ in $\Phi_0^{\omega0}$.  
The intersection $V^*_0\cap\bigcap_{i=1}^{n-2}\beta_i^\perp$ is a line in $V^*_0$ and since it is an intersection of reflecting hyperplanes for $W_0$, it is the union of two rays of the Coxeter fan. 
The point $x_c\in V^*_0$ is contained in this line because $\br{x_c,\beta_i}=-\omega_c(\beta_i,\delta)=0$ for all~$i$.  
Since $x_c\neq 0$ by Lemma~\ref{Phi+Nonempty}, it is contained in the relative interior of a ray of the Coxeter fan.
\end{proof}

The following lemma will assist us in applying Theorem~\ref{sortable is aligned} to the proof of Theorem~\ref{TitsCapBdry}.

\begin{lemma} \label{InABox}
Let $(\beta_1, \ldots, \beta_{n-1})$ be linearly independent roots in $\Phi_0$.
Then there are only finitely many elements $w$ of $W$ such that, for each $i$, the size of $\inv(w) \cap \Span_{\RR}(\beta_i, \delta)$ is at most $1$.
\end{lemma}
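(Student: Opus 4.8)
The plan is to translate the statement into the Euclidean geometry of the action of $W$ on the affine space $V_1^*=\set{x\in V^*:\br{x,\delta}=1}$. As recalled above, $W$ acts there as a group of affine Euclidean motions, $V_1^*$ is tiled by the congruent copies $wD_1$ of the bounded simplex $D_1$, distinct elements $w$ give distinct tiles (since the cones $wD$ are distinct and each $wD$ is the cone over $wD_1$), and the inversions of $w$ are exactly the reflecting hyperplanes separating $wD_1$ from $D_1$. Since $\Span_\RR(\beta_i,\delta)=\Span_\RR(-\beta_i,\delta)$, I may assume each $\beta_i$ is a positive root of $\Phi_0$.

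First I would analyze a single direction. For fixed $i$, the roots lying in $\Span_\RR(\beta_i,\delta)$ are the positive scalings of $\pm\beta_i+k\delta$, and the corresponding reflecting hyperplanes in $V_1^*$ are precisely the parallel hyperplanes $\set{x\in V_1^*:\br{x,\beta_i}=p}$ as $p$ ranges over a discrete set $P_i\subseteq\RR$. By the description of affine root systems recalled above (see \cite[Proposition~6.3]{Kac}), for each $\beta_i$ there are infinitely many positive and infinitely many negative integers $k$ for which $\beta_i+k\delta$ is a scaling of a root, so $P_i$ is unbounded above and below, with gaps between consecutive elements bounded by a constant $g_i$ depending only on $\Phi$. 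Under this identification, $\inv(w)\cap\Span_\RR(\beta_i,\delta)$ is exactly the set of hyperplanes of this family that separate $wD_1$ from $D_1$.

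The key step is to convert the bound on the number of separating hyperplanes into a bound on position. Because no tile crosses a reflecting hyperplane, both $D_1$ and $wD_1$ project, under $x\mapsto\br{x,\beta_i}$, into a single slab bounded by consecutive hyperplanes of the family, hence into intervals of length at most $g_i$. If at most one hyperplane of the family separates the two tiles, then these two projection intervals lie within a bounded distance (a couple of gaps of $P_i$) of one another, so the value of $\br{x,\beta_i}$ on $wD_1$ differs from its value on $D_1$ by at most a constant $c_i$ independent of $w$; that is, $\br{x,\beta_i}$ is bounded on $wD_1$ uniformly in $w$. This is the heart of the argument, and the only place where the affine-type hypothesis—through the bounded-gap property of $P_i$—is genuinely used.

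To finish, I would observe that $\beta_1,\ldots,\beta_{n-1}$, being linearly independent in the $(n-1)$-dimensional space $V_0$, form a basis, so the functionals $x\mapsto\br{x,\beta_i}$ restrict to a coordinate system on the direction space $V_0^*$ of $V_1^*$. The uniform bounds from the previous paragraph therefore confine every qualifying tile $wD_1$ to one fixed bounded region $R\subseteq V_1^*$ independent of $w$. Since $W$ acts by Euclidean motions, all tiles $wD_1$ are congruent to $D_1$ and hence share the same positive volume, and distinct $w$ give tiles with pairwise disjoint interiors. Only finitely many such tiles can fit inside $R$ (at most $\Vol(R)/\Vol(D_1)$ of them), so only finitely many $w$ satisfy the hypothesis.
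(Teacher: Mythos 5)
Your proposal is correct and follows essentially the same route as the paper: interpret inversions in $\Span_{\RR}(\beta_i,\delta)$ as the parallel family of reflecting hyperplanes normal to $\beta_i$ separating $wD_1$ from $D_1$, deduce that the hypothesis confines $wD_1$ to a bounded slab in each of the $n-1$ independent directions (hence to a bounded parallelepiped), and conclude by the volume/congruence argument for the tiles $wD_1$. The only cosmetic difference is that you invoke a bounded-gap property of the hyperplane family where the paper just names two explicit bounding hyperplanes; both rest on the same fact that infinitely many roots $\beta_i+k\delta$ exist in each direction.
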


\begin{proof} 
Given a root $\beta\in\Phi_0$, if a positive scaling of $\beta+k\delta$ is a root for some integer~$k$, then we write $\beta^{(k)}$ for this root.
The set $\Phi \cap \Span_{\RR}(\beta, \delta)$ consists of the roots of the form $\beta^{(k)}$ and $(- \beta)^{(k)}$.
For every root $\beta$ in $\Phi_0$, there are infinitely many nonnegative integers $k$ for which there is a root of the form $\beta^{(k)}$.

Recall the notations $V_1^* := \{ x \in V^* : \langle x, \delta \rangle =1 \}$ and $D_1 := D \cap V_1^*$.
A positive root $\beta^{(k)}$ is an inversion of $w\in W$ if and only if $D_1$ and $wD_1$ are on opposite sides of the hyperplane $\{ x: \br{x,\beta} = -k \}$. 
Similarly, a positive root $(-\beta)^{(k)}$ is an inversion of $w$ if and only if $D_1$ and $wD_1$ are on opposite sides of the hyperplane $\{ x: \br{x,\beta} = k \}$.
Thus the condition that $\inv(w) \cap \Span_{\RR}(\beta_i, \delta)$ is at most~$1$ restricts $wD_1$ to lie between the two hyperplanes $\{ x: \br{x,\beta_i} = \sgn(\beta_i) \}$ and $\{ x: \br{x,\beta_i} =  -2 \sgn(\beta_i) \}$.

Since the $\beta_i$ are a basis for $V_0$, imposing that $w D_1$ lies between the $n-1$ pairs of parallel hyperplanes normal to the $\beta_i$ is equivalent to imposing that $w D_1$ lies in a parallelepiped.
Since $W$ acts by Euclidean motions on $V_1^*$, each of the simplices $w D_1$ is congruent.
But the parallelepiped is bounded and so has finite volume, and thus contains only finitely many simplices $w D_1$.
\end{proof}

We can now prove that the doubled Cambrian fan has the promised support.

\begin{proof}[Proof of Theorem~\ref{TitsCapBdry}]
We first show that $|\F_c| \cap \partial \Tits(A)$ is contained in or equal to
$\bigcup_{\beta \in \Phi_0^{\omega +}} \{ x \in V_0^* : \br{x,\beta} \geq 0 \}$. Since $|\F_c| = \bigcup_v \Cone_c(v)$, it is enough to check that $\Cone_c(v)\cap \partial \Tits(A)$ is contained in the right hand side for any $c$-sortable $v$. 
Fix $v$, and let $\beta$ be as in Lemma~\ref{EasyCase}. Let $\beta_0$ be the root in $\Phi_0^{\omega+}$ that is a positive 
scaling of $\pi(\beta)$.
Then $\Cone_c(v) \subseteq \{ x\in V^* : \br{x,\beta} \geq 0 \}$ and thus 
$\Cone_c(v) \cap \partial \Tits(A) \subseteq  \{ x \in V_0^* : \br{x,\beta_0} \geq 0 \}$.

The rest of the proof establishes the reverse containment.
Let $\beta\in\Phi_0^{\omega +}$; we must show that  ${\set{x\in V_0^* : \br{x,\beta} \geq 0}}$ is contained in $|\F_c|$. 
The set $\set{x \in V_0^* : \br{x,\beta}\geq 0}$ is a union of cones of $u_0 D_0$, for $u_0$ in $W_0$, where $D_0$ is $\bigcap_{s \in S_0} \set{x\in V_0^*: \br{x,\alpha_s}\ge 0}$.
Thus it is enough to show that, if $u_0 D_0 \subseteq \{ x \in V_0^* : \br{x,\beta} \geq 0 \}$, then $u_0 D_0 \subseteq |\F_c|$.

Lemma~\ref{Phi0Rank} implies (see, for example,~\cite{DyerAffine}) that $\Phi^{\omega0}_0$ is a root system of rank $n-2$, so there exist linearly independent elements $\beta_1, \ldots, \beta_{n-2}$ of $\Phi_0^{\omega0}$ such that $u_0D_0$ is contained in $\set{x\in V_0^*:\br{x,\beta_i}\ge0,\ 1 \leq i \leq n-2}$.
Defining $\beta_{n-1}$ to be $\beta$ (the given root in $\Phi_0^{\omega +}$) we have
$u_0D_0\subseteq\set{x\in V_0^*:\br{x,\beta_i}\ge0,\ 1 \leq i \leq n-1}$.  
Note that the roots $\beta_1, \ldots, \beta_{n-1}$ are linearly independent because $\beta\in\Phi_0^{\omega +}$.

We retain the notation $\beta^{(k)}$ from the proof of Lemma~\ref{InABox}.  
For each $\beta_i$, there is a smallest integer $k_i$ such that $(-\beta_i)^{(k_i)}$ is a positive root.
Define 
\[
\Lambda :=  \set{x\in V^{\ast} :\br{x,\delta} \geq 0} \cap  \bigcap_{i=1}^{n-1} \set{x\in V^{\ast} :\br{x,(- \beta_i)^{(k_i)}} \leq 0} . \]
Note that $(-\beta_i)^{(k_i)}$ is a linear combination of $\delta$ and a nonzero multiple of $\beta_i$, and that the vectors $\beta_i$ are linearly independent. So the $n$ normal vectors defining $\Lambda$ are linearly independent and $\Lambda$ is a simplicial cone, one of whose faces contains $u_0 D_0$.
Since $\Lambda$ lies on the $\Tits(A)$ side of $\delta^{\perp}$, the interior of $\Lambda$  is contained in $\Tits(A)$ and thus  is covered by its intersections with cones of $\F_c$. We claim that only finitely many cones of $\F_c$ meet the 
interior of $\Lambda$.

We pause to illustrate the claim by returning to Example~\ref{affineG2 slice ex}.
Let $\beta=2\alpha_1+3\alpha_2$ and let $u_0D_0$ be the cone spanned by $\rho_2$ and $-\rho_1+3\rho_2$.
This is the cone that is marked by (black) $s_1s_3s_1$ and (red) $s_2s_1s_2s_1s_2$ in Figure~\ref{g2s0}.
Because $\Phi_0^{\omega0}=\set{\pm(\alpha_1+2\alpha_2)}$, we must take $\beta_1=\alpha_1+2\alpha_2$ and by construction $\beta_2$ is $\beta=2\alpha_1+3\alpha_2$.
We have shaded $\Lambda \cap V_1^{\ast}$ in Figure~\ref{g2s1small}, a miniature version of Figure~\ref{g2s1}.
The claim is that only finitely many cones $\Cone_c(v)$ meet $\Lambda$.
In this case, we see that 14 cones $\Cone_c(v)$ in $\F_c$ meet $\Lambda$. 
\begin{figure}[ht]
\scalebox{1.5}{
\includegraphics{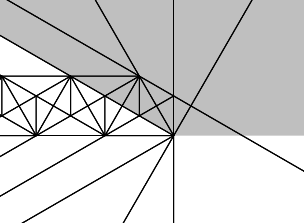}
}
\caption{The shaded region is the set $\Lambda \cap V_1^{\ast}$ used in the proof of Theorem~\ref{TitsCapBdry}. Compare to Figure~\ref{g2s1}}
\label{g2s1small}
\end{figure}

To prove the claim, suppose that $v\in W$ is $c$-sortable and that $\Cone_c(v)$ meets the 
interior of $\Lambda$. 
Then there exists $w\in W$ with $wD \subseteq \Lambda$ and $\pidown^c(w) =v$. 
Since $wD$ is contained in $\set{x\in\Tits(A):\br{x,(-\beta_i)^{(k_i)}}\le 0}$, we know that $wD$ is \emph{not} contained in $\set{x\in\Tits(A):\br{x,(\beta_i)^{(k)}}\leq 0}$ 
for any positive root $\beta_i^{(k)}$.
So such a $\beta_i^{(k)}$ is not an inversion of $w$ and, since $w \geq \pidown^c(w) = v$ (Theorem~\ref{pidown fibers}), no positive root $\beta_i^{(k)}$ 
can be an inversion of $v$.

But also, $v$ is $c$-sortable and $\omega_c(\beta_i, \delta) \geq 0$.  
Let $\Phi'$ be the generalized rank-two parabolic sub root system $\Phi\cap\Span_\RR(\beta_i,\delta)$.
Theorem~\ref{sortable is aligned} says in particular that $v$ is $c$-aligned with respect to $\Phi'$, and therefore at most one positive root of the form $(- \beta_i)^{(k)}$ can be an inversion of $v$ for each $i$.
We have shown that, if $\Cone_c(v)$ meets the interior of $\Lambda$, then (for each $i$) there is at most one inversion of  $v$ of the form $(\pm \beta_i)^{(k)}$.
Now Lemma~\ref{InABox} proves the claim.

Let $\mathcal{V}$ be $\bigcup \Cone_c(v)$, where the union ranges over those $v$ such that $\Cone_c(v)$ meets the relative interior of $\Lambda$.
We have just shown that this union is finite, so $\mathcal{V}$ is a finite union of closed cones and is itself closed.   
By Theorem~\ref{pidown fibers}, $\mathcal{V}$ contains the interior of $\Lambda$, and since $\mathcal{V}$ is closed, it also contains the facet of $\Lambda$ containing $u_0D_0$.  
We have shown that $u_0 D_0 \subseteq |\F_c|$, as desired.

We have shown that $|\F_c|\cap\partial\Tits(A)$ equals $\bigcup_{\beta \in \Phi_0^{\omega +}} \set{x\in V_0^* : \br{x,\beta} \geq 0}$, which is nonempty by Lemma~\ref{Phi+Nonempty}.
An analogous argument shows that $|\!-\!\F_{c^{-1}}|\cap \partial \Tits(A)$ equals $\bigcup_{\beta \in \Phi_0^{\omega +}} \{ x \in V_0^* : \br{x,\beta} \geq 0 \}$.
(The negative signs introduced by the antipodal map cancel the negative signs introduced by passing from $c$ to $c^{-1}$.)
Since $|\F_c|\cap\partial\Tits(A)$ and $|\!-\!\F_{c^{-1}}|\cap \partial \Tits(A)$ coincide, they both equal $|\DF_c|\cap\partial\Tits(A)$, which therefore equals $\bigcup_{\beta \in \Phi_0^{\omega +}} \set{x\in V_0^* : \br{x,\beta} \geq 0}$.
\end{proof}

\begin{proof}[Proof of Corollary~\ref{DFc complement}]
By Theorem~\ref{pidown fibers},
the interiors of $\Tits(A)$ and $- \Tits(A)$ lie in $|\F_c|$ and $-|\F_{c^{-1}}|$. So $V^* \setminus |\DF_c| = \partial \Tits(A) \setminus \left(|\DF_c|\cap \partial \Tits(A)\right)$.
 The first description of $V^* \setminus |\DF_c|$ follows by taking the complement of the expression for $|\DF_c|\cap \partial \Tits(A)$ in Theorem~\ref{TitsCapBdry}.

We now prove the second description. Every cone of $\DF_c$
intersects $\partial \Tits(A)$ in a union of cones of the $W_0$-Coxeter fan.  
We must show that a cone $F$ of the $W_0$-Coxeter fan is contained in $|\DF_c|$ if and only if $x_c\not\in F$.

For each $\beta \in \Phi_0$, the cone $F$ lies 
entirely to one side of $\beta^{\perp}$ or the other, or on $\beta^{\perp}$.  
Thus we deduce from Theorem~\ref{TitsCapBdry} that $F \subseteq |\DF_c|$ if and only if  there exists $\beta \in \Phi_0^{\omega +}$ such that $F \subseteq \set{x\in V_0^*:\br{x,\beta}\geq 0}$.
But also $\beta\in\Phi^{\omega+}_0$ if and only if $\omega_c(\beta,\delta)>0$, or equivalently $\br{x_c,\beta}<0$.
So $F \subseteq |\DF_c|$ if and only if  there exists $\beta\in\Phi_0$ with both (a) $F \subseteq \set{x\in V_0^*:\br{x,\beta}\geq 0}$ and (b) $\br{x_c,\beta}<0$. 
If both (a) and (b) hold for some $\beta$, then necessarily $x_c\not\in F$, and conversely, if $x_c\not\in F$ then some facet-defining hyperplane of $F$ 
separates $x_c$ from $F$.
This hyperplane is $\beta^\perp$ for some $\beta\in\Phi_0$.
Therefore either $\beta$ or $-\beta$ satisfies (a) and (b).

The first description writes $V^* \setminus |\DF_c|$ as a finite intersection of open cones in $\partial\Tits(A)$, so $V^* \setminus |\DF_c|$ is an open cone in $\partial\Tits(A)$.
The second description implies that $V^* \setminus |\DF_c|$ is nonempty.
\end{proof}

\subsection{Well-connectedness}\label{affine well-conn sec} 
In this section, we prove that $(\DCamb_c,\DC_c)$ is well-connected and prove a corollary to well-connectedness which is helpful in proving simple connectivity.

\begin{prop}\label{well-connected}
If $A$ is of affine type, then $(\DCamb_c,\DC_c)$ is well-connected.
\end{prop}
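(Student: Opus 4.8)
The plan is to leverage the well-connectedness of the two Cambrian frameworks $(\Camb_c,C_c)$ and $(\Camb_{c^{-1}},C_{c^{-1}})$ (Theorem~\ref{camb good}) and to bridge across $\partial\Tits(A)$ using the boundary results of this section. Given vertices $\tilde v,\tilde v'$ of $\DCamb_c$ with $\Cone(\tilde v)\cap\Cone(\tilde v')=F$, I would first dispose of the case where both are represented on the same side: if both come from $\Camb_c$, then $F$ is a common face of $\Cone_c(v)$ and $\Cone_c(v')$, and the well-connectedness of $\F_c$ supplies a path in $\Camb_c\subseteq\DCamb_c$ through cones all having $F$ as a face; the case of two vertices from $-\Camb_{c^{-1}}$ is symmetric. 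So the only genuinely new case is the \emph{mixed} one, where (after possibly swapping $c\leftrightarrow c^{-1}$) $\tilde v$ is a $c$-sortable element $v$ and $\Cone(\tilde v')=-\Cone_{c^{-1}}(v')$ for a $c^{-1}$-sortable $v'$.

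In the mixed case I would first show $F\subseteq\partial\Tits(A)$. If $v$ is not a doubled cone, then by Corollary~\ref{DoubledConeExists} the interior of $\Cone_c(v)$ misses $-\Tits(A)$, so $\Cone_c(v)\subseteq\overline{\Tits(A)}=\set{x:\br{x,\delta}\ge 0}$; symmetrically $-\Cone_{c^{-1}}(v')\subseteq\set{x:\br{x,\delta}\le 0}$ when $v'$ is not doubled, whence $F\subseteq\delta^\perp=\partial\Tits(A)$. (If either $v$ or $v'$ is already a doubled cone, I would take the cone $Q$ below to be that cone and skip ahead.) By Proposition~\ref{face in boundary}, $\dim F\le n-2$.

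The heart of the argument is to produce a \emph{doubled} cone $Q$ of $\DF_c$ having $F$ as a face. Since $F=F\cap\partial\Tits(A)$ is a cone of the fan $\mathcal B:=\set{G\cap\partial\Tits(A):G\in\DF_c}$ (the intersection of the fan $\DF_c$ with the subspace $\partial\Tits(A)$, hence itself a fan), and since $F\subseteq|\DF_c|\cap\partial\Tits(A)$, I would argue that $F$ is a face of an $(n-1)$-dimensional cone of $\mathcal B$. This uses Theorem~\ref{TitsCapBdry}, which makes $|\DF_c|\cap\partial\Tits(A)$ an $(n-1)$-dimensional region, together with Corollary~\ref{DFc complement}, which identifies the uncovered part $V^*\setminus|\DF_c|$ as a single open cone inside $\partial\Tits(A)$: because $F$ lies in the covered region and $\dim F\le n-2$, its relative interior borders the $(n-1)$-dimensional covered part, so $F$ is a proper face of some maximal cone of $\mathcal B$. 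By Proposition~\ref{face in boundary} every $(n-1)$-dimensional cone of $\mathcal B$ is of the form $Q\cap\partial\Tits(A)$ for a (necessarily doubled) maximal cone $Q$ of $\DF_c$ whose interior crosses $\partial\Tits(A)$; transitivity of ``is a face of'' then gives that $F$ is a face of $Q$. Finally, with such a $Q$ in hand, $F$ is a common face of $\Cone_c(v)$ and $Q$ inside $\F_c$ and a common face of $Q$ and $-\Cone_{c^{-1}}(v')$ inside $-\F_{c^{-1}}$; applying the well-connectedness of $\F_c$ and of $-\F_{c^{-1}}$ (Theorem~\ref{camb good}) gives paths $v\to Q$ and $Q\to v'$ through cones all containing $F$, and concatenating them yields the required path in $\DCamb_c$.

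The main obstacle is this middle step: ruling out that the uncovered cone $V^*\setminus|\DF_c|$ ``isolates'' $F$, i.e.\ showing that $F$ really is a face of a doubled slice rather than a stranded maximal cone of $\mathcal B$ sitting on the rim of the uncovered region. The key structural input making this work is Proposition~\ref{face in boundary}: no $(n-1)$-dimensional face of $\DF_c$ lies in $\partial\Tits(A)$ (because $\delta$ is not a scaling of a real root), so the covered region of $\partial\Tits(A)$ is tiled right up to the rim by the $(n-2)$-dimensional boundary faces of doubled cones. One must check that a face $F$ which is simultaneously a face of an upper and of a lower maximal cone always borders this tiled region on its covered side; I expect this to follow cleanly from the explicit half-space description of $|\DF_c|\cap\partial\Tits(A)$ in Theorem~\ref{TitsCapBdry} combined with Lemma~\ref{EasyCase}, which forces each of $\Cone_c(v)$ and $-\Cone_{c^{-1}}(v')$ to have a facet normal projecting into $\Phi_0^{\omega+}$. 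The finiteness statement (Proposition~\ref{affine finitely}) guarantees that $\mathcal B$ has only finitely many cones, so all paths produced are finite.
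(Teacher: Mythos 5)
Your proposal is correct and follows essentially the same route as the paper: reduce to the mixed case where $F\subseteq\partial\Tits(A)$, use Theorem~\ref{TitsCapBdry} together with Proposition~\ref{face in boundary} to find a ``doubled'' maximal cone containing $F$ as a face (the paper does this by perturbing a relative-interior point of $F$ inside $\partial\Tits(A)$ in a direction with $\br{y,\beta}\ge 0$ for the root $\beta$ supplied by Lemma~\ref{EasyCase}), and then concatenate paths obtained from the well-connectedness of $(\Camb_c,C_c)$ and $(\Camb_{c^{-1}},C_{c^{-1}})$ via Corollary~\ref{DoubledConeExists}. One small wording fix: since $Q\cap\partial\Tits(A)$ is a slice through the interior of $Q$ rather than a face of it, ``transitivity of is-a-face-of'' is not the right justification for $F$ being a face of $Q$; instead note that $F$ and $Q$ are both cones of the fan $\DF_c$ with $F\subseteq Q$, so $F=F\cap Q$ is a face of $Q$ because they meet nicely.
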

\begin{proof}
The polyhedral property was already established.
It remains to show that if $F$ is a proper face of two maximal cones in $\DF_c$, then there exists a path in $\DCamb_c$ connecting the two corresponding vertices, such that, along the path, each corresponding cone has $F$ as a face.
We already know that $(\Camb_c,C_c)$ and $(\Camb_{c^{-1}},C_{c^{-1}})$ have this property by Theorem~\ref{camb good}. 
By Corollary~\ref{DoubledConeExists}, we need only check the case where $v$ is $c$-sortable, $v'$ is $c^{-1}$-sortable, the interior of $\Cone_c(v)$ is in $\Tits(A)$ and the interior of $-\Cone_{c^{-1}}(v')$ is in $-\Tits(A)$.
In particular, $F$ is in the boundary of $\Tits(A)$.

In the first paragraph of the proof of Theorem~\ref{TitsCapBdry}, we showed that there exists $\beta\in\Phi_0^{\omega+}$ such that $\Cone_c(v) \cap \partial \Tits(A) \subseteq  \{ x\in V_0^* : \br{x,\beta} \geq 0 \}$.
Since $F$ is a face of $\Cone_c(v)$ and is contained in $\partial\Tits(A)=V^*_0$, we have $F\subseteq\set{x\in V_0^*:\br{x,\beta}\geq 0}$ for the same $\beta\in\Phi_0^{\omega+}$.
Choose $x$ in the relative interior of~$F$.
By Proposition~\ref{face in boundary}, $F$ has dimension less than $n-1$ as do all other faces of $\F_c$ contained in $\partial\Tits(A)$.
Thus there exists a vector $y\in V_0^*$ such that, for small enough $\ep>0$, the vector $x+\ep y$ is not contained in any non-maximal face of $\F_c$.
Furthermore, we can choose $y$ such that $\br{y,\beta}\ge0$, so that $x+\ep y\in|\F_c|$ for all $\ep>0$ by Theorem~\ref{TitsCapBdry}.
We conclude that for small enough $\ep$, the point $x+\ep y$ is in the interior of some $\Cone_c(u)$ containing $F$ in its boundary.

Since $\F_c$ is a fan and $F$ is a face of $\Cone_c(v)$, we see that $F$ is a face of $\Cone_c(u)$.
Since $(\Camb_c,C_c)$ is well-connected, there is a path in $\Camb_c$ connecting $u$ to $v$ such that, along the path, each corresponding cone has $F$ as a face.  
Furthermore, by Corollary~\ref{DoubledConeExists}, there exists a $c^{-1}$-sortable element $u'$ such that $\Cone_c(u)=-\Cone_{c^{-1}}(u')$.
Thus there is a path in $-\Camb_{c^{-1}}$ from $u'$ to $v'$ such that, along the path, each corresponding cone has $F$ as a face.
Concatenating the two paths, we obtain the desired path in $\DCamb_c$.
\end{proof}

\begin{cor} \label{finite type interior}  
Suppose $A$ is of affine type and let $F$ be a face of $\DF_c$ of dimension $n-2$. 
Then the maximal cones of $\DF_c$ containing $F$ form a rank-two cycle or path in $\DCamb_c$. 
If the relative interior of $F$ is in the interior of $|\DF_c|$, then they form a cycle. 
Otherwise they form a path.
Moreover, in the former case, the rank two root system $\Phi' := \{ \beta \in \Phi : \langle \beta, F \rangle = 0 \}$ is finite and, in the latter case, $\Phi'$ is infinite of affine type.
\end{cor}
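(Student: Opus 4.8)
The plan is to combine the general rank-two machinery of Subsection~\ref{frame and clus sec} with the support computation of Subsection~\ref{affine fan support}. First I would note that, by completeness (Proposition~\ref{complete}) and well-connectedness (Proposition~\ref{well-connected}), together with the polyhedral property (Theorem~\ref{DoubleFramework}, available since $\DCamb_c$ is connected by Proposition~\ref{connected}), the pair $(\DCamb_c,\DC_c)$ is a complete, well-connected polyhedral reflection framework. Proposition~\ref{rk2cyc poly} then applies verbatim and shows that the maximal cones of $\DF_c$ containing $F$ form a rank-two cycle or path $\tau$; this is the first assertion. Proposition~\ref{rk2cyc reflection} supplies the key refinement: $\tau$ is a cycle if and only if $\Phi'$ is of finite type. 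So the whole statement reduces to identifying which of the two cases of $\tau$ corresponds to $\relint F$ lying in the interior of $|\DF_c|$, and to identifying $\Phi'$ in each case.

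Next I would pin down $\Phi'$. Since $F$ has dimension $n-2$ and $\DF_c$ is simplicial, $F=\Cone(v)\cap\beta^\perp\cap\gamma^\perp$ for two labels $\beta,\gamma\in\DC_c(v)$, which are linearly independent by Proposition~\ref{basis}; hence $\Phi'=\Phi\cap\Span_\RR(\beta,\gamma)$ is a genuine rank-two root system (Remark~\ref{what a root system}) spanning the annihilator $P$ of $F$. Because $A$ is of affine type, $K$ is positive semidefinite with radical $\RR\delta$, so $K|_P$ is positive semidefinite and is degenerate exactly when $\delta\in P$. Thus $\Phi'$ is finite when $\delta\notin P$ and affine (hence infinite) when $\delta\in P$, with no other possibility. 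Finally $\delta\in P$ is equivalent to $\br{x,\delta}=0$ for all $x\in F$, that is, to $F\subseteq\delta^\perp=\partial\Tits(A)$. Summarizing, $\tau$ is a cycle iff $\Phi'$ is finite iff $F\not\subseteq\partial\Tits(A)$, while $\tau$ is a path iff $\Phi'$ is affine iff $F\subseteq\partial\Tits(A)$; this already yields the ``moreover'' clause.

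It remains to match these to the position of $\relint F$, which I would do by examining the link of $F$ under the projection $V^*\to V^*/\Span F\cong\RR^2$. If $\tau$ is a finite cycle, consecutive cones share a facet through $F$ and the cycle closes up, so the projected two-dimensional cones have pairwise disjoint interiors, are consecutively adjacent, and return to their starting ray; such a cyclic chain necessarily sweeps out the full angle $2\pi$, so $\bigcup_{v\in\tau}\Cone(v)$ contains a neighborhood of every point of $\relint F$, whence $\relint F\subseteq\int|\DF_c|$.

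The main obstacle is the converse, showing that when $\tau$ is an infinite path the relative interior of $F$ lies on the boundary of $|\DF_c|$; here the accumulation of cones near $\partial\Tits(A)$ must be controlled, and this is where Corollary~\ref{DFc complement} is essential. In this case $F\subseteq\partial\Tits(A)$, and by Proposition~\ref{affine finitely} all but finitely many of the (infinitely many) cones containing $F$ are non-straddling, hence contained in one of the closed half-spaces $\set{x:\br{x,\delta}\ge0}$ or $\set{x:\br{x,\delta}\le0}$, with $F$ in their boundary. Projecting as above, $\delta$ descends to a nonzero functional whose kernel is a line $\ell$, and these cones give infinitely many disjoint-interior sectors in the two closed half-planes bounded by $\ell$; such families accumulate toward $\ell$ and cannot cover it, so a neighborhood of the origin is not covered. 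I would then identify the uncovered directions with the gap $V^*\setminus|\DF_c|$ of Corollary~\ref{DFc complement}, which lies in $\partial\Tits(A)$, concluding that $\relint F$ is not contained in $\int|\DF_c|$. Chaining this with the equivalences of the second paragraph gives the remaining two assertions.
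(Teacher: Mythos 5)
Your overall architecture is sound and close to the paper's: both reduce the first assertion to Propositions~\ref{rk2cyc poly} and~\ref{well-connected}, both use Proposition~\ref{rk2cyc reflection} to convert ``cycle versus path'' into ``$\Phi'$ finite versus infinite,'' and both then tie this to the position of $F$ relative to $\partial\Tits(A)$. Two of your local arguments genuinely differ from the paper's and are correct: you identify the type of $\Phi'$ by restricting the positive semidefinite form $K$ to the annihilator plane $P$ of $F$ (so $\Phi'$ is finite exactly when $\delta\notin P$, i.e.\ when $F\not\subseteq\partial\Tits(A)$), whereas the paper handles the finite case via the stabilizer of a point of $\relint(F)$ in the Tits cone; and your explicit angle-sweeping argument in the quotient $V^*/\Span(F)$ for ``cycle implies $\relint(F)$ is interior to $|\DF_c|$'' makes precise a step the paper leaves implicit.

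The gap is in the last step of the path case. What you actually establish there is that the cones of $\tau$ --- the maximal cones of $\DF_c$ \emph{containing} $F$ --- fail to cover a ray of $\ell$ near $\relint(F)$; but to conclude that $\relint(F)$ is not interior to $|\DF_c|$ you must also rule out that those uncovered points are swept up by maximal cones of $\DF_c$ that do \emph{not} contain $F$. Since $\DF_c$ is not locally finite along $\partial\Tits(A)$, this is not automatic, and it is exactly where the accumulation of cones must be controlled; Corollary~\ref{DFc complement} alone does not do it, because ``identifying the uncovered directions with the gap'' presupposes knowing that the uncovered ray lies in the closure of the gap, which is essentially the statement being proved (it is Corollary~\ref{2face in boundary}). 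The step can be closed with tools already in the paper: the uncovered points lie in $\partial\Tits(A)$; the fan $\DF_c\cap\partial\Tits(A)$ is finite (Proposition~\ref{affine finitely non-maximal}), so $|\DF_c|\cap\partial\Tits(A)$ is a finite union of closed cones $C\cap\partial\Tits(A)$; if a sequence of uncovered points converging to $x\in\relint(F)$ all lay in $|\DF_c|$, then infinitely many would lie in a single such $C\cap\partial\Tits(A)$, forcing $x\in C$, hence $F\subseteq C$ (a cone of a fan meeting $\relint(F)$ contains $F$), hence $C\in\tau$, contradicting uncoveredness. The paper instead argues by contradiction: if $\relint(F)$ were interior, Proposition~\ref{face in boundary} would produce two straddling maximal cones whose facets pin all remaining cones of $\tau$ between hyperplanes $\beta^\perp$ with $\beta\in\Phi'$, and since these accumulate only at $\delta^\perp$ there would be room for only finitely many cones, contradicting the infinitude of the path. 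You should either adopt that contradiction argument or supply the finiteness argument above; as written, the identification is asserted rather than proved.
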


\begin{proof}
The first assertion of the corollary
is immediate by Propositions~\ref{rk2cyc poly} and \ref{well-connected}.
Let $\tau$ be the rank-two cycle or path in $\DCamb_c$ associated to~$F$.
We write $\relint(F)$ for the relative interior of $F$ and break into cases based on the relationship between $\relint(F)$ and $\partial\Tits(A)$.  

If the relative interior of $F$ intersects $\Tits(A)$, then let $x\in\relint(F)\cap\Tits(A)$.
The stabilizer of $x$ with respect to the action of $W$ on $V^*$ is a proper (not-necessarily standard) parabolic subgroup of $W$ generated by reflections in the roots $\Phi''={ \{ \beta \in \Phi : \langle \beta, x \rangle = 0 \}}$.  
(This is a standard fact.  Since $\Tits(A)$ is the union of cones $uD$ for $u\in W$, there exists $w\in W$ such that $wx\in D$.
The stabilizer of $wx$ is then described by \cite[Theorem~5.13]{Humphreys}.)
Since $W$ is of affine type, its proper parabolic subgroups are all finite, and we conclude that $\Phi''$ is finite.
The root system $\Phi'$ described in the statement of the corollary is a sub root system of $\Phi''$, and thus $\Phi'$ is also finite.
Proposition~\ref{rk2cyc reflection} now implies that $\tau$ is a cycle.
Therefore the relative interior of $F$ lies in the interior of $|\DF_c|$, so the proposition holds in this case.

If the relative interior of $F$ intersects $-\Tits(A)$, then the analogous argument works.
Thus it remains to consider the case where $F$ is contained in $\partial\Tits(A)$.
In this case, $\delta\in\Span_\RR(\Phi')$,  
We conclude that $\Phi'$ is infinite, and since $\Phi'$ is a sub root system of $\Phi$, it is necessarily of affine type.
Proposition~\ref{rk2cyc reflection} now implies that $\tau$ is a path.
To complete the proof, we must show that $F$ is not in the interior of $|\DF_c|$.
Supposing to the contrary, if we view $F$ as a cone in the fan $\DF_c\cap\partial\Tits(A)$, the cone $F$ is shared by two $(n-1)$-dimensional faces $G_1$ and $G_2$ of $\DF_c\cap\partial\Tits(A)$.
Proposition~\ref{face in boundary} says that each $G_i$ is the intersection with $\partial\Tits(A)$ of a maximal cone $C_i$ of $\DF_c$.
Every other maximal cone of $\DF_c$ containing $F$ has two facets defined by roots in~$\Phi'$.
But then, since the hyperplanes $\beta^\perp$ for $\beta\in\Phi'$ accumulate to $\delta^\perp=\partial\Tits(A)$, there are only finitely many different maximal cones in $\DF_c$ containing $F$ as a face, contradicting the fact that $\tau$ is an infinite path..
The situation is illustrated schematically in Figure~\ref{schematic}.
In the picture, the bold lines indicate facets of $C_1$ and $C_2$ and the lighter lines indicate hyperplanes $\beta^\perp$ for $\beta\in\Phi'$.
\end{proof}
\begin{figure}[ht]
\scalebox{0.75}{
\begin{picture}(0,0)(80,-40)
\end{picture}
\includegraphics{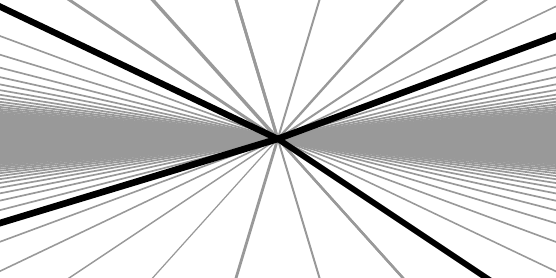}
\begin{picture}(0,0)(80,-40)
\put(-50,-3){\Large$C_2$}
\put(36,-3){\Large$C_1$}
\put(-9,-13){\Large$F$}
\end{picture}
}
\caption{An illustration for the proof of Corollary~\ref{finite type interior}}
\label{schematic}
\end{figure}

The proof of Corollary~\ref{finite type interior} also establishes the following useful fact, which augments Proposition~\ref{face in boundary}.

\begin{cor}\label{2face in boundary}
If $A$ is of affine type, then every $(n-2)$-dimensional face of $\DF_c$ contained in $\partial\Tits(A)$ is in the boundary of $|\DF_c|$.
\end{cor}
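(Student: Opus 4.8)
The plan is to extract this statement directly from the third case in the proof of Corollary~\ref{finite type interior}, namely the case in which the face in question is contained in $\partial\Tits(A)$. So let $F$ be an $(n-2)$-dimensional face of $\DF_c$ with $F\subseteq\partial\Tits(A)$. By Corollary~\ref{finite type interior}, the maximal cones of $\DF_c$ containing $F$ form a rank-two path or cycle $\tau$ in $\DCamb_c$, and the associated rank-two root system $\Phi'=\set{\beta\in\Phi:\br{\beta,F}=0}$ is infinite of affine type (since $\delta\in\Span_\RR(\Phi')$), so that $\tau$ is an infinite path. The goal is to show that the relative interior of $F$ is not in the interior of $|\DF_c|$.

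First I would argue by contradiction, assuming that $\relint(F)$ lies in the interior of $|\DF_c|$. Viewing $F$ as a codimension-one cone in the polyhedral complex $\DF_c\cap\partial\Tits(A)$ inside the hyperplane $\partial\Tits(A)=V_0^*$, the interiority assumption forces $F$ to be shared by exactly two $(n-1)$-dimensional cones $G_1$ and $G_2$ of $\DF_c\cap\partial\Tits(A)$. By Proposition~\ref{face in boundary}, each $G_i$ equals $C_i\cap\partial\Tits(A)$ for a maximal cone $C_i$ of $\DF_c$ whose interior meets $\partial\Tits(A)$. In particular $C_1$ and $C_2$ both contain $F$ and are two of the cones appearing in the path $\tau$.

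The crux is then an accumulation argument carried out in the two-dimensional linear slice transverse to $F$. In this slice, $\Phi'$ appears as an affine rank-two root system with imaginary direction $\delta$, and the reflecting hyperplanes $\beta^\perp$ for $\beta\in\Phi'$ accumulate only to the line $\delta^\perp=\partial\Tits(A)$ (that is, all but finitely many of them lie in any fixed angular neighborhood of that line). Because $C_1$ and $C_2$ have interiors meeting $\partial\Tits(A)$, they occupy an angular neighborhood of the $\delta^\perp$ direction in this slice; every other maximal cone of $\tau$ has both of its facets at $F$ defined by roots of $\Phi'$ and has interior disjoint from $\partial\Tits(A)$, so it lies angularly bounded away from $\delta^\perp$. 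Since only finitely many walls $\beta^\perp$ lie away from $\delta^\perp$, only finitely many maximal cones can contain $F$, contradicting the fact that $\tau$ is an infinite path. The situation is exactly the one illustrated schematically in Figure~\ref{schematic}.

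I expect the main obstacle to be making the accumulation step fully rigorous: one must verify in the normal slice, using the description of affine roots as positive scalings of $\beta+k\delta$ from Section~\ref{affine root sec}, that the walls indeed approach $\delta^\perp$ from both rotational sides while only finitely many stay outside a neighborhood of it, and that the two straddling cones $C_1,C_2$ together cover that neighborhood. Once this picture is justified, the remaining bookkeeping—identifying which cones of $\tau$ are straddling versus one-sided, and tallying walls—is routine.
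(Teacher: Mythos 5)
Your proposal is correct and follows essentially the same route as the paper, which explicitly derives this corollary from the third case (where $F\subseteq\partial\Tits(A)$) of the proof of Corollary~\ref{finite type interior}: the two straddling cones $C_1,C_2$ from Proposition~\ref{face in boundary}, the observation that every other cone of $\tau$ has both facets at $F$ cut out by roots of $\Phi'$, and the accumulation of the hyperplanes $\beta^\perp$ ($\beta\in\Phi'$) to $\delta^\perp$, contradicting the infinitude of the path. The level of rigor you flag for the accumulation step is the same level at which the paper leaves it (cf.\ Figure~\ref{schematic}).
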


\subsection{Simple connectivity}\label{affine simply sec}\label{debt paid sec}
The final piece in the proof of Theorem~\ref{AffineDoubleFramework} is the following proposition.
\begin{prop}\label{simply}
If $A$ is of affine type, then $(\DCamb_c,\DC_c)$ is simply connected.  
\end{prop}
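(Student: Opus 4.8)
The plan is to show directly that the space $\Sigma$ obtained from $\DCamb_c$ by filling a two-cell into each rank-two cycle is simply connected, by matching $\Sigma$ with the geometry of the fan $\DF_c$ and then analyzing that geometry. Since $(\DCamb_c,\DC_c)$ is already known to be complete (Proposition~\ref{complete}), well-connected (Proposition~\ref{well-connected}), and polyhedral, Proposition~\ref{rk2cyc poly} puts the rank-two cycles and paths in bijection with the codimension-two faces $F$ of $\DF_c$, the maximal cones containing $F$ forming the associated cycle or path. By Corollary~\ref{finite type interior}, the rank-two \emph{cycles} (the ones we fill) are exactly those coming from codimension-two faces whose relative interior lies in the interior of $|\DF_c|$, while the codimension-two faces in the boundary of $|\DF_c|$ give rank-two \emph{paths}, which we leave unfilled. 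Thus $\Sigma$ is precisely the two-skeleton of the dual block decomposition of $\DF_c$: vertices are maximal cones, edges are adjacent pairs of maximal cones (dual to codimension-one cones), and two-cells are dual to the \emph{interior} codimension-two faces. Via the identification of $\Sigma$ with this dual complex discussed at the end of \cite[Section~4.2]{framework}, and because $\pi_1$ depends only on the two-skeleton, we obtain $\pi_1(\Sigma)\cong\pi_1(|\DF_c|\setminus\set{0})\cong\pi_1(|\DF_c|\cap S^{n-1})$, where $S^{n-1}$ is the unit sphere in $V^*$ and the last isomorphism is the radial deformation retraction (valid because $|\DF_c|$ is a union of rays).

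It then remains to show that $|\DF_c|\cap S^{n-1}$ is simply connected. By Corollary~\ref{DFc complement},
\[ |\DF_c|=V^*\setminus N, \qquad N=\bigcap_{\beta\in\Phi_0^{\omega+}}\set{x\in\partial\Tits(A):\br{x,\beta}<0}, \]
so $N$ is an intersection of open half-spaces inside the hyperplane $\partial\Tits(A)=\delta^\perp$; in particular $N$ is a nonempty, relatively open, \emph{convex} cone of codimension one in $V^*$. Intersecting with the sphere, $|\DF_c|\cap S^{n-1}=S^{n-1}\setminus N'$, where $N'=N\cap S^{n-1}$ is a relatively open, geodesically convex, proper subset of the equatorial sphere $E=\delta^\perp\cap S^{n-1}$.

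To see that $S^{n-1}\setminus N'$ is simply connected (for $n\ge 3$; the rank-two case is Remark~\ref{rank 2}), let $C=\overline{N'}=\bigcap_{\beta\in\Phi_0^{\omega+}}\set{x\in E:\br{x,\beta}\le 0}$ be the closed geodesically convex spherical cell closing up $N'$. Since $C$ is full-dimensional in $E$ but not contained in any $\beta^\perp$, its relative interior contains a point $x_*$ with $\br{x_*,\beta}<0$ for all $\beta\in\Phi_0^{\omega+}$; hence $-x_*\notin C$. The geodesic flow away from $x_*$ toward $-x_*$ is a deformation retraction of $S^{n-1}\setminus\set{x_*}$ onto $\set{-x_*}$, and convexity of $C$ guarantees that each geodesic ray from $x_*$ meets $C$ in a single arc around $x_*$, so the flow restricts to a contraction of $S^{n-1}\setminus C$. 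Thus $S^{n-1}\setminus C$ is contractible, in particular simply connected. Finally, a loop in $S^{n-1}\setminus N'$ can meet $C$ only in the relative boundary $C\setminus N'$, which has codimension two in $S^{n-1}$; a small perturbation rel basepoint pushes the loop off this codimension-two set (to the side away from $N'$) into $S^{n-1}\setminus C$, where it contracts. Therefore $S^{n-1}\setminus N'$, and hence $\Sigma$, is simply connected.

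The main obstacle is the first step: making precise the isomorphism $\pi_1(\Sigma)\cong\pi_1(|\DF_c|\cap S^{n-1})$. This is exactly where completeness, well-connectedness, and the precise matching of filled two-cells with interior codimension-two faces (Corollaries~\ref{finite type interior} and~\ref{2face in boundary}) are essential: if the boundary codimension-two faces were filled as well, $\Sigma$ would reproduce the homotopy type of the full sphere rather than that of the slit sphere $S^{n-1}\setminus N'$, and the argument would collapse. Once this combinatorics-to-geometry dictionary is in place, the convexity of $N$ furnished by Corollary~\ref{DFc complement} makes the remaining geometric step routine.
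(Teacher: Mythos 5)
Your geometric intuition is exactly the one the paper uses --- the complement of $|\DF_c|$ is a relatively open convex cone in $\partial\Tits(A)$ (Corollary~\ref{DFc complement}), and this convexity is what makes loops contractible --- and your second half (contracting $S^{n-1}\setminus C$ by geodesic flow away from an interior point of $C$, then perturbing loops off the codimension-two set $C\setminus N'$) is a sound, if sketchy, way to see that $|\DF_c|\cap S^{n-1}$ is simply connected. The genuine gap is the first step: the isomorphism $\pi_1(\Sigma)\cong\pi_1(|\DF_c|\cap S^{n-1})$ is asserted by appeal to a ``discussion'' at the end of \cite[Section~4.2]{framework}, but no such theorem is available there (if it were, the paper would not need a page-long direct argument), and proving it is essentially the entire content of the proof. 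The difficulty is twofold. First, $\DF_c$ is an infinite fan that fails to be locally finite along $\partial|\DF_c|$: by Corollary~\ref{finite type interior}, every codimension-two face in the boundary of the support lies in \emph{infinitely many} maximal cones (an infinite rank-two path), so the ``dual block decomposition'' is not a CW structure on $|\DF_c|\cap S^{n-1}$ in any standard sense near those accumulation points, and the usual dual-complex/nerve theorems do not apply off the shelf. Second, even granting that loops can be put in general position, one must show that a contracting \emph{homotopy} can be arranged to stay inside $|\DF_c|$, to cross only finitely many codimension-two faces, and to cross only \emph{interior} ones (those dual to filled two-cells); none of this is automatic for an infinite, non-complete fan.

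The paper's proof never states a general $\pi_1$ isomorphism; it proves only the one implication needed --- that every combinatorial loop in $\DCamb_c$ is a product of conjugates of rank-two cycles --- by realizing the loop as a piecewise-linear path through generic points $y_i$ and coning it off to a carefully chosen point $z$ in the \emph{antipode} of the star $\sigma$ of $x_c$. Lemma~\ref{Convex Lemma} (the analogue of your convexity observation) shows each triangle $\Delta_i=\mathrm{ConvexHull}(z,y_i,y_{i+1})$ misses $\sigma$ and hence lies in the interior of $|\DF_c|$, and Lemma~\ref{Finite Geometry Lemma} shows $\Delta_i$ meets only finitely many cones; Corollary~\ref{finite type interior} then guarantees every codimension-two face crossed corresponds to a finite rank-two cycle. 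To repair your argument you would either need to prove the dual-complex identification in this infinite, boundary-accumulating setting (which amounts to redoing the paper's lemmas), or bypass it as the paper does by contracting each combinatorial loop directly.
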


In preparation for the proof of the proposition, consider a loop $v_0 \to v_1 \to v_2 \to \cdots \to v_p = v_0$ in $\DCamb_c$.  
Choose generic points $y_i$ in the interior of the cones $\Cone_c(v_i)$ so that the line segment $\overline{y_i y_{i+1}}$ crosses between $\Cone_c(v_i)$ and $\Cone_c(v_{i+1})$ in the relative interior of the wall $\Cone_c(v_i) \cap \Cone_c(v_{i+1})$.
In particular, $\overline{y_i y_{i+1}}$ is contained entirely in the interior of $|\DF_c|$. We write $\gamma$ for the loop $\bigcup_{i=0}^{p-1} \overline{y_i y_{i+1}}$ in $V^{\ast}$.  

Let $\sigma$ be the union of the closed cones of the $W_0$-Coxeter fan that contain $x_c$.
From Corollary~\ref{DFc complement}, $V^* \setminus \sigma$ is the interior of $|\DF_c|$.
Choose $z$ to be a generic point in the relative interior of $- \sigma$. 
For each $i$ from $0$ to $p-1$, write $\Delta_i$ for the triangle $\mathrm{ConvexHull}(z, y_i, y_{i+1})$.

\begin{lemma}\label{Convex Lemma}
The cone $\sigma$ does not intersect $\Delta_i$.
\end{lemma}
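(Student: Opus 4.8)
The plan is to run everything through the convexity of $\sigma$, using the two facts already recorded just before the statement: that $\mathrm{int}|\DF_c|=V^*\setminus\sigma$ (a consequence of Corollary~\ref{DFc complement}) and that the segment $\overline{y_iy_{i+1}}$ lies entirely in $\mathrm{int}|\DF_c|$. First I would note that $\sigma$, being the union of the closed $W_0$-Coxeter cones containing $x_c$, is the closure of the open cone $V^*\setminus|\DF_c|=\bigcap_{\beta\in\Phi_0^{\omega+}}\set{x\in\partial\Tits(A):\br{x,\beta}<0}$, hence equals $\bigcap_{\beta\in\Phi_0^{\omega+}}\set{x\in\partial\Tits(A):\br{x,\beta}\le0}$. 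In particular $\sigma$ is a closed convex cone, i.e.\ closed under nonnegative linear combinations, and it is not a linear subspace, since $\br{x_c,\beta}<0$ for every $\beta$ in the nonempty set $\Phi_0^{\omega+}$ (Lemma~\ref{Phi+Nonempty}). From this I would deduce that $\relint(-\sigma)$ is disjoint from $\sigma$: the intersection $\sigma\cap(-\sigma)$ is the lineality space of $-\sigma$, which is a proper face of $-\sigma$ precisely because $\sigma$ is not a subspace, and a proper face contains no relative-interior point. Since $z\in\relint(-\sigma)$, this yields both $z\notin\sigma$ and $-z\in\sigma$.

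Next I would write a general point of $\Delta_i=\mathrm{ConvexHull}(z,y_i,y_{i+1})$ in the form $p=(1-t)z+tq$ with $t\in[0,1]$ and $q$ a point of the segment $\overline{y_iy_{i+1}}$; this uses only that the convex hull of a point and a segment is the union of the segments joining the point to the segment. By construction $q\in\overline{y_iy_{i+1}}\subseteq\mathrm{int}|\DF_c|=V^*\setminus\sigma$, so $q\notin\sigma$. I would then argue by contradiction, assuming $p\in\sigma$. If $t=0$ then $p=z\in\sigma$, contradicting $z\notin\sigma$. If $t>0$, solving for $q$ gives $q=\tfrac1t\,p+\tfrac{1-t}{t}\,(-z)$, which exhibits $q$ as a nonnegative linear combination of the two elements $p$ and $-z$ of $\sigma$; since $\sigma$ is a convex cone this forces $q\in\sigma$, again a contradiction. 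Hence no point of $\Delta_i$ lies in $\sigma$.

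The argument is short, and the only substantive step is the one-line convex-cone identity $q=\tfrac1t p+\tfrac{1-t}{t}(-z)$ in the last paragraph. Accordingly, I expect the main (and only) thing to get right is the preliminary bookkeeping establishing that $\sigma$ is a convex cone with $-z\in\sigma$ and $z\notin\sigma$, all of which follows directly from the description of $\sigma$ in Corollary~\ref{DFc complement} together with Lemma~\ref{Phi+Nonempty}. I would emphasize that no genericity of $z$ is needed for this lemma: only the membership $z\in\relint(-\sigma)$ is used.
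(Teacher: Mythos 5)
Your proof is correct, and it takes a genuinely different route from the paper's. The paper exploits the fact that $\sigma\subseteq\partial\Tits(A)$ to reduce to the slice $\Delta_i\cap\partial\Tits(A)$, and then splits into two cases: if $\overline{y_iy_{i+1}}$ misses $\partial\Tits(A)$ the slice is just $\{z\}$, and otherwise it is a segment $\overline{wz}$ with $w=\overline{y_iy_{i+1}}\cap\partial\Tits(A)$, which is separated from $\sigma$ by a single hyperplane $\beta^\perp$ with $\beta\in\Phi_0^{\omega+}$ chosen (via Theorem~\ref{TitsCapBdry}) so that $\br{w,\beta}>0$ and $\br{z,\beta}>0$. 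You instead observe that $\sigma$ is the closure of $V^*\setminus|\DF_c|$ and hence equals $\bigcap_{\beta\in\Phi_0^{\omega+}}\{x\in\partial\Tits(A):\br{x,\beta}\le 0\}$, a closed convex cone that contains $-z$ but neither $z$ nor any point of $\overline{y_iy_{i+1}}$; the identity $q=\tfrac1t p+\tfrac{1-t}{t}(-z)$ then rules out any point $p=(1-t)z+tq$ of the triangle lying in $\sigma$. Your argument is uniform (no case split on whether the segment crosses $\partial\Tits(A)$), and it isolates exactly which properties of $\sigma$ and $z$ are used — convexity of $\sigma$, $-z\in\sigma$, $z\notin\sigma$ — at the cost of the small preliminary verification that the closed star of $x_c$ really is the stated intersection of half-spaces (which needs nonemptiness of the open cone $V^*\setminus|\DF_c|$, supplied by Corollary~\ref{DFc complement}). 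The paper's version is more hands-on but leans on Theorem~\ref{TitsCapBdry} at the crossing point; both ultimately rest on the same description of $V^*\setminus|\DF_c|$ and on Lemma~\ref{Phi+Nonempty}. Your closing remark that only $z\in\relint(-\sigma)$, and not its genericity, is needed is accurate and applies equally to the paper's proof (the genericity of $z$ is used later, in Lemma~\ref{Finite Geometry Lemma} and the proof of Proposition~\ref{simply}).
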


\begin{proof}
If the line segment $\overline{y_i y_{i+1}}$ does not cross $\partial \Tits(A)$, then the claim is obvious, since $z$ is the only point of $\Delta_i$ in $\partial \Tits(A)$ while $\sigma$ is contained in $\partial \Tits(A)$ and we constructed $z$ to 
lie outside $\sigma$.

If $\overline{y_i y_{i+1}}$ crosses $\partial \Tits(A)$, let $w$ be the point $\overline{y_i y_{i+1}} \cap \partial \Tits(A)$. Then $\Delta_i \cap \partial \Tits(A)$ is the line segment $\overline{wz}$. Now $\overline{y_i y_{i+1}}$ lies in 
the interior of $|\DF_c|$, so $w$ is in the relative interior of $|\DF_c|\cap\partial \Tits(A)$.
By Theorem~\ref{TitsCapBdry}, there is some $\beta$ in $\Phi_0^{\omega +}$ so that $\br{w,\beta}>0$. 
But $z$ is in the relative interior of $-\sigma$, which equals $\bigcap_{\beta \in \Phi_0^{\omega +}} \{ x \in \partial \Tits(A) : \br{x,\beta} > 0 \}$, so also $\br{z,\beta}>0$.  \
Therefore the entire segment $\overline{wz}$ lies strictly 
to one side of $\beta^{\perp}$, and thus misses $\sigma$.
\end{proof}

Assuming we have chosen our points $y_i$ and $z$ generically enough, $\Delta_i$ only meets the faces of dimension $\geq n-2$ in $\DF_c$, and meets them transversely,  $\partial \Delta_i$ only meets the faces of dimension $\ge n-1$, and $z$ lies in a maximal cone of $\DF_c$.

\begin{lemma} \label{Finite Geometry Lemma}
Only finitely many cones of $\DF_c$ meet $\Delta_i$.
\end{lemma}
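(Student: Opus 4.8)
The plan is to use that $\Delta_i$ is compact and contained in the interior of the support of $\DF_c$, and then to prove local finiteness of $\DF_c$ throughout that interior. By Lemma~\ref{Convex Lemma}, $\sigma\cap\Delta_i=\emptyset$, so $\Delta_i\subseteq V^*\setminus\sigma$, which (as recorded just before Lemma~\ref{Convex Lemma}, via Corollary~\ref{DFc complement}) is exactly the interior of $|\DF_c|$. Hence it suffices to show that every point $p$ of $V^*\setminus\sigma$ has a neighborhood meeting only finitely many cones of $\DF_c$: a finite subcover of the compact set $\Delta_i$ then proves the lemma. I would split this into the cases $\br{p,\delta}\neq 0$ and $\br{p,\delta}=0$.

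For $p$ in the interior of $\Tits(A)$ (the case $\br{p,\delta}>0$; the case $\br{p,\delta}<0$ is symmetric), I would invoke the standard local finiteness of the Coxeter fan in the interior of the Tits cone. A small enough neighborhood $N$ of $p$ meets only finitely many chambers $wD$, and within the interior of $\Tits(A)$ each $\Cone_c(v)$ is a union of such chambers (Theorem~\ref{pidown fibers}), so $N$ meets only finitely many cones of $\F_c$. Any cone of $-\F_{c^{-1}}$ meeting $N$ lies partly in the interior of $\Tits(A)$, hence is one of the finitely many straddling cones of Proposition~\ref{affine finitely}. So $N$ meets finitely many cones of $\DF_c$.

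The remaining, and principal, case is a boundary point $p\in\partial\Tits(A)$ with $p\notin\sigma$, where cones of $\DF_c$ can a priori accumulate. The straddling cones are again finite by Proposition~\ref{affine finitely}, so the task is to bound the non-straddling cones approaching $p$. Here I would rerun the finiteness construction from the proof of Theorem~\ref{TitsCapBdry}. Since $p\notin\sigma$, that theorem furnishes $\beta\in\Phi_0^{\omega+}$ with $\br{p,\beta}>0$; taking $\beta$ as $\beta_{n-1}$ together with a basis $\beta_1,\dots,\beta_{n-2}$ of $\Phi_0^{\omega0}$ adapted to the maximal $W_0$-Coxeter cone whose relative interior contains $p$, one builds the simplicial cone $\Lambda$, and the proof of Theorem~\ref{TitsCapBdry} shows that only finitely many cones $\Cone_c(v)$ meet the interior of $\Lambda$. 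Because $\br{p,\beta_i}>0$ for all $i$, the point $p$ lies in the relative interior of the facet $\Lambda\cap\partial\Tits(A)$, so $\Lambda$ contains a relative neighborhood of $p$ inside $\overline{\Tits(A)}$; thus only finitely many cones of $\F_c$ reach a one-sided neighborhood of $p$. The symmetric construction (a cone $-\Lambda'$) bounds the cones of $-\F_{c^{-1}}$ on the $-\Tits(A)$ side, and combined with the finitely many straddling cones this bounds all cones of $\DF_c$ meeting a full $V^*$-neighborhood of $p$.

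The hard part will be this boundary case: arranging $\Lambda$ (and $-\Lambda'$) so that they genuinely cover a relative neighborhood of $p$ in $\overline{\Tits(A)}$ and in $\overline{-\Tits(A)}$ respectively, so that the one-sided finiteness supplied by the proof of Theorem~\ref{TitsCapBdry} assembles into two-sided local finiteness at $p$. When $p$ lies on a wall of the $W_0$-Coxeter fan rather than in the relative interior of a maximal cone, I would handle it by taking the finitely many adjacent maximal Coxeter cones and the corresponding finitely many cones $\Lambda$, whose union still covers a relative neighborhood of $p$. I note that a naive count of root hyperplanes meeting $\Delta_i$ would not work, precisely because hyperplanes $\beta^{\perp}$ accumulate at $W_0$-walls of $\partial\Tits(A)$ while $\DF_c$ coarsens this arrangement; it is the $c$-alignment input to Theorem~\ref{TitsCapBdry} that controls the cones directly and makes the argument go through.
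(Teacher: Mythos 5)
Your overall strategy (compactness of $\Delta_i$ plus local finiteness of $\DF_c$ at every point of the interior of $|\DF_c|$) is a genuinely different route from the paper's, and your two interior cases are fine. But the boundary case, which you correctly identify as the crux, is not resolved by your proposal, and the failure is exactly at the wall points you try to dispatch in one sentence. Work in the affine slice $V_1^*$: the cone $\Lambda$ attached to a Coxeter chamber $u_0 D_0$ cuts out the translated simplicial region $\set{x\in V_1^*:\br{x,\beta_i}\ge k_i,\ 1\le i\le n-1}$, where $k_i$ is the smallest integer making $(-\beta_i)^{(k_i)}$ a positive root. If $p\in\partial\Tits(A)$ lies on a wall $\beta^\perp$ with $\beta\in\Phi_0^{\omega 0}$ (and the segment $\Delta_i\cap\partial\Tits(A)$ will in general cross such walls, which are \emph{not} faces of $\DF_c$ and so are not excluded by the genericity assumptions), then the chamber on the side $\br{\cdot,\beta}\ge 0$ contributes the constraint $\br{x,\beta}\ge k\ge 1$ while the chamber on the other side contributes $\br{x,\beta}\le 0$. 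The slab $0<\br{x,\beta}<k$ is covered by neither, and points $p+\ep y$ with $0<\br{y,\beta}<k$ lie in this slab for all $\ep>0$ while converging to $p$. One cannot repair this by choosing the $\beta_i$ with $\br{p,\beta_i}>0$ strictly: when $p$ lies on $\beta^\perp$ for $\beta\in\Phi_0^{\omega 0}$, the roots of $\Phi_0^{\omega 0}$ pairing strictly positively with $p$ do not span the hyperplane $U_c$, so no admissible choice of $\beta_1,\dots,\beta_{n-2}$ avoids the problem. Thus the one-sided finiteness supplied by the proof of Theorem~\ref{TitsCapBdry} does not assemble into local finiteness at such $p$, and your argument leaves infinitely many cones of $\F_c$ potentially accumulating in the uncovered slab.

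Controlling the cones in that slab is precisely where the real content of the lemma lives, and the paper supplies it by a different mechanism that your proof never invokes: an accumulation-point argument on $\Delta_i$ reduces everything to the statement that a codimension-$2$ face of $\DF_c$ whose relative interior lies in the interior of $|\DF_c|$ is contained in only finitely many maximal cones (Corollary~\ref{finite type interior}), which in turn rests on well-connectedness and the rank-two cycle/path dichotomy of Proposition~\ref{rk2cyc reflection}. Your closing remark about why a naive count of root hyperplanes fails is exactly right, but the same accumulation of hyperplanes $\gamma^{(k)\perp}$ for $\gamma\ne\pm\beta$ passes through the uncovered slab near $p$, so some version of the rank-two analysis is unavoidable there; the $\Lambda$ construction alone cannot substitute for it.
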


\begin{proof}
Suppose otherwise.
Since $\Delta_i$ is transverse to $\DF_c$, if $\Delta_i$ meets a cone, it meets its relative interior. 
For each such cone
$C$, choose a point $q(C)$ in the intersection of $\Delta$ and the relative interior of $C$.
Since $\Delta_i$ is compact, the set of such $q(C)$ must have an accumulation point $r\in\Delta_i$. 
Let $F$ be the cone of $\DF_c$ in whose relative interior $r$ lies.

Since $\Delta_i$ is disjoint from all faces of $\DF_c$ of dimension less than $n-2$, we know that $\dim F\ge n-2$.
If $\dim F=n$, then we have a contradiction, as there is an open neighborhood of $r$ in $V^*$ which meets no cones of $\DF_c$ other than $F$.
If $\dim F=n-1$, we likewise have a contradiction, as there is an open neighborhood of $r$ in $V^*$ which meets only three cones of $\DF_c$ (namely, $F$ and the two cones containing it.
Finally, suppose that $\dim F = n-2$. 
Since $\Delta_i$ lies in the interior of $\DF_c$ (Lemma~\ref{Convex Lemma}), the fact that $\relint(F)$ meets $\Delta_i$ means that $\relint(F)$ lies in the interior of $\DF_c$.
From Corollary~\ref{finite type interior}, $F$ lies in finitely many cones of $\DF_c$ and we have a contradiction as in the other cases.
\end{proof}

\begin{proof}[Proof of Proposition~\ref{simply}] 
Continuing the notation from above, we want to show that the loop $v_0 \to v_1 \to v_2 \to \cdots \to v_p = v_0$ is trivial modulo relations coming from finite rank-two cycles in $\DCamb_c$. 
We chose $z$ to lie in $\Cone_c(w)$ for some vertex $w$ of $\DCamb_c$.
We write $v_i \leadsto w$ for the path from $v_i$ to $w$ in $\DCamb_c$ following the edge $y_i z$ of $\Delta_i$; the wavy line reminds us that we may pass through other cones of $\DF_c$ on the way.
We also write $w\leadsto v_i$ for the reverse path.

Pulling the path from $y_i$ to $y_{i+1}$ across the triangle $\Delta_i$, from the edge $\overline{y_i y_{i+1}}$ of the triangle to the two edges $\overline{y_i z} \cup \overline{z y_{i+1}}$, we cross over codimension-$2$ faces of $\DF_c$. 
By Lemma~\ref{Finite Geometry Lemma}, we only cross finitely many of them.
From Corollary~\ref{finite type interior}, all of these codimension-$2$ faces correspond to finite rank-two cycles in $\DCamb_c$. 
So $v_i \to v_{i+1}$ is equivalent to $v_i \leadsto w \leadsto v_{i+1}$ modulo rank-two cycles. 
Thus the cycle $v_0 \to v_1 \to v_2 \to \cdots \to v_p = v_0$ is equivalent to the following cycle, which is obviously trivial:  
$ v_0 \leadsto w \leadsto  v_1 \leadsto w \leadsto  v_2 \leadsto \cdots \leadsto v_{p-1} \leadsto w \leadsto  v_p = v_0$. 
\end{proof}

Having proved Theorem~\ref{AffineDoubleFramework}, we conclude by paying an old debt. 
Almost all of Corollary~\ref{affine type conj} follows from Theorems~\ref{AffineDoubleFramework} and~\ref{all conj}.
The remaining piece is Conjecture~3.14 of \cite{framework} (the sign coherence of $\g$-vectors), which is now easy now that we have the definition of the doubled Cambrian framework.

 \begin{proof}[End of the proof of Corollary~\ref{affine type conj}] 
Proposition~\ref{camb g sign-coherent} implies that the conclusion of Conjecture~3.14 of \cite{framework} holds in the Cambrian reflection frameworks $(\Camb_c,C_c)$ and $(\Camb_{c^{-1}},C_{c^{-1}})$.
The antipodal map preserves sign-coherence of $\g$-vectors, so Conjecture~3.14 of \cite{framework} holds for all vertices of $(\DCamb_c,\DC_c)$.
\end{proof}

\section*{Acknowledgments}
The authors thank Salvatore Stella for helpful conversations, leading in particular to a simplification of the proof of Lemma~\ref{Phi0Rank}.  
The authors also thank the anonymous referees for their very careful reading and many helpful suggestions.

\end{document}